\newtheorem{propo}{Proposition}
\newtheorem{remark}{Remark}
\newtheorem{lemma}{Lemma}
\newtheorem{theorem}{Theorem}
\newtheorem*{theorem*}{Theorem}
\numberwithin{equation}{section}
\newcommand{\vertiii}[1]{{\left\vert\kern-0.25ex\left\vert\kern-0.25ex\left\vert #1 
		\right\vert\kern-0.25ex\right\vert\kern-0.25ex\right\vert}}
\def\aa{\text{{\cursive{a}}}}
\def\bb{\text{{\cursive\slshape{b}}}}
\def\cc{\text{{\cursive{c}}}}
\def\dd{\text{{\cursive{d}}}}
\def\N{\mathbb{N}}
\def\R{\mathbb{R}}
\def\d{\,\mathrm{d}}
\def\I{\mathbb{I}}
\def\fy{\mathfrak{y}}
\def\fz{\mathfrak{z}}
\def\B{\mathbb{B}}
\def\E{\mathbb{E}}
\def\eps{\varepsilon}
\newcommand{\ei}{\textcolor{black}}
\newcommand{\sm}{\textcolor{black}}
\title{\textbf{Weak Error on the densities for the Euler scheme of stable additive SDEs with  Besov drift}}
\author{Mathis Fitoussi\footnote{Laboratoire de Mathématiques et Modélisation d'Evry (LaMME), UMR CNRS 8071, Université d'Evry Val d'Essonne-Paris Saclay, 23 Boulevard de France, 91037 Evry, mathis dot fitoussi at univ-evry dot fr}, Elena Issoglio\footnote{Dipartimento di Matematica Giuseppe Peano, Università di Torino,  Via Carlo Alberto 10, elena.issoglio@unito.it}, Stéphane Menozzi\footnote{Laboratoire de Mathématiques et Modélisation d'Evry (LaMME), UMR CNRS 8071, Université d'Evry Val d'Essonne-Paris Saclay,  23 Boulevard de France, 91037 Evry, stephane dot menozzi at univ-evry dot fr}}
\date{\today}
\begin{document}

\maketitle

%${\text{\aa}}=$, ${\text{\bb}}=$, ${\text{\cc}}=$, ${\text{\dd}}=$

\begin{center}
	\textbf{Abstract}
\end{center}
\begin{adjustwidth}{0.8in}{0.8in}
	We are interested in the Euler-Maruyama dicretization of the formal SDE
	\begin{equation}\label{sde-abstract}\notag
		\d X_t = b(t,X_t)\d t + \d Z_t, \qquad X_0 = x \in \R^d,
	\end{equation}
	where $Z_t$ is a symmetric isotropic $d$-dimensional $\alpha$-stable process, $\alpha\in (1,2) $ and the drift $b\in L^r \left([0,T],\B^{\beta}_{p,q}(\R^d,\R^d)\right)$, $\beta<0$ is distributional and the parameters $ $ satisfy some constraints which guarantee weak-well posedness. Defining an appropriate Euler scheme, we show that, denoting $\gamma:=\alpha+2\beta-d/p - \alpha/r-1>0$, the weak error on densities related to this discretization converges at the rate $(\gamma-\varepsilon)/\alpha$ for any $\varepsilon\in (0,\gamma) $.\\
\end{adjustwidth}
\textbf{Key words:} Singular SDEs; Stable processes; Distributional drifts; Numerical Schemes.\\
\textbf{MSC:} 60H10, 60h35.

%\textcolor{black}{IMPORTANTE: Prova generica di stime per il Lemma 5. S.: non sono poi così sicuro... Se ne parla. La prima prova serve per la stima di regolarità in tempo (molto locale). Forse appesantisce il Lemmone.}

\section{Introduction}
    For a fixed finite time horizon $T>0$, we are interested in  the Euler-Maruyama dicretization of the \textit{formal} SDE
	\begin{equation}\label{sde}
	X_t =x+\int_0^t b(s,X_s)\d s + \d Z_s,  \qquad \forall t \in [0,T],
	\end{equation}
	where $Z_t$ is a symmetric isotropic $d$-dimensional $\alpha$-stable process, $\alpha\in (1,2) $ and $b\in L^r \left([0,T],\B^{\beta}_{p,q}(\R^d,\R^d)\right)$, $\beta<0$.
	{\color{black}The (negative regularity) Besov space $\B^{\beta}_{p,q}(\R^d,\R^d)$ will be defined in Subsection \ref{subsec-besov}.}
	 In this pure-jump setting, it was established in \cite{CdRM22} that well-posedness of the generalized martingale problem holds for the generator formally associated with \eqref{sde} under the condition

\begin{equation}\label{serrin}
	\alpha \in \left( 1%\frac{1+\frac{d}{p}}{1-\frac{1}{r}}
	,2 \right), \qquad \beta \in \left( \frac{1-\alpha+\frac{d}{p}+\frac{\alpha}{r}}{2} ,0\right),
\end{equation}
which we assume to hold throughout this paper. Density estimates on the time marginals of \eqref{sde} were obtained in \cite{Fit23}.\\
	
	The goal of this paper is to prove a convergence rate for the weak error on densities associated with an {\textit{appropriate}} Euler scheme for \eqref{sde}. The proof consists in approaching $b(\cdot,\cdot)$ with a sequence $(\mathfrak{b}(\cdot,\cdot,h))_{h\geq 0}$ of bounded H\"older functions, where the mollification parameter $h$ is also the time step of the scheme. 
	\subsection{Definition of the scheme}\label{subsec-def-scheme}
	To introduce the scheme associated with the formal previous SDE \eqref{sde}, one first needs to recall that the precise meaning to be given to the SDE, following \cite{CdRJM22} in the pure-jump setting, inspired by \cite{DD16} in the Brownian setting,  is:
	\begin{equation}\label{DYN_DIFF}
	X_t=x+\int_0^t {\mathfrak b}(s,X_s,\d s)+Z_t,
	\end{equation}
	where for all $(s,z)\in [0,T]\times \R^d,h>0$,
	\begin{equation}
\label{DEF_DRIFT}
\mathfrak{b}(s,z,h) := \int_s^{s+h} \int b(u,y)p_\alpha(u-s,z-y) \d y \d u=\int_s^{s+h} P_{u-s}^\alpha b(u,z)   \d u,
	\end{equation}
$p_\alpha(v,\cdot)$ denoting the density of the $\alpha $-stable driving noise $(Z_v)_{v\geq0}$ at time $v$ and $P^\alpha $ the associated semi-group.  The integral in \eqref{DYN_DIFF} is intended as a {\color{black}nonlinear} Young integral obtained by passing to the limit  in a suitable procedure aimed at reconstructing the drift (see again \cite{CdRJM22}). The resulting drift in \eqref{DYN_DIFF} is, \textit{per se}, a Dirichlet process (as it had already been indicated in the literature, see e.g. \cite{ABM20} and references therein). Importantly, the dynamics in \eqref{DYN_DIFF} also naturally provides a corresponding approximation scheme to be analyzed. Note that, in order to give a precise meaning to the integral appearing in \eqref{DYN_DIFF}, we need the following condition: 
\begin{equation}
	\alpha \in \left( \frac{1+\frac{d}{p}}{1-\frac{1}{r}},2 \right)\qquad \beta \in \left( \frac{1-\alpha+\frac{2d}{p}+\frac{2\alpha}{r}}{2} ,0\right),
\end{equation}
which is more stringent than \eqref{serrin}. Interestingly enough, this condition does not appear \textcolor{black}{elsewhere} in the present work since we only consider the time marginals of the process. \\

	We will use a discretization scheme with $n$ time steps over $[0,T]$, with constant step size $h:=T/n$. For the rest of this paper, we denote, $\forall k \in \{{0},...,n\}, t_k := kh$ and $\forall s>0, \tau_s^h := h \lfloor \frac{s}{h} \rfloor \in (s-h,s]$, which is the last grid point before time $s$. Namely, if $s\in [t_k,t_{k+1}), \tau_s^h = t_k$. \\

We can now define the related Euler scheme $X^h $, starting from $X_0^h=x$, on the time grid as
\begin{equation}\label{euler-scheme-besov_GRID}
		 X_{t_{i+1}}^h = X_{t_i}^h+  \mathfrak{b}(t_i,{X}_{t_i}^h,h)+Z_{t_{i+1}}-Z_{t_i}.
	\end{equation}
	We have precisely
	{\color{black} used the quantity $\mathfrak{b}(t_i,X_{t_i}^h,h)$ defined in \eqref{DEF_DRIFT} as an approximation of the nonlinear Young integral $\int_{t_i}^{t_{i+1}} {\mathfrak b}(s,X^h_s,\d s)$,} 
%plugged the expression \eqref{DEF_DRIFT}, 
which served to define the limit dynamics \eqref{DYN_DIFF} for the SDE, with a time argument corresponding to the chosen time step.\\

	Set now for $(s,z)\in [0,T]\backslash {(kh)_{k\in \{0,\cdots,n\}}}\times \R^d$,
	\begin{equation}\label{DEF_BH_SCHEME}
{\mathfrak b}_h(s,z):=P_{s-\tau_s^h}^\alpha b(s,z). 
	\end{equation}
Observe from that definition {\color{black}and using \eqref{DEF_DRIFT}} that, on any time step, the drift also writes as
\begin{equation}
\label{EXPR_DRIFT}
 \mathfrak{b}(t_i,{X}_{t_i}^h,h)=\int_{t_i}^{t_{i+1}} \mathfrak b_h(u,X_{t_i}^h)\d u=\E[\mathfrak{b}_h(U_{i},{X}_{t_i}^h)|{X}_{t_i}^h]h,
 \end{equation}
where  the	$(U_k)_{k\in \N} $ are independent random variables, %defined on some probability space $(\tilde \Omega, \tilde {\mathcal A}, p ) $ 
independent as well from the driving noise, s.t. $U_k\overset{({\rm law})}={\mathcal U}([{t_k,t_{k+1}}]) $, i.e. $U_k $ is uniform on the time interval $[t_k,t_{k+1}]$.\\

From a practical viewpoint, the above time and spatial expectations {\color{black} \eqref{EXPR_DRIFT} and \eqref{DEF_BH_SCHEME}, respectively,} will anyhow have to be approximated if one was to fully implement this discretization. These computations are however case-dependent. We mention that a usual way to spare one of these approximations  consists in randomizing the time, namely this amounts to consider $\tilde {\mathfrak{b}}(t_i,{X}_{t_i}^h,h):=\mathfrak{b}_h(U_{i},{X}_{\tau_s^h}^h) $. This approach was successfully carried out for Lebesgue drifts (see \cite{JM211}, \cite{FJM24}) and also allowed in the spatial Hölder setting to achieve the somehow expected convergence rates without any requirements on the time regularity (see \cite{FM24}).\\

Anyhow, in the current singular setting it seems difficult to benefit from such an effect in the sense that without any additional time integration we do not have controls on the approximate drift norm. This can be  seen e.g. in \eqref{CTR_PONCTUEL_BH} below or in the proof of the sensitivity analysis involving the local transitions (see proof of control \eqref{besov-estimate-gammah-stable-PERTURB_DRIFT}).\\

The representation \textcolor{black}{\eqref{EXPR_DRIFT}} naturally suggests to extend the dynamics of the scheme in continuous time as follows 
	\begin{equation}\label{euler-scheme-besov}
		 X_{t}^h = X_{\tau_t^h}^h+  \mathfrak{b}(\tau_t^h,{X}_{\tau_t^h}^h,t-\tau_t^h)+Z_{t}-Z_{\tau_t^h}=X_{\tau_t^h}^h+ \int_{\tau_t^h}^t \mathfrak{b}_h(s,{X}_{\tau_t^h}^h)\d s+Z_t-Z_{\tau_t^h},
	\end{equation}
which gives an  extension in integral form which is  similar to the dynamics of Euler schemes involving non-singular drifts, i.e. it is an Itô type process and the approximate drift appears through a usual time integral. 

	\subsection{Euler Scheme - State of the art}

	Estimates on the weak error involving a suitably smooth test function have been studied for a long time. In the Brownian setting, we can mention, the seminal works of \cite{TT90} (smooth coefficients and test function) and \cite{MP91} (non-degeneracy and H\"older coefficients). Going to density (i.e. taking a dirac mass as test function) requires some additional non-degeneracy of the noise. We can refer to  \cite{BT96} in a suitable H\"ormander setting and \cite{KM02} in a non-degenerate case, which deal with smooth coefficients as well as \cite{KM10} for stable-driven SDEs with smooth coefficients. The key tool to derive these results resides in studying the smoothness of the backward Kolmogorov equation with the corresponding test function.\\
	
	When the coefficients are smooth, the aforementioned works prove that the weak error rate of the Euler scheme is of order 1 with respect to the time step $h$. When the drift and the (possibly non-trivial) diffusion coefficients are $\eta$-H\"older, in the Brownian setting, the expected rate falls to $h^{\frac{\eta}{2}}$ (see \cite{MP91} for smooth test functions and \cite{KM17} for the error on densities). To the best of the authors' knowledge, this has not yet been proven in the pure-jump setting $\alpha<2$, but an associated rate of order $\eta / \alpha$ would be expected.\\
	
	For a more detailed review of those topics, we refer to the introduction of \cite{FM24}.\\
	
	Recently, a series of works considered the Euler approximation of SDEs with stable additive noise and low-regularity drifts and a randomization in the time variable for the scheme. The first work in this direction goes back to \cite{BJ20}, which addressed the case of a Brownian SDE with bounded drift, achieving a convergence rate of order $1/2$ up to a logarithmic factor for the total variation error. The ideas introduced therein have been generalised in \cite{JM211} in order to handle Lebesgue drifts in $L^q_t-L^p_x$ under the Krylov-R\"ockner condition $2/q + d/p <1$. The achieved rate on densities then writes $(1-2/q-d/p)/{2}$, which corresponds to the margin in the Krylov-R\"ockner condition multiplied by the self-similarity index of the noise. This has been extended to the strictly stable case in \cite{FJM24} under the condition $\alpha/q + d/p <\alpha-1$, achieving the rate $(\alpha-1-\alpha/q-d/p)/\alpha$, although in this setting, the above condition only ensures \textit{weak} well-posedness of the underlying SDE (see \cite{XZ20} for the conditions leading to strong well-posedness). Eventually, in a H\"older setting, it was derived in \cite{FM24} that for bounded $\eta$-H\"older (in space) coefficients with $\alpha\in (1,2]$, the convergence rate writes $(\alpha+\eta-1)/\alpha$. Keeping in mind that weak well-posedness holds for $\alpha+\eta-1>0$ (see \cite{CZZ21}), this rate again corresponds to the margin multiplied by the self-similarity index of the noise. All those works rely on first deriving heat kernel estimates for the diffusion and the scheme in order to bypass the lack of regularity of the drift. In the present work, we manage to apply this approach to handle Besov drifts through the previously introduced scheme, achieving, up to some $\eps>0$ (which is intrisic to the dsitributional setting), a rate which corresponds to the margin appearing in the condition required for weak well-posedness multiplied by the self-similarity index of the noise, thus emphasizing the robustness of the approach.\\
	
	Nevertheless, another approach has proven fruitful when handling SDEs with (possibly fractional) Brownian noise, mainly to derive strong error rates, leading, surprisingly, to better convergence rates using the stochastic sewing lemma (see \cite{Le20}). In the Krylov-R\"ockner setting {\color{black}for the drift}, the strong error rate derived in \cite{LL22} is $1/2$ up to a logarithmic factor. For the weak error, an approach involving the stochastic sewing lemma has been successfully applied by \cite{Hol22} for H\"older drifts, leading to the rate $(\eta+1)/2$, up to some $\eps>0$ (which is intrisic to the sewing lemma), thus achieving similar rates to those discussed in the previous paragraph in an analog setting. \textcolor{black}{We can as well mention the work by Hao \textit{et al.} \cite{hao:le:ling:24} who obtained order $1/2$ for a semi-explicit scheme involving a spatial convolution in the Krylov-Röckner setting, investigating weaker norms than the supremum of the difference of the densities}.
%	\textcolor{black}{S. A dd reference of the work by Hao, Le, Ling.}\\
	
	{\color{black}Regarding numerical schemes for SDEs with distributional drifts} let us also mention {\color{black} \cite{dAGI, CIP23} in a Brownian scalar setting and \cite{GHR25} in a multi-dimensional fractional Brownian setting, who derived convergence rates} for the strong error. The results of the current paper are, to the best of our knowledge, the first ones concerning multi-dimensional SDEs with distributional drifts in the strictly stable setting. \textcolor{black}{Let us as well mention that the restriction to the strictly stable setting is mainly due to the lack in the literature of corresponding heat kernel estimates in the Brownian driven case. We can mention the work by Perkowski and Van Zuijlen \cite{PvZ22} providing estimates of this type for $b\in B_{\infty,1}^{\beta}$.% and the working paper \cite{meno:pagl:25} dealing as well with the kinetic case. 
	 We anyhow stress that the results and the proofs we present in this work could somehow readily extend to the Brownian setting provided the appropriate heat kernel estimates work}.
%\textcolor{black}{S. Say a word about the restriction to stable setting, mention work with Pagliarani.}

	 \subsection{Driving noise and related density properties}\label{subsec-noise}
	Let us denote by $\mathcal{L}^\alpha$ the generator of the driving noise $Z$. When $\alpha \in (1,2)$, in whole generality, the generator of a symmetric stable process writes, $\forall \phi \in C_0^\infty (\R^d,\R)$ (smooth compactly supported functions),
	\begin{align*}
		\mathcal{L}^\alpha \phi (x)&= \mathrm{p.v.} \int_{\R^d} \left[ \phi(x+z) - \phi(x)\right]\nu(\d z)\\
		&=\mathrm{p.v.}\int_{\R_+}\int_{\mathbb{S}^{d-1}}\left[ \phi(x+\rho \xi) - \phi(x)\right]\mu(\d \xi)\frac{\d \rho}{\rho^{1+\alpha}}
	\end{align*}
	(see \cite{Sat99} for the polar decomposition of the stable L\'evy measure) where $\mu$ is a symmetric measure on the unit sphere $\mathbb{S}^{d-1}$. We will here restrict to the case where $\mu  =m$  the Lebesgue measure on the sphere but it is very likely that the analysis below can be extended to the case where
	 $\mu$ is symmetric and $\exists \kappa \geq 1 : \forall \lambda \in \R^d$,
	\begin{equation*}
C^{-1} m(\d \xi) \leq \mu (\d \xi) \leq C m(\d \xi),
	\end{equation*} 
 i.e. it is equivalent to the Lebesgue measure on the sphere. Indeed, in that setting
 Watanabe (see \cite{Wa07}, Theorem 1.5) and Kolokoltsov (\cite{Kol00}, Propositions 2.1--2.5) showed that if $C^{-1} m(\d \xi) \leq \mu (\d \xi) \leq C m(\d \xi)$, the following estimates hold: denoting $p_\alpha(v,\cdot)$ the density of the noise at time $v$, there exists a constant $C$ depending only on $\alpha,d$, s.t. $\forall v\in \R_+^*, z\in \R^d$,
    \begin{equation}\label{ARONSON_STABLE}
    C^{-1}v^{-\frac{d}{\alpha}}\left( 1+ \frac{|z|}{v^{\frac{1}{\alpha}}} \right)^{-(d+\alpha)}\leq p_\alpha(v,z)\leq Cv^{-\frac{d}{\alpha}}\left( 1+ \frac{|z|}{v^{\frac{1}{\alpha}}} \right)^{-(d+\alpha)}.
    \end{equation}
 
 	Note that, additionally to the previous non-degeneracy condition, in order to have the estimates on the derivatives of $p_\alpha$  appearing in Lemma \ref{lemma-stable-sensitivities}, some smoothness is required on the Lebesgue density of $\mu$.

	On the other hand let us mention that the sole non-degeneracy condition  
		\begin{equation*}
		\kappa^{-1} |\lambda|^\alpha \leq \int_{\mathbb{S}^{d-1}} |\lambda \cdot \xi|^\alpha \mu(\d \xi) \leq \kappa |\lambda|^\alpha,
	\end{equation*} 
	does not allow to derive \textit{global} heat kernel estimates for the noise density. In \cite{Wa07}, Watanabe investigates the behavior of the density of an $\alpha$-stable process in terms of properties fulfilled by the support of its spectral measure $\mu$. From this work, we know that whenever the measure $\mu$ is not equivalent to the Lebesgue measure $m$ on the unit sphere, accurate estimates on the density of the stable process can be delicate to obtain. \\

	Let us now introduce
%	From now on, we will be introducing an additional dependency for $p_\alpha$ on some constant $c\geq 1$ which only plays a role in the case $\alpha=2$. Namely, we denote
	\begin{equation}
		\bar{p}_{\alpha} (v,z) :=    C_\alpha {v^{-\frac{d}{\alpha}}} \left(1+\frac{|z|}{v^{\frac{1}{\alpha}}} \right)^{-(d+\alpha)} \qquad\qquad v>0,z\in \R^d,\label{DEF_P_BAR}
	\end{equation}
	where $C_\alpha$ is chosen so that $\forall v>0, \int \bar {p}_{\alpha} (v,y) \d y = 1$. Observe as well that from the definition in \eqref{DEF_P_BAR} we readily have {\color{black}from \eqref{ARONSON_STABLE}} the following important properties:
	
	\begin{lemma}[Convolution properties and spatial moment for $\bar p_\alpha$]\textcolor{white}{a\\}
		\begin{itemize}
			\item (Approximate) convolution property: there exists ${\mathfrak c}\ge 1$ s.t. for all $(u,v)\in (\R_+^*)^2,\   (x,y)\in (\R^d)^2$,
			\begin{equation}
				\label{APPR_CONV_PROP}
				\int_{\R^d}  \bar p_{\alpha}(u,z-x) \bar p_{\alpha}(v,y-z) \d z\le {\mathfrak c}\bar   p_{\alpha}(u+v,y-x). 
			\end{equation}
			\item Time-scale for the spatial moments: for all $0\le \delta<\alpha, \alpha\in (1,2) $, there exists $C_{\alpha,\delta} $ s.t.
			\begin{equation}
				\label{spatial-moments}
				\int_{\R^d} |z|^\delta\bar  p_\alpha(v,z)\d z\le C_{\alpha,\delta} v^{\frac{\delta}{\alpha}}.
			\end{equation}
		\end{itemize}
	\end{lemma}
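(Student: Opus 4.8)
The plan is to read off both statements directly from the explicit profile \eqref{DEF_P_BAR} together with the two-sided heat kernel bound \eqref{ARONSON_STABLE}; no delicate computation is really needed, the point is to reduce everything either to a scaling identity or to the genuine stable density.

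I would start with the spatial moment bound \eqref{spatial-moments}, which is pure scaling. Substituting $z = v^{1/\alpha}w$ in $\bar p_\alpha(v,z) = C_\alpha v^{-d/\alpha}\bigl(1+|z|/v^{1/\alpha}\bigr)^{-(d+\alpha)}$ gives
\begin{equation*}
\int_{\R^d}|z|^\delta \bar p_\alpha(v,z)\,\d z = C_\alpha\, v^{\delta/\alpha}\int_{\R^d}|w|^\delta(1+|w|)^{-(d+\alpha)}\,\d w ,
\end{equation*}
so it only remains to note that the last, $v$-independent, integral is finite: in polar coordinates its integrand is $\sim \rho^{\,d-1+\delta}$ near the origin (integrable since $d\geq 1$, $\delta\geq 0$) and $\sim \rho^{\,\delta-1-\alpha}$ at infinity (integrable exactly because $\delta<\alpha$). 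This yields \eqref{spatial-moments} with $C_{\alpha,\delta}:=C_\alpha\int_{\R^d}|w|^\delta(1+|w|)^{-(d+\alpha)}\,\d w$, the case $\delta=0$ reducing to the normalisation of $\bar p_\alpha$.

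For the approximate convolution property \eqref{APPR_CONV_PROP}, the idea is to transfer the question to the \emph{true} stable density. By \eqref{ARONSON_STABLE} and the fact that $C_\alpha$ in \eqref{DEF_P_BAR} is a fixed constant depending only on $\alpha,d$, there is $\bar C\geq 1$, depending only on $\alpha,d$, such that $\bar C^{-1}\bar p_\alpha(w,z)\leq p_\alpha(w,z)\leq \bar C\,\bar p_\alpha(w,z)$ for all $w>0$, $z\in\R^d$. Since $Z$ is a Lévy process, $p_\alpha$ satisfies the exact Chapman--Kolmogorov identity $p_\alpha(u,\cdot)\ast p_\alpha(v,\cdot)=p_\alpha(u+v,\cdot)$, hence
\begin{align*}
\int_{\R^d}\bar p_\alpha(u,z-x)\,\bar p_\alpha(v,y-z)\,\d z
&\leq \bar C^2\int_{\R^d} p_\alpha(u,z-x)\,p_\alpha(v,y-z)\,\d z \\
&= \bar C^2\,p_\alpha(u+v,y-x) \leq \bar C^3\,\bar p_\alpha(u+v,y-x),
\end{align*}
which is \eqref{APPR_CONV_PROP} with ${\mathfrak c}:=\bar C^3$.

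The only step that is not entirely mechanical is this comparison: recognising that $\bar p_\alpha$ is, up to multiplicative constants, a genuine $\alpha$-stable transition density is precisely what collapses \eqref{APPR_CONV_PROP} onto the semigroup property. If one insisted on arguing solely from \eqref{DEF_P_BAR}, the obstacle would be the familiar one for such convolution estimates: one must split the $z$-integral according to whether $|z-x|$ or $|y-z|$ controls $|y-x|$, and then treat separately the regime where the two scales $u^{1/\alpha}$ and $v^{1/\alpha}$ are of very different size, using there that the ``localised'' factor is supported (essentially) on a ball whose volume is itself comparable to a power of the smaller scale. This is elementary but somewhat tedious bookkeeping, whereas routing through \eqref{ARONSON_STABLE} avoids it altogether.
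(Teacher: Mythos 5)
Your proof is correct, and it follows exactly the route the paper indicates: the authors state that the lemma "readily" follows from the definition \eqref{DEF_P_BAR} and the two-sided bound \eqref{ARONSON_STABLE}, without supplying details. Your spatial-moment argument is the expected scaling computation, and your convolution argument — comparing $\bar p_\alpha$ two-sidedly to the genuine stable transition density via \eqref{ARONSON_STABLE} and then invoking the exact Chapman–Kolmogorov identity for $p_\alpha$ — is precisely the intended use of \eqref{ARONSON_STABLE} and is the cleanest way to obtain \eqref{APPR_CONV_PROP}.
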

%\begin{trivlist}
%\item[-] (Approximate) convolution property: there exists a constant ${\mathfrak c}\ge 1$ s.t. for all $(u,v)\in (\R_+^*)^2,\   (x,y)\in (\R^d)^2$,
%\begin{equation}
%\label{APPR_CONV_PROP}
%\int_{\R^d}  \bar p_\alpha(u,z-x) \bar p_\alpha(v,y-z) \d z\le {\mathfrak c}\bar   p_\alpha(u+v,y-x). 
%\end{equation}
%In particular, for $\alpha=2 $ the convolution is \textit{exact} and ${\mathfrak c}=1$.
%\item[-] Time-scale for the spatial moments: for all $0\le \delta<\alpha, \alpha\in (1,2) $ and for all $\delta\ge 0 $ if $\alpha=2 $, there exists $C_{\alpha,\delta} $ s.t.
%\begin{equation}
%\label{spatial-moments}
%\int_{\R^d} |z|^\delta\bar  p_\alpha(v,z)\d z\le C_{\alpha,\delta} v^{\frac{\delta}{\alpha}}.
%\end{equation}
%
%\end{trivlist}	

From now on, we will use assume w.l.o.g that $0<  T<1$. The results below can be extended to an abitrary fixed $T>0$ through a simple iteration procedure. For two quantities $A$ and $B$ the symbol $A\lesssim B $ {\color{black} is used} whenever there exists a constant $C:=C({d},b,\alpha)$ s.t. $A\le CB $. Namely,
\begin{equation}\label{DEF_LESSSIM}
	A\lesssim B \Longleftrightarrow \exists C:=C({d},b,\alpha),\ A\le CB.
\end{equation}
We will also use the notation $A \asymp B$ whenever $A\lesssim B$ and $B\lesssim A$. \textcolor{black}{Also, for any $\ell\in [1,\infty]$, $\ell '$ will always denote its conjugate exponent, i.e. $\frac{1}{\ell}+\frac{1}{\ell '} = 1$.}

	\begin{lemma}[Stable sensitivities - Estimates on the $\alpha $-stable kernel {\color{black}${p}_\alpha$}]\label{lemma-stable-sensitivities}
	For each multi-index  $\zeta$ with length $|\zeta|\leq {2}$,  for all $0<u\leq u'<+\infty$, $(z,z')\in (\R^d)^2$, \ei{and $\delta\in \{0,1\}$,}
	\begin{itemize} 
		\item Spatial derivatives: 
		\begin{align}\label{derivatives-palpha}
			\left|\partial_u^\delta \nabla_z^\zeta p_{\alpha} (u,z)\right| \lesssim \frac{1}{u^{\delta + \frac{|\zeta|}{\alpha}}}\bar  p_\alpha (u,z).
		\end{align}
		\item Time H\"older regularity: for all $\theta\in [0,1] $,
		\begin{align}\label{holder-time-palpha}
			\left|\partial_u^\delta\nabla_z^\zeta p_{\alpha} (u,z)-\partial_u^\delta\nabla_z^\zeta p_{\alpha} (u',z)\right| \lesssim \frac{|u-u'|^\theta}{u^{\delta + \theta+\frac{|\zeta|}{\alpha}}} \left( \bar p_\alpha (u,z)+ \bar p_\alpha (u',z)\right).
		\end{align}
		\item Spatial H\"older regularity: for all $\theta \in [0,1]$,
		\begin{align}\label{holder-space-palpha}
			\left|\partial_u^\delta\nabla_z^\zeta p_{\alpha} (u,z)-\partial_u^\delta\nabla_z^\zeta p_{\alpha} (u,z')\right| \lesssim \left(\frac{|z-z'|^\theta}{u^{\frac{\theta}{\alpha}}} \wedge 1\right)\frac{1}{u^{\delta + \frac{|\zeta|}{\alpha}}}\left(\bar  p_\alpha (u,z)+ \bar  p_\alpha (u,z')\right).
		\end{align}
		\item Convolution {\color{black}(for $\bar{p}_\alpha$)}: for all $x,y \in (\R^d)^2$, $\forall 0 \leq s \leq u \leq t$, $\forall \ell\geq 1$,
		\begin{equation}\label{p-q-convo}
			\Vert \bar{p}_\alpha (t-u,\cdot-y) \bar{p}_\alpha (u-s,x-\cdot)\Vert_{L^{\ell'}} \lesssim \left[ \frac{1}{(t-u)^{\frac{d}{\alpha  \ell}}} +\frac{1}{(u-s)^{\frac{d}{\alpha  \ell}}}\right] \bar{p}_\alpha (t-s,x-y).
		\end{equation}
		\item Besov norm: for all $\vartheta \in \R_+$, $(\ell,m)\in [1,+\infty]^2$
		\begin{equation}
			\label{besov-norm-stable-kernel} \Vert p_\alpha (t,\cdot)\Vert_{ \B_{\ell,m}^{\vartheta}}  \lesssim t^{-\frac{\vartheta}{\alpha}-\frac{d}{\alpha \ell'}}
		\end{equation}
	\end{itemize}
\end{lemma}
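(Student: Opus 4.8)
\emph{Proof strategy.} The plan is to obtain all five estimates from two ingredients: the exact self-similarity of the isotropic stable density, $p_\alpha(v,z)=v^{-d/\alpha}g_\alpha(v^{-1/\alpha}z)$ with $g_\alpha:=p_\alpha(1,\cdot)$, and the classical pointwise bounds on $g_\alpha$ and its derivatives. Since $\mu=m$, the function $g_\alpha$ is smooth and radial with $\widehat{g_\alpha}(\xi)=e^{-c|\xi|^\alpha}$, and one has, for $\delta\in\{0,1\}$ and every multi-index $\zeta$, $|\partial_v^\delta\nabla_z^\zeta p_\alpha(1,z)|\lesssim(1+|z|)^{-(d+\alpha)}\asymp\bar p_\alpha(1,z)$ (Blumenthal--Getoor; Kolokoltsov \cite{Kol00}, Propositions 2.1--2.5; Watanabe \cite{Wa07}; for the time derivative one may also use $\partial_v p_\alpha=\mathcal{L}^\alpha p_\alpha$). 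It is exactly this input that the smoothness assumption on the spectral density mentioned after \eqref{ARONSON_STABLE} secures; everything past this point is scaling bookkeeping, a dyadic summation, and an on/off-diagonal split.

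For \eqref{derivatives-palpha} I would differentiate the self-similarity relation, $\partial_u^\delta\nabla_z^\zeta p_\alpha(u,z)=u^{-d/\alpha-\delta-|\zeta|/\alpha}(\partial_v^\delta\nabla_z^\zeta p_\alpha)(1,u^{-1/\alpha}z)$, bound the last factor by $\bar p_\alpha(1,u^{-1/\alpha}z)$ and use $u^{-d/\alpha}\bar p_\alpha(1,u^{-1/\alpha}z)=\bar p_\alpha(u,z)$, which is immediate from \eqref{DEF_P_BAR}. For the time regularity \eqref{holder-time-palpha} with $\theta=1$, I would write the increment as $-\int_u^{u'}\partial_v^{\delta+1}\nabla_z^\zeta p_\alpha(v,z)\,\d v$ and invoke \eqref{derivatives-palpha}; this uses only the elementary comparison $\bar p_\alpha(\lambda v,z)\asymp\bar p_\alpha(v,z)$ for $\lambda\in[1,2]$ when $u'\le 2u$, while for $u'>2u$ the prefactor $|u-u'|/u$ is bounded below and the estimate reduces to \eqref{derivatives-palpha} plus the triangle inequality. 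The spatial version \eqref{holder-space-palpha} with $\theta=1$ is analogous: write the increment along the segment $[z,z']$ as $\int_0^1(\nabla_z^\zeta\partial_u^\delta\nabla_z p_\alpha)(u,\lambda z'+(1-\lambda)z)\cdot(z'-z)\,\d\lambda$, apply \eqref{derivatives-palpha} with one more spatial derivative (which holds by the same argument), and compare $\bar p_\alpha(u,\lambda z'+(1-\lambda)z)$ with $\bar p_\alpha(u,z)+\bar p_\alpha(u,z')$; when $|z-z'|>u^{1/\alpha}$ one instead falls back on \eqref{derivatives-palpha}, which accounts for the truncation $\wedge 1$. The general exponent $\theta\in[0,1]$ then follows by interpolating these $\theta=1$ bounds with the trivial $\theta=0$ one.

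For \eqref{p-q-convo} I would first record the scaling identity $\|\bar p_\alpha(v,\cdot)\|_{L^{\ell'}}\lesssim v^{-d/(\alpha\ell)}$ (the integral at $v=1$ converging since $(d+\alpha)\ell'>d$), and then split $\R^d=\{|z-y|\ge|x-y|/2\}\cup\{|z-x|\ge|x-y|/2\}$. In the off-diagonal regime $|x-y|\gtrsim(t-s)^{1/\alpha}$, on the first set $\bar p_\alpha(t-u,z-y)\lesssim\bar p_\alpha(t-s,x-y)$ by comparison of polynomial tails (using $t-u\le t-s$), so that set contributes $\lesssim\bar p_\alpha(t-s,x-y)\|\bar p_\alpha(u-s,\cdot)\|_{L^{\ell'}}$ to the $L^{\ell'}$ norm, and symmetrically the second set contributes $\bar p_\alpha(t-s,x-y)\|\bar p_\alpha(t-u,\cdot)\|_{L^{\ell'}}$; summing and inserting the scaling identity gives the claim. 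In the diagonal regime $|x-y|\lesssim(t-s)^{1/\alpha}$ one instead bounds the factor carrying the larger of the two time increments by its supremum ($\lesssim(t-s)^{-d/\alpha}\asymp\bar p_\alpha(t-s,x-y)$) and the other by its $L^{\ell'}$ norm. Finally, for \eqref{besov-norm-stable-kernel} I would use the Littlewood--Paley characterization of $\B^\vartheta_{\ell,m}$ from Subsection \ref{subsec-besov}: from $\widehat{p_\alpha(t,\cdot)}(\xi)=e^{-ct|\xi|^\alpha}$, Bernstein's inequality and the rapid decay of the frequency-localized kernel yield $\|\Delta_j p_\alpha(t,\cdot)\|_{L^\ell}\lesssim 2^{jd/\ell'}\,(1\wedge(t2^{j\alpha})^{-N})$ for every $N\in\N$; hence $2^{j\vartheta}\|\Delta_j p_\alpha(t,\cdot)\|_{L^\ell}\lesssim 2^{j(\vartheta+d/\ell')}(1\wedge(t2^{j\alpha})^{-N})$, and the $\ell^m$ norm over $j$ is dominated by the top scales $2^j\asymp t^{-1/\alpha}$, producing $t^{-(\vartheta+d/\ell')/\alpha}=t^{-\vartheta/\alpha-d/(\alpha\ell')}$.

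\emph{Main obstacle.} The only genuinely non-soft step is the pointwise bound $|\partial_v^\delta\nabla_z^\zeta p_\alpha(1,\cdot)|\lesssim\bar p_\alpha(1,\cdot)$ feeding \eqref{derivatives-palpha}: it underpins \eqref{holder-time-palpha} and \eqref{holder-space-palpha}, and together with \eqref{ARONSON_STABLE} it is what makes the whole comparison calculus with $\bar p_\alpha$ (hence also \eqref{p-q-convo}) go through. In the isotropic case it is classical, but it is precisely where the smoothness of the spectral density enters and what one would need to revisit in order to treat a general $\mu$ equivalent to $m$; once it is granted, the dyadic sum for \eqref{besov-norm-stable-kernel} and the two-region split for \eqref{p-q-convo} are entirely standard.
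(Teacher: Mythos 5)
The paper does not prove Lemma \ref{lemma-stable-sensitivities}: it refers to \cite{FJM24} with the remark that these controls are ``somehow standard.'' Your proposal therefore supplies an argument where the paper deliberately does not, and the outline you give is sound: self-similarity plus the classical pointwise bound $|\partial_v^\delta\nabla^\zeta p_\alpha(1,\cdot)|\lesssim\bar p_\alpha(1,\cdot)$ reduce \eqref{derivatives-palpha} to a scaling identity; \eqref{holder-time-palpha} and \eqref{holder-space-palpha} then follow from a mean-value step in the diagonal regime (with the on/off-diagonal split giving the $\wedge 1$), and interpolation for $\theta\in(0,1)$; \eqref{p-q-convo} from the two-region split of $\R^d$ together with $\|\bar p_\alpha(v,\cdot)\|_{L^{\ell'}}\lesssim v^{-d/(\alpha\ell)}$; and \eqref{besov-norm-stable-kernel} from a dyadic sum. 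All of this is correct.

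Two small remarks. First, for the $\theta=1$ case of \eqref{holder-time-palpha} with $\delta=1$, the fundamental-theorem-of-calculus step needs a bound on $\partial_v^{2}\nabla^\zeta p_\alpha$, which is not literally contained in \eqref{derivatives-palpha} (where $\delta\in\{0,1\}$); it does hold by the same scaling argument, and you make the analogous remark in the spatial case (``one more spatial derivative''), but you should say it explicitly for the time case as well. Second, for \eqref{besov-norm-stable-kernel} you reach for the Littlewood--Paley characterization, whereas the paper defines $\B^\vartheta_{\ell,m}$ through the thermic characterization \eqref{HEAT_CAR}. The two are of course equivalent, but since the paper works entirely in the thermic setting it would be more economical, and internally consistent, to run the argument there directly: using the semigroup property $p_\alpha(v,\cdot)\star p_\alpha(t,\cdot)=p_\alpha(v+t,\cdot)$ and \eqref{derivatives-palpha}, one gets $\|\partial_v^n p_\alpha(v+t,\cdot)\|_{L^\ell}\lesssim (v+t)^{-n-d/(\alpha\ell')}$, and the integral $\int_0^1 v^{(n-\vartheta/\alpha)m-1}(v+t)^{-(n+d/(\alpha\ell'))m}\,\d v$ splits at $v=t$ to give $t^{-m(\vartheta/\alpha+d/(\alpha\ell'))}$, as required. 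This avoids invoking an equivalence of norms the paper never records.
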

The controls of Lemma \ref{lemma-stable-sensitivities} are somehow standard. Up to equation \eqref{p-q-convo}, a proof can be found e.g.\ in \cite{FJM24}. \textcolor{black}{Importantly, note that those controls are valid both for $\bar{p}_\alpha$ and $p_\alpha$. For \eqref{besov-norm-stable-kernel} we can also refer to Lemma 12 in \cite{CdRM22}.} %\textcolor{black}{Da S. a E.: da discutere, occhio per la regolarità. Penso rimangano vere, ma non è esplicitamente espresso così nel Lemma.}
\sm{
\begin{remark}[About pointwise bounds in the diagonal regime]\label{REM_DIAG_REGIME_DENS}
It is plain, from the expression \eqref{DEF_P_BAR} to derive that, for all $z,z'\in \R^d,\ u>0 $, as soon as $|z'-z|\le u^{\frac 1\alpha} $, which corresponds to the \textit{diagonal regime}, in the sense that the difference between the spatial points considered is smaller than the typical time scale, then
$$ \bar  p_\alpha (u,z')\lesssim \bar  p_\alpha (u,z).$$
This can be established following  the lines of the proof of Lemma \ref{Lemma_TRANS_SCHEME} below.
\end{remark}
}
	
    \subsection{Main result}\label{subsec-main-result}
  {\color{black} 
  Let $X_t^{0,x}$ be the canonical process associated to the solution of the generalized martingale problem with initial condition $x$ at time 0 under assumption \eqref{serrin}.
  Then, in \cite{Fit23} it was shown that $X_t^{0,x}$ admits a density which is denoted by $\Gamma(0,x,t,\cdot)$. Similarly, for the associated scheme \eqref{euler-scheme-besov} we denote by $\Gamma^h(t_i, x, t, \cdot)$ the density of $X^h_t$ for $t\in[t_i, t_{i+1})$  conditioned on the value $x$ at time $t_i$. The existence of such density and its properties are collected in 
 {Section \ref{SSC:duhamel}} below. 
  Recall that the ``gap to singularity'' is defined as 
\begin{equation}\label{eq:gamma}
\gamma:= \alpha+2\beta -\frac dp - \frac \alpha r-1
\end{equation}
and it is positive by assumption \eqref{serrin} on the parameters.
The main result of the paper is the following theorem:
	\begin{theorem}[Convergence Rate for the stable-driven Euler scheme with Besov drift]\label{thm-besov}
Let \eqref{serrin} holds and let the drift  $\mathfrak b$ be an element of $ L^r\left([0,T],\B^{\beta}_{p,q}(\R^d,\R^d)\right)$ for some $r\in[1, \infty]$.
	Denoting by $\Gamma $ and $ \Gamma^h$ the respective densities of the SDE \eqref{sde} and its Euler scheme defined in \eqref{euler-scheme-besov}, for all $\eps\in \textcolor{black}{>0}$
	there exists a constant $C:=C({d},b,\alpha,T,\eps,\textcolor{black}{\beta})<\infty$ s.t. for all $h=T/n$ with $n\in\N^*$, and all $t\in(0,T]$, $x,y\in \R^d $,%\footnote{\sm{Da S. a E. :ho rimesso la $\bar p_\alpha $ rispetto al precedente enunciato con la $p_\alpha $, che era un pallino di Mathis. Questa scelta agevola e semplifica la discussione prima della prova.}} ,
	\begin{align}
		| \Gamma^h(0,x,t,y)-\Gamma(0,x,t,y)| &\leq C h^{\frac{\gamma-\eps}{\alpha}}\sm{\bar p_\alpha(t,y-x)},\label{BD_THM-besov}
	\end{align}
	where $\gamma=\alpha +2\beta-\frac{d}{p}-\frac{\alpha}{r}-1 >0$ is  the ``gap to singularity" in the Besov case.
\end{theorem}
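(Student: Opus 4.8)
The plan is to follow the now-standard perturbation (Duhamel / parametrix) approach used in this line of work, comparing the density of the SDE and the density of the scheme against a common Gaussian-type proxy built from $\bar p_\alpha$, and then controlling the difference through a one-step error that is integrated over the grid. Concretely, I would first write a forward Kolmogorov (Duhamel) representation for each density. For the true SDE, $\Gamma(0,x,t,y) = p_\alpha(t,y-x) + \int_0^t \!\!\int_{\R^d} \Gamma(0,x,s,z)\, \big[b(s,\cdot)\cdot\nabla_z p_\alpha(t-s,y-z)\big]\,\d z\,\d s$, interpreted in the distributional (Besov pairing) sense as in \cite{Fit23}; for the scheme, using the continuous-time dynamics \eqref{euler-scheme-besov} and the expression \eqref{EXPR_DRIFT}, one gets an analogous identity with $b(s,\cdot)$ replaced by $\mathfrak b_h(s,\cdot) = P^\alpha_{s-\tau_s^h} b(s,\cdot)$ and with the transition density of the scheme frozen at $\tau_s^h$ rather than at $s$. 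Subtracting the two identities produces a Volterra-type equation for the error $D(0,x,t,y):=\Gamma^h(0,x,t,y)-\Gamma(0,x,t,y)$ whose source term splits into (i) the discrepancy between $b$ and $\mathfrak b_h$, and (ii) the discretization of the spatial/time variables in the frozen kernel (replacing $s$ by $\tau_s^h$ inside $\Gamma$ and inside $\nabla p_\alpha$). A Gronwall/iteration argument on this Volterra equation, using the heat-kernel bounds \eqref{aronson-gammah}–\eqref{ineq-density-scheme} and \eqref{aronson-gamma}–\eqref{holder-time-gamma} of Proposition \ref{prop-HK} together with the stable sensitivities of Lemma \ref{lemma-stable-sensitivities} and the convolution property \eqref{APPR_CONV_PROP}, should close the estimate with the claimed rate $h^{(\gamma-\eps)/\alpha}$ times $p_\alpha(t,y-x)$.

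The heart of the matter is bounding the two source terms in the right Besov-duality norm. For term (i), the key quantitative input is the smoothing estimate on the stable semigroup: for $b(s,\cdot)\in \B^\beta_{p,q}$ one has $\|P^\alpha_v b(s,\cdot) - b(s,\cdot)\|_{\B^{\beta'}_{p,q}} \lesssim v^{(\beta-\beta')/\alpha}\|b(s,\cdot)\|_{\B^\beta_{p,q}}$ for $\beta'<\beta$, so that pairing $b(s,\cdot)-\mathfrak b_h(s,\cdot)$ against $\Gamma^h(0,x,s,\cdot)\nabla_z p_\alpha(t-s,y-\cdot)$ — a product whose Besov regularity is controlled via \eqref{ineq-density-scheme} and \eqref{besov-norm-stable-kernel} — gains a factor $(s-\tau_s^h)^{\text{(something)}/\alpha}\le h^{\text{(something)}/\alpha}$. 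One must carefully track the time singularities: each application of the gradient on $p_\alpha$ costs $(t-s)^{-1/\alpha}$, the Besov pairing across the threshold $\beta$ costs an $s^{-1/\alpha}$-type weight, and the mollification gain is $h^{(\text{gap})/\alpha}$; the exponents have to combine so that the resulting time integral $\int_0^t (t-s)^{-a} s^{-b}\,\d s$ converges, which is exactly what the condition $\gamma>0$ guarantees (and the $\eps$ is the usual slack needed to stay strictly inside the admissible Besov window, losing $B^\rho_{\infty,\infty}\hookrightarrow\dots$ by an arbitrarily small amount). For term (ii), one uses the time- and space-Hölder regularity of $p_\alpha$ from \eqref{holder-time-palpha}–\eqref{holder-space-palpha} and the time-Hölder estimate \eqref{holder-time-gamma} for $\Gamma$ (or the analogue for $\Gamma^h$), each contributing a factor $(s-\tau_s^h)^{\theta/\alpha}\le h^{\theta/\alpha}$ with $\theta$ chosen close to $\gamma$; again the remaining time integral is summable precisely by the gap condition.

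The main obstacle I expect is \textbf{not} the Gronwall step but the bookkeeping of the Besov-pairing estimates when the kernel is frozen at the grid point. Two subtleties deserve care. First, in \eqref{euler-scheme-besov} the scheme's transition density entering the Duhamel formula is $\Gamma^h(\tau_s^h, X^h_{\tau_s^h}, s, \cdot)$, i.e. the process is restarted at $\tau_s^h$ with its (non-smooth) law $\Gamma^h(0,x,\tau_s^h,\cdot)$, so the forward equation for $\Gamma^h$ is genuinely a \emph{nonlinear} Young/Volterra equation with an extra time-discretized convolution layer; one must argue (as in \cite{FJM24}, \cite{FM24}) that this restarting contributes only a further $h$-power and does not destroy the heat-kernel bound — this is where Proposition \ref{prop-HK} and a preliminary self-contained analysis of $\Gamma^h$ (the content of Appendix \ref{APP_HK}) are essential. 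Second, since $\beta<0$ the products appearing inside the pairings (density over Gaussian times $\nabla p_\alpha$ times $b$) are only well-defined via paraproduct/duality arguments requiring the sum of regularities to be positive; one has to check that the admissible range $\rho\in(-\beta,\gamma-\beta)$ for the Hölder modulus of $\Gamma$ and $\Gamma^h$ is wide enough that such a positive-sum choice exists, and this is again exactly ensured by $\gamma>0$. Once these two points are handled, the iteration of the Volterra inequality converges (the kernel has an integrable singularity and each iterate picks up the same $h$-power), yielding \eqref{BD_THM-besov}; finally one replaces $\bar p_\alpha$ by $p_\alpha$ on the right-hand side using the two-sided bound \eqref{ARONSON_STABLE}.
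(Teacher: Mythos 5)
Your overall strategy matches the paper's: Duhamel comparison of $\Gamma$ and $\Gamma^h$, Besov duality to handle the distributional drift, heat-kernel estimates from Proposition \ref{prop-HK}, and a closure argument of Gronwall/circular type. You correctly identify the two main sources of error (the mollification discrepancy $b-\mathfrak b_h$, and the time/space discretization of the frozen kernel), the role of the semigroup smoothing estimate $\|b-P^\alpha_h b\|_{\B^{\beta-\gamma}_{p,q}}\lesssim h^{\gamma/\alpha}\|b\|_{\B^\beta_{p,q}}$, and the need for a self-contained heat-kernel analysis of $\Gamma^h$.

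However, there is a genuine gap in the closure step. You treat the Gronwall/Volterra iteration as acting on a scalar quantity (essentially $\|(\Gamma^h-\Gamma)(0,x,t,\cdot)/\bar p_\alpha(t,\cdot-x)\|_{L^\infty}$) and expect it to close once the source terms are estimated. But because $b(s,\cdot)\in\B^\beta_{p,q}$ with $\beta<0$, the duality pairing in the Volterra term --- which is precisely $\int_h^{\tau_t^h-h}\int (\Gamma-\Gamma^h)(0,x,\tau_s^h,z)\,\mathfrak b_h(s,z)\cdot\nabla_y p_\alpha(t-s,y-z)\,\d z\,\d s$ --- is only controlled after applying the product rule \eqref{PR}, which produces $\big\|\tfrac{(\Gamma-\Gamma^h)(0,x,\tau_s^h,\cdot)}{\bar p_\alpha(\tau_s^h,\cdot-x)}\big\|_{\B^\rho_{\infty,\infty}}$ with $\rho>-\beta$. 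That is, the \emph{error itself} enters through its normalized $\rho$-H\"older norm, not merely its sup norm. You mention that $\Gamma$ and $\Gamma^h$ individually need $\rho$-H\"older estimates with $\rho\in(-\beta,\gamma-\beta)$, but you do not identify that the difference $(\Gamma-\Gamma^h)/\bar p_\alpha$ must also be controlled in $\B^\rho_{\infty,\infty}$, and that the Gronwall quantity must therefore be the combination $g_{h,\rho}(s)=\big\|\tfrac{(\Gamma-\Gamma^h)}{\bar p_\alpha}\big\|_{L^\infty}+s^{\rho/\alpha}\mathcal H_\rho\big(\tfrac{(\Gamma-\Gamma^h)}{\bar p_\alpha}\big)$. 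This forces one to redo the entire error decomposition a second time at the level of H\"older increments $\Delta_i(0,x,t,y)-\Delta_i(0,x,t,y')$ (Subsection \ref{subsec-proof-main-thm-holder-mod} of the paper), with the extra $(t-s)^{-\rho/\alpha}$ singularities this entails; without that second layer the inequality does not close. A secondary, more technical point: your coarse two-term source decomposition hides the fact that the first and last time steps must be estimated separately (they contribute singular integrals near $s=0$ and $s=t$ that would otherwise diverge), and that the discretization term must itself be split between a time-regularity-of-$\Gamma$ part and a one-step noise-regularity part, since each requires a different heat-kernel input.
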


\begin{remark}[About the convergence rate]\label{REM_AFTER_THM}
Let us briefly discusss the convergence rates. For simplicity assume that $p=r
=\infty$. If $\alpha=2-\varepsilon,\ \varepsilon>0$, then for $\beta >(-1+\varepsilon)/2$ the approximation scheme is convergent and will give a convergence rate of order $(1-\varepsilon)/2+\beta $ which goes to 0 when $\beta $ decreases to the limit threshold $-1/2$.  On the other hand, for small $\beta $, e.g. $\beta=-\varepsilon/2 $ meaning that we are close to the usual function setting, then the convergence rate is $1/2-\varepsilon $ and there is somehow a continuity result w.r.t. to the rate $1/2$ obtained  in \cite{JM211} for an $L^\infty $ drift. Of course, the smaller the $\alpha $, the less singular the corresponding drift. If $\alpha$ goes to close to $1 $, the worst drift that can be handled is barely distributional, i.e. $\beta $ will be close to 0, with associated very small convergence rate.
\textcolor{black}{We also mention, in the current distributional setting, the recent paper by Hao and Wu \cite{hao:wu:26} in which  a similar convergence rate is obtained for the considered case  $p=r=\infty $ therein and for the total variation distance between the laws. Importantly, the semi-group setting, instead of the densities here, allows to consider more general stable driving {noises}, with possibly singular non degenerate spectral measures including e.g. cylindrical ones.}
\end{remark}
\begin{remark}[About test functions.]
We observe that the main result implies that the classical weak rate for a class of bounded functions is also $\frac{\gamma-\varepsilon}{\alpha}$. Indeed for a bounded measurable function $F$  and denoting by $\mathbb E_{0,x}[F(Y_t)]$ the expectation of a stochastic process $Y$ at time $t$ evaluated in $F$ given that $Y_0=x$, one has 
\begin{align*}
\big |\mathbb E_{0,x}[F(X^h_t)] - & \mathbb E_{0,x}[F(X_t)] \big |   = \big |  \int F(y) (\Gamma^h (0,x,t,y)- \Gamma(0,x,t,y)) dy  \big |\\
&\leq   \|F\|_\infty \int p_\alpha (0,x,t,y) C  h^{\frac{\gamma-\varepsilon}\alpha} dy 
\leq C'  h^{\frac{\gamma-\varepsilon}\alpha}.
\end{align*}
Let us point out that, for the moment we are not able to take advantage of some additional smoothness of the \textit{test function} $F$ in this negative regularity setting. This was somehow obtained in the Lebesgue setting, see \cite{hao:le:ling:24}, \cite{hao:le:ling:26} and \cite{jour:meno:26}  who address as well the Hölder case.
%\textcolor{red}{Needs to be updated according to new papers.}
\end{remark}

{%\color{red}
\begin{remark}[About the driving noise]
We already mentioned the work \cite{hao:wu:26} which handles the error in total variation for a wider class of $\alpha $-stable noises with $\alpha\in (1,2) $. One can naturally wonder whether our results extend or not to the Brownian setting, i.e. for $\alpha=2 $. The main ingredient needed concerns heat kernel estimates on the diffusion and its related scheme. In the Brownian setting with Besov drifts, for $p=r=\infty $, such controls have been obtained for the diffusion in \cite{meno:pagl:26}. It seems reasonable, see e.g. Remark 1 of the quoted reference, that corresponding heat kernel estimates can be obtained for general indexes $p,q,\beta,r $ under the more general condition on $\beta $ appearing in \eqref{serrin}. Hence, we believe that similar controls should as well hold true for the scheme and that the error analysis developed here should extend to the Gaussian driven case, under \eqref{serrin} with $\alpha=2$, with final convergence rate as in \eqref{BD_THM-besov} with $\alpha= 2$.  
\end{remark}
}

To establish the main result, we will proceed through a comparison of the Duhamel representation of the densities given in Proposition \ref{prop-main-estimates-D} below.  Actually, because of the distributional nature of the drift involved, we will need to control the $\rho>\beta $-Hölder norm of the difference of the densities, which will somehow be the quantity to be \textit{gronwallized}. Additionally to the errors which already appeared in the previous related works with less singular drifts, e.g. \cite{JM211}, \cite{FJM24}, \cite{FM24}, which were related to sensitivities of heat kernels (of the driving noise and some forward time sensitivity of the diffusion/scheme), there will be a new term to be analyzed involving the approximation of the drift, term $\Delta_3 $ in the error decomposition \eqref{DECOUP_ERR}. Handling this new term will again involve heat kernel controls of the underlying objects. Indeed, since the natural idea consists in estimating the difference of the drifts in some function space with the biggest possible negative regularity, in order to have the biggest possible convergence rate, this in turn requires to have the corresponding positive regularity index for the other terms appearing which are precisely the heat kernels.

The paper is organized as follows. We recall in Section \ref{SEC_BESOV_AND_DRIFT} some useful results on Besov spaces and controls related to the approximated drift
$\mathfrak b_h $ of the Euler scheme. We state in Section \ref{TOOLS_HK_AND_MISC_CONV} some key tools and technical results for the error analysis: Duhamel representations of the densities and convolution estimates in Besov norms. Section \ref{SEC_PROOF_THM} is dedicated to the proof of the main theorem. Eventually, we prove in Section \ref{sec-proofs-lemmas} the main technical results stated in Section \ref{TOOLS_HK_AND_MISC_CONV}, while Appendix \ref{APP_HK} provides a proof of the estimates  stated in the above Proposition \ref{THE_PROP} for the Euler scheme.

%{\color{black}ELENA: add a remark with  implications of main result for weak error}\\
%\textcolor{black}{From S. to E.: cos'intendi precisamente? Da discutere! Indicare che una funzione spontaneamente non consente di migliorare il tasso.}

	\section{About Besov spaces and related controls on the mollified drift} \label{SEC_BESOV_AND_DRIFT}

\subsection{Definition and related properties}\label{subsec-besov}

We first recall that \textcolor{black}{denoting by ${\mathcal S'}( \R^d) $ the dual space of the Schwartz class ${\mathcal S}( \R^d) $}, for $\ell,m\in (0,+\infty] $, $\vartheta\in \R $, the {\color{black}inhomogeneous} Besov space $\B^\vartheta_{\ell,m}$  can be characterized {\color{black}(see \cite{Tri06})}  with
\[
{\color{black} \B^\vartheta_{\ell,m}(\R^d)} = \B^\vartheta_{\ell,m}=\left\{f\in\textcolor{black}{\mathcal S'}( \R^d)\,:\,\Vert f\Vert_{\B^\vartheta_{\ell,m}}:=\|\mathcal F^{-1}(\phi\mathcal F(f))\|_{L^\ell}+\mathcal T_{\ell,m}^\vartheta(f)<\infty\right\},
\]
\begin{align}
	\mathcal T_{\ell,m}^\vartheta(f):=&\left\{
	\begin{aligned}
		&
		\left(\int_0^1\,\frac{\d v}{v}v^{(n-\vartheta/{\alpha})m}\Vert\partial^n_v p_\alpha(v,\cdot)\star f\Vert^m_{L^\ell}\right)^{\frac 1m}\,\text{ for }\,1\le m<\infty,\\
		&\sup_{v\in(0,1]}\left\{v^{(n-\vartheta/{\alpha})}\Vert\partial^n_v p_\alpha(v,\cdot)\star f\Vert_{L^\ell}\right\}\,\text{for}\,m=\infty,
	\end{aligned}
	\right. 
	\label{HEAT_CAR}
\end{align}
with $\star$ denoting the spatial convolution, $n$ being any non-negative integer (strictly) greater than $\vartheta/{\alpha}$, the function $\phi$ being a $\mathcal C^\infty_0$-function (infinitely differentiable function with compact support) such that $\phi(0)\neq 0$, and $p_\alpha(v,\cdot)$ denoting \textcolor{black}{as above} the density function at time $v$ of the $d$-dimensional isotropic stable process. 
{\color{black}The term $\mathcal T_{\ell,m}^\vartheta(f)$ appearing in the norm is called ``thermic part'' of the  Besov norm, and this characterization of the norm and related Besov spaces is \textcolor{black}{often} called the ``thermic characterization''.}\\

For our analysis we will rely on the following important inequalities:
\begin{itemize}
\item \textcolor{black}{Embeddings between Lebesgue and $B^0_{\ell,m}$-spaces (\cite[Prop. 2.1]{Sawano18}):
\begin{equation}
\forall 1 \le \ell \le \infty,\qquad B^0_{\ell,1}\hookrightarrow L^\ell \hookrightarrow B_{\ell,\infty}^0.\label{EMBEDDING}%\tag{$\mathbf E_1$}
\end{equation}
}
\item Product rule: for all $\vartheta \in \R$, $(\ell,m)\in [1,+\infty]^2$ and $\rho>\max\Big(\vartheta,-\vartheta\Big)$, $\forall (f,g)\in \B_{\infty,\infty}^\rho \times \B_{\ell,m}^\vartheta $,
%	$\forall p,q,s$ and $\rho>\max\Big(s,d\big(\frac 1p -1\big)_+-s\Big)$, $\forall (f,g)\in \B_{\infty,\infty}^\rho \times \B_{p,q}^s $,
	\begin{equation}\label{PR}
		\|f \cdot g\|_{ \B_{\ell,m}^\vartheta} \le \|f \|_{\B_{\infty,\infty}^\rho} \|g\|_{ \B_{\ell,m}^\vartheta}.
	\end{equation}
	See Theorem 4.37 in \cite{Sawano18} for a proof.
	\item Duality inequality: for all $\vartheta \in \R$, $(\ell,m)\in [1,+\infty]^2$, with $m'$ and $\ell'$ respective conjugates of $m$ and $\ell$, and $(f,g)\in \B_{\ell,m}^\vartheta \times \B_{\ell',m'}^{-\vartheta}$,
	\begin{equation}\label{dual-ineq}
		\textcolor{black}{|\langle f,g\rangle_{B_{\ell,m}^{\vartheta},B_{\ell',m'}^{-\vartheta}}|:=}\left| \int f(y)g(y) \mathrm{d}y \right| \leq \Vert f \Vert_{\B_{\ell,m}^\vartheta} \Vert g \Vert_{\B_{\ell',m'}^{-\vartheta}},
	\end{equation}
where the duality pairing is from now on denoted in integral form for notational convenience.
	We refer to Proposition 6.6 in \cite{lema:02} for a proof.
	
%	\textcolor{black}{From S. to E.: to be discussed since the integral can be somehow "formal". Diciamo che è una notazione per il dual pairing (occhio!!)}
	\item Young inequality: for all $\vartheta \in \R$, $(\ell,m)\in [1,+\infty]^2$, for any $\delta\in \R$ and for $(\ell_1,\ell_2)\in [1,\infty]^2$ and $(m_1,m_2)\in (0,\infty]^2$ such that 
	$$1+\frac{1}{\ell} = \frac{1}{\ell_1} + \frac{1}{\ell_2} \qquad  \text{and}\qquad  \frac{1}{m}\leq \frac{1}{m_1}+\frac{1}{m_2},$$
	there exists $C$ such that, for $f\in \B_{\ell_1,m_1}^{\vartheta - \delta}$ and $g\in \B_{\ell_2,m_2}^\delta$,
	\begin{equation}
		\label{young-besov} \Vert f\star g \Vert_{\B_{\ell,m}^\vartheta} \leq C \Vert f \Vert_{\B_{\ell_1,m_1}^{\vartheta - \delta}} \Vert g \Vert_{\B_{\ell_2,m_2}^\delta}.
	\end{equation}
	See Theorem 2.2 in \cite{KS21} for a proof (or \cite{Sawano18}).
\end{itemize}

{\color{black}For $r\in[1,\infty]$ we denote by $L^r ([0,T], \B^\vartheta_{\ell,m} (\R^d))$ the space of  $L^r$-functions of time with values in $\B^\vartheta_{\ell,m} (\R^d)$. The respective norm is denoted by $\| \cdot \|_{L^r-\B^\vartheta_{\ell,m}}$, that is, for $f\in L^r ([0,T], \B^\vartheta_{\ell,m} (\R^d))$ then 
\[
\| f \|_{L^r-\B^\vartheta_{\ell,m}} := \left( \int_0^T \| f(t) \|^r_{\B^\vartheta_{\ell,m}} \d r \right)^{\frac1r}.
\] }

\subsection{Controls for the mollified drift}

Let us now state some important properties of the chosen approximate drift \textcolor{black}{$ \mathfrak b_h$ defined in \eqref{DEF_BH_SCHEME}}.
\begin{lemma}[Useful bounds for $\mathfrak{b}_h$] \label{lemma-regularity-mollified-b} There exists $C\geq 1$ s.t. for all $h>0$ and all $(s,z)\in [0,T] \times \R^d$, $s\neq \tau_s^h $,
	\begin{itemize}
		\item Pointwise control {\color{black}(uniform in $z$)}
		\begin{equation}\label{CTR_PONCTUEL_BH}
			|\mathfrak b_h(s,z)|\le C (s-\tau_s^h)^{-\frac d{\alpha p}+\frac\beta\alpha}\|b(s,\cdot)\|_{\B_{p,q}^\beta}.
		\end{equation}
		\item Time-integrated pointwise control {\color{black}(uniform in $z$)}
		\begin{equation}\label{CTR_PONCTUEL_BH_INT}
			\left|\int_{\tau_s^h}^s\mathfrak b_h(u,z)\d u \right|\le C (s-\tau_s^h)^{\frac{\gamma}{\alpha}+\frac{1-\beta}{\alpha}}\|b\|_{L^r-\B_{p,q}^\beta}.
			%h^{1-\frac{1}{r}-\frac{d}{\alpha p}+\frac{\beta}{\alpha}}=h^{\frac{\gamma-\beta}\alpha+\frac{1}\alpha},
		\end{equation} 
		\item Spatial H\"older modulus of the integrated drift\\
		For all $(z,z')\in (\R^d)^2 $, $\zeta\in [-\beta,\alpha-1+\beta-d/p - \alpha/r) $,
		\begin{equation}\label{CTR_PONCTUEL_BH_INT_HOLDER} 
			\left|\int_{\tau_s^h}^s\Big(\mathfrak b_h(u,z)-\mathfrak b_h(u,z')\Big)\d u \right|\le  C|z-z'|^\zeta h^{\frac{\gamma}{\alpha}+\frac{1-\beta-\zeta}{\alpha}}\|b\|_{L^r-\B_{p,q}^\beta}.
			%h^{1-\frac{1}{r}-\frac{d}{\alpha p}+\frac{\beta}{\alpha}}=h^{\frac{\gamma-\beta}\alpha+\frac{1}\alpha},
		\end{equation}
		\item Besov norm of the mollified drift
		\begin{equation}
			\label{CTR_BESOV_BH}
			\|\mathfrak b_h(s,\cdot)\|_{\B_{p,q}^\beta}\le \|b(s,\cdot)\|_{\B_{p,q}^\beta}.
		\end{equation}
	\end{itemize}
\end{lemma}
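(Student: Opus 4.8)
The plan is to derive all four estimates from the definition $\mathfrak b_h(s,z) = P_{s-\tau_s^h}^\alpha b(s,z)$ together with the smoothing properties of the stable semigroup, using the Besov-space inequalities collected in Subsection \ref{subsec-besov}. The fundamental mechanism throughout is: convolution against $p_\alpha(v,\cdot)$ raises spatial regularity at the cost of a negative power of $v$, quantitatively $\|P_v^\alpha g\|_{\B_{\ell,m}^{\vartheta+\kappa}} \lesssim v^{-\kappa/\alpha} \|g\|_{\B_{\ell,m}^\vartheta}$ for $\kappa\ge 0$ (a consequence of Young's inequality \eqref{young-besov} and the kernel bound \eqref{besov-norm-stable-kernel}), and similarly $L^\infty$-type bounds follow by combining this with the embedding $\B_{\infty,1}^0\hookrightarrow L^\infty$ from \eqref{EMBEDDING}.

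First I would prove the pointwise control \eqref{CTR_PONCTUEL_BH}: write $|\mathfrak b_h(s,z)| \le \|P_{s-\tau_s^h}^\alpha b(s,\cdot)\|_{L^\infty}$, then use $\B_{\infty,1}^0\hookrightarrow L^\infty$ and the smoothing estimate to bound $\|P_{s-\tau_s^h}^\alpha b(s,\cdot)\|_{\B_{\infty,1}^0}$ by $(s-\tau_s^h)^{-\frac{d}{\alpha p}+\frac{\beta}{\alpha}}\|b(s,\cdot)\|_{\B_{p,q}^\beta}$; the exponent is precisely the sum of the regularity gap $-\beta/\alpha$ needed to pass from $\B_{p,q}^\beta$ to $\B_{p,q}^0$ and the gap $-\frac{d}{\alpha p}$ coming from the Besov embedding $\B_{p,q}^{d/p}\hookrightarrow \B_{\infty,\infty}^0$ (more carefully, one routes through the mixed estimate \eqref{besov-norm-stable-kernel} with $\ell=\infty$, whose right side carries $t^{-\vartheta/\alpha - d/(\alpha\ell')}$ with $1/\ell' = 1 - 1/p$... actually one takes the convolution with $\ell_1 = p'$-type index so that the output sits in $L^\infty$). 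Then \eqref{CTR_PONCTUEL_BH_INT} follows by integrating \eqref{CTR_PONCTUEL_BH} in $u$ over $[\tau_s^h,s]$, pulling out $\|b(u,\cdot)\|_{\B_{p,q}^\beta}$ via Hölder's inequality in time with exponents $(r,r')$ — this produces the factor $\left(\int_{\tau_s^h}^s (u-\tau_s^h)^{(-\frac{d}{\alpha p}+\frac\beta\alpha)r'}\d u\right)^{1/r'}$, which integrates (since the exponent exceeds $-1$ exactly under \eqref{serrin}) to a power of $(s-\tau_s^h)$, and a bookkeeping check shows that power equals $\frac1{r'} + 1 - \frac{d}{\alpha p} + \frac\beta\alpha = \frac\gamma\alpha + \frac{1-\beta}{\alpha}$, recalling $\gamma = \alpha + 2\beta - d/p - \alpha/r - 1$ and $1/r' = 1 - 1/r$.

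For the Hölder modulus \eqref{CTR_PONCTUEL_BH_INT_HOLDER} the idea is the same but now I would use the $\zeta$-Hölder smoothing of the semigroup: $|P_v^\alpha b(u,z) - P_v^\alpha b(u,z')| \le |z-z'|^\zeta \|P_v^\alpha b(u,\cdot)\|_{\B_{\infty,\infty}^\zeta}$, then bound the Besov norm by $v^{-\frac\zeta\alpha - \frac\beta\alpha - \frac{d}{\alpha p}}\|b(u,\cdot)\|_{\B_{p,q}^\beta}$ (now the regularity gap to be crossed is $\zeta - \beta + d/p$), integrate in $u$ against $\|b(u,\cdot)\|$ with Hölder in time; the constraint $\zeta < \alpha - 1 + \beta - d/p - \alpha/r$ is exactly what keeps the time exponent $(-\frac{\zeta+\beta}{\alpha} - \frac{d}{\alpha p})r' > -1$ integrable, and the resulting power of $h$ comes out as $\frac\gamma\alpha + \frac{1-\beta-\zeta}{\alpha}$ by the same arithmetic. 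Finally \eqref{CTR_BESOV_BH} is the easiest: it follows from the contraction property $\|P_v^\alpha g\|_{\B_{p,q}^\beta}\le \|g\|_{\B_{p,q}^\beta}$, which one gets either directly from Young's inequality \eqref{young-besov} with $\delta=0$ and $\|p_\alpha(v,\cdot)\|_{L^1}=1$, or from the thermic characterization \eqref{HEAT_CAR} and the semigroup property of $P^\alpha$.

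I expect the main obstacle to be purely the bookkeeping of exponents — ensuring at each step that the right Besov embedding is used (so that the $d/p$ loss appears with the correct sign and is not doubled), that the time integrability condition matches exactly the hypothesis \eqref{serrin} (resp. the range of $\zeta$), and that the final powers of $(s-\tau_s^h)$ collapse to the stated $\gamma/\alpha + (1-\beta)/\alpha$ and $\gamma/\alpha + (1-\beta-\zeta)/\alpha$. There is no conceptual difficulty; the only mild subtlety is that \eqref{besov-norm-stable-kernel} must be invoked with the conjugate exponent so the convolution output lands in $L^\infty$ (equivalently $\B_{\infty,\cdot}$), which is what forces the $-d/(\alpha p)$ rather than $-d/(\alpha p')$ in the pointwise bounds.
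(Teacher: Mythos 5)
Your proposal follows essentially the same route as the paper's proof: embedding $\B^0_{\infty,1}\hookrightarrow L^\infty$ plus Young's inequality \eqref{young-besov} plus the kernel bound \eqref{besov-norm-stable-kernel} for the pointwise estimate; time-H\"older with exponents $(r,r')$ for the two integrated bounds, routing through the $\B^\zeta_{\infty,\infty}$ norm in the H\"older case; and Young with $\|p_\alpha(v,\cdot)\|_{\B^0_{1,1}}$ bounded for \eqref{CTR_BESOV_BH}. One small imprecision worth noting: the stated upper bound $\zeta<\alpha-1+\beta-d/p-\alpha/r=\gamma-\beta$ is not, as you say, ``exactly'' the time-integrability threshold for the H\"older-in-time step (which only requires the weaker $\zeta<\gamma-\beta+1$); the tighter range is imposed because it is what is used downstream in the error analysis, not because the integral would otherwise diverge.
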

\begin{proof}
	Let us recall the definition of $\mathfrak{b}_h$:
	\begin{equation}
		\mathfrak{b}_h(s,{\color{black}z}) := P^\alpha_{s-\tau_s^h}b(s,{\color{black}z}) = {\color{black}(} p_\alpha (s-\tau_s^h,\cdot)\star b(s,\cdot){\color{black}) (z)}.
	\end{equation}
{\color{black}Using the embedding of $L^\infty$ in $\B^0_{\infty, \textcolor{black}{1}}$, see  \eqref{EMBEDDING},
%(STEPHANE, É GIUSTO? \textcolor{black}{No, non lo era}) QUALE EMBEDDING ESATTAMENTE? \textcolor{black}{Referenza aggiunta) } 
then \eqref{young-besov} and \eqref{besov-norm-stable-kernel}
we have,
\begin{align*}
|\mathfrak b_h(s,z)| = & | (p_\alpha (s-\tau_s^h,\cdot)\star b(s,\cdot))(z)| 
\leq \| p_\alpha (s-\tau_s^h,\cdot)\star b(s,\cdot)\|_{\B^0_{\infty, \textcolor{black}{1}}}\\
& \leq \| p_\alpha (s-\tau_s^h)\|_{\B^{-\beta}_{p',q' }} \|b(s)\|_{\B^{\beta}_{p,q }}
 \leq  (s-\tau_s^h)^{\frac\beta\alpha - \frac d {\alpha p} } \|b(s)\|_{\B^{\beta}_{p,q }},
\end{align*}
which is \eqref{CTR_PONCTUEL_BH} as wanted. 
}	
 Using now \eqref{CTR_PONCTUEL_BH} for the time integral and the H\"older inequality we obtain 
	\begin{align*}
		\left|\int_{\tau_s^h}^s\mathfrak b_h(u,z)\d u\right|& 
		\le \int_{\tau_s^h}^{s} (u-\tau_s^h)^{\frac \beta\alpha-\frac{d}{p \alpha}}\|b(u,\cdot)\|_{\B_{p,q}^\beta}\d u %\label{INTERMED}\\
		\le \|b\|_{L^r-\B_{p,q}^\beta} \Big(\int_{\tau_s^h}^{s} \frac{\d u}{(u-\tau_s^h)^{r'(\frac{-\beta}\alpha+\frac{d}{p\alpha})}}\Big)^{\frac 1{r'}}\\
		&\le C\|b\|_{L^r-\B_{p,q}^\beta}(s-\tau_s^h)^{1-\frac 1r-\frac{d}{\alpha p}+\frac{\beta}\alpha}.\notag
	\end{align*}
	This proves \eqref{CTR_PONCTUEL_BH_INT}. Similarly, for $\zeta\in [-\beta,\alpha-1+\beta-d/p - \alpha/r) $, 
	using again %\textcolor{black}{the embedding \eqref{EMBEDDING}}, Da S. a E.: Qua non serve l'embedding. Young funziona direttamente
	the Young inequality \eqref{young-besov} \textcolor{black}{and \eqref{besov-norm-stable-kernel}}, we have
	\begin{align*}
		&\left|\int_{\tau_s^h}^s\big(\mathfrak b_h(u,z)-\mathfrak b_h(u,z')\big)\d u\right| \\
		\le &|z-z'|^\zeta  \int_{\tau_s^h}^s\big\|\mathfrak b_h(u,\cdot)\big\|_{\B_{\infty,\infty}^\zeta}\d u \\
		\le& |z-z'|^\zeta\int_{\tau_s^h}^{s} \|p_{\alpha}(u-\tau_s^h,\cdot)\|_{\B_{p',q'}^{\zeta-\beta}}\|b(u,\cdot)\|_{\B_{p,q}^\beta}\d u\le |z-z'|^\zeta \int_{\tau_s^h}^{s} (u-\tau_s^h)^{\frac \beta\alpha-\frac \zeta\alpha-\frac{d}{p \alpha}}\|b(u,\cdot)\|_{\B_{p,q}^\beta}\d u\\
		\le & |z-z'|^\zeta\|b\|_{L^r-\B_{p,q}^\beta} \Big(\int_{\tau_s^h}^{s} \frac{\d u}{(u-\tau_s^h)^{r'(\frac{-\beta}\alpha+\frac{\zeta}\alpha+\frac{d}{p\alpha})}}\Big)^{\frac 1{r'}}\le C|z-z'|^\zeta\|b\|_{L^r-\B_{p,q}^\beta}(s-\tau_s^h)^{1-\frac 1r-\frac{d}{\alpha p}+\frac{\beta}\alpha-\frac \zeta\alpha}\\
		\le &\textcolor{black}{C|z-z'|^\zeta\|b\|_{L^r-\B_{p,q}^\beta}h^{\frac\gamma\alpha+\frac{1-\beta- \zeta}\alpha}}.
	\end{align*}
	This proves \eqref{CTR_PONCTUEL_BH_INT_HOLDER}. Eventually, \eqref{CTR_BESOV_BH} follows from the Young inequality \eqref{young-besov} and the fact that the $\B_{1,1}^0$ norm of the stable kernel is uniformly bounded.
\end{proof}

Let us also state {\color{black}and prove} the following lemma, which indicates that the deviation induced by the mollified drift over a single time step can be neglected at the scale of the noise:
\begin{lemma}[The approximate singular drift in the density of the driving noise]
	\label{Lemma_TRANS_SCHEME}
	There exists $C$ s.t.\ for $0\le s<t\le T $ s.t. $t-v\ge s-\tau_s^h $ with $v\in \{s, \tau_s^h\}$ and $(z,z')\in (\R^d)^2 $, and for all $k,\ |k|\le 2 $ {\color{black} we have}
	\begin{align}\label{DRIFT_TO_NEGLECT}
		\left|\nabla_z^k p_\alpha\left(t-v,z-\int_{\tau_s^h}^s \mathfrak b_h(u,z')\d u\right)\right|\le\frac{ C}{(t-v)^{\frac{|k|}\alpha}}\bar p_\alpha(t-v,z).
	\end{align}
\end{lemma}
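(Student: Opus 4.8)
The plan is to treat the drift shift $m_z := \int_{\tau_s^h}^s \mathfrak b_h(u,z')\d u$ inside the argument of $\nabla_z^k p_\alpha(t-s,\cdot)$ as a small perturbation and to absorb it into the Aronson-type bound $\bar p_\alpha(t-s,\cdot)$, exploiting the fact that, by the time-integrated pointwise control \eqref{CTR_PONCTUEL_BH_INT}, $|m_z|\lesssim (s-\tau_s^h)^{\gamma/\alpha + (1-\beta)/\alpha}\|b\|_{L^r-\B_{p,q}^\beta}$, and that $\gamma>0$, $1-\beta>1>0$, so that in particular $|m_z|\lesssim (s-\tau_s^h)^{1/\alpha}\lesssim (t-s)^{1/\alpha}$ using the standing assumption $t-s\ge s-\tau_s^h$ and $T<1$. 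First I would invoke the sensitivity estimate \eqref{derivatives-palpha} from Lemma \ref{lemma-stable-sensitivities}, which gives $|\nabla_z^k p_\alpha(t-s,w)|\lesssim (t-s)^{-|k|/\alpha}\bar p_\alpha(t-s,w)$ for any $w$; applying this with $w=z-m_z$ reduces the claim to showing that $\bar p_\alpha(t-s,z-m_z)\lesssim \bar p_\alpha(t-s,z)$.

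The latter is a routine ``off-diagonal stability'' statement for the stable heat kernel bound: since $\bar p_\alpha(v,\cdot)$ is, up to constants, $v^{-d/\alpha}(1+|\cdot|/v^{1/\alpha})^{-(d+\alpha)}$, and $|m_z|\le C(t-s)^{1/\alpha}$, one has $1+|z-m_z|/(t-s)^{1/\alpha}\ge 1+ |z|/(t-s)^{1/\alpha} - C \asymp 1+|z|/(t-s)^{1/\alpha}$ when $|z|/(t-s)^{1/\alpha}$ is large, and both sides are comparable to $1$ when $|z|/(t-s)^{1/\alpha}$ is bounded; hence $(1+|z-m_z|/(t-s)^{1/\alpha})^{-(d+\alpha)}\lesssim (1+|z|/(t-s)^{1/\alpha})^{-(d+\alpha)}$, and the prefactors $(t-s)^{-d/\alpha}$ match exactly. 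This yields $\bar p_\alpha(t-s,z-m_z)\le C\bar p_\alpha(t-s,z)$ with $C$ depending only on $d,\alpha$ and the constant in the bound for $|m_z|$ (hence on $b$ through $\|b\|_{L^r-\B_{p,q}^\beta}$), and combining with the previous step gives \eqref{DRIFT_TO_NEGLECT}. I would state the off-diagonal comparison as an elementary sublemma (or simply cite it as standard, e.g.\ from \cite{FJM24}), since it is used repeatedly.

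The main (and really the only) point requiring care is making sure the bound on $|m_z|$ is genuinely of order $(t-s)^{1/\alpha}$ or smaller, uniformly over the grid and over $z'$: this is exactly where \eqref{CTR_PONCTUEL_BH_INT} is needed rather than the non-integrable pointwise bound \eqref{CTR_PONCTUEL_BH}, and where the positivity of $\gamma$ (from assumption \eqref{serrin}) together with $\beta<0<1$ is used so that the exponent $\gamma/\alpha+(1-\beta)/\alpha$ is at least $1/\alpha$; since $T<1$ and $s-\tau_s^h\le h\le T$, one then has $(s-\tau_s^h)^{\gamma/\alpha+(1-\beta)/\alpha}\le (s-\tau_s^h)^{1/\alpha}\le (t-s)^{1/\alpha}$. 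No Gronwall or fixed-point argument is involved; the lemma is purely a pointwise heat-kernel manipulation.
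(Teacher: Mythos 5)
Your proposal is correct and follows essentially the same route as the paper: invoke \eqref{derivatives-palpha} to reduce to comparing $\bar p_\alpha(t-s,z-m_z)$ with $\bar p_\alpha(t-s,z)$, then use \eqref{CTR_PONCTUEL_BH_INT}, $\gamma>0$, $\beta<0$, $T<1$ and $s-\tau_s^h\le t-s$ to get $|m_z|\lesssim (t-s)^{1/\alpha}$, and absorb the shift into the Aronson-type bound via its explicit form. The only cosmetic difference is that the paper writes out the absorption step inline (using $2-|m_z|/(t-s)^{1/\alpha}+|z|/(t-s)^{1/\alpha}\ge 1+|z|/(t-s)^{1/\alpha}$ "for $h$ sufficiently small") rather than isolating it as a standalone off-diagonal stability sublemma, but the content is identical.
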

{%\color{red} 
The argument $v=\tau_s^h$ will naturally appear after conditioning w.r.t $X_{\tau_s^h}^h $ in the Duhamel expansion of the density \eqref{duhamel-scheme} of the scheme in Proposition \ref{prop-main-estimates-D}. This will be the most frequent use of the lemma.
On the other hand, the argument  $v=s$ is e.g. used when handling contributions associated with the first time step (see eq. \eqref{delta2-supnorm}  below).}
\begin{proof}
	Write from \eqref{derivatives-palpha},
	\begin{align*}
		\left|\nabla_z^k p_\alpha\left(t-v,z-\int_{\tau_s^h}^s \mathfrak b_h(u,z')\d u\right)\right|\le&\frac{ C}{(t-v)^{\frac{|k|}\alpha}}\bar p_\alpha\left(t-v,z-\int_{\tau_s^h}^s \mathfrak b_h(u,z')\d u\right)\\
		\le& \frac{C}{(t-v)^{\frac{|k|}\alpha+\frac{d}{\alpha}}}\frac{1}{\left(2-\frac{|\int_{\tau_s^h}^s \mathfrak b_h(u,z')\d u|}{(t-v)^{\frac 1\alpha}}+\frac{|z|}{(t-v)^{\frac 1\alpha}}\right)^{d+\alpha}}\\
		\le &\frac{C}{(t-v)^{\frac{|k|}\alpha+\frac{d}{\alpha}}}\frac{1}{\left(2-\frac{(s-\tau_s^h)^{\frac1\alpha+\frac{\gamma-\beta}\alpha}}{(t-v)^{\frac 1\alpha}}+\frac{|z|}{(t-v)^{\frac 1\alpha}}\right)^{d+\alpha}}\\
		\le & \frac{C}{(t-v)^{\frac{|k|}\alpha+\frac{d}{\alpha}}}\frac{1}{\left(1+\frac{|z|}{(t-v)^{\frac 1\alpha}}\right)^{d+\alpha}}\le \frac{C}{(t-v)^{\frac{|k|}\alpha}}\bar p_\alpha(t-v,z), 
	\end{align*}
	for $h$ sufficiently small, using \eqref{CTR_PONCTUEL_BH_INT} for the last but one inequality and up to a modification of $C$ from line to line. This proves \eqref{DRIFT_TO_NEGLECT}.
\end{proof}

\section{Tools for the proof of Theorem \ref{thm-besov}}\label{TOOLS_HK_AND_MISC_CONV}
{\color{black}In this section we collect some tools and technical lemmata that are useful to prove the main result.}

\subsection{Duhamel expansion of the densities and heat kernel-like estimates} \label{SSC:duhamel}
{\color{black}Let us introduce some notation first. Given a stochastic process $(Y_r)$ in $\R^d$, a measurable  function $f:[0,T]\times\R^d\to\R^{d'}$ (for some $d,d'$), $0\leq s\leq r$ and $x\in\R^d$, we denote the conditional expectation knowing $Y_s=x$ by $\E_{s,x}\left[f(r,Y_r)\right]:= \E\left[f(r,Y_r) \vert Y_s=x\right] $.}

        \begin{propo}[Duhamel representations for the densities of the SDE and the Euler scheme]\label{prop-main-estimates-D}
		The density $\Gamma(s,x,t,\cdot) $ of the unique weak solution to Equation \eqref{sde} starting from $x$ at time $s\in [0,T)$ admits \textcolor{black}{the} following  Duhamel representation: for all $\textcolor{black}{t\in (s, T],\  y\in \R^d }$,
		\begin{align}
			\Gamma(s,x,t,y)
			= p_\alpha(t-s,y-x)-\int_{s}^{ t}\E_{s,x}\left[b(r,X_r)\cdot\nabla_y  p_\alpha(t-r,y-X_r)\right]\d r.\label{duhamel-Diff}
		\end{align} 
Similarly, for $k\in \llbracket 0,n-1\rrbracket ,\ t\in (t_k,T] $, the density of $X_t^h$ admits, conditionally to $X_{t_k}^h=x$, a transition density $\Gamma^h(t_k,x,t,\cdot) $, which enjoys a Duhamel type representation: for all $y\in \R^d $,
		\begin{align}
			\Gamma^h(t_k,x,t,y)
			= p_\alpha(t-t_k,y-x)-\int_{t_k}^{ t}\E_{t_k,x}\left[\mathfrak b_h(r,X^h_{\tau_r^h})\cdot\nabla_y  p_\alpha(t-r,y-X^h_r)\right]\d r.\label{duhamel-scheme}
		\end{align} 
	\end{propo}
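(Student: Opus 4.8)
The plan is to derive both Duhamel representations by the same strategy: mollify the drift, write the Itô/Dynkin formula for the smooth problem, identify the resulting forward Kolmogorov (Fokker--Planck) identity in mild form, and then pass to the limit using the heat-kernel estimates of Proposition \ref{prop-HK} together with the drift controls of Lemma \ref{lemma-regularity-mollified-b}. For the SDE \eqref{sde}, recall that its rigorous meaning is the nonlinear-Young formulation \eqref{DYN_DIFF}, obtained as a limit of the mollified SDEs $\d X_t^m = \mathfrak b^m(t, X_t^m)\,\d t + \d Z_t$ where $\mathfrak b^m = b^m$ is a smooth approximation of $b$ (e.g. $b^m = P^\alpha_{1/m} b$) converging in $L^r\text{--}\B^\beta_{p,q}$. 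For each fixed $m$, the density $\Gamma^m(s,x,t,\cdot)$ of $X^m_t$ solves the Fokker--Planck equation with generator $\L^\alpha + b^m\cdot\nabla$; writing this equation in mild (Duhamel) form against the stable semigroup $P^\alpha$ and using the adjoint/integration-by-parts identity $\int \L^\alpha\text{-heat-kernel}$ exactly as in \eqref{duhamel-Diff} gives
\begin{equation*}
\Gamma^m(s,x,t,y) = p_\alpha(t-s,y-x) - \int_s^t \E_{s,x}\!\left[ b^m(r,X^m_r)\cdot \nabla_y p_\alpha(t-r,y-X^m_r)\right]\d r.
\end{equation*}

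The core of the argument is then the passage to the limit $m\to\infty$ in this identity. The left-hand side converges to $\Gamma(s,x,t,y)$ because the martingale problem is well-posed under \eqref{serrin} (so $X^m_t \Rightarrow X_t$) and the densities are equicontinuous and uniformly bounded by $C\bar p_\alpha(t-s,\cdot-x)$ by \eqref{aronson-gamma}--\eqref{holder-forward-gamma}. For the right-hand side, the key point is that $b^m \cdot \nabla_y p_\alpha(t-r,y-\cdot)$ should be understood as a duality pairing $\langle b^m(r,\cdot)\,,\, \nabla_y p_\alpha(t-r,y-\cdot)\,\Gamma(r,\cdot) / \Gamma(r,\cdot)\rangle$ integrated against the law of $X_r$: more precisely one rewrites $\E_{s,x}[b^m(r,X^m_r)\cdot\nabla_y p_\alpha(t-r,y-X^m_r)] = \int b^m(r,z)\cdot\nabla_y p_\alpha(t-r,y-z)\,\Gamma^m(s,x,r,z)\,\d z$, and uses the duality inequality \eqref{dual-ineq} to bound this by $\|b^m(r,\cdot)\|_{\B^\beta_{p,q}}\,\big\| \nabla_y p_\alpha(t-r,y-\cdot)\,\Gamma^m(s,x,r,\cdot)\big\|_{\B^{-\beta}_{p',q'}}$. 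The product rule \eqref{PR} together with the heat-kernel/Besov bounds \eqref{besov-norm-stable-kernel} and \eqref{ineq-density-diff} controls the second factor by a kernel of the form $C(t-r)^{-(1-\beta)/\alpha - d/(\alpha p)}(r-s)^{-\rho/\alpha}\,\bar p_\alpha(t-s,y-x)$ with $\rho>-\beta$, which is time-integrable near $r=s$ and $r=t$ precisely because $\gamma>0$; this gives both the well-definedness of the integral in \eqref{duhamel-Diff} for the distributional $b$ and, via dominated convergence and the $L^r\text{--}\B^\beta_{p,q}$-convergence $b^m\to b$ (and $\Gamma^m\to\Gamma$ in the appropriate sense), the convergence of the integral term.

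For the Euler scheme \eqref{euler-scheme-besov}, the argument is parallel but simpler, since on each interval $[\tau^h_r, \tau^h_r + h)$ the continuous-time extension is genuinely an Itô process with the bounded measurable-in-time, Hölder-in-space drift $z\mapsto \mathfrak b_h(r,z)$ frozen at $X^h_{\tau^h_r}$; the existence of the conditional density $\Gamma^h(t_k,x,t,\cdot)$ with the stated heat-kernel bounds comes from Proposition \ref{prop-HK} (proved in Appendix \ref{APP_HK}). Applying the Dynkin formula on each subinterval and summing, or equivalently writing the Fokker--Planck equation for $X^h$ with the piecewise-defined drift and putting it in mild form, yields \eqref{duhamel-scheme} directly, with $\mathfrak b_h(r, X^h_{\tau^h_r})$ appearing because that is the (conditionally constant) drift actually used between grid points; here the bound \eqref{CTR_PONCTUEL_BH} on $\|\mathfrak b_h(r,\cdot)\|$ replaces the $L^r$-in-time control and again integrability near the endpoints follows from $\gamma>0$. \textbf{The main obstacle} is making rigorous the manipulation of $b^m(r,X^m_r)\cdot\nabla_y p_\alpha(t-r,y-X^m_r)$ as a duality pairing and passing to the limit uniformly in the time singularity at $r\to s^+$ and $r\to t^-$: one must check that the $\B^\beta_{p,q}$--$\B^{-\beta}_{p',q'}$ pairing estimate, combined with the $\rho$-Hölder control \eqref{ineq-density-diff} on $\Gamma^m/\bar p_\alpha$ (uniform in $m$), produces a time-integrable singularity, and that the constants do not degenerate along the approximating sequence. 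This is exactly the "gap to singularity" bookkeeping that $\gamma>0$ guarantees, and it is the same mechanism that will later be used in the error analysis of Theorem \ref{thm-besov}.
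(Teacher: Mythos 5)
Your proposal follows essentially the same route as the paper: for \eqref{duhamel-Diff}, the mollify-and-pass-to-the-limit strategy is exactly what \cite{Fit23} (Section 4) carries out (the paper simply cites it), and for \eqref{duhamel-scheme} the It\^o/Dynkin argument you describe is precisely the paper's Appendix~\ref{PROOF_FOR_DENS_SCHEME} proof — apply It\^o's formula to $v(s,X^h_s)$ with $v$ the backward Kolmogorov solution with smooth terminal datum $\phi$, take expectations, use Fubini, and invoke the evenness of $p_\alpha$. One minor remark: in the limit passage for the SDE you cite \eqref{aronson-gamma}–\eqref{holder-forward-gamma} as if they were uniform-in-$m$ bounds for $\Gamma^m$, whereas as stated they concern the limit $\Gamma$; the uniform version, which is what actually justifies equicontinuity and dominated convergence, is established in \cite{Fit23}.
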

\begin{proof}
For \eqref{duhamel-Diff}, this is a consequence of the mollification procedure considered in \cite{Fit23} (see Section 4 therein).

{\color{black}As for \eqref{duhamel-scheme}, first of all we prove using} \eqref{CTR_PONCTUEL_BH}, that, for any $t>0$, the   {\color{black}scheme \eqref{euler-scheme-besov}}
admits a density which we denote $ \Gamma^h$. Indeed, the scheme can be viewed as the \textcolor{black}{Euler discretization in space} associated with the solution to the SDE
\begin{equation}
	\d X_t^{D,h}=\mathfrak{b}_h(t,X_t^{D,h})\d t + \d Z_t,
\end{equation}
which has $L^r-L^\infty$ drift, albeit with $L^r-L^\infty$ norm depending on $h$.
As a consequence of \cite{FJM24} \textcolor{black}{and \eqref{EXPR_DRIFT}}, it follows that $\Gamma^h$ enjoys the Duhamel-type representation \eqref{duhamel-scheme} at the discretization times. For the sake of completeness we provide a proof for \eqref{duhamel-scheme} in Section \ref{PROOF_FOR_DENS_SCHEME}. 
\end{proof}

 We are now ready to state some useful controls for the densities $\Gamma$ and $\Gamma^h$.}
\begin{propo}[Heat kernel estimates for the densities]\label{prop-HK}\label{THE_PROP}\phantom{BOUH}
	\begin{itemize}
		\item Heat kernel bound for the density of the Euler scheme: for all $\rho \in (-\beta,\gamma-\beta)$ there exists $C\textcolor{black}{=C(d,b,\alpha)}$ such that for all $(x,y,y')\in (\R^d)^3$, $t>0$, \textcolor{black}{$|y-y'|\lesssim t^{\frac 1\alpha}$},
		\begin{align}
		\Gamma^h (0,x,t,y) &\leq C \bar p_{\alpha}(t,y-x),\label{aronson-gammah}\\
		|\Gamma^h (0,x,t,y')-\Gamma^h (0,x,t,y)|&\le C\frac{|y-y'|^\rho}{t^{\frac \rho\alpha}} \bar p_\alpha(t,y-x).\label{holder-forward-gammah}
		\end{align}
		{Actually, these bounds follow from the proof of the following inequality:}
		\begin{equation}\label{ineq-density-scheme}
						\left\Vert \frac{\Gamma^h (0,x,t,\cdot)}{\bar{p}_{\alpha}(t,\cdot-x)} \right\Vert_{\B_{\infty,\infty}^\rho} \leq C(1+t^{-\frac \rho\alpha}).
		\end{equation}
		\item Heat kernel and Sensitivity bounds for the density of the SDE: for all $\rho \in (-\beta,\gamma-\beta)$, there exists $C\textcolor{black}{=C(d,b,\alpha)}$ such that for all $(x,y,y')\in (\R^d)^3$, $t\in (0,T]$, \textcolor{black}{$|y-y'|\lesssim t^{\frac 1\alpha}$},
				\begin{align}
					\Gamma (0,x,t,y) &\leq C \bar p_\alpha(t,y-x),\label{aronson-gamma}\\
				|\Gamma (0,x,t,y')-\Gamma (0,x,t,y)|&\le C\frac{|y-y'|^\rho}{t^{\frac \rho\alpha}}\bar p_\alpha(t,y-x).\label{holder-forward-gamma}
			\end{align}
Consequently, in terms of Besov spaces,
		\begin{equation}\label{ineq-density-diff}
			\left\Vert \frac{\Gamma (0,x,t,\cdot)}{\bar{p}_{\alpha}(t,\cdot-x)} \right\Vert_{\B_{\infty,\infty}^\rho} \leq C(1+t^{-\frac \rho\alpha}).
		\end{equation}
Moreover, it holds that \textcolor{black}{for all $\eps>0$ meant to be small}%\footnote{\textcolor{black}{From S. : not true as is, it must hold that $\rho+\gamma-\varepsilon<-\beta+\gamma $}}
, $t'\in (t,T]$ such that  $|t-t'|\le t/2$, 
		\begin{equation}\label{holder-time-gamma}
			\left\Vert \frac{\Gamma (0,x,t,\cdot)-\Gamma (0,x,t',\cdot)}{\bar{p}_{\alpha}(t',\cdot-x)} \right\Vert_{\B_{\infty,\infty}^\rho} \leq C \frac{(t'-t)^\frac{\gamma-\eps}{\alpha}}{t^\frac{\gamma-\eps+\rho}{\alpha}} .
		\end{equation}
	\end{itemize}
\end{propo}
The bound \eqref{ineq-density-diff} was obtained in \cite{Fit23}, \eqref{holder-time-gamma} would follow from the same lines but it is detailed for {\color{black}completeness} in Appendix \ref{HK_TIME_APP}. The corresponding bound \eqref{ineq-density-scheme} for the Euler scheme is also proved in Appendix \ref{PROOF_FOR_DENS_SCHEME}.
\textcolor{black}{Actually, \eqref{aronson-gammah} and \eqref{holder-forward-gammah} are a byproduct of the arguments used to prove \eqref{ineq-density-scheme}. These points are discussed as well in Appendix \ref{PROOF_FOR_DENS_SCHEME}.}

\subsection{\color{black}Auxiliary estimates for the density of stable processes}%{Auxiliary estimates}

%\subsubsection
The following estimates will be needed for the error analysis below.

	\begin{lemma}[Besov estimates for ${\bar p}_{\alpha}$]\label{lemma-besov-estimates-palpha} Let $\beta<0$.
		\begin{itemize}
			\item $\forall 0< s  < t$, $\forall (x,y)\in (\R^d)^2$, $\forall \zeta \in (-\beta,1]$, $\forall k \in \{ 0,1\}$,
			\begin{align}\label{besov-estimate-stable}
				\Vert&\bar {p}_{\alpha} (s,x-\cdot) \nabla^k_y p_{\alpha} (t-s,y-\cdot) \Vert_{\B^{-\beta}_{p',q'}} \lesssim 
				\frac{\bar{p}_{\alpha} (t,x-y)}{(t-s)^{\frac{k}{\alpha}}} t^{\frac{\beta}{\alpha}}\left[ \frac{1}{s^{\frac{d }{\alpha  p}}}+\frac{1}{(t-s)^{\frac{d }{\alpha  p}}} \right] \left[%1+  
				\frac{t^{\frac{\zeta}{\alpha}}}{s^{\frac{\zeta }{\alpha}}}+\frac{t^{\frac{\zeta}{\alpha}}}{(t-s)^{\frac{\zeta }{\alpha}}}  \right] .
			\end{align}
			\item $\forall 0< s  < t$, $\forall (x,y)\in (\R^d)^2$, $\textcolor{black}{j\in \{0,1-\textcolor{black}{\frac \varepsilon \gamma}\}}$, $\forall \zeta \in (-\beta+\textcolor{black}{j\gamma},\textcolor{black}{-\beta+\gamma})$\footnote{\textcolor{black}{Pay attention that the constraint on $\zeta $ here follows from the available regularity of the density $\Gamma $ in its forward spatial variable. The threshold 1 is reachable for the $\bar p_\alpha $. This is not the case when the density $\Gamma $ is involved (see \eqref{ineq-density-diff}). 
			%A very small margin should appear here for the regularity parameter in this bound. TO DO: discutere un po' le soglie a seconda dei valori di $\beta $
			}
			}, $\forall k \in \{ 0,1\}$,
			\begin{align}\label{besov-estimate-gamma-and-stable}
				\Vert& \Gamma(0,x,s,\cdot) \nabla^k_y p_{\alpha} (t-s,y-\cdot) \Vert_{\B^{-\beta+j\gamma}_{p',q'}} \lesssim 
				\frac{\bar{p}_{\alpha} (t,x-y)}{(t-s)^{\frac{k}{\alpha}}} t^{\frac{\beta\textcolor{black}{-j\gamma}}{\alpha}}\left[ \frac{1}{s^{\frac{d }{\alpha  p}}}+\frac{1}{(t-s)^{\frac{d }{\alpha  p}}} \right] \left[%1+
				  \frac{t^{\frac{\zeta}{\alpha}}}{s^{\frac{\zeta }{\alpha}}}+\frac{t^{\frac{\zeta}{\alpha}}}{(t-s)^{\frac{\zeta }{\alpha}}}  \right] .
			\end{align}
			
				\item  $\forall 0< s  < t$, $\forall (x,y,w)\in (\R^d)^3$, $\forall \zeta \in (-\beta,1]$,
	\begin{align}\label{big-lemma-2}
		&\left\Vert \bar{p}_{\alpha} (s,x-\cdot)\left[\frac{\nabla p_\alpha (t-s,w-\cdot)}{\bar{p}_{\alpha} (t,w-x)} -\frac{\nabla p_\alpha (t-s,y-\cdot)}{\bar{p}_{\alpha} (t,y-x)} \right] \right\Vert_{ \B_{p',q'}^{-\beta}} \nonumber \\& \qquad\lesssim \frac{|w-y|^\zeta }{(t-s)^{\frac{\zeta+1}{\alpha}}} t^{\frac{\beta}{\alpha}} \left[\frac{1}{s^{\frac{d }{\alpha  p}}}+ \frac{1}{(t-s)^{\frac{d }{\alpha  p}}} \right] \left[%1+ 
		 \frac{t^{\frac{\zeta}{\alpha}}}{s^{\frac{\zeta }{\alpha}}}+\frac{t^{\frac{\zeta}{\alpha}}}{(t-s)^{\frac{\zeta }{\alpha}}}  \right] .
\end{align}
%						\item $\forall 0\leq s  \leq t$, $\forall (x,y,y')\in \R^d$, s.t. $|y'-y|\le t^{\frac 1\alpha} $, $\forall \zeta,\rho \in (-\beta,1]$, $\forall k \in \{ 0,1\}$, 
%			\begin{align}\label{besov-estimate-stable_NO_NORM_Holder}
%				&\Vert\bar {p}_{\alpha} (s,x-\cdot) (\nabla^k_y p_{\alpha} (t-s,y-\cdot)-\nabla^k_y p_{\alpha} (t-s,y'-\cdot)) \Vert_{\B^{-\beta}_{p',q'}} \notag\\
%				\lesssim& \bar{p}_{\alpha} (t,x-y)\frac{|y-y'|^\rho}{(t-s)^{\frac k\alpha+\frac\rho\alpha}} t^{\frac{\beta}{\alpha}}\left[ \frac{1}{s^{\frac{d }{\alpha  p}}}+\frac{1}{(t-s)^{\frac{d }{\alpha  p}}} \right] \left[1+  \frac{t^{\frac{\zeta}{\alpha}}}{s^{\frac{\zeta }{\alpha}}}+\frac{t^{\frac{\zeta}{\alpha}}}{(t-s)^{\frac{\zeta }{\alpha}}}  \right] .
%			\end{align}
			\item $\forall 0< s  < t$, $\forall (x,y,y')\in (\R^d)^3$, s.t. $|y'-y|\le t^{\frac 1\alpha} $, $\textcolor{black}{j\in \{0,1-\textcolor{black}{\frac \varepsilon\gamma}\}}$, $\forall \zeta \in (-\beta+\textcolor{black}{j\gamma} ,\textcolor{black}{-\beta+\gamma})\footnote{Here as well the restriction on $\zeta $ comes from the regularity of $\Gamma $ in its forward spatial variable.},\rho \in (-\beta+\textcolor{black}{j\gamma},1]$, $\forall k \in \{ 0,1\}$, 
			\begin{align}\label{besov-estimate-gamma-and-stable_NO_NORM_Holder}
			&\Vert \Gamma(0,x,s,\cdot) (\nabla^k_y p_{\alpha} (t-s,y-\cdot)-\nabla^k_y p_{\alpha} (t-s,y'-\cdot)) \Vert_{\B^{-\beta\textcolor{black}{+j\gamma}}_{p',q'}} \notag\\
			\lesssim& \bar{p}_{\alpha} (t,x-y)\frac{|y-y'|^\rho}{(t-s)^{\frac k\alpha+\frac\rho\alpha}} t^{\frac{\beta\textcolor{black}{-j\gamma}}{\alpha}}\left[ \frac{1}{s^{\frac{d }{\alpha  p}}}+\frac{1}{(t-s)^{\frac{d }{\alpha  p}}} \right] \left[%1+  
			\frac{t^{\frac{\zeta}{\alpha}}}{s^{\frac{\zeta }{\alpha}}}+\frac{t^{\frac{\zeta}{\alpha}}}{(t-s)^{\frac{\zeta }{\alpha}}}  \right] .
			\end{align}			
			
			\item  $ \forall h\leq s \leq \tau_t^h-h$, $r\in \textcolor{black}{[}\tau_s^h,\tau_s^h+h)$, $\forall (x,y)\in (\R^d)^2$, $\forall \zeta \in (-\beta,\textcolor{black}{-\beta+\gamma)}$, $\forall \delta\in [0,1) $, 
			\begin{align}\label{besov-estimate-gammah-stable-sensi-holder-time}
				&\left\Vert \Gamma^h(0,x,\tau_s^h,\cdot)\left[\nabla_y p_{\alpha} (t-s,y-\cdot) -\nabla_y p_{\alpha} (t-r,y-\cdot) \right] \right\Vert_{ \B_{p',q'}^{-\beta}} \nonumber \\& \qquad\lesssim (s-r)^\delta\frac{\bar p_{\alpha}(t,y-x) }{(t-s)^{\frac{1}{\alpha}+\delta}} t^{\frac{\beta}{\alpha}}\left[\frac{1}{s^{\frac{d }{\alpha  p}}}+ \frac{1}{(t-s)^{\frac{d }{\alpha  p}}} \right] \left[%1+ 
				\frac{t^{\frac{\zeta}{\alpha}}}{s^{\frac{\zeta }{\alpha}}}+\frac{t^{\frac{\zeta}{\alpha}}}{(t-s)^{\frac{\zeta }{\alpha}}}  \right] .
			\end{align}
			
			\item  $ \forall h\leq s \leq \tau_t^h-h$, $r\in \textcolor{black}{[}\tau_s^h,\tau_s^h+h)$, $\forall (x,y,y')\in (\R^d)^3$, $\forall \zeta,\rho \in (-\beta,1]$, $|y-y'|\le t^{\frac 1\alpha} $, 
			$\forall \theta \in\{0,1\} $,
			\begin{align}\label{besov-estimate-stable-derivees_temps_esp_sensi_holder_esp}
				&\left\Vert \bar {p}_{{\alpha}} (\tau_s^h,x-\cdot)\left[\partial_t^\theta\nabla_y p_{\alpha} (t-r,y-\cdot) -\partial_t^\theta\nabla_y p_{\alpha} (t-r,y'-\cdot) \right] \right\Vert_{ \B_{p',q'}^{-\beta}} \nonumber \\& \qquad\lesssim |y-y'|^\rho \frac{\bar p_{\alpha}(t,y-x) }{(t-s)^{\frac{1}{\alpha}+\theta+\frac{\rho}{\alpha}}} t^{\frac{\beta}{\alpha}}\left[\frac{1}{s^{\frac{d }{\alpha  p}}}+ \frac{1}{(t-s)^{\frac{d }{\alpha  p}}} \right] \left[%1+
				 \frac{t^{\frac{\zeta}{\alpha}}}{s^{\frac{\zeta }{\alpha}}}+\frac{t^{\frac{\zeta}{\alpha}}}{(t-s)^{\frac{\zeta }{\alpha}}}  \right] .
			\end{align}
			
			\item  $ \forall h\leq s \leq \tau_t^h-h$, $r\in (\tau_s^h,\tau_s^h+h)$, $\forall (x,y,y')\in (\R^d)^3$, $\forall \zeta \in \textcolor{black}{(-\beta,-\beta+\gamma)},\rho \in (-\beta,1]$, $|y-y'|\le t^{\frac 1\alpha} $, $\forall \theta \in\{0,1\} $, \\
			%\ei{From E to S: nella stima sotto c'era $r$ ma l'ho sostituito con $s$, mi sembrava un typo ma vorrei che tu controllassi.}\\
			%\sm{No, non era un typo. Ho rimesso la $r$. La $r$ era per indicare qualunque punto intermedio una volta applicata la formula di Taylor.}\\
			\begin{align}\label{besov-estimate-gammah-stable-derivees_temps_esp_sensi_holder_esp}
				&\left\Vert \Gamma^h(0,x,\tau_s^h,\cdot)\left[\partial_t^\theta\nabla_y p_{\alpha} (t-\sm{r} ,y-\cdot) -\partial_t^\theta\nabla_y p_{\alpha} (t-\textcolor{black}{r},y'-\cdot) \right] \right\Vert_{ \B_{p',q'}^{-\beta}} \nonumber \\& \qquad\lesssim |y-y'|^\rho \frac{\bar p_{\alpha}(t,y-x) }{(t-s)^{\frac{1}{\alpha}+\theta+\frac{\rho}{\alpha}}} t^{\frac{\beta}{\alpha}}\left[\frac{1}{s^{\frac{d }{\alpha  p}}}+ \frac{1}{(t-s)^{\frac{d }{\alpha  p}}} \right] \left[%1+
				 \frac{t^{\frac{\zeta}{\alpha}}}{s^{\frac{\zeta }{\alpha}}}+\frac{t^{\frac{\zeta}{\alpha}}}{(t-s)^{\frac{\zeta }{\alpha}}}  \right] .
			\end{align}
			
			\item $\forall h\leq s \leq \tau_t^h-h$,  $\forall (x,y)\in (\R^d)^2$, $\forall \zeta \in (-\beta,\textcolor{black}{-\beta+\gamma})$,
			\begin{align}\label{besov-estimate-gammah-stable-PERTURB_DRIFT}
				&\left\| \Gamma^h(0,x,\tau_s^h,\cdot)\Big(\nabla_y p_\alpha(t-\tau_s^h,y-\cdot) -\nabla_y  p_\alpha (t-\tau_s^h,y-(\cdot+ \int_{\tau_s^h}^{s}\mathfrak b_h(u,\cdot)\d u)) \Big)\right \|_{\B_{p',q'}^{-\beta}}\notag\\
				& \qquad\lesssim  \bar p_{\alpha}(t,y-x)  \frac{h^{\frac{\gamma-\beta}{\alpha}}   
				}{(t-\tau_s^h)^{\frac 1 \alpha}}t^{\frac{\beta}{\alpha}}\left[\frac{1}{(\tau_s^h)^{\frac{d}{\alpha p}}} + \frac{1}{(t-\tau_s^h)^{\frac{d}{\alpha p}}}\right]\left[%1+
				\frac{t^{\frac \zeta\alpha}}{(\tau_s^h)^\frac\zeta\alpha}+\frac{t^{\frac \zeta\alpha}}{(t-\tau_s^h)^\frac\zeta\alpha} \right].
			\end{align}
			\item $ \forall h\leq s \leq \tau_t^h-h$,  $\forall (x,y,y')\in (\R^d)^3,\ |y-y'|\le t^{\frac 1\alpha}$, $\forall \zeta\in \textcolor{black}{(-\beta,-\beta+\gamma)},\rho \in (-\beta,1], \forall \lambda\in [0,1]$,
			\begin{align}\label{besov-estimate-gammah-stable-PERTURB_DRIFT_SENSI_HOLDER}
				&\left\| \Gamma^h(0,x,\tau_s^h,\cdot)\Big(\nabla_y^2  p_\alpha (t-\tau_s^h,y-(\cdot+ \lambda \int_{\tau_s^h}^{s}\mathfrak b_h(u,\cdot)\d u))- \nabla_y^2  p_\alpha (t-\tau_s^h,y'-(\cdot+ \lambda \int_{\tau_s^h}^{s}\mathfrak b_h(u,\cdot)\d u))\Big)\right.\notag\\
				&\qquad\left.\times\int_{\tau_s^h}^{s}\mathfrak b_h(u,\cdot)\d u \right \|_{\B_{p',q'}^{-\beta}}\notag\\
				& \qquad\lesssim  |y-y'|^\rho \bar p_{\alpha}(t,y-x)  \frac{h^{\frac{\gamma-\beta}{\alpha}}    
				} {(t-\tau_s^h)^{\frac 1 \alpha+\frac\rho\alpha}}t^{\frac{\beta}{\alpha}}\left[\frac{1}{(\tau_s^h)^{\frac{d}{\alpha p}}} + \frac{1}{(t-\tau_s^h)^{\frac{d}{\alpha p}}}\right]\left[ \frac{t^{\frac \zeta\alpha}}{(\tau_s^h)^\frac\zeta\alpha}+\frac{t^{\frac \zeta\alpha}}{(t-\tau_s^h)^\frac\zeta\alpha} \right].
			\end{align}
	\end{itemize}
	\end{lemma}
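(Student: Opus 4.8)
`\textbf{Proof strategy for the estimates of Lemma \ref{lemma-besov-estimates-palpha}.}`

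\textbf{Plan of proof.} All the bounds in Lemma~\ref{lemma-besov-estimates-palpha} share the same shape: they control the $\B^{-\beta}_{p',q'}$-norm (or $\B^{-\beta+j\gamma}_{p',q'}$-norm) of a product $\phi(\cdot)\,\psi(\cdot)$, where $\phi$ is a density-type function centred at $x$ on the time scale $s$ --- one of $\bar p_\alpha(s,x-\cdot)$, $\Gamma(0,x,s,\cdot)$, $\Gamma^h(0,x,\tau_s^h,\cdot)$ --- and $\psi$ is built from spatial derivatives of the stable kernel $p_\alpha(t-s,y-\cdot)$, possibly a spatial or time increment thereof, or a version with its argument shifted by the integrated mollified drift $\int_{\tau_s^h}^s\mathfrak b_h(u,\cdot)\,\d u$. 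The natural plan is therefore to isolate one ``master'' product estimate and then feed the appropriate $\psi$ into it for each item. The master estimate reads as follows: if $\phi$ obeys an Aronson bound $|\phi(w)|\lesssim\bar p_\alpha(s,w-x)$ and a spatial H\"older modulus $|\phi(w)-\phi(z)|\lesssim(|w-z|^\eta s^{-\eta/\alpha}\wedge1)(\bar p_\alpha(s,w-x)+\bar p_\alpha(s,z-x))$ for some admissible $\eta$, and $\psi$ obeys $|\psi(w)|\lesssim(t-s)^{-k/\alpha}\bar p_\alpha(t-s,w-y)$ together with a spatial H\"older modulus of exponent $\eta'$ with prefactor $(t-s)^{-(k+\eta')/\alpha}$, then $\|\phi\psi\|_{\B^\vartheta_{p',q'}}$ is bounded by the right-hand side of \eqref{besov-estimate-stable} with $-\beta$ replaced by $\vartheta$, $t^{\beta/\alpha}$ replaced by $t^{-\vartheta/\alpha}$, and $\zeta$ ranging in $(\vartheta,\min(\eta,\eta')]$. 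For $\phi=\bar p_\alpha$ one may take $\eta$ up to $1$ (by \eqref{derivatives-palpha}--\eqref{holder-space-palpha}); for $\phi=\Gamma$ or $\phi=\Gamma^h$ only $\eta<\gamma-\beta$ is available, by \eqref{aronson-gamma}--\eqref{holder-forward-gamma} and \eqref{aronson-gammah}--\eqref{holder-forward-gammah} of Proposition~\ref{prop-HK}, which is exactly the origin of the restricted $\zeta$-ranges $(-\beta+j\gamma,-\beta+\gamma)$ flagged in the footnotes.

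To prove the master estimate I would use the thermic characterization \eqref{HEAT_CAR} with $n=1$ (legitimate since $\vartheta/\alpha<1$ for every $\vartheta$ at play, as $-\beta<\alpha$ and $-\beta+\gamma<\alpha+\beta<\alpha$ under \eqref{serrin}). The low-frequency / $L^{p'}$-part is handled by $\|\phi\psi\|_{L^{p'}}\le(t-s)^{-k/\alpha}\|\bar p_\alpha(s,\cdot-x)\bar p_\alpha(t-s,\cdot-y)\|_{L^{p'}}$ and \eqref{p-q-convo} with $\ell=p$, which already yields $(t-s)^{-k/\alpha}\bar p_\alpha(t,x-y)[s^{-d/(\alpha p)}+(t-s)^{-d/(\alpha p)}]$. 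For the thermic part, write for $v\in(0,1]$, using the zero-mean property of $\partial_v p_\alpha(v,\cdot)$,
\[
[\partial_v p_\alpha(v,\cdot)\star(\phi\psi)](z)=\int\partial_v p_\alpha(v,z-w)\big(\phi(w)\psi(w)-\phi(z)\psi(z)\big)\d w,
\]
split the increment as $(\phi(w)-\phi(z))\psi(w)+\phi(z)(\psi(w)-\psi(z))$, and combine the H\"older moduli of $\phi$ and $\psi$ with $|\partial_v p_\alpha(v,z-w)|\lesssim v^{-1}\bar p_\alpha(v,z-w)$ and the moment bound \eqref{spatial-moments}. Taking the $L^{p'}$-norm in $z$ via \eqref{APPR_CONV_PROP} and \eqref{p-q-convo}, then the $L^{q'}(\d v/v)$-norm of $v^{1-\vartheta/\alpha}\|\cdot\|_{L^{p'}}$, and distinguishing the regimes $v$ versus $s$, $v$ versus $t-s$, and $|x-y|$ versus $t^{1/\alpha}$, one recovers the factor $t^{-\vartheta/\alpha}$ together with the two time-weight brackets. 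The free parameter $\zeta$ precisely book-keeps these regimes: raising $\zeta$ trades a power of $\min(s,t-s)$ against a power of $t$.

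It remains to feed the correct $\psi$ into the master estimate. For \eqref{besov-estimate-stable}, \eqref{besov-estimate-gamma-and-stable} take $\psi=\nabla^k p_\alpha(t-s,y-\cdot)$ and use \eqref{derivatives-palpha}, \eqref{holder-space-palpha}; for the forward-spatial increments \eqref{besov-estimate-gamma-and-stable_NO_NORM_Holder}, \eqref{besov-estimate-stable-derivees_temps_esp_sensi_holder_esp}, \eqref{besov-estimate-gammah-stable-derivees_temps_esp_sensi_holder_esp} take $\psi=\nabla^k p_\alpha(t-s,y-\cdot)-\nabla^k p_\alpha(t-s,y'-\cdot)$ and apply \eqref{holder-space-palpha} twice to harvest the $|y-y'|^\rho(t-s)^{-\rho/\alpha}$ gain; for the time increment \eqref{besov-estimate-gammah-stable-sensi-holder-time} take $\psi=\nabla_y p_\alpha(t-s,y-\cdot)-\nabla_y p_\alpha(t-r,y-\cdot)$ and use \eqref{holder-time-palpha}. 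For \eqref{big-lemma-2}, decompose
\[
\tfrac{\nabla p_\alpha(t-s,w-\cdot)}{\bar p_\alpha(t,w-x)}-\tfrac{\nabla p_\alpha(t-s,y-\cdot)}{\bar p_\alpha(t,y-x)}=\tfrac{\nabla p_\alpha(t-s,w-\cdot)-\nabla p_\alpha(t-s,y-\cdot)}{\bar p_\alpha(t,w-x)}+\nabla p_\alpha(t-s,y-\cdot)\Big(\tfrac{1}{\bar p_\alpha(t,w-x)}-\tfrac{1}{\bar p_\alpha(t,y-x)}\Big),
\]
handling the first summand by the increment master estimate and the second by the scalar H\"older regularity of $1/\bar p_\alpha$. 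For the drift-perturbation terms \eqref{besov-estimate-gammah-stable-PERTURB_DRIFT}, \eqref{besov-estimate-gammah-stable-PERTURB_DRIFT_SENSI_HOLDER}, set $\xi(w):=\int_{\tau_s^h}^s\mathfrak b_h(u,w)\,\d u$ and expand by the fundamental theorem of calculus,
\[
\nabla p_\alpha(t-\tau_s^h,y-w)-\nabla p_\alpha(t-\tau_s^h,y-(w+\xi(w)))=\int_0^1\nabla^2 p_\alpha\big(t-\tau_s^h,y-(w+\lambda\xi(w))\big)\cdot\xi(w)\,\d\lambda;
\]
then pull out $\|\xi\|_{\B^{\rho}_{\infty,\infty}}$ through the product rule \eqref{PR}, bounded by $h^{(\gamma-\beta)/\alpha}\|b\|_{L^r-\B^\beta_{p,q}}$ up to a recoverable power of $t-\tau_s^h$ thanks to \eqref{CTR_PONCTUEL_BH_INT_HOLDER} and \eqref{CTR_PONCTUEL_BH_INT}, and apply the master estimate to $\psi=\nabla^2 p_\alpha(t-\tau_s^h,y-(\cdot+\lambda\xi(\cdot)))$ (and to its $y$-increment for \eqref{besov-estimate-gammah-stable-PERTURB_DRIFT_SENSI_HOLDER}), whose Aronson bound is obtained as in Lemma~\ref{Lemma_TRANS_SCHEME} and whose H\"older modulus follows from \eqref{holder-space-palpha} composed with the modulus of $w\mapsto w+\lambda\xi(w)$ given again by \eqref{CTR_PONCTUEL_BH_INT_HOLDER}. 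In the items where $\phi=\Gamma$ or $\phi=\Gamma^h$, one may equivalently factor $\phi=(\phi/\bar p_\alpha)\cdot\bar p_\alpha$ and invoke \eqref{PR} with the quotient bounds \eqref{ineq-density-diff}, \eqref{ineq-density-scheme}.

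The main obstacle is twofold. First, the lengthy regime analysis in the thermic part of the master estimate: after integrating the $v$-weight one must check that all the scattered powers of $s$, $t-s$, $t$ and $|x-y|$ collapse exactly into the two brackets $[s^{-d/(\alpha p)}+(t-s)^{-d/(\alpha p)}]$ and $[(t/s)^{\zeta/\alpha}+(t/(t-s))^{\zeta/\alpha}]$, and it is here that the cap $\eta<\gamma-\beta$ on the H\"older exponent of $\Gamma$ and $\Gamma^h$ (from \eqref{ineq-density-diff}, \eqref{ineq-density-scheme}) forces the restricted $\zeta$-ranges. Second, for \eqref{besov-estimate-gammah-stable-PERTURB_DRIFT}, \eqref{besov-estimate-gammah-stable-PERTURB_DRIFT_SENSI_HOLDER}, estimating the thermic part of $\|\Gamma^h(0,x,\tau_s^h,\cdot)\,\nabla^2 p_\alpha(t-\tau_s^h,y-(\cdot+\lambda\xi(\cdot)))\,\xi(\cdot)\|_{\B^{-\beta}_{p',q'}}$ requires simultaneously exploiting the spatial H\"older sensitivity \eqref{holder-space-palpha} of $p_\alpha$ and the spatial H\"older modulus \eqref{CTR_PONCTUEL_BH_INT_HOLDER} of the integrated mollified drift, while keeping track of the extra $(t-\tau_s^h)^{-1/\alpha}$ lost to the second derivative and recovered from the $h$-smallness of $\xi$.
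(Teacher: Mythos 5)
Your overall plan — a single "master" product estimate proven via the thermic characterization with a cancellation argument, diagonal/off-diagonal splitting in the thermic variable, and then specializing $\psi$ for each item — is essentially the paper's proof (see Sections \ref{PROOF_BIG_LEMME_2}–\ref{subsubsec-lemma-besov-3}). The decomposition of the increment as $(\phi(w)-\phi(z))\psi(w)+\phi(z)(\psi(w)-\psi(z))$, the split of the thermic integral at the level $t$, the use of \eqref{spatial-moments} and \eqref{p-q-convo} to harvest the $v$- and $(s,t-s)$-powers, and the origin of the restricted $\zeta$-ranges in the forward H\"older regularity of $\Gamma,\Gamma^h$ — all of this matches. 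Your decomposition of \eqref{big-lemma-2} into an increment of $\nabla p_\alpha$ plus a scalar H\"older-of-$1/\bar p_\alpha$ term is exactly what appears in the diagonal branch of the paper's proof (the paper additionally handles the off-diagonal regime $|w-y|>(t-s)^{1/\alpha}$ by the triangle inequality, which you implicitly need as well).

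The one place where your route genuinely deviates is \eqref{besov-estimate-gammah-stable-PERTURB_DRIFT}. You propose to expand via the fundamental theorem of calculus and then peel off $\|\xi\|_{\B^\rho_{\infty,\infty}}$ by the product rule \eqref{PR}, $\xi:=\int_{\tau_s^h}^s\mathfrak b_h(u,\cdot)\d u$, leaving the master estimate applied to $\psi=\nabla^2p_\alpha$. The paper does this only for \eqref{besov-estimate-gammah-stable-PERTURB_DRIFT_SENSI_HOLDER}; for \eqref{besov-estimate-gammah-stable-PERTURB_DRIFT} it runs the cancellation argument directly on the increment of $\mathfrak q^{s,t,h}_{x,y}$, which lets it pair the $\nabla^2p_\alpha$ factor with the \emph{$L^\infty$} bound \eqref{CTR_PONCTUEL_BH_INT} of $\xi$ rather than with its Besov $\B^\rho_{\infty,\infty}$-norm. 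This is not merely cosmetic: from \eqref{CTR_PONCTUEL_BH_INT}--\eqref{CTR_PONCTUEL_BH_INT_HOLDER} one has $\|\xi\|_{\B^\rho_{\infty,\infty}}\asymp h^{(\gamma+1-\beta-\rho)/\alpha}$ for $\rho>-\beta$, which, after converting $h^{(1-\rho)/\alpha}\le(t-\tau_s^h)^{(1-\rho)/\alpha}$ and absorbing the $(t-\tau_s^h)^{-2/\alpha}$ from $\nabla^2p_\alpha$, yields a bound with $(t-\tau_s^h)^{-(1+\rho)/\alpha}$ in place of the paper's $(t-\tau_s^h)^{-1/\alpha}$. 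So your route proves a strictly weaker version of \eqref{besov-estimate-gammah-stable-PERTURB_DRIFT}. The damage is limited — if you track this weaker bound through $\Delta_{52}$ in Section~\ref{subsec-proof-main-thm-supnorm} the singular integral still converges and the final exponent of $t$ stays positive under the running assumption $\gamma>\eps$ — but it does not reproduce the lemma as stated, and it costs an extra margin that the paper chooses not to waste. The paper's direct cancellation, which only invokes the H\"older modulus \eqref{CTR_PONCTUEL_BH_INT_HOLDER} of $\xi$ for the cross-difference $\int(\mathfrak b_h(u,z)-\mathfrak b_h(u,w))\d u$ and uses the cheaper $L^\infty$ bound of $\xi$ elsewhere, is the sharper option and is what one should do to get the advertised $(t-\tau_s^h)^{-1/\alpha}$.
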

\textcolor{black}{The bounds \eqref{besov-estimate-stable}, \eqref{big-lemma-2} 
can be found in Lemma 3 of \cite{Fit23}.
For the sake of completeness and in order to be self contained we provide a proof of \eqref{big-lemma-2}  below in Section \ref{PROOF_BIG_LEMME_2}}.
Equation \eqref{besov-estimate-gamma-and-stable} relies on the same proof as \eqref{big-lemma-2}. The other estimates are proved in Section \ref{subsec-lemma-besov}, and the approach therein would also readily give the previously mentioned bounds.\\	

\sm{\begin{remark}[About the time arguments in some of the previous inequalities]\label{REM_WITH_DISCR}\phantom{GRR}\\
Let us mention that, when $s\in [h,t) $, then the controls \eqref{besov-estimate-stable} and \eqref{besov-estimate-gamma-and-stable} still hold with the respective l.h.s. replaced by
$ \Vert\bar {p}_{\alpha} (\tau_s^h,x-\cdot) \nabla^k_y p_{\alpha} (t-s,y-\cdot) \Vert_{\B^{-\beta}_{p',q'}}$ and $\Vert\Gamma(0,x,\tau_s^h,\cdot) \nabla^k_y p_{\alpha} (t-s,y-\cdot) \Vert_{\B^{-\beta+j\gamma}_{p',q'}} $. This will be thoroughly used in the proofs below and is a simple consequence of the fact that,
 on the considered time interval, $s\asymp \tau_s^h$ 
    and the very same proof would lead to a r.h.s. with $\bar p_\alpha(t-s+\tau_s^h,y-x)\le \bar p_\alpha(t,y-x) $. Similarly, the resulting time singularities w.r.t. the first time argument would be in $(\tau_s^h)^{-\vartheta},\ \vartheta>0 $, which can as well be bounded by $s^{-\vartheta} $ again because of the equivalence.   
\end{remark}
}

\subsection{Singular integrals}
{\color{black}
Let us recall the definition of the Beta function, that is $B(1-\aa,1-\bb):= \int_0^1 s^{-\aa} (1-s)^{-\bb} \d s$, which converges  if $\aa<1, \bb<1$. By a change of variables it is easy to see that for $0\leq u<t$ we have
\begin{equation}\label{eq:Beta}
\int_u^t \frac{1}{(s-u)^{\aa}}\frac{1}{(t-s)^{\bb}} \d s =\int_0^1 \frac{1}{[(t-u)s']^\aa} \frac{1}{[(t-u)(1-s')]^\bb} (t-u) \d s' = (t-u)^{1-\aa-\bb} B(1-\aa, 1-\bb).
\end{equation}
We now use this to prove a lemma on three singular integrals, quantities which  are recurring in the computations below. 

\begin{lemma}\label{lm:sing_int}
Let $0<v<t$, $r'\geq 1$ and $\aa,\bb,\cc,\dd \in \mathbb R$. Let $r$ be the conjugate of $r'$, that is $\frac1r+ \frac1{r'}=1$.
\begin{itemize}
\item  For  $r'(\aa+\cc+\dd)<1$ and $r'(\bb+\cc+\dd)<1$ we have
\begin{equation}\label{eq:Beta2-5}
\left (\int_0^t  \frac{1}{s^{\aa r'}}\frac{1}{(t-s)^{\bb r'}} \left [\frac{1}{s^{\cc}} + \frac{1}{(t-s)^{\cc}}  \right]^{r'}  \left [\frac{1}{s^{\dd}} + \frac{1}{(t-s)^{\dd}}  \right]^{r'} \d s \right)^{\frac1{r'}}
\lesssim t^{1-\frac1{r} -\aa-\bb-\cc-\dd}.
\end{equation} 

\item For $r'(\aa+\bb)<1$  we have
\begin{equation}\label{eq:Beta1}
\left (\int_0^v  \frac{1}{s^{\aa r'}}  \left [\frac{1}{s^{\bb}} + \frac{1}{t^{\bb}}  \right]^{r'} \d s \right)^{\frac1{r'}} 
\lesssim v^{1-\frac1{r}-\aa-\bb} + t^{-\bb } v^{1-\frac1{r} -\aa},
\end{equation} 
and 
\begin{equation}\label{eq:Beta6}
\left (\int_v^t  \frac{1}{(t-s)^{\aa r'}}  \left [\frac{1}{t^{\bb}} + \frac{1}{(t-s)^{\bb}}  \right]^{r'} \d s \right)^{\frac1{r'}} 
\lesssim (t-v)^{1-\frac1{r}-\aa-\bb} + t^{-\bb } (t-v)^{1-\frac1{r} -\aa}.
\end{equation} 
\end{itemize}
\end{lemma}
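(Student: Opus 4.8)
The plan is to reduce all three estimates to the Beta-function identity \eqref{eq:Beta} by expanding the bracketed terms and treating each resulting summand separately, using only the elementary fact that a sum of positive terms raised to a power $r'\geq 1$ is comparable (up to a constant depending only on $r'$ and the number of terms) to the sum of the $r'$-th powers — i.e. $(a_1+\dots+a_k)^{r'}\asymp a_1^{r'}+\dots+a_k^{r'}$ for $a_i\geq 0$.

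For \eqref{eq:Beta2-5}, I would first apply this equivalence to the two brackets, so that the integrand is bounded by a sum of four terms, each of the form $s^{-r'(\aa+\cc_1+\dd_1)}(t-s)^{-r'(\bb+\cc_2+\dd_2)}$ where $\cc_1\in\{\cc,0\}$, $\cc_2\in\{0,\cc\}$ with $\cc_1+\cc_2=\cc$, and similarly for $\dd$. Each such term, under the stated hypotheses $r'(\aa+\cc+\dd)<1$ and $r'(\bb+\cc+\dd)<1$, has both exponents strictly below $1$, so \eqref{eq:Beta} applies with $u=0$ and gives $\int_0^t s^{-\aa' r'}(t-s)^{-\bb' r'}\d s \lesssim t^{1-r'(\aa'+\bb')}$ with $\aa'+\bb'=\aa+\bb+\cc+\dd$ in every case. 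Summing the four contributions, taking the $1/r'$ power, and using $1-\frac1{r'}=\frac1r$ wait — actually $1/r' = 1 - 1/r$, so $(t^{1-r'(\aa+\bb+\cc+\dd)})^{1/r'} = t^{1/r' - (\aa+\bb+\cc+\dd)} = t^{1-1/r-\aa-\bb-\cc-\dd}$, which is exactly the claimed bound.

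For \eqref{eq:Beta1} and \eqref{eq:Beta6}, the same strategy applies but the upper (resp. lower) limit of integration is $v$ (resp. $v$) rather than $t$, and one of the two "scales" in the bracket is the \emph{fixed} quantity $t$ rather than the integration variable, so it comes out of the integral. Concretely, for \eqref{eq:Beta1} I expand $[s^{-\bb}+t^{-\bb}]^{r'}\asymp s^{-\bb r'}+t^{-\bb r'}$; the first piece gives $\int_0^v s^{-r'(\aa+\bb)}\d s \lesssim v^{1-r'(\aa+\bb)}$ (finite since $r'(\aa+\bb)<1$), whose $1/r'$ power is $v^{1-1/r-\aa-\bb}$; the second piece gives $t^{-\bb r'}\int_0^v s^{-\aa r'}\d s$, and here one only needs $r'\aa<1$, which follows from $r'(\aa+\bb)<1$ when $\bb\geq 0$ — this is a minor point to check, or one simply notes that if $r'\aa\geq 1$ the integral over $[0,v]$ diverges and the estimate is vacuous, but in the intended applications $\bb\geq 0$. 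Taking the $1/r'$ power of $t^{-\bb r'} v^{1-\aa r'}$ yields $t^{-\bb}v^{1-1/r-\aa}$. Estimate \eqref{eq:Beta6} is identical after the change of variables $s\mapsto t-s$ mapping $[v,t]$ to $[0,t-v]$.

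The main (and only real) obstacle is bookkeeping: one must be careful that the combinatorial explosion of terms from expanding the brackets always lands, under the stated constraints, in the regime where \eqref{eq:Beta} is applicable (both exponents $<1$), and that the resulting powers of $t$ (resp. $v$, $t-v$) all collapse to the single claimed exponent. There is no analytic difficulty beyond \eqref{eq:Beta} itself; the statement is essentially a convenient packaging of repeated Beta-integral computations, and the proof is a short verification that the hypotheses $r'(\aa+\cc+\dd)<1$, $r'(\bb+\cc+\dd)<1$ (resp. $r'(\aa+\bb)<1$) are precisely what is needed for every summand.
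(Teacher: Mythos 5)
Your proof is correct and follows essentially the same route as the paper's: expand the brackets via $(x+y)^{r'}\lesssim x^{r'}+y^{r'}$, apply the rescaled Beta identity \eqref{eq:Beta} to each resulting summand, and take the $1/r'$-th power. Your remark that the second piece of \eqref{eq:Beta1} implicitly requires $r'\aa<1$ (and hence $\bb\ge 0$, as in all the paper's applications) is a legitimate caveat that the paper itself glosses over — the analogous implicit positivity of $\cc,\dd$ is also what makes the paper's claim ``clearly all integrals converge'' in the proof of \eqref{eq:Beta2-5} actually hold.
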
  
\begin{proof}
The proofs of \eqref{eq:Beta1} and \eqref{eq:Beta6} follow by using the simple inequality (for positive real numbers $x,y$) $(x+y)^{r'} \leq 2^{r'-1}(x^{r'}+ y^{r'})$ and then computing the integrals, which converge exactly under the conditions stated. 

We now prove \eqref{eq:Beta2-5}. Using the same inequality as before we get
\begin{align*}
&\left (\int_0^t   \frac{1}{s^{\aa r'}}\frac{1}{(t-s)^{\bb r'}} \left [\frac{1}{s^{\cc}} + \frac{1}{(t-s)^{\cc}}  \right]^{r'}  \left [\frac{1}{s^{\dd}} + \frac{1}{(t-s)^{\dd}}  \right]^{r'} \d s \right)^{\frac1{r'}}\\
& \qquad \lesssim
\left (\int_0^t  \frac{1}{s^{(\aa+\cc+\dd)r'}}\frac{1}{(t-s)^{\bb r'}}  \d s \right)^{\frac1{r'}}
+ \left (\int_0^t  \frac{1}{s^{(\aa+\cc)r'}}\frac{1}{(t-s)^{(\bb+\dd)r'}}  \d s \right)^{\frac1{r'}}\\ 
& \quad \qquad+ \left (\int_0^t  \frac{1}{s^{(\aa+\dd)r'}}\frac{1}{(t-s)^{(\bb+\cc)r'}}  \d s \right)^{\frac1{r'}}
+ \left (\int_0^t  \frac{1}{s^{\aa r'}}\frac{1}{(t-s)^{(\bb+\cc+\dd)r'}}  \d s \right)^{\frac1{r'}}\\
&  \qquad =
\left ( t^{1- (\aa+\cc+\dd)r' -\bb r'} B(1- (\aa+\cc+\dd)r'), 1-\bb r' ) \right)^{\frac1{r'}}\\
& \quad \qquad
+ \left ( t^{1- (\aa+\cc)r' -(\bb+\dd)r'} B(1- (\aa+\cc)r'), 1-(\bb+\dd)r' ) \right)^{\frac1{r'}}\\ 
& \quad \qquad
+ \left ( t^{1- (\aa+\dd)r' -(\bb+\cc)r'} B(1- (\aa+\dd)r'), 1-(\bb+\cc)r' ) \right)^{\frac1{r'}}\\
& \quad \qquad
+ \left ( t^{1- \aa r' -(\bb+\cc+\dd)r'} B(1- \aa r', 1-(\bb+\cc+\dd)r' ) \right)^{\frac1{r'}}\\
&  \qquad \lesssim  t^{\frac1{r'} -\aa-\bb-\cc-\dd},
\end{align*}
having used \eqref{eq:Beta} for the equality. Clearly the conditions $r'(\aa+\cc+\dd)<1$ and $r'(\bb+\cc+\dd)<1$ imply that all integrals converge. 
\end{proof}
}

\section{Proof of Theorem \ref{thm-besov}}\label{SEC_PROOF_THM}
{\color{black} 
This whole section is devoted to the proof of Theorem \ref{thm-besov} and the details are worked out on four subsections. Before heading straight to the technical calculations and estimates, we illustrate the overall strategy.
The convergence rate that we want to prove can be rewritten as 
\[
f(t,x, \cdot):=\frac{|\Gamma^h(0,x,t,\cdot)-\Gamma(0,x,t,\cdot)|}{\bar p_\alpha (t,\cdot-x)}\leq C h^{\frac{\gamma-\varepsilon}{\alpha}}.
\]
We start by decomposing in Section \ref{ssc:decomposition} the error $ \Gamma^h(0,x,t,y)-\Gamma(0,x,t,y)$  in \sm{several} terms using the Duhamel representation for the densities, see \eqref{DECOUP_ERR} and Figure 1. \sm{All the considered terms, but one, are associated with some time or space sensitivities, mollification error, or some small time interval. The remaining term is somehow associated with a Gronwall type procedure and will make an appropriate Besov norm of $f$ appear}. We then  look for bounds of the difference $ \Gamma^h(0,x,t,y)-\Gamma(0,x,t,y)$ that are uniform in $y$, and we do so in Section \ref{subsec-proof-main-thm-supnorm} by bounding each term in the decomposition separately. \sm{Due to product rules associated with a distribution with regularity index $\beta<0 $, the Gronwall type term, $\Delta_4$ in the expansion \eqref{DECOUP_ERR} below},  can \sm{naturally}%\footnote{\sm{C'era scritto only. Non è così. Infatti nell'articolo con Benjamin riusciamo a considerare altre norme. Questa è una scelta.}} 
be bounded by a quantity depending on the Besov norm of $f(t,x,\cdot)$ in the space $\B_{\infty,\infty}^\rho$ for some $\rho\in(-\beta,1)$. Notice also that, for $\rho$ \sm{in the indicated range}, one has 
\begin{align}\label{eq:Gamma-Gammah}
\left \|f(t,x,\cdot)\right\|_{\B_{\infty,\infty}^\rho}%\\
\asymp &\left \| f(t,x,\cdot) \right\|_{L^\infty}+\mathcal H_\rho\left(f(t,x,\cdot)\right),
\end{align}
where the  $\rho$-H\"older modulus $\mathcal H_\rho(\cdot)$ is defined by
\[
\mathcal H_\rho \left ( f(t, x, \cdot) \right)
	:=  \sup_{y\neq y'\in \R^d} \frac{\left| f(t, x,y) - f(t, x,y')\right|}{|y-y'|^\rho}.
\]
\sm{In order to be as close as possible to the threshold imposed by the product rules, which will in turn lead to the largest convergence rate, we set from now on}
\begin{equation}
\label{CHOICE_FOR_RHO}
\rho:=-\beta+\frac \varepsilon2,
\end{equation}
where $\varepsilon>0 $ can be chosen as small as desired. \sm{In order to close} the proof argument, we must  also control the H\"older modulus of $f(t,x,\cdot) $. To this end, write: 
\begin{align}
&\left|f(t,x,y)-f(t,x,y')\right|\notag\\
\sm{\le} &\left|\frac{(\Gamma-\Gamma^h)(0,t,x,y)-(\Gamma-\Gamma^h)(0,t,x,y')}{\bar p_\alpha(t,y-x)}\right|+\left|\Big[\frac{1}{\bar p_\alpha(t,y'-x)}-\frac{1}{\bar p_\alpha(t,y-x)} \Big](\Gamma-\Gamma^h)(0,t,x,y')\right|\notag\\
\lesssim &\left|\frac{(\Gamma-\Gamma^h)(0,t,x,y)-(\Gamma-\Gamma^h)(0,t,x,y')}{\bar p_\alpha(t,y-x)}\right|+\frac{|y-y'|^\rho}{t^{\frac\rho\alpha}}\frac{\left|(\Gamma-\Gamma^h)(0,t,x,y')\right|}{\bar p_\alpha(t,y'-x)}\notag\\
\lesssim &\left|\frac{(\Gamma-\Gamma^h)(0,t,x,y)-(\Gamma-\Gamma^h)(0,t,x,y')}{\bar p_\alpha(t,y-x)}\right|+\frac{|y-y'|^\rho}{t^{\frac\rho\alpha}}\left\|\frac{(\Gamma-\Gamma^h)(0,t,x,\cdot)}{\bar p_\alpha(t,\cdot-x)}\right\|_{L^\infty},
\label{THE_DECOUP_FOR_HOLDER_MOD_E}
\end{align}
having used Lemma \ref{lemma-stable-sensitivities} for the first inequality. The second term in  \eqref{THE_DECOUP_FOR_HOLDER_MOD_E} can be bounded using the $L^\infty$ bounds already discussed, while 
the first term in \eqref{THE_DECOUP_FOR_HOLDER_MOD_E}, or in fact of the difference $ (\Gamma-\Gamma^h)(0,t,x,y)-(\Gamma-\Gamma^h)(0,t,x,y')$,
is the content of Section \ref{subsec-proof-main-thm-holder-mod}.  
%\sm{\sout{However, while obtaining those bounds, there is one term ($\Delta_2$) that will give rise to a singularity in time, namely of the form}} \footnote{\sm{Non concordo: $\Delta_2$ è relativo alla sensitività in tempo forward. Preferisco indicare che la normalizzazione naturale, per avere una quantità limitata si legge già da \eqref{THE_DECOUP_FOR_HOLDER_MOD_E}}}
%\[
%\sm{\sout{|(\Gamma-\Gamma^h)(0,t,x,y)-(\Gamma-\Gamma^h)(0,t,x,y')| \lesssim \bar p_\alpha(t,y-x) |y-y'|^\rho t^{\frac\beta\alpha} h^{\frac{\gamma-\eps}\alpha}.}}
%\] 
\sm{The previous r.h.s. \eqref{THE_DECOUP_FOR_HOLDER_MOD_E} exhibits a time singularity. This leads  us to define, from \eqref{eq:Gamma-Gammah}, the normalized} quantity 
\begin{align}\label{DEF_GH}
g_{h,\rho}(t):=\left \|\frac{(\Gamma-\Gamma^h)(0,x,t,\cdot)}{\bar p_\alpha(t,\cdot-x)}\right\|_{L^\infty}+t^{\frac\rho\alpha}\mathcal H_\rho\left(\frac{(\Gamma-\Gamma^h)(0,x,t,\cdot)}{\bar p_\alpha(t,\cdot-x)}\right).
\end{align}
Thanks to the heat kernel estimates \eqref{aronson-gamma}, \eqref{holder-forward-gamma} and \eqref{aronson-gammah}, \eqref{holder-forward-gammah}, we  know that $g_{h,\rho}(t)$ is finite. We mention that the additional normalization in $t^{\frac \rho\alpha} $ for the H\"older modulus is the natural one associated with the spatial $\rho $-H\"older modulus of continuity of the stable heat kernel. %\sm{Da togliere possibilmente, l'ultima frase.}

\vspace{10pt}
Thus, the overall strategy of the proof will be to decompose the error (Section \ref{ssc:decomposition}) and  control both its $L^\infty$-norm (Section \ref{subsec-proof-main-thm-supnorm}) and its H\"older modulus (Section \ref{subsec-proof-main-thm-holder-mod}) with a power of the discretization step $h$ plus something depending on $g_{h,\rho}(t)$. This in turn will allow us to  perform a circular argument (Section \ref{sec:circular}), %\textcolor{red}
{assuming w.l.o.g. that $T$ is small enough}, by plugging those bounds in \eqref{DEF_GH} and to conclude since $f(t,x,y)\lesssim g_{h, \rho}(t)$. 

%\textcolor{red}
{Once the result holds for a small enough time $T_0$, it is direct to extend it to an arbitrary time $t$ not necessarily small, by using the Chapman-Kolmogorov decomposition. Indeed, assuming w.l.o.g. that  $t=mT_0,\ m>1 $ (which is always possible up to a modification of $T_0$), and setting $ T_{0}^{(i)}=iT_0,\ i\in \{0,\cdots,m\}$, $z_0=x, z_m=y $, one writes (with the convention that $\prod_{i=j}^{j-1}=1,\ j\in \N$):
\begin{align*}
(\Gamma^h-\Gamma)(0,x,t,y)=&\int_{(\R^d)^{m-1}} \Big[\prod_{i=0}^{m-1}\Gamma^h(T_0^{(i)},z_i,T_{0}^{(i+1)},z_{i+1})-\prod_{i=0}^{m-1}\Gamma(T_0^{(i)},z_i,T_{0}^{(i+1)},z_{i+1})\Big]\prod_{i=1}^{m-1} dz_i\\
=&\sum_{\ell=1}^m\int_{(\R^d)^{m-1}} \Big[\prod_{i=0}^{\ell-2}\Gamma(T_0^{(i)},z_i,T_{0}^{(i+1)},z_{i+1})\Big((\Gamma^h-\Gamma)(T_0^{(\ell-1)},z_{\ell-1},T_0^{(\ell)},z_\ell)\Big)\\
&\times \prod_{i=\ell}^{m-1}\Gamma^h(T_0^{(i)},z_i,T_{0}^{(i+1)},z_{i+1})\Big]\prod_{i=1}^{m-1} dz_i.
% -\prod_{i=0}^{m-1}\Gamma(T_0^{(i)},z_i,T_{0}^{(i+1)},z_{i+1})\Big]\prod_{i=1}^{m-1} dz_i
%-\int_{(\R^d)^{N_0(t)}} \prod_{i=0}^{m-1}\Gamma(T_0^{(i)},z_i,T_{0}^{(i+1)},z_{i+1})\prod_{i=1}^{m-1} dz_i\\
%&-\int_{(\R^d)^{N_0(t)}} \Gamma(0,x,T_{0}^{(1)},z_1)\Gamma(T_{0}^{(1)},z_1,T_{0}^{(2)},z_2)\\
%&\hspace*{1cm}\cdots \Gamma(T_{0}^{(N_0(t)-1)},z_{N_0(t)-1} , T_0^{(N_0(t))},z_{N_0(t)})\Gamma(T_{0}^{(N_0(t))},z_{N_0(t)},t,y)\prod_{i=1}^{N_0(t)} dz_i\\
%=&\int_{(\R^d)^{N_0(t)}} (\Gamma^h-\Gamma)(0,x,T_{0}^{(1)},z_1)\Gamma^h(T_{0}^{(1)},z_1,T_{0}^{(2)},z_2)\\
%&\hspace*{1cm}\cdots \Gamma^h(T_{0}^{(N_0(t)-1)},z_{N_0(t)-1} , T_0^{(N_0(t))},z_{N_0(t)})\Gamma^h(T_{0}^{(N_0(t))},z_{N_0(t)},t,y)\prod_{i=1}^{N_0(t)} dz_i\\
%&+\int_{(\R^d)^{N_0(t)}} \Gamma(0,x,T_{0}^{(1)},z_1)\Big[\Gamma^h(T_{0}^{(1)},z_1,T_{0}^{(2)},z_2)\\
%&\hspace*{1cm}\cdots \Gamma^h(T_{0}^{(N_0(t)-1)},z_{N_0(t)-1} , T_0^{(N_0(t))},z_{N_0(t)})\Gamma^h(T_{0}^{(N_0(t))},z_{N_0(t)},t,y)\\
%&-\Gamma(T_{0}^{(1)},z_1,T_{0}^{(2)},z_2)
%%&\hspace*{1cm}
%\cdots \Gamma(T_{0}^{(N_0(t)-1)},z_{N_0(t)-1} , T_0^{(N_0(t))},z_{N_0(t)})\Gamma(T_{0}^{(N_0(t))},z_{N_0(t)},t,y)\Big]\\ 
%&\times \prod_{i=1}^{N_0(t)} dz_i\\
%=&\sum_{i=1}^{N_0(t)}\int_{(\R^{d})^{N_0(t)}}\prod_{j=1}^{i-1}\Gamma(T_0^{(j-1)},z_{j-1},T_0^{(j)},z_j) (\Gamma^h-\Gamma)(T_0^{(i-1)},z_{i-1},T_0^{(i)},z_i)\\
%&\times \prod_{j=i+1}^{N_0(t)}\Gamma^h(T_0^{(j-1)},z_{j-1},T_0^{(j)},z_j)
\end{align*}
The heat kernel estimates \eqref{aronson-gamma} and \eqref{aronson-gammah} as well as the error analysis control \eqref{BD_THM-besov} in short time are invariant under time shift. This readily gives the statement for the arbitrary time $t$ considered.
} 
}

We conclude this section with some heuristics about power counting which give some intuition about the final rate we obtain. Let us consider the special case $\alpha=2^- $, $\beta=-1/2^+ $, $p=q=r=\infty $. Going back the Duhamel representations \eqref{duhamel-Diff} and \eqref{duhamel-scheme}, we see that for the spatial integrals to make sense, it is necessary to consider, by duality (see eq. \eqref{dual-ineq}), the $B_{1,1}^{1/2^-} $ norm of $\nabla p_\alpha(t-r,\cdot) $ yielding already a time singularity of order $(t-r)^{-(1/2^++1/4^-)}=(t-r)^{-3/4^-} $. Recalling from the previous description of the overall strategy that it is also necessary to consider the $\rho=1/2^- $-Hölder modulus of the normalized error, we see the resulting time singularity would read as $(t-r)^{-1^-} $, which leads to an almost vanishing convergence rate, corresponding to the remaining space to keep integrability. This matches what was previously stated in Theorem \ref{thm-besov} and Remark \ref{REM_AFTER_THM}.

\subsection{Decomposition of the error}\label{ssc:decomposition}
{\color{black}We decompose the error into several terms that will be bounded separately. From the Duhamel expansions stated in Proposition \ref{prop-main-estimates-D} we have the following:}
\begin{align}
	& \Gamma^h(0,x,t,y)-\Gamma(0,x,t,y)\notag\\
	& \qquad =\int_0^h \E_{0,x} \left[b (s,X_s)\cdot \nabla_y p_\alpha (t-s,y-X_s)-\mathfrak{b}_h(s,{x})\cdot\nabla_y p_\alpha (t-s,y-X_s^h) \right] \d s\notag\\
	& \qquad \qquad + \int_h^{\tau_t^h-h} \E_{0,x} \left[ b(s,X_s)\cdot \nabla_y p_\alpha(t-s,y-X_s)-b(s,X_{\tau_s^h})\cdot \nabla_y  p_\alpha (t-s,y-X_{\tau_s^h})	\right] \d s\notag\\
	& \qquad \qquad + \int_h^{\tau_t^h-h} \E_{0,x} \left[ b(s,X_{\tau_s^h})\cdot \nabla_y p_\alpha(t-s,y-X_{\tau_s^h})-%P_{s-\tau_s^h}^\alpha{b}
	\mathfrak b_h(s,X_{\tau_s^h})\cdot \nabla_y p_\alpha(t-s,y-X_{\tau_s^h})\right] \d s \nonumber\\
	& \qquad \qquad + \int_h^{\tau_t^h-h} \E_{0,x} \bigg[ \mathfrak b_h%P_{s-\tau_s^h}^\alpha b
	(s,X_{\tau_s^h})\cdot \nabla_y p_\alpha (t-s,y-X_{\tau_s^h})
	-%P_{s-\tau_s^h}^\alpha b
	\mathfrak b_h(s,X_{\tau_s^h}^h)\cdot \nabla_y  p_\alpha (t-s,y-X_{\tau_s^h}^h) \bigg] \d s\notag\\
	&\qquad \qquad + \int_h^{\tau_t^h-h} \E_{0,x} \left[\mathfrak{b}_h(s,X_{\tau_s^h}^h)\cdot \left( \nabla_y p_\alpha(t-s,y-X_{\tau_s^h}^h)-\nabla_y  p_\alpha (t-s,y-X_s^h)\right)\right]\d s\notag\\
	%& \qquad \qquad+\int_h^{\tau_t^h-h} \E_{0,x} \left[\mathfrak{b}_h(U_{\tau_s^h/h},X_{\tau_s^h}^h)\cdot \left( \nabla_y p_\alpha(t-U_{\tau_s^h/h},y-X_{\tau_s^h}^h)-\nabla_y  p_\alpha (t-s,y-X_{\tau_s^h}^h)\right)\right]\d s\notag\\
	&\qquad\qquad+\int_{\tau_t^h-h}^t \E_{0,x} \left[ b(s,X_s)\cdot \nabla_y p_\alpha (t-s,y-X_s)-\mathfrak{b}_h(s,X_{\tau_s^h}^h)\cdot\nabla_y p_\alpha (t-s,y-X_s^h)  \right] \d s\notag\\
	&\qquad =:\big(\Delta_1 + \Delta_2 + \Delta_3 + \Delta_4 + \Delta_5+\Delta_6\big)(0,x,t,y).\label{DECOUP_ERR}
\end{align}
{\color{black}A visual depiction of the error decomposition can be found in Figure \ref{fig1} below.}
%\newpage
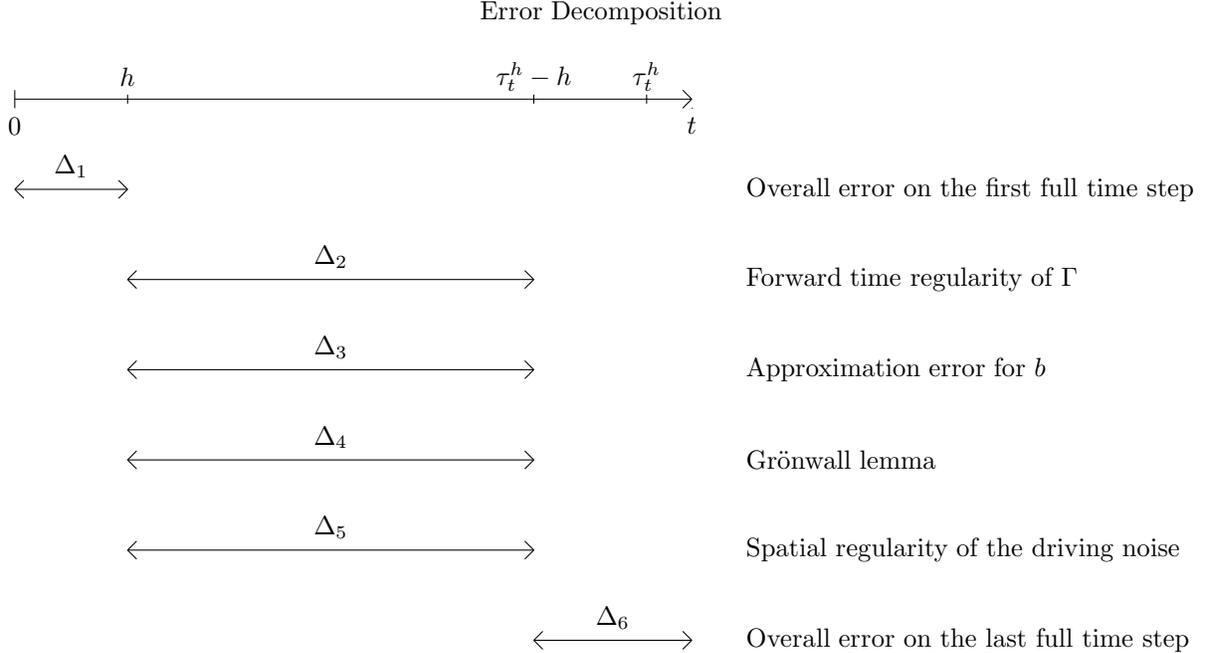
\begin{figure}[h] \label{fig1}
	\begin{centering}
{\color{black} Error Decomposition \\ \ \\}
		\begin{tikzpicture}[scale=6]
			\draw (1.5,-0.02)--(1.5,-0.02) node[anchor=north] {$t$};
			\draw (0,0.02)--(0,-0.02) node[anchor=north] {0};
			\draw (1.48,0.02)--(1.5,0) ;
			\draw (1.48,-0.02)--(1.5,0) ;		
			\draw (0,0)--(1.5,0);
			
			\node at (0.25,0.05) {$h$};
			\draw (0.25,0.01)--(0.25,-0.01) {};
			
			\node at (1.15,0.05) {$\tau_t^h-h$};
			\draw (1.15,0.01)--(1.15,-0.01) {};
			
			\node at (1.4,0.05) {$\tau_t^h$};
			\draw (1.4,0.01)--(1.4,-0.01) {};
			
%			\draw (0,0)--(1.5,0);
%			\draw (0,-0.2)--(1.5,-0.2);
%			\draw (0,-0.2)--(0.02,-0.18);
%			\draw (0,-0.2)--(0.02,-0.22);
%			\draw (1.5,-0.2)--(1.48,-0.18);
%			\draw (1.5,-0.2)--(1.48,-0.22);
%			\node [draw=none] at (0.75,-0.15) {$\Delta_1$} ;
%			\node [draw=none,anchor=west] at (1.6,-0.2) {Grönwall lemma} ;

			\draw (0,-0.2)--(0.25,-0.2);
			\draw (0,-0.2)--(0.02,-0.18);
			\draw (0,-0.2)--(0.02,-0.22);
			\draw (0.25,-0.2)--(0.23,-0.18);
			\draw (0.25,-0.2)--(0.23,-0.22);
			\node [draw=none] at (0.125,-0.15) {$\Delta_1$} ;
			\node [draw=none,anchor=west] at (1.6,-0.2) {Overall error on the first full time step} ;
			
			\draw (0.25,-0.4)--(1.15,-0.4);
			\draw (0.25,-0.4)--(0.27,-0.38);
			\draw (0.25,-0.4)--(0.27,-0.42);
			\draw (1.15,-0.4)--(1.13,-0.38);
			\draw (1.15,-0.4)--(1.13,-0.42);
			\node [draw=none] at (0.7,-0.35) {$\Delta_2$};
			\node [draw=none,anchor=west] at (1.6,-0.4) {Forward time regularity of $\Gamma$} ;
			
			\draw (0.25,-0.6)--(1.15,-0.6);
			\draw (0.25,-0.6)--(0.27,-0.58);
			\draw (0.25,-0.6)--(0.27,-0.62);
			\draw (1.15,-0.6)--(1.13,-0.58);
			\draw (1.15,-0.6)--(1.13,-0.62);
			\node [draw=none] at (0.7,-0.55) {$\Delta_3$};
			\node [draw=none,anchor=west] at (1.6,-0.6) {Approximation error for $b$} ;
			
			\draw (0.25,-0.8)--(1.15,-0.8);
			\draw (0.25,-0.8)--(0.27,-0.78);
			\draw (0.25,-0.8)--(0.27,-0.82);
			\draw (1.15,-0.8)--(1.13,-0.78);
			\draw (1.15,-0.8)--(1.13,-0.82);
			\node [draw=none] at (0.7,-0.75) {$\Delta_4$};
			\node [draw=none,anchor=west] at (1.6,-0.8) {Grönwall lemma} ;
			
			\draw (0.25,-1)--(1.15,-1);
			\draw (0.25,-1)--(0.27,-0.98);
			\draw (0.25,-1)--(0.27,-1.02);
			\draw (1.15,-1)--(1.13,-0.98);
			\draw (1.15,-1)--(1.13,-1.02);
			\node [draw=none] at (0.7,-0.95) {$\Delta_5$};
			\node [draw=none,anchor=west] at (1.6,-1) {Spatial regularity of the driving noise} ;
			
			\draw (1.15,-1.2)--(1.5,-1.2);
			\draw (1.15,-1.2)--(1.17,-1.18);
			\draw (1.15,-1.2)--(1.17,-1.22);
			\draw (1.5,-1.2)--(1.48,-1.18);
			\draw (1.5,-1.2)--(1.48,-1.22);
			\node [draw=none] at (1.325,-1.15) {$\Delta_6$};
			\node [draw=none,anchor=west] at (1.6,-1.2) {Overall error on the last full time step} ;
		\end{tikzpicture}\\[0.5cm]
	\end{centering}
	\caption{{\color{black} Decompositon of the error into 6 terms, including the first and last time steps ($\Delta_1$ and $\Delta_6$), the approximation error for the distribution $b$ ($\Delta_3$), the errors due to the (ir)regularity of the density and the noise ($\Delta_5$ and $\Delta_2$) and the term that is treated via a Gronwall-type argument ($\Delta_4$).} }
\end{figure}

\subsection{Control of the supremum norm}\label{subsec-proof-main-thm-supnorm}
\paragraph{Term $\Delta_1 $: first time step.}
    For $\Delta_1$, we rely on the fact that we work on the first time step.  Let us first expand the expectation:
\begin{align*}
	&\Delta_1(0,x,t,y)\\ 
	=&  \int_0^h \int \Big(\Gamma(0,x,s,z)b(s,z)\cdot \nabla_y  p_\alpha (t-s,y-z) - \Gamma^h(0,x,s,z)\mathfrak b_h(s,x) \cdot \nabla_y p_\alpha (t-s,y-z)\Big)\d z\d s\\
	=:& \Big(\Delta_{1,1}+\Delta_{1,2}\Big)(0,x,t,y).
\end{align*}
For $\Delta_{1,1}$, which involves the distributional $b$, we have to rely on the duality inequality in Besov spaces \eqref{dual-ineq}. Assuming w.l.o.g. that $t>2 h$ so that $(t-s)\asymp t$, then using \eqref{besov-estimate-gamma-and-stable} (taking therein $\zeta\in (-\beta,-\beta+\gamma) $) {\color{black}and H\"older inequality and subsequently \eqref{eq:Beta1} (taking $\aa=\frac\zeta\alpha$ and $\bb=\frac d {\alpha p}$ therein)} we get 
\begin{align}
	|\Delta_{1,1}(0,x,t,y)|&\lesssim \int_0^h \Vert b (s,\cdot)\Vert_{\B_{p,q}^\beta} \Vert\Gamma (0,x,s,\cdot)\nabla_y p_{\alpha} (t-s,y-\cdot)\Vert_{\B_{p',q'}^{-\beta}}\d s\nonumber\\
	&\lesssim \bar{p}_{\alpha}(t,y-x)\int_0^h \Vert b (s,\cdot)\Vert_{\B_{p,q}^\beta} \frac{t^{\frac{\beta}{\alpha}}}{(t-s)^{\frac{1}{\alpha}}}\left[\frac{1}{s^{\frac{d}{\alpha p}}} + \frac{1}{(t-s)^{\frac{d}{\alpha p}}}\right]\left[%1+
	\frac{t^{\frac{\zeta}{\alpha}}}{s^{\frac{\zeta}{\alpha}}}+\frac{t^{\frac{\zeta}{\alpha}}}{(t-s)^{\frac{\zeta}{\alpha}}}\right]\d s\nonumber\\
	&{\color{black}\lesssim \bar{p}_{\alpha}(t,y-x)\int_0^h \Vert b (s,\cdot)\Vert_{\B_{p,q}^\beta} \frac{t^{\frac{\beta}{\alpha}}}{t^{\frac{1}{\alpha}}}\left[\frac{1}{s^{\frac{d}{\alpha p}}} + \frac{1}{t^{\frac{d}{\alpha p}}}\right]\left[\frac{t^{\frac{\zeta}{\alpha}}}{s^{\frac{\zeta}{\alpha}}}+\frac{t^{\frac{\zeta}{\alpha}}}{t^{\frac{\zeta}{\alpha}}}\right] \d s }\nonumber\\
	&{\color{black}\lesssim \bar{p}_{\alpha}(t,y-x) t^{\frac{\beta-1+\zeta}{\alpha}} \int_0^h \Vert b (s,\cdot)\Vert_{\B_{p,q}^\beta} \left[\frac{1}{s^{\frac{d}{\alpha p}}} + \frac{1}{t^{\frac{d}{\alpha p}}}\right] \frac{1}{s^{\frac{\zeta}{\alpha}}} \d s }\nonumber\\
	& {\color{black}\lesssim \bar{p}_{\alpha}(t,y-x) t^{\frac{\beta-1+\zeta}{\alpha}}  \Vert b \Vert_{L^r-\B_{p,q}^\beta} \left(\int_0^h \left[\frac{1}{s^{\frac{d}{\alpha p}}} + \frac{1}{t^{\frac{d}{\alpha p}}}\right]^{r'} \frac{1}{s^{\frac{\zeta r'}{\alpha}}} \d s\right)^{\frac{1}{r'}} }\nonumber\\
	&{\color{black}\lesssim \bar{p}_{\alpha}(t,y-x)  t^{\frac{\beta-1+\zeta}{\alpha}} \Vert b \Vert_{L^r-\B_{p,q}^\beta} \left[h^{1-\frac1r -\frac{\zeta}{\alpha} -\frac{d}{\alpha p}}+ t^{-\frac{d}{\alpha p}} h^{1-\frac1r -\frac{\zeta}{\alpha}}\right]}\nonumber\\
	&{\color{black}\lesssim \bar{p}_{\alpha}(t,y-x) t^{-\frac\beta\alpha} t^{\frac{2\beta-1+\zeta}{\alpha}}  \left[h^{1-\frac1r -\frac{\zeta}{\alpha} -\frac{d}{\alpha p}}+ t^{-\frac{d}{\alpha p}} h^{1-\frac1r -\frac{\zeta}{\alpha}}\right]}\nonumber\\
	&{\color{black}\lesssim \bar{p}_{\alpha}(t,y-x) t^{-\frac\beta\alpha} (2h)^{\frac{2\beta-1+\zeta}{\alpha}}  \left[h^{1-\frac1r -\frac{\zeta}{\alpha} -\frac{d}{\alpha p}}+ (2h)^{-\frac{d}{\alpha p}} h^{1-\frac1r -\frac{\zeta}{\alpha}}\right]}\nonumber\\
	&{\color{black}\lesssim \bar{p}_{\alpha}(t,y-x) t^{-\frac\beta\alpha} h^{\frac{2\beta -1 }{\alpha}} h^{1-\frac1r-\frac d{\alpha p}} }\nonumber\\
	&= \bar{p}_{\alpha}(t,y-x)h^{\frac{\gamma}{\alpha}}t^{-\frac{\beta}{\alpha}},\label{maj-D11-mainthm}
\end{align}
{\color{black}having used the fact that $t>2h$ and $-\frac d{\alpha p}<0$ and $ \frac {2\beta-1+\zeta}{\alpha}<0$ as well as the definition of $\gamma= \alpha+2\beta -\frac dp - \frac \alpha r-1$ given in  \eqref{eq:gamma}.} 
For $\Delta_{1,2}$, using the $L^\infty$ bound of $\mathfrak{b}_h$ {\color{black}given in} \eqref{CTR_PONCTUEL_BH}, \textcolor{black}{\eqref{aronson-gammah}}, {\color{black}the bound \eqref{derivatives-palpha} on the spatial derivative and the approximate convolution property \eqref{APPR_CONV_PROP} for $\bar p_\alpha$ and then the rescaled Beta function \eqref{eq:Beta}}, we get
\begin{align}
	|\Delta_{1,2}(0,x,t,y)|&\lesssim  \int_0^h s^{-\frac{d}{\alpha p} + \frac{\beta}{\alpha}} \|b(s,\cdot)\|_{\B_{p,q}^\beta}\frac{1}{(t-s)^{\frac{1}{\alpha}}}\int \bar p_\alpha (s,z-x) \bar p_\alpha (t-s,y-z)  \d z\d s\nonumber\\
	%\lesssim h^{-\frac{1}{r}-\frac{d}{\alpha p} + \frac{\beta}{\alpha}}\bar p_{\alpha} (t,y-x)\int_0^h \frac{1}{(t-s)^{\frac{1}{\alpha}}} \d s
	&{\color{black} \lesssim \bar p_{\alpha} (t,y-x)  \|b\|_{L^r-\B_{p,q}^\beta} \left( \int_0^h s^{(-\frac{d }{\alpha p} + \frac{\beta}{\alpha})r'} \frac{1}{(t-s)^{\frac{r'}{\alpha}}} \d s \right)^{\frac1{r'}}} \nonumber\\
	&{\color{black} \lesssim \bar p_{\alpha} (t,y-x)  \|b\|_{L^r-\B_{p,q}^\beta} \left( \int_0^h s^{(-\frac{d }{\alpha p} + \frac{\beta}{\alpha})r'} \frac{1}{(h-s)^{\frac{r'}{\alpha}}} \d s \right)^{\frac1{r'}}} \nonumber\\
	&{\color{black} \lesssim \bar p_{\alpha} (t,y-x)  \|b\|_{L^r-\B_{p,q}^\beta} \left( h^{1+(-\frac{d }{\alpha p} + \frac{\beta}{\alpha})r' - \frac{r'}{\alpha} }  \right)^{\frac1{r'}}} \nonumber\\
	&{\color{black} \lesssim \bar p_{\alpha} (t,y-x)   h^{1 - \frac1r -\frac{d }{\alpha p} + \frac{\beta}{\alpha} - \frac{1}{\alpha} }  t^{\frac\beta\alpha -\frac \beta\alpha} } \nonumber\\
%	&\lesssim \bar p_{\alpha} (t,y-x)h^{1-\frac{1}{r}-\frac{d}{\alpha p} + \frac{\beta}{\alpha}} t^{-\frac 1\alpha}\nonumber\\
%	&\lesssim \bar p_{\alpha} (t,y-x)h^{\frac{\gamma}{\alpha}}h^{\frac{1}{\alpha}- \frac{\beta}{\alpha}}t^{-\frac 1\alpha}
&\lesssim \bar p_\alpha (t,y-x)h^{\frac \gamma\alpha}t^{-\frac{\beta}{\alpha}},\label{maj-D12-mainthm}
\end{align}
 recalling that $h\le t$ {\color{black} and $\beta<0$} for the last inequality. %\footnote{\textcolor{black}{Da S. a E.: violenta la stima $ (t-s)^{-...}\lesssim(h-s)^{...}$, però forse chiara rispetto alla gestione a posteriori delle singolarità che comunque conduce allo stesso controllo finale.}}. 
 We eventually get:
 \begin{equation}
 |\Delta_{1}(0,x,t,y)|\lesssim \bar p_\alpha (t,y-x)h^{\frac \gamma\alpha}t^{-\frac{\beta}{\alpha}}.\label{maj-D1-mainthm}
 \end{equation}
 \paragraph{Term $\Delta_2 $: time sensitivity of the density of the SDE.}
 Let us turn to $\Delta_2$. Expanding the expectation, using {\color{black}the duality inequality \eqref{dual-ineq}, then} the product rule \eqref{PR}, the heat kernel estimate \eqref{holder-time-gamma}, and the control \eqref{besov-estimate-stable}, we get for $\zeta,\rho\in (-\beta, -2\beta)$ (so that $\rho\in (-\beta, 1]$ is satisfied),
 \begin{align}\label{eq:Delta2}
 	&|\Delta_2(0,x,t,y)|\\ 
	=& \left| \int_h^{\tau_t^h-h}  \int [\Gamma(0,x,s,z)-\Gamma (0,x,\tau_s^h,z)] b(s,z)\cdot \nabla_y p_{\alpha}(t-s,y-z)	\d z \d s\right|\notag\\
 	 \lesssim&  \int_h^{\tau_t^h-h}  \Vert b(s,\cdot)\Vert_{\B_{p,q}^\beta}  \left\Vert \frac{\Gamma(0,x,s,\cdot)-\Gamma (0,x,\tau_s^h,\cdot)}{\bar p_\alpha (s,\cdot-x)} \right\Vert_{\B^{\rho}_{\infty,\infty}}\Vert \bar p_\alpha (s,\cdot-x)\nabla_y p_{\alpha}(t-s,y-\cdot)\Vert_{\B_{p',q'}^{-\beta}} \d s\notag\\
 	 \lesssim& \bar p_\alpha(t,y-x)\int_h^{\tau_t^h-h}  \Vert b(s,\cdot)\Vert_{\B_{p,q}^\beta} \frac{(s-\tau_s^h)^{\frac{\gamma-\eps}{\alpha}}}{s^{\frac{\gamma-\eps+\rho}{\alpha}}} \frac{t^{\frac{\beta}{\alpha}}}{(t-s)^{\frac{1}{\alpha}}}\left[\frac{1}{s^{\frac{d}{\alpha p}}} + \frac{1}{(t-s)^{\frac{d}{\alpha p}}}\right]\left[\frac{t^{\frac{\zeta}{\alpha}}}{s^{\frac{\zeta}{\alpha}}}+\frac{t^{\frac{\zeta}{\alpha}}}{(t-s)^{\frac{\zeta}{\alpha}}}\right] \d s\notag\\
	 \lesssim& \textcolor{black}{\bar p_\alpha(t,y-x) h^{\frac{\gamma-\varepsilon}{\alpha}}t^{\frac{\beta+\zeta}\alpha}\|b\|_{L^r-\B_{p,q}^\beta}\Big(\int_{h}^{\tau_t^h-h} \frac{1}{s^{r'\frac{\gamma-\eps+\rho}{\alpha}}} \frac{1}{(t-s)^{r'\frac{1}{\alpha}}}\left[\frac{1}{s^{\frac{d}{\alpha p}}} + \frac{1}{(t-s)^{\frac{d}{\alpha p}}}\right]^{r'}\left[\frac{1}{s^{\frac{\zeta}{\alpha}}}+\frac{1}{(t-s)^{\frac{\zeta}{\alpha}}}\right]^{r'} \d s\Big)^{\frac 1{r'}}}\notag\\
	  \lesssim& {\color{black}\bar p_\alpha(t,y-x) h^{\frac{\gamma-\varepsilon}{\alpha}}t^{\frac{\beta+ \zeta}\alpha} \Big(\int_{0}^{t} \frac{1}{s^{r'\frac{\gamma-\eps+\rho}{\alpha}}} \frac{1}{(t-s)^{r'\frac{1}{\alpha}}}\left[\frac{1}{s^{\frac{d}{\alpha p}}} + \frac{1}{(t-s)^{\frac{d}{\alpha p}}}\right]^{r'}\left[\frac{1}{s^{\frac{\zeta}{\alpha}}}+\frac{1}{(t-s)^{\frac{\zeta}{\alpha}}}\right]^{r'} \d s\Big)^{\frac 1{r'}} .} \notag
	  \end{align}
The singular integral can be bounded using Lemma \ref{lm:sing_int} with $\aa=\frac{\gamma-\varepsilon+\rho}\alpha, \bb= \frac1\alpha, \cc= \frac d{\alpha p}, \dd=\frac\zeta\alpha$, provided that 
\[
r'(\frac{\gamma-\varepsilon+\rho}\alpha+ \frac d{\alpha p} + \frac\zeta\alpha)<1 
\iff
\gamma - \varepsilon + \rho +\frac dp +\zeta < \alpha- \frac\alpha r
\iff 
2\beta -1-\varepsilon + \rho +\zeta <0
\] 
which is satisfied since $\rho,\zeta \in (-\beta, -2\beta)$ and $\beta>-\frac12$, 
and provided that
\[
r'(\frac{1}\alpha+ \frac d{\alpha p} + \frac\zeta\alpha)<1 
\]
which is equivalent to
\[
1 +\frac dp +\zeta < \alpha- \frac\alpha r
\iff 
1-\alpha +\frac dp +\frac\alpha r <-\zeta 
\ei{\iff -\gamma+2\beta +\zeta <0},
\]  
and which is satisfied  since $\zeta<-2\beta$ by assumption and $\gamma>0$.
Thus  by \eqref{eq:Beta2-5}
\[
\Big(\int_{0}^{t} \frac{1}{s^{r'\frac{\gamma-\eps+\rho}{\alpha}}} \frac{1}{(t-s)^{r'\frac{1}{\alpha}}}\left[\frac{1}{s^{\frac{d}{\alpha p}}} + \frac{1}{(t-s)^{\frac{d}{\alpha p}}}\right]^{r'}\left[\frac{1}{s^{\frac{\zeta}{\alpha}}}+\frac{1}{(t-s)^{\frac{\zeta}{\alpha}}}\right]^{r'} \d s\Big)^{\frac 1{r'}} \lesssim t^{1-\frac1r - \frac{\gamma-\varepsilon+  \rho +1 +\frac dp +\zeta}\alpha} = t^{-\frac{2\beta}\alpha + \frac{\varepsilon-\rho-\zeta}\alpha}, 
\]
%\ei{From S to E: qui sopra dovremmo giá usare la definizione di $\rho = -\beta+\frac\eps2$. Se la mettiamo ci viene $t^{\frac{-\beta+\eps/2-\zeta}{\alpha} }$ e scegliendo ad esempio $\zeta = -\beta+\eps/4$ viene in totale $t^{\frac{\eps}{4\alpha} }$ -- va bene se lo cambio?}\\
%\sm{Qua è un conto intermedio. Non penso occorra precisare o esplicitare il valore di $\rho $. Nemmeno scegliere una $\zeta $ particolare. Perché due righe dopo la $\zeta $ scompare. Quindi ho aggiunto un'ultima uguaglianza qua sotto.}
and plugging it in \eqref{eq:Delta2} we finally get, \sm{recalling \eqref{CHOICE_FOR_RHO},} 
\begin{align}\label{maj-D2-mainthm}
		|\Delta_2(0,x,t,y)|
		 \lesssim 
		 \bar p_\alpha(t,y-x) h^{\frac{\gamma-\varepsilon}{\alpha}}t^{\frac{\beta+ \zeta}\alpha} t^{ -\frac{2\beta}\alpha + \frac{\varepsilon-\rho-\zeta}\alpha}
		 =\bar p_\alpha(t,y-x)h^{\frac{\gamma-\eps}{\alpha}} t^{\textcolor{black}{\frac{\eps-\beta-\rho}{\alpha}} }\sm{=\bar p_\alpha(t,y-x)h^{\frac{\gamma-\eps}{\alpha}} t^{{\frac{\eps}{2\alpha}} }}.
 \end{align}
   
\paragraph{Term $\Delta_3 $: approximation of the singular drift.} 
 Let us turn to $\Delta_3$. Expanding the inner expectation and using the proof of Proposition 2 in \cite{CdRJM22} \textcolor{black}{ to write $\|b(s,\cdot)-P_h^\alpha b(s,\cdot)\|_{\B_{p,q}^{\beta-\gamma}}\lesssim h^{\frac{\gamma}{\alpha}}\|b(s,\cdot)\|_{\B_{p,q}^{\beta}}  $}, \sm{we derive, using this time \eqref{dual-ineq} and \eqref{besov-estimate-gamma-and-stable}  replacing therein the argument $s$ of $\Gamma $ by $\tau_s^h $ thanks to Remark \ref{REM_WITH_DISCR}, that for $\varepsilon>0 $, $\zeta \in (-\beta+\gamma-\varepsilon,-\beta+\gamma)$:}
\begin{align}
	&\nonumber|\Delta_3(0,x,t,y)|\\
	=&\left| \int_h^{\tau_t^h-h} \int\Gamma (0,x,\tau_s^h,z) \left[b(s,z)- \mathfrak{b}_h(s,z)\right]\cdot \ei{\nabla_y}  p_\alpha (t-s,y-z) \d z \d s\right|\nonumber\\
	\nonumber\lesssim& \int_h^{\tau_t^h-h}  \|b(s,\cdot)-P_h^\alpha b(s,\cdot)\|_{\B_{p,q}^{\textcolor{black}{\beta-\gamma+\varepsilon}}} \left\Vert\Gamma (0,x,\tau_s^h,\cdot)\nabla_y p_{\alpha}(t-s,y-\cdot)\right\Vert_{\B_{p',q'}^{\textcolor{black}{-\beta+\gamma-\varepsilon}}}  \d s\\
	\nonumber\lesssim& h^{\frac{\gamma\textcolor{black}{-\varepsilon}}\alpha}\int_h^{\tau_t^h-h} \|b(s,\cdot)\|_{\B_{p,q}^\beta}\frac{\bar{p}_{\alpha} (t,x-y)}{(t-s)^{\frac{1}{\alpha}}} t^{\frac{\beta- \gamma\textcolor{black}{+\varepsilon}}{\alpha}}\left[ \frac{1}{(t-s)^{\frac{d }{\alpha  p}}} +\frac{1}{s^{\frac{d }{\alpha  p}}}\right] \left[ \frac{t^{\frac{\zeta}{\alpha}}}{(t-s)^{\frac{\zeta }{\alpha}}} + \frac{t^{\frac{\zeta}{\alpha}}}{s^{\frac{\zeta }{\alpha}}} \right] \d s\\
	\nonumber \lesssim&   { \color{black}\bar p_\alpha(t,y-x)h^{\frac{\gamma\textcolor{black}{-\varepsilon}}{\alpha}} t^{\frac{\beta - \gamma +\textcolor{black}{\varepsilon}+\zeta }{\alpha}} \|b\|_{L^r-\B_{p,q}^\beta} \Big(\int_{0}^{t} \frac{1}{(t-s)^{r'\frac{1}{\alpha}}}\left[\frac{1}{s^{\frac{d}{\alpha p}}} + \frac{1}{(t-s)^{\frac{d}{\alpha p}}}\right]^{r'}\left[\frac{1}{s^{\frac{\zeta}{\alpha}}}+\frac{1}{(t-s)^{\frac{\zeta}{\alpha}}}\right]^{r'} \d s\Big)^{\frac 1{r'}} }\\
 \nonumber  \lesssim&  { \color{black}\bar p_\alpha(t,y-x)h^{\frac{\gamma\textcolor{black}{-\varepsilon}}{\alpha}} t^{\frac{\beta - \gamma+\textcolor{black}{\varepsilon} +\zeta }{\alpha}}  t^{1-\frac1r - \frac1\alpha -\frac{d}{\alpha p}-\frac{\zeta}{\alpha}}}\\
 	 \lesssim& \bar p_\alpha(t,y-x)h^{\frac{\gamma\textcolor{black}{-\varepsilon}}{\alpha}} t^{-\frac{\beta}{\alpha}},\label{maj-D3-mainthm}
\end{align}
having used Lemma  \ref{lm:sing_int} as for the term $\Delta_2$ but with $\aa=0$, and the definition of $\gamma$.

\paragraph{Term $\Delta_4 $: Gronwall or circular type argument.} Due to the singular drift, this term emphasizes that  in order to derive the main theorem, not only will we need to control the supremum of the difference but as well a kind of H\"older modulus. Namely, for $\rho \in (-\beta,-\beta+\gamma)$, using \eqref{PR}, \ei{\eqref{CTR_BESOV_BH} and \sm{the modification of \eqref{besov-estimate-stable} discussed in Remark \ref{REM_WITH_DISCR}}, with $\zeta \in (-\beta, 1]$} we have,

% \ei{from E to S: qua, come sopra, si usa la  \eqref{besov-estimate-stable} per $\tau_s^h$ anche se é scritta per $s$ -- spiegare perché si puó fare. }\\
 %\sm{Di nuovo: si fa riferimento al Remark dopo il Lemmone.}
\begin{align}
&|\Delta_4(0,x,t,y)|\notag\\
=&\left|\int_h^{\tau_t^h-h} \E_{0,x} \bigg[ \mathfrak b_h
	(s,X_{\tau_s^h})\cdot \nabla_y p_\alpha (t-s,y-X_{\tau_s^h})
	-
	\mathfrak b_h(s,X_{\tau_s^h}^h)\cdot \nabla_y  p_\alpha (t-s,y-X_{\tau_s^h}^h) \bigg] \d s\right|\notag\\
	=&\left|\int_h^{\tau_t^h-h}\int_{\R^d } (\Gamma-\Gamma^h)(0,x,\tau_s^h,z) \mathfrak b_h(s,z) \nabla_y  p_\alpha (t-s,y-z)  \d z \d s\right|\notag\\
	\le &\int_h^{\tau_t^h-h} \|\mathfrak b_h(s,\cdot)\|_{\B_{p,q}^\beta}\left \|\frac{(\Gamma-\Gamma^h)(0,x,\tau_s^h,\cdot)}{\bar p_\alpha(\tau_s^h,\cdot-x)}\right\|_{\B_{\infty,\infty}^\rho}\|\bar p_\alpha(\tau_s^h,\cdot-x) \nabla_y p_{\alpha}(t-s,y-\cdot)\|_{\B_{p',q'}^{-\beta}}\d s\notag\\
\le &\ei{	\int_h^{\tau_t^h-h} \| b(s,\cdot)\|_{\B_{p,q}^\beta}\left \|\frac{(\Gamma-\Gamma^h)(0,x,\tau_s^h,\cdot)}{\bar p_\alpha(\tau_s^h,\cdot-x)}\right\|_{\B_{\infty,\infty}^\rho} \bar p_\alpha(t,y-x)
\frac{t^{\frac\beta\alpha}}{(t-s)^\frac1\alpha} \left[ \frac{1}{s^{\frac{d }{\alpha  p}}}+\frac{1}{(t-s)^{\frac{d }{\alpha  p}}} \right] \left[\frac{t^{\frac{\zeta}{\alpha}}}{s^{\frac{\zeta }{\alpha}}}+\frac{t^{\frac{\zeta}{\alpha}}}{(t-s)^{\frac{\zeta }{\alpha}}}  \right] 
	\d s.\notag}
\end{align}	
{\color{black} Combining \eqref{THE_DECOUP_FOR_HOLDER_MOD_E} with \eqref{DEF_GH} we see that 
\[
\left \|\frac{(\Gamma-\Gamma^h)(0,x,\tau_s^h,\cdot)}{\bar p_\alpha(\tau_s^h,\cdot-x)}\right\|_{\B_{\infty,\infty}^\rho}
\lesssim \frac1{(\tau_s^h)^\frac\rho\alpha} g_{h,\rho}(\tau_s^h)\lesssim \sm{\frac1{s^\frac\rho\alpha} g_{h,\rho}(\tau_s^h),}
\]
\sm{on the considered time interval.}
%S É COME TAU S PERCHÉ SIAMO OLTRE IL PRIMO INTERVALLO (0,H) 
}
With this bound at hand we write:
\begin{align}
|\Delta_4(0,x,t,y)|	\le &\sup_{s\in (h,T]}g_{h,\rho}(s)\bar p_\alpha(t,y-x)\notag\\
	&\hspace*{.5cm}\times\int_0^t \|b(s,\cdot)\|_{\B_{p,q}^\beta}\frac{1}{s^{\frac{\rho}{\alpha}}}\frac{t^{\frac{\beta}{\alpha}}}{(t-s)^{\frac{1}{\alpha}}}\left[\frac{1}{s^{\frac{d}{\alpha p}}} + \frac{1}{(t-s)^{\frac{d}{\alpha p}}}\right]\left[\frac{t^{\frac{\zeta}{\alpha}}}{s^{\frac{\zeta}{\alpha}}}+\frac{t^{\frac{\zeta}{\alpha}}}{(t-s)^{\frac{\zeta}{\alpha}}}\right] \d s\notag\\
	\lesssim & \ei{ \sup_{s\in (h,T]} g_{h,\rho}(s) \bar p_\alpha(t,y-x)  \|b\|_{L^r-\mathbb B^{\beta}_{p,q}} t^{\frac{\beta +\zeta}{\alpha}} t^{1- \frac1r -\frac\rho\alpha - \frac1\alpha - \frac d {\alpha p} -\frac{\zeta}{\alpha}} }\\
	\lesssim & \sup_{s\in (h,T]}g_{h,\rho}(s) t^{\frac{-\beta-\rho+\gamma}{\alpha}}\bar p_\alpha(t,y-x)=\sup_{s\in (h,T]}g_{h,\rho}(s) t^{\frac{\gamma\textcolor{black}{-\frac{\eps}{2}}}{\alpha}}\bar p_\alpha(t,y-x),\label{maj-D4-mainthm}
\end{align}
where 
 for the second inequality  \ei{we used} Lemma \ref{lm:sing_int} as in the term $\Delta_2$ but with $\aa=\frac \rho\alpha$. The above contribution actually emphasizes that, in order to control the error on the densities we actually need to control a related Hölder modulus of continuity.  Let us as well point out that for the time contribution to be small (in order to perform the circular argument on the quantity $g_{h,\rho}$) we will as well assume w.l.o.g. that $\gamma>\textcolor{black}{\eps} $.

\paragraph{Term $\Delta_5 $: spatial sensitivities of the driving noise.}
Let us now turn to \ei{$\Delta_5 $. Using the harmonicity of the stable heat kernel (or martingale property of the driving noise) we have}
\begin{align*}
\Delta_5(0,t,x,y)=&\int_h^{\tau_t^h-h} \E_{0,x} \left[\mathfrak{b}_h(s,X_{\tau_s^h}^h)\cdot \left( \nabla_y p_\alpha(t-s,y-X_{\tau_s^h}^h)-\nabla_y  p_\alpha (t-s,y-X_s^h)\right)\right]\d s\\
=&\int_h^{\tau_t^h-h} \E_{0,x} \left[\mathfrak{b}_h(s,X_{\tau_s^h}^h)\cdot \left( \nabla_y p_\alpha(t-s,y-X_{\tau_s^h}^h)\right.\right.\\
&
\left.\left.-\nabla_y  p_\alpha \Big(t-\tau_s^h,y-(X_{\tau_s^h}^h+\int_{\tau_s^h}^s\mathfrak{b}_h(u,X_{\tau_s^h}^h)\d u)\Big)\right)\right]\d s.
\end{align*}
Adding and subtracting the relevant term we write
\begin{align*}
&|\Delta_5(0,x,t,y)|\\
\le& \Bigg|\int_{h}^{\tau_t^h-h}\int_{\R^d} \Gamma^h(0,x,\tau_s^h,z)\mathfrak{b}_h(s,z)\cdot \left( \nabla_y p_\alpha(t-s,y-z) 
-\nabla_y  p_\alpha (t-\tau_s^h,y-z)\right) \d z  \d r\d s\Bigg|\\
&+\Bigg|\int_{h}^{\tau_t^h-h} \int_{\R^d} \Gamma^h(0,x,\tau_s^h,z)\mathfrak{b}_h(r,z)\cdot \left( \nabla_y p_\alpha(t-\tau_s^h,y-z) \right.\notag \\
&\hspace*{.5cm}\left. -\nabla_y  p_\alpha (t-\tau_s^h,y-(z+\int_{\tau_s^h}^s\mathfrak{b}_h(u,{\color{black} z})\d u))\right) \d z  \d r\d s\Bigg|=:|\Delta_{51}(0,x,t,y)|+|\Delta_{52}(0,x,t,y)|.
\end{align*}
For $\Delta_{51}$, using the duality inequality \eqref{dual-ineq}, \eqref{besov-estimate-gammah-stable-sensi-holder-time} {\color{black} and \eqref{CTR_BESOV_BH}}, we have
\begin{align}
|\Delta_{51}(0,x,t,y)|&\le  \int_h^{\tau_t^h-h}\left| \int \Gamma^h(0,x,\tau_s^h,z)\mathfrak{b}_h(s,z)\cdot \left( \nabla_y p_\alpha(t-\tau_s^h,y-z)-\nabla_y  p_\alpha (t-s,y-z)\right) \d z \right| \d s\nonumber\\
&\lesssim \int_h^{\tau_t^h-h} \|\mathfrak b_h(s,\cdot)\|_{\B_{p,q}^\beta}  \| \Gamma^h (0,x,\tau_s^h,\cdot)\left( \nabla_y p_\alpha(t-\tau_s^h,y-\cdot)-\nabla_y  p_\alpha (t-s,y-\cdot)\right)\|_{\B_{p',q'}^{-\beta}}\d s\nonumber\\
&\lesssim \bar{p}_\alpha(t,y-x) \int_h^{\tau_t^h-h} \| {\color{black}b(s,\cdot)}\|_{\B_{p,q}^\beta} \frac{(s-\tau_s^h)^{\frac{\gamma}{\alpha}}t^{\frac{\beta}{\alpha}}}{(t-s)^{\frac{1+\gamma}{\alpha}}}\left[\frac{1}{s^{\frac{d }{\alpha  p}}}+ \frac{1}{(t-s)^{\frac{d }{\alpha  p}}} \right] \left[%1+ 
\frac{t^{\frac{\zeta}{\alpha}}}{s^{\frac{\zeta }{\alpha}}}+\frac{t^{\frac{\zeta}{\alpha}}}{(t-s)^{\frac{\zeta }{\alpha}}}  \right]\d s\nonumber\\
\nonumber & {\color{black}\lesssim \bar{p}_\alpha(t,y-x) h^{\frac{\gamma}{\alpha}}t^{\frac{\beta+\zeta}{\alpha}} \| b\|_{L^r-\B_{p,q}^\beta} \Big(
 \int_0^{t} \frac{1}{(t-s)^{r' \frac{1+\gamma}{\alpha}}}\left[\frac{1}{s^{\frac{d }{\alpha  p}}}+ \frac{1}{(t-s)^{\frac{d }{\alpha  p}}} \right]^{r'} \!\!\left[\frac{1}{s^{\frac{\zeta }{\alpha}}}+\frac{1}{(t-s)^{\frac{\zeta }{\alpha}}}  \right]^{r'}\!\!\!\!\d s
 \Big)^{\frac1{r'}}}\\
\nonumber  & {\color{black}\lesssim \bar{p}_\alpha(t,y-x) h^{\frac{\gamma}{\alpha}}t^{\frac{\beta+\zeta}{\alpha}} t^{1-\frac1r - \frac{1+\gamma}{\alpha}  - \frac{d}{\alpha p} -\frac\zeta\alpha}  } \\
&\lesssim \bar{p}_\alpha(t,y-x) h^{\frac{\gamma}{\alpha}}t^{-\frac{\beta}{\alpha}}\label{maj-D51},
\end{align}
{\color{black}having used again Lemma   \ref{lm:sing_int} as for the term $\Delta_2$ but with $\aa=0$.}
On the other hand, using this time \eqref{besov-estimate-gammah-stable-PERTURB_DRIFT} with $\zeta\in(-\beta, -\beta+\gamma)$ and the fact that $s \asymp \tau_s^h$ in the considered time interval, we have,
\begin{align}
&|\Delta_{52}(0,x,t,y)|\nonumber\\
& \le \int_{h}^{\tau_t^h-h}  \|\mathfrak b_h(s,\cdot)\|_{\B_{p,q}^\beta} \left\| \Gamma^h(0,x,\tau_s^h,\cdot)\Big(\nabla_y p_\alpha(t-\tau_s^h,y-\cdot) -\nabla_y  p_\alpha (t-\tau_s^h,y-(\cdot+\int_{\tau_s^h}^s\mathfrak{b}_h(u,\cdot)\d u)) \Big)\right \|_{\B_{p',q'}^{-\beta}}\!\!\!\!
\d s\nonumber\\
&\lesssim \bar p_\alpha(t,y-x)\int_{h}^{\tau_t^h-h}  \|b(s,\cdot)\|_{\B_{p,q}^\beta} \frac{h^{\frac{\gamma-\beta}{\alpha}}\textcolor{black}{t^{\frac{\beta}{\alpha}}}}{(t-\tau_s^h)^{\frac{1}{\alpha}}}\left[\frac{1}{(\tau_s^h)^{\frac{d}{\alpha p}}} + \frac{1}{(t-\tau_s^h)^{\frac{d}{\alpha p}}}\right]\left[\frac{t^{\frac \zeta\alpha}}{(\tau_s^h)^{\frac{\zeta}{\alpha }}} + \frac{t^{\frac\zeta\alpha}}{(t-\tau_s^h)^{\frac{\zeta}{\alpha }}}\right]\d s\nonumber\\
&\lesssim \textcolor{black}{\bar p_\alpha(t,y-x)\int_{h}^{\tau_t^h-h}  \|b(s,\cdot)\|_{\B_{p,q}^\beta} \frac{h^{\frac{\gamma-\beta}{\alpha}}\textcolor{black}{t^{\frac{\beta}{\alpha}}}}{(t-s)^{\frac{1}{\alpha}}}\left[\frac{1}{s^{\frac{d}{\alpha p}}} + \frac{1}{(t-s)^{\frac{d}{\alpha p}}}\right]\left[\frac{t^{\frac \zeta\alpha}}{s^{\frac{\zeta}{\alpha }}} + \frac{t^{\frac\zeta\alpha}}{(t-s)^{\frac{\zeta}{\alpha }}}\right]\d s}\nonumber\\
&\lesssim \textcolor{black}{\bar p_\alpha(t,y-x)\int_{0}^{t}  \|b(s,\cdot)\|_{\B_{p,q}^\beta} \frac{h^{\frac{\gamma-\beta}{\alpha}}\textcolor{black}{t^{\frac{\beta}{\alpha}}}}{(t-s)^{\frac{1}{\alpha}}}\left[\frac{1}{s^{\frac{d}{\alpha p}}} + \frac{1}{(t-s)^{\frac{d}{\alpha p}}}\right]\left[\frac{t^{\frac \zeta\alpha}}{s^{\frac{\zeta}{\alpha }}} + \frac{t^{\frac\zeta\alpha}}{(t-s)^{\frac{\zeta}{\alpha }}}\right]\d s}\nonumber\\
&\lesssim \bar p_\alpha(t,y-x)h^{\frac{\gamma-\beta}{\alpha}}t^{\frac{\gamma-\beta}{\alpha}},\label{maj-D52}
\end{align}
where for the last inequality we again applied Lemma   \ref{lm:sing_int} \ei{as in $\Delta_2$ but} with $\aa=0$. 
From \eqref{maj-D51} and \eqref{maj-D52} we thus derive:
\begin{align}\label{maj-D5-mainthm}
|\Delta_5(0,x,t,y)|\lesssim \bar p_\alpha(t,y-x)h^{\frac{\gamma}{\alpha}} t^{-\frac{\beta}{\alpha}}.
\end{align}
\paragraph{Term $\Delta_6 $: last time step.}
This term is handled very much like $\Delta_1$, in the sense that the smallness will come from each contribution and not from sensitivities, since the time-interval is itself small. Namely,
\begin{align*}
&|\Delta_6(0,x,t,y)|\\
=&\left|\int_{\tau_t^h-h}^t \E_{0,x} \left[ b(s,X_s)\cdot \nabla_y p_\alpha (t-s,y-X_s)-\mathfrak{b}_h(s,X_{\tau_s^h}^h)\cdot\nabla_y p_\alpha (t-s,y-X_s^h)  \right] \d s\right|\\
\le&\left|\int_{\tau_t^h-h}^t \E_{0,x} \left[ b(s,X_s)\cdot \nabla_y p_\alpha (t-s,y-X_s)\right]\d s \right|+\left|\int_{\tau_t^h-h}^t\E_{0,x}\left[\mathfrak{b}_h(s,X_{\tau_s^h}^h)\cdot\nabla_y p_\alpha (t-s,y-X_s^h)  \right] \d s\right|\\
=:&\Big(|\Delta_{6,1}|+|\Delta_{6,2}|\Big)(0,x,t,y).
\end{align*}
 We assume w.l.o.g\ that $t>3h$, so for $\tau_t^h -h< s<t$ then $s \asymp t$ (since $s\in(\frac t2, t)$) and $t-(\tau_t^h-h) \asymp h$ (since $t-(\tau_t^h-h)  \in (h,2h)$). We rely on the duality inequality \eqref{dual-ineq} and on \eqref{besov-estimate-gamma-and-stable} with $j=0$ and any $\zeta\in(-\beta,1]$ to get
\begin{align}
	|\Delta_{6,1}(0,x,t,y)|&\lesssim \int_{\tau_t^h-h}^t \Vert b (s,\cdot)\Vert_{\B_{p,q}^\beta}  \Vert\Gamma (0,x,s,\cdot)\nabla_y p_{\alpha} (t-s,y-\cdot)\Vert_{\B_{p',q'}^{-\beta}}\d s\nonumber\\
	&\lesssim \bar{p}_{\alpha}(t,y-x)\int_{\tau_t^h-h}^t \Vert b (s,\cdot)\Vert_{\B_{p,q}^\beta} \frac{t^{\frac{\beta}{\alpha}}}{(t-s)^{\frac{1}{\alpha}}}\left[\frac{1}{s^{\frac{d}{\alpha p}}} + \frac{1}{(t-s)^{\frac{d}{\alpha p}}}\right]\left[%1+
	\frac{t^{\frac{\zeta}{\alpha}}}{s^{\frac{\zeta}{\alpha}}}+\frac{t^{\frac{\zeta}{\alpha}}}{(t-s)^{\frac{\zeta}{\alpha}}}\right]\d s\nonumber\\
%	&{\color{black}\lesssim \bar{p}_{\alpha}(t,y-x)  t^{\frac{\beta}{\alpha}}  \int_{\tau_t^h-h}^t \Vert b (s,\cdot)\Vert_{\B_{p,q}^\beta} \frac{1}{(t-s)^{\frac{1}{\alpha}}}\left[\frac{1}{t^{\frac{d}{\alpha p}}} + \frac{1}{(t-s)^{\frac{d}{\alpha p}}}\right]\left[1+\frac{t^{\frac{\zeta}{\alpha}}}{t^{\frac{\zeta}{\alpha}}}+\frac{t^{\frac{\zeta}{\alpha}}}{(t-s)^{\frac{\zeta}{\alpha}}}\right]\d s\nonumber}%\footnote{\textcolor{black}{Da S. a E.: forse sopprimerei questa riga}}
%	\\	
	&{\color{black}\lesssim \bar{p}_{\alpha}(t,y-x)  t^{\frac{\beta+\zeta}{\alpha}}  \int_{\tau_t^h-h}^t \Vert b (s,\cdot)\Vert_{\B_{p,q}^\beta} \frac{1}{(t-s)^{\frac{1+\zeta}{\alpha}}}\left[\frac{1}{t^{\frac{d}{\alpha p}}} + \frac{1}{(t-s)^{\frac{d}{\alpha p}}}\right] \d s\nonumber}\\
	&{\color{black}\lesssim \bar{p}_{\alpha}(t,y-x) \Vert b \Vert_{L^r-\B_{p,q}^\beta} t^{\frac{\beta+\zeta}\alpha} \left(\int_{\tau_t^h-h}^t\frac1 {(t-s)^{r'\frac{1+\zeta}{\alpha}}}\left[\frac{1}{t^{\frac{d}{\alpha p}}} + \frac{1}{(t-s)^{\frac{d}{\alpha p}}}\right]^ {r'} \d s\right)^{\frac{1}{r'}}\nonumber}\\
	&{\color{black}\lesssim \bar{p}_{\alpha}(t,y-x) t^{\frac{\beta+\zeta}\alpha} \left[ h^{1-\frac1r -\frac{1+\zeta}\alpha - \frac d {\alpha p}} + t^{-\frac d {\alpha p} } h^{1-\frac1r -\frac{1+\zeta}\alpha } \right],  }
\end{align}
{\color{black} having used the second bullet point of Lemma \ref{lm:sing_int} with $\aa=\frac{1+\zeta}\alpha, \bb= \frac d {\alpha p}$ and $v= \tau_t^h-h$. Notice that the assumption $r'(a+b)<1$  is equivalent to %\footnote{\textcolor{black}{From S. to E.: cambiato $d/p$ in $\frac dp $ in display mode per coerenza tipografica}}
\[
\frac{1+\zeta + \frac dp}\alpha <1-\frac1r \iff 1+\zeta+\frac dp < \alpha - \frac \alpha r \iff \zeta< \alpha-\frac\alpha r  -\frac dp - 1
\] 
which is satisfied as long as $\zeta<-2\beta$ since $-2\beta<  \alpha-\frac\alpha r  -\frac dp - 1$ by assumption \eqref{serrin}. Thus for all $\zeta\in (-\beta, -2\beta)$ using that $3h<t$ we have
\begin{align}
	|\Delta_{6,1}(0,x,t,y)|& \lesssim \bar{p}_{\alpha}(t,y-x)t^{\frac{\beta+\zeta}{\alpha}}\left[h^{1-\frac1r- \frac{1+\zeta}{\alpha }-\frac{d}{\alpha p}} +h^{1-\frac1r- \frac{1+\zeta}{\alpha }} t^{-\frac{d}{\alpha p}} \right]\nonumber\\
	& \lesssim \bar{p}_{\alpha}(t,y-x)t^{\frac{\beta+\zeta}{\alpha}} h^{1-\frac1r- \frac{1+\zeta}{\alpha }-\frac{d}{\alpha p}} h^{\frac{2\beta}\alpha-\frac{2\beta}\alpha }\nonumber\\
	&\lesssim \bar{p}_{\alpha}(t,y-x)h^{\frac{\gamma}{\alpha}}t^{-\frac{\beta}{\alpha}},\label{maj-D61-mainthm}
	\end{align}
having used the fact that $\zeta<-2\beta$ so that $h^{-\frac{\zeta}\alpha-\frac{2\beta}\alpha }\leq t^{ -\frac{\zeta}\alpha-\frac{2\beta}\alpha }$.}

\sm{Let us now turn to  $\Delta_{6,2}$ and write:
\begin{align*}
|\Delta_{6,2}(0,t,x,y)|&=\left|\int_{\tau_t^h-h}^t\E_{0,x}\left[\mathfrak{b}_h(s,X_{\tau_s^h}^h)\cdot\nabla_y p_\alpha (t-s,y-X_s^h)  \right] \d s\right|\\
&=\left|\int_{\tau_t^h-h}^t\E_{0,x}\left[\mathfrak{b}_h(s,X_{\tau_s^h}^h)\cdot\nabla_y p_\alpha (t-\tau_s^h,y-(X_{\tau_s^h}^h+\int_{\tau_s^h}^s\mathfrak{b}_h(r,X_{\tau_s^h}^h)\d r)  \right] \d s\right|\\
&\le \int_{\tau_t^h-h}^t\int_{\R^d} \Gamma^h(0,x,\tau_s^h,z)|\mathfrak{b}_h(s,z)|\Big|\nabla_y p_\alpha (t-\tau_s^h,y-(z+\int_{\tau_s^h}^s\mathfrak{b}_h(r,z)\d r))\Big|  \d z \d s,
\end{align*}
where we again used the harmonicity of the stable heat kernel for the second equality. Then, from  Lemma \ref{Lemma_TRANS_SCHEME} (taking therein $v=\tau_s^h $), since $t-\tau_s^h\ge s-\tau_s^h $, it holds that 
$$\Big|\nabla_y p_\alpha (t-\tau_s^h,y-(z+\int_{\tau_s^h}^s\mathfrak{b}_h(r,z)\d r))\Big|\le \frac{C}{(t-\tau_s^h)^{\frac 1\alpha}}\bar p_\alpha(t-\tau_s^h,y-z).$$
Hence, recalling from \eqref{CTR_PONCTUEL_BH} that $|\mathfrak b_h(s,z)| \le C(s-\tau_s^h)^{-\frac d{\alpha p}+\frac \beta\alpha}\|b(s,\cdot)\|_{\B_{p,q}^\beta}$, and using as well the upper bound \eqref{aronson-gammah} for $\Gamma^h $,  we derive:}
\begin{align*}
|\Delta_{6,2}(0,t,x,y)|&\lesssim \int_{\tau_t^h-h}^t \int_{\R^d} \frac{1}{(t-s)^{\frac{1}{\alpha}}}\|b(s,\cdot)\|_{\B_{p,q}^\beta}(s-\tau_s^h)^{-\frac{d}{\alpha p} + \frac{\beta}{\alpha}}\bar p_\alpha (\tau_s^h,z-x) \bar p_\alpha (t-\tau_s^h,y-z)  \d z\d s\\
	&\ei{\lesssim  \bar p_{\alpha} (t,y-x)\textcolor{black}{\Vert b \Vert_{L^r-\B_{p,q}^\beta}}\left(\int_{\tau_t^h-h}^t \frac{1}{(s-\tau_s^h)^{r'(\frac{d}{\alpha p}{\color{black}-} \frac{\beta}{\alpha})}(t-s)^{\frac{r'}{\alpha}}} \d s\right)^{\frac 1{r'}}. }\nonumber
%	&\lesssim \bar p_{\alpha} (t,y-x)h^{1-\frac 1\alpha-\frac{1}{r}-\frac{d}{\alpha p} + \frac{\beta}{\alpha}}\lesssim \bar p_{\alpha} (t,y-x)h^{\frac{\gamma}{\alpha}}h^{- \frac{\beta}{\alpha}}\lesssim \bar p_\alpha (t,y-x)h^{\frac \gamma\alpha}t^{-\frac{\beta}{\alpha}}.\label{maj-D62-mainthm}
\end{align*}
The integral is computed by splitting the domain into  $[{\tau_t^h-h}, t ]=[{\tau_t^h-h}, {\tau_h^t}] \cup [{\tau_t^h}, t]$ and making use of the rescaled Beta function \eqref{eq:Beta}, to obtain
\begin{align}
&\int_{\tau_t^h-h}^t \frac{1}{(s-\tau_s^h)^{r'(\frac{d}{\alpha p}{\color{black}-} \frac{\beta}{\alpha})}(t-s)^{\frac{r'}{\alpha}}} \d s \notag\\
&= \int_{\tau_t^h-h}^{\tau^h_t} \frac{1}{(s-(\tau_t^h-h))^{r'(\frac{d}{\alpha p}- \frac{\beta}{\alpha})}(t-s)^{\frac{r'}{\alpha}}} \d s + \int_{\tau_t^h}^{t} \frac{1}{(s-\tau_t^h)^{r'(\frac{d}{\alpha p}- \frac{\beta}{\alpha})}(t-s)^{\frac{r'}{\alpha}}} \d s\notag\\
&\lesssim   \int_{\tau_t^h-h}^{\tau^h_t} \frac{1}{(s-(\tau_t^h-h))^{r'(\frac{d}{\alpha p}- \frac{\beta}{\alpha})}}  \frac{1}{(\tau^h_t-s)^{\frac{r'}{\alpha}}} \d s + \int_{\tau_t^h}^{t} \frac{1}{(s-\tau_t^h)^{r'(\frac{d}{\alpha p}- \frac{\beta}{\alpha})}} \frac{1}{(t-s)^{\frac{r'}{\alpha}}} \d s\notag\\
&\lesssim (\tau^h_t - (\tau^h_t -h))^{1- r'(\frac{d}{\alpha p}- \frac{\beta}{\alpha} +\frac1\alpha)} + (t - \tau^h_t )^{1- r'(\frac{d}{\alpha p}- \frac{\beta}{\alpha} +\frac1\alpha)} \notag\\
&\lesssim h^{1- r'(\frac{d}{\alpha p}- \frac{\beta}{\alpha} +\frac1\alpha) }, \label{SPLIT_BETA_LAST_TIME_STEP}
\end{align}
having used that $t - \tau^h_t \leq h$ and $1- r'(\frac{d}{\alpha p}- \frac{\beta}{\alpha} +\frac1\alpha) >0$. Thus the term $\Delta_{6,2}$ is bounded by 
\begin{align}
	|\Delta_{6,2}(0,x,t,y)|	&\lesssim \bar p_{\alpha} (t,y-x)	h^{\frac1{r'}- (\frac{d}{\alpha p}- \frac{\beta}{\alpha} +\frac1\alpha) } \nonumber\\
	&\lesssim \bar p_{\alpha} (t,y-x)h^{1-\frac 1\alpha-\frac{1}{r}-\frac{d}{\alpha p} + \frac{\beta}{\alpha}} \nonumber \\	
	&\lesssim \bar p_{\alpha} (t,y-x)h^{\frac{\gamma}{\alpha}}h^{- \frac{\beta}{\alpha}} \nonumber \\
	&\lesssim \bar p_\alpha (t,y-x)h^{\frac \gamma\alpha}t^{-\frac{\beta}{\alpha}}.\label{maj-D62-mainthm}
\end{align}
From \eqref{maj-D61-mainthm}, \eqref{maj-D62-mainthm}, we eventually get:
\begin{align}\label{maj-D6-mainthm}
	|\Delta_{6}(0,x,t,y)|&\lesssim \bar p_\alpha (t,y-x)h^{\frac \gamma\alpha}t^{-\frac{\beta}{\alpha}}.
\end{align}

\subsection{Control of the Hölder modulus for the error}\label{subsec-proof-main-thm-holder-mod}
%{\color{purple} Riscrivere la spiegazione in vista dei cambiamenti all'inizio della prova, sez 4.1}\\
%\sm{Ho cercato di modificare mantenendo la coerenza.}\\
It was seen in the control of the previous term  $\Delta_4 $ that a contribution in $\sup_{s\in (0,T]}\left \|\frac{(\Gamma-\Gamma^h)(0,x,\tau_s^h,\cdot)}{\bar p_\alpha(\tau_s^h,\cdot-x)}\right\|_{\B_{\infty,\infty}^\rho}$, $\rho$ defined in \eqref{CHOICE_FOR_RHO} appeared in the r.h.s. of \eqref{maj-D4-mainthm}. This in particular means that, in order to make a circular/Gronwall type argument, we need to control the corresponding  normalized Besov norm for the error in its final variable through its decomposition in \eqref{DECOUP_ERR}. To this end, we will mainly reproduce the former computations observing that we still have some margin in the time singularities. We will here focus on the $\rho $-Hölder modulus of continuity in the diagonal regime (for the current time considered and the final variable), since otherwise the previous controls on the supremum norm already provide the estimates (see the definition in \eqref{DEF_GH} above). 

\sm{From the overall strategy introduced  at the beginning of Section \ref{SEC_PROOF_THM}, and coherently with the former analysis for the supremum norm which also made the $\B_{\infty,\infty}^\rho $ norm of the normalized error appear}, we will thus now focus on the first term in the r.h.s. of \eqref{THE_DECOUP_FOR_HOLDER_MOD_E} using the error expansion \eqref{DECOUP_ERR}. With a slight abuse of notation from now on, for $y,y'\in \R^d $, s.t. $|y-y'|\le t^{\frac 1\alpha}  $, we will denote for $i\in \{1,\cdots, 6\} $,
$$\Delta_i(0,x,t,y,y'):=\Delta_i(0,x,t,y')-\Delta_i(0,x,t,y'),$$
where the terms $\Delta_i(0,x,t,y),\Delta_i(0,x,t,y') $ are those introduced in \eqref{DECOUP_ERR}. 

\paragraph{Term $\Delta_1 $: first time step.}
    For $\Delta_1$, we rely on the fact that we work on the first time step.  Let us first expand the expectation:
\begin{align*}
	&\Delta_1(0,x,t,y,y')\\ 
	:=&  \int_0^h \int \Big(\Gamma(0,x,s,z)b(s,z)\cdot \big(\nabla_y  p_\alpha (t-s,y-z)-\nabla_y  p_\alpha (t-s,y'-z)\big) \\
	& \qquad\qquad- \Gamma^h(0,x,s,z)\mathfrak b_h(s,x) \cdot \big( \nabla_y p_\alpha (t-s,y-z)-\nabla_y p_{\alpha} (t-s,y'-\textcolor{black}{z})\big)\Big)\d z\d s\\
	=:& \Big(\Delta_{1,1}+\Delta_{1,2}\Big)(0,x,t,y,y').
\end{align*}
For $\Delta_{1,1}$, which involves the distributional $b$, we have to rely on duality inequalities in Besov spaces. Assuming w.l.o.g. that $t>2 h$ so that $(t-s)\asymp t$ and using \eqref{besov-estimate-gamma-and-stable_NO_NORM_Holder} (taking therein $\zeta\in (-\beta,\textcolor{black}{-2\beta}) $), we get
\begin{align}
	&|\Delta_{1,1}(0,x,t,y,y')|\notag\\
	&\quad\lesssim \int_0^h \Vert b (s,\cdot)\Vert_{\B_{p,q}^\beta}  \Vert \Gamma (0,x,s,\cdot)\Big(\nabla_y p_{\alpha} (t-s,y-\cdot)-\nabla_y p_{\alpha} (t-s,y'-\cdot)\Big)\Vert_{\B_{p',q'}^{-\beta}}\d s\nonumber\\
	&\quad\lesssim \bar{p}_{\alpha}(t,y-x)|y-y'|^\rho\int_0^h \Vert b (s,\cdot)\Vert_{\B_{p,q}^\beta} \frac{t^{\frac{\beta}{\alpha}}}{(t-s)^{\frac{1}{\alpha}+\frac \rho\alpha}}\left[\frac{1}{s^{\frac{d}{\alpha p}}} + \frac{1}{(t-s)^{\frac{d}{\alpha p}}}\right]\left[%1+
	\frac{t^{\frac \zeta\alpha}}{(t-s)^{\frac{\zeta}{\alpha}}}+\frac{t^{\frac \zeta\alpha}}{s^{\frac{\zeta}{\alpha}}}\right]\d s\nonumber\\
	&\quad\lesssim \bar{p}_{\alpha}(t,y-x) |y'-y|^\rho\Vert b \Vert_{L^r-\B_{p,q}^\beta} \textcolor{black}{t^{\frac{\beta-1-\rho+\zeta}{\alpha}}\left(\int_0^h\left[\frac{1}{s^{\frac{dr'}{\alpha p}}} + \frac{1}{t^{\frac{dr'}{\alpha p}}}\right]\frac{1}{s^{\frac{\zeta r'}{\alpha}}} 
	\d s\right)^{\frac{1}{r'}}}\nonumber\\
	&\quad\lesssim \bar{p}_{\alpha}(t,y-x)\textcolor{black}{|y'-y|^\rho}\left[h^{1-\frac{1}{r}-\frac{d}{\alpha p}-\frac{\zeta}{\alpha}}t^{\frac{\beta-1-\rho+\zeta}{\alpha}}+h^{1-\frac{1}{r}-\frac{\zeta}{\alpha}}t^{\frac{\beta-1-\rho+\zeta}{\alpha}-\frac{d}{\alpha p}}\right]\nonumber\\
	&\quad\lesssim \bar{p}_{\alpha}(t,y-x) |y'-y|^\rho h^{\frac{\ei{\gamma-\eps}}{\alpha}}t^{-\frac{\ei{\beta+\rho-\eps}}{\alpha}}
	= \bar{p}_{\alpha}(t,y-x) |y'-y|^\rho h^{\frac{\ei{\gamma-\eps}}{\alpha}}t^{\ei{\frac{\eps}{2\alpha}}},\label{maj-D11-mainthm_H}
\end{align}
%\ei{From E to S:ho cambiato leggermente mettendo anche qua $-\eps$ nella potenza di $h$ se no le cose non funzionano...}\\
%\sm{Sì infatti, però questo serve solo per l'ultima uguaglianza.}\\
having used \eqref{eq:Beta1} (with $\aa=\frac\zeta\alpha$ and $\bb=\frac d {\alpha p}$ therein) for the last but one inequality, 
the definition of $\gamma$,
\sm{the inequality $-2\beta-\zeta>0 $}
 and the fact that $h<t$ for the last inequality. Eventually,  the choice of $\rho$ in \eqref{CHOICE_FOR_RHO} gives the last equality.
 
For $\Delta_{1,2}$, using the $L^\infty$ bound \eqref{CTR_PONCTUEL_BH} for $\mathfrak{b}_h$,  \sm{the upper bound \eqref{aronson-gammah} for the density of the scheme},  \sm{\eqref{holder-space-palpha}},  
\ei{together with the approximate convolution property \eqref{APPR_CONV_PROP} and the fact that $\bar p_\alpha (t,y'-x)$ can be replaced by $\bar p_\alpha (t,y-x)$ in the diagonal regime \sm{(see Remark \ref{REM_DIAG_REGIME_DENS})}}, 
we get
\begin{align}
	&|\Delta_{1,2}(0,x,t,y,y')|\notag\\
	\lesssim&  \int_0^h s^{-\frac{d}{\alpha p} + \frac{\beta}{\alpha}} \|b(s,\cdot)\|_{\B_{p,q}^\beta}  \int_{\R^d} \bar p_\alpha (s,z-x)    \frac{|y-y'|^\rho}{(t-s)^{\frac{1+\rho}{\alpha}}}  (\bar p_\alpha (t-s,y-z)+\bar p_\alpha (t-s,y'-z))  \d z\d s\nonumber\\
	\lesssim&\ei{ \Big(\int_0^h s^{(-\frac{d}{\alpha p} + \frac{\beta}{\alpha})r'}\d s \Big)^{\frac {1}{r'}} t^{-\frac {1+\rho}\alpha}  |y'-y|^\rho\|b\|_{L^r-\B_{p,q}^\beta} [ \bar p_{\alpha} (t,y-x) + \bar p_{\alpha} (t,y'-x) ] }\nonumber\\
	\lesssim& \bar p_{\alpha} (t,y-x)\textcolor{black}{|y'-y|^\rho}h^{1-\frac{1}{r}-\frac{d}{\alpha p} + \frac{\beta}{\alpha}} t^{-\frac {1+\rho}\alpha}\nonumber\\
	\lesssim& \bar p_{\alpha} (t,y-x)\textcolor{black}{|y'-y|^\rho}h^{\frac{\ei{\gamma-\eps}}{\alpha}}h^{\ei{\frac\eps\alpha}+\frac{1}{\alpha}- \frac{\beta}{\alpha}}t^{-\frac {1+\rho}\alpha}\lesssim \bar p_\alpha (t,y-x) |y'-y|^\rho h^{\frac \gamma\alpha}t^{\ei{\frac{\eps}{2\alpha}}},\label{maj-D12-mainthm_H}
\end{align} 
 recalling that $h\le t$ \ei{and the choice of $\rho$} for the last inequality. We eventually get from \eqref{maj-D12-mainthm_H} and \eqref{maj-D11-mainthm_H}:
 \begin{equation}
 |\Delta_{1}(0,x,t,y,y')|\lesssim \bar p_\alpha (t,y-x)\textcolor{black}{|y'-y|^\rho}h^{\frac \gamma\alpha}t^{\ei{\frac{\eps}{2\alpha}}}.\label{maj-D1-mainthm_H}
 \end{equation}
 
  \paragraph{Term $\Delta_2 $: time sensitivity of the density of the SDE.}
 Let us turn to $\Delta_2$. Expanding the expectation, using the product rule \eqref{PR}, the duality inequality \eqref{dual-ineq} in Besov spaces, the heat kernel estimate \eqref{holder-time-gamma} and the control \eqref{besov-estimate-stable-derivees_temps_esp_sensi_holder_esp}, we get for $\zeta\ei{\in(-\beta,1]} $,
 \begin{align}
 	&|\Delta_2(0,x,t,y,y')|\notag\\ 
	&\quad = \left| \int_h^{\tau_t^h-h}  \int [\Gamma(0,x,s,z)-\Gamma (0,x,\tau_s^h,z)] b(s,z)\cdot \left(\nabla_y p_{\alpha}(t-s,y-z)-\nabla_y p_{\alpha}(t-s,y'-z)\right)	\d z \d s\right|\notag\\
 	 &\quad\lesssim  \int_h^{\tau_t^h-h}  \Vert b(s,\cdot)\Vert_{\B_{p,q}^\beta}	 \left\Vert \frac{\Gamma(0,x,s,\cdot)-\Gamma (0,x,\tau_s^h,\cdot)}{\bar p_\alpha (s,\cdot-x)}\right\Vert_{\B^{\rho}_{\infty,\infty}}\nonumber \\ & \quad \qquad \qquad \qquad \times\Vert \bar p_\alpha (s,\cdot-x) \big(\nabla_y p_{\alpha}(t-s,y-\cdot)-\nabla_y p_{\alpha}(t-s,y'-\cdot)\big)\Vert_{\B_{p',q'}^{-\beta}} \d s\notag\\
 	 &\quad\lesssim |y-y'|^\rho\bar p_\alpha(t,y-x)\int_h^{\tau_t^h-h}  \Vert b(s,\cdot)\Vert_{\B_{p,q}^\beta} \frac{(s-\tau_s^h)^{\frac{\gamma-\eps}{\alpha}}}{s^{\frac{\gamma-\eps+\rho}{\alpha}}} \frac{t^{\frac{\beta}{\alpha}}}{(t-s)^{\frac{1+\rho}{\alpha} }} \left[\frac{1}{s^{\frac{d}{\alpha p}}} + \frac{1}{(t-s)^{\frac{d}{\alpha p}}}\right]\left[\frac{t^{\frac{\zeta}{\alpha}}}{s^{\frac{\zeta}{\alpha}}}+\frac{t^{\frac{\zeta}{\alpha}}}{(t-s)^{\frac{\zeta}{\alpha}}}\right] \d s \notag\\
 & \ei{ \quad\lesssim |y-y'|^\rho\bar p_\alpha(t,y-x) \Vert b\Vert_{L^r-\B_{p,q}^\beta} h^{\frac{\gamma-\eps}{\alpha}} t^{\frac{\beta+ \zeta}{\alpha}}  } \notag \\
 & \quad \qquad \qquad \qquad \times     \ei{ 
 \left( \int_h^{\tau_t^h-h} \frac{1}{s^{\frac{\gamma-\eps+\rho}{\alpha} r'} } \frac{1}{(t-s)^{\frac{1+\rho}{\alpha}r'}} \left[\frac{1}{s^{\frac{d}{\alpha p}}} + \frac{1}{(t-s)^{\frac{d}{\alpha p}}} \right]^{r'} \left[\frac{1}{s^{\frac{\zeta}{\alpha}}}+\frac{1}{(t-s)^{\frac{\zeta}{\alpha}}}\right]^{r'} \d s 
 \right)^{\frac1{r'} }  
 }\notag.%\\
 \end{align}
The singular integral can be bounded using Lemma \ref{lm:sing_int} with $\aa=\frac{\gamma-\varepsilon+\rho}\alpha, \bb= \frac1\alpha+\textcolor{black}{\frac \rho \alpha}, \cc= \frac d{\alpha p}, \dd=\frac\zeta\alpha$, provided that 
\[
r'(\frac{\gamma-\varepsilon+\rho}\alpha+ \frac d{\alpha p} + \frac\zeta\alpha)<1 
\iff 
\gamma - \varepsilon + \rho +\frac dp +\zeta < \alpha- \frac\alpha r
\iff 
2\beta -1-\varepsilon + \rho +\zeta <0,
\] 
which is satisfied since $\rho,\zeta \in (-\beta, -2\beta)$ and $\beta>-\frac12$, 
and provided that
\[
r'(\frac{1}\alpha+\textcolor{black}{\frac \rho \alpha} +\frac d{\alpha p} + \frac\zeta\alpha)<1 
\iff 
1 +\textcolor{black}{\rho} +\frac dp +\zeta < \alpha- \frac\alpha r
\iff 
1-\alpha +\frac dp +\frac\alpha r <-(\zeta+\textcolor{black}{\rho}) .
\]  
\ei{Recall} \eqref{CHOICE_FOR_RHO} where we set $\rho=-\beta+\frac \varepsilon 2$, 
\ei{so the latter condition is equivalent to} 
$$
 0<\alpha-1-\frac dp-\frac \alpha r+\ei{ \beta -\frac   \varepsilon2 -\zeta \iff 0< \gamma  - \beta - \frac \varepsilon 2-\zeta},$$
having used the definition \eqref{eq:gamma} of $\gamma $.
  \ei{Choosing  $\zeta=-\beta+  \frac\varepsilon2 $, which is allowed, we see that the constraint becomes $0<\gamma-\varepsilon$, hence it is satisfied since    we  assumed $\gamma> \varepsilon  $.} 
Thus  by \eqref{eq:Beta2-5} \ei{from Lemma \ref{lm:sing_int} we get}
\begin{align*}
\Big(\int_{0}^{t} \frac{1}{s^{r'\frac{\gamma-\eps+\rho}{\alpha}}}
 \frac{1}{ (t-s)^{r'\frac{1+ \rho}{\alpha} }}
 \left[\frac{1}{s^{\frac{d}{\alpha p}}} 
 + \frac{1}{(t-s)^{\frac{d}{\alpha p}}}\right]^{r'}\left[\frac{1}{s^{\frac{\zeta}{\alpha}}}+\frac{1}{(t-s)^{\frac{\zeta}{\alpha}}}\right]^{r'} \d s\Big)^{\frac 1{r'}} &\lesssim t^{1-\frac1r - \frac{\gamma-\varepsilon+  \textcolor{black}{2}\rho +1 +\frac dp +\zeta}\alpha} \\
&= t^{-\frac{2\beta}\alpha + \frac{\varepsilon-\textcolor{black}{2}\rho-\zeta}\alpha}, 
\end{align*}
which eventually yields:
\begin{align}
 	|\Delta_2(0,x,t,y,y')|\lesssim \bar p_\alpha(t,y-x)|y-y'|^\rho h^{\frac{\gamma-\eps}{\alpha}} t^{\textcolor{black}{\frac{\eps-\beta-2\rho}{\alpha}}}.\label{maj-D2-mainthm_H}
\end{align}

\paragraph{Term $\Delta_3 $: approximation of the singular drift.}

 Let us turn to $\Delta_3$. Expanding the inner expectation and from the proof of Proposition 2 in \cite{CdRJM22} we derive, using \eqref{besov-estimate-gamma-and-stable_NO_NORM_Holder} with some $\zeta\in (-\beta+\gamma\textcolor{black}{-\varepsilon,-\beta+\gamma })$,
\begin{align*}
	&|\Delta_3(0,x,t,y,y')|\\
	&\quad =\left| \int_h^{\tau_t^h-h} \int\Gamma (0,x,\tau_s^h,z) \left[b(s,z)- \mathfrak{b}_h(s,z)\right]\cdot \Big(\nabla_y  p_\alpha (t-s,y-z)-\nabla_y  p_\alpha (t-s,y'-z)\Big) \d z \d s\right|\\
	&\quad \lesssim \int_h^{\tau_t^h-h}  \|b(s,\cdot)-P_h^\alpha b(s,\cdot)\|_{\B_{p,q}^{\beta-\gamma+\textcolor{black}{\varepsilon}}} \| \Gamma (0,x,\tau_s^h,\cdot)\big(\nabla p_{\alpha}(t-s,y-\cdot)-\nabla p_{\alpha}(t-s,y'-\cdot)\big)\|_{\B_{p',q'}^{-\beta+\gamma\textcolor{black}{-\varepsilon}}}  \d s\\
		&\quad \lesssim h^{\frac{\gamma\textcolor{black}{-\varepsilon}}\alpha}|y'-y|^\rho\int_h^{\tau_t^h-h} \|b(s,\cdot)\|_{\B_{p,q}^\beta}\frac{\bar{p}_{\alpha} (t,x-y)}{(t-s)^{\frac{1}{\alpha}+\frac \rho\alpha}} t^{\frac{\beta-\gamma\textcolor{black}{+\varepsilon}}{\alpha}}\left[ \frac{1}{(t-s)^{\frac{d }{\alpha  p}}} +\frac{1}{s^{\frac{d }{\alpha  p}}}\right] \left[ \frac{t^{\frac{\zeta}{\alpha}}}{(t-s)^{\frac{\zeta }{\alpha}}} + \frac{t^{\frac{\zeta}{\alpha}}}{s^{\frac{\zeta }{\alpha}}} \right] \d s\\
		&\quad \lesssim h^{\frac{\gamma\textcolor{black}{-\varepsilon}}\alpha}|y'-y|^\rho t^{\frac{\beta-\gamma\textcolor{black}{+\varepsilon}+\zeta}{\alpha}}\|b\|_{L^r-\B_{p,q}^\beta}\bar{p}_{\alpha} (t,x-y)\\
		&\quad\quad\times\Bigg(\int_h^{\tau_t^h-h} \frac{1}{(t-s)^{r'(\frac{1+\rho}{\alpha})}} \left[ \frac{1}{(t-s)^{\frac{d }{\alpha  p}}} +\frac{1}{s^{\frac{d }{\alpha  p}}}\right]^{r'} \left[ \frac{1}{(t-s)^{\frac{\zeta }{\alpha}}} + \frac{1}{s^{\frac{\zeta }{\alpha}}} \right]^{r'} \d s\Bigg)^{\frac 1{r'}}.
\end{align*}
Notice that, this time, since $\zeta>-\beta+\gamma\textcolor{black}{-\varepsilon}$, the previous integral is  indeed convergent (see Lemma \ref{lm:sing_int} which applies here with \ei{$\aa=0,\bb=\frac{1+\rho}{\alpha},\cc=\frac{d}{\alpha p},\dd=\frac\zeta\alpha$}). Indeed, the highest singularity appears around $s=t$.
For the corresponding  exponent to be integrable one needs:
$$r'(\frac{1+\rho}\alpha+\frac{d}{\alpha p}+\frac \zeta\alpha)<1\iff 0<\alpha-1-\frac dp-\frac \alpha r-\rho-\zeta\iff 0<\gamma-2\beta-\rho-\zeta=-\beta+\gamma-\zeta-\frac \eps 2,$$
recalling \eqref{CHOICE_FOR_RHO} for the last equality. 
Taking \ei{$\zeta = -\beta+\gamma- \frac{3\eps}{4}$} thus \ei{guarantees the integrability condition as well as $\zeta\in(-\beta+\gamma-\eps, -\beta+\gamma)$}. We get
\begin{align}
	&|\Delta_3(0,x,t,y,y')| \lesssim h^{\frac{\gamma-\varepsilon}\alpha}|y'-y|^\rho t^{\frac{\beta-\gamma+\varepsilon +\zeta}{\alpha}} \bar p_\alpha(t,y-x)  \ei{t^{1-\frac1r- \frac{1+\rho}{\alpha} - \frac{d}{\alpha p} -\frac\zeta\alpha}} \lesssim h^{\frac{\gamma-\eps}\alpha}|y'-y|^\rho t^{\frac{\eps}{\textcolor{black}{4}\alpha}}\textcolor{black}{\bar p_\alpha(t,y-x)}.\label{maj-D3-mainthm_H}
\end{align}

\paragraph{Term $\Delta_4 $: Gronwall-like or circular type argument.}
We now need to control the Hölder norm of the  term associated with the Gronwall or circular type argument. 
\ei{Proceeding similarly as for the control of the supremum norm of $\Delta_4$ but using \eqref{besov-estimate-stable-derivees_temps_esp_sensi_holder_esp} in place of \eqref{besov-estimate-stable} with $\zeta,\rho\in (-\beta,1]$ we have}
\begin{align}
&|\Delta_4(0,x,t,y,y')|\notag\\
:=&\left|\int_h^{\tau_t^h-h} \bigg\{\E_{0,x} \bigg[ \mathfrak b_h
	(s,X_{\tau_s^h})\cdot \nabla_y p_\alpha (t-s,y-X_{\tau_s^h})
	-
	\mathfrak b_h(s,X_{\tau_s^h}^h)\cdot \nabla_y  p_\alpha (t-s,y-X_{\tau_s^h}^h) \bigg] \right.\notag\\
	&\left. -\E_{0,x} \bigg[\mathfrak b_h
	(s,X_{\tau_s^h})\cdot \nabla_y p_\alpha (t-s,y'-X_{\tau_s^h})
	-
	\mathfrak b_h(s,X_{\tau_s^h}^h)\cdot \nabla_y  p_\alpha (t-s,y'-X_{\tau_s^h}^h)\bigg]\bigg\}\d s\right|\notag\\
	=&\left|\int_h^{\tau_t^h-h}\int_{\R^d } (\Gamma-\Gamma^h)(0,x,\tau_s^h,z) \mathfrak b_h(s,z) \big(\nabla_y  p_\alpha (t-s,y-z)-\nabla_y  p_\alpha (t-s,y'-z)\big)  \d z \d s\right|\notag\\
	\le &\int_h^{\tau_t^h-h} \|\mathfrak b_h(s,\cdot)\|_{\B_{p,q}^\beta}\left \|\frac{(\Gamma-\Gamma^h)(0,x,\tau_s^h,\cdot)}{\bar p_\alpha(\tau_s^h,\cdot-x)}\right\|_{\B_{\infty,\infty}^\rho}\notag\\
	&\qquad\qquad\times\|\bar p_\alpha(\tau_s^h,\cdot-x) \big(\nabla p_{\alpha}(t-s,y-\cdot)-\nabla p_{\alpha}(t-s,y'-\cdot)\big)\|_{\B_{p',q'}^{-\beta}}\d s\notag\\
	\le &|y-y'|^\rho\sup_{s\in (h,T]}g_{h,\rho}(s)
	\bar p_\alpha(t,y-x)\notag\\
	&\hspace*{.5cm}\times\int_0^t \|b(s,\cdot)\|_{\B_{p,q}^\beta}\frac{1}{s^{\frac \rho\alpha }}\frac{t^{\frac{\beta}{\alpha}}}{(t-s)^{\frac{1}{\alpha}+\frac\rho\alpha}}\left[\frac{1}{s^{\frac{d}{\alpha p}}} + \frac{1}{(t-s)^{\frac{d}{\alpha p}}}\right]\left[\frac{t^{\frac{\zeta}{\alpha}}}{s^{\frac{\zeta}{\alpha}}}+\frac{t^{\frac{\zeta}{\alpha}}}{(t-s)^{\frac{\zeta}{\alpha}}}\right] \d s\notag\\
	\lesssim & |y-y'|^\rho\sup_{s\in (h,T]}g_{h,\rho}(s)
	t^{\frac{\gamma-\beta-2\rho}{\alpha}}\bar p_\alpha(t,y-x)\notag\\
	\lesssim & |y-y'|^\rho\sup_{s\in (h,T]}g_{h,\rho}(s)
	t^{\frac{\textcolor{black}{\gamma+\beta-\eps}}{\alpha}}\bar p_\alpha(t,y-x),\label{maj-D4-mainthm_H}
\end{align}keeping in mind the definition \eqref{DEF_GH} 
and having used the \ei{first} inequality  of Lemma \ref{lm:sing_int} as for the former term $\Delta_2 $ of this section but here  with $\aa=\frac \rho\alpha $, recalling as well the choice for $\rho $ in \eqref{CHOICE_FOR_RHO}.

\paragraph{Term $\Delta_5 $: spatial sensitivities of the driving noise.}
Let us now turn to $\Delta_5 $. Using the harmonicity of the stable heat kernel (or martingale property of the driving noise) we have
\begin{align*}
&\Delta_5(0,t,x,y,y')\\
=&\int_h^{\tau_t^h-h} \left\{\E_{0,x} \left[\mathfrak{b}_h(s,X_{\tau_s^h}^h)\cdot \left( \nabla_y p_\alpha(t-s,y-X_{\tau_s^h}^h)-\nabla_y  p_\alpha (t-s,y-X_s^h)\right)\right]\right.\\
&\qquad\qquad-\left. \E_{0,x} \left[\mathfrak{b}_h(s,X_{\tau_s^h}^h)\cdot \left( \nabla_y p_\alpha(t-s,y'-X_{\tau_s^h}^h)-\nabla_y  p_\alpha (t-s,y'-X_s^h)\right)\right]\right\}\d s\\
=&\int_h^{\tau_t^h-h} \E_{0,x} \left[\mathfrak{b}_h(s,X_{\tau_s^h}^h)\cdot \left( \nabla_y p_\alpha(t-s,y-X_{\tau_s^h}^h)
-\nabla_y  p_\alpha \Big(t-\tau_s^h,y-(X_{\tau_s^h}^h+\int_{\tau_s^h}^s\mathfrak{b}_h(u,X_{\tau_s^h}^h)\d u)\Big)\right.\right.\\
&\left.\left.  -\Big(\nabla_y p_\alpha(t-s,y'-X_{\tau_s^h}^h)
-\nabla_y  p_\alpha \Big(t-\tau_s^h,y'-(X_{\tau_s^h}^h+\int_{\tau_s^h}^s\mathfrak{b}_h(u,X_{\tau_s^h}^h)\d u)\Big)\Big)\right)\right]\d s.
\end{align*}
Write now,
\begin{align*}
&|\Delta_5(0,x,t,y,y')|\\
= &\Bigg|\int_{h}^{\tau_t^h-h} \int_{\R^d} \Gamma^h(0,x,\tau_s^h,z)\mathfrak{b}_h(s,z)\cdot \left( \nabla_y p_\alpha(t-s,y-z) 
-\nabla_y  p_\alpha (t-\tau_s^h,y-(z+\int_{\tau_s^h}^s\mathfrak{b}_h(u,z)\d u))\right.\\
&\qquad \qquad \left. -\Big(\nabla_y p_\alpha(t-s,y'-z) 
-\nabla_y  p_\alpha (t-\tau_s^h,y'-(z+\int_{\tau_s^h}^s\mathfrak{b}_h(u,z)\d u))\Big)
\right) \d z \d s\Bigg|\\
\le& \Bigg|\int_{h}^{\tau_t^h-h} \int_{\R^d} \Gamma^h(0,x,\tau_s^h,z)\mathfrak{b}_h(s,z)\cdot \left( \nabla_y p_\alpha(t-s,y-z) 
-\nabla_y  p_\alpha (t-\tau_s^h,y-z)\right.\\
&\left.\qquad\qquad-\Big( \nabla_y p_\alpha(t-s,y'-z) 
-\nabla_y  p_\alpha (t-\tau_s^h,y'-z) \Big)\right) \d z  \d s\Bigg|\\
&+\Bigg|\int_{h}^{\tau_t^h-h} \int_{\R^d} \Gamma^h(0,x,\tau_s^h,z)\mathfrak{b}_h(r,z)\cdot \left( \nabla_y p_\alpha(t-\tau_s^h,y-z) \phantom{\int}\right.\notag \\
&\hspace*{.5cm}\left. -\nabla_y  p_\alpha (t-\tau_s^h,y-(z+\int_{\tau_s^h}^s\mathfrak{b}_h(u,z)\d u))\right.\\
&\hspace*{.5cm}\left. -\Big(\nabla_y p_\alpha(t-\tau_s^h,y'-z)-\nabla_y  p_\alpha (t-\tau_s^h,y'-(z+\int_{\tau_s^h}^s\mathfrak{b}_h(u,z)\d u))\Big)\right) 
\d z  \d s\Bigg|\\
=:&|\Delta_{51}(0,x,t,y,y')|+|\Delta_{52}(0,x,t,y,y')|.
\end{align*}
We have:
\begin{align*}
&|\Delta_{51}(0,x,t,y,y')|\\
\lesssim& \int_h^{\tau_t^h-h}\left| \int_{\R^d} \Gamma^h(0,x,\tau_s^h,z)\mathfrak{b}_h(s,z)\cdot \left( \nabla_y p_\alpha(t-\tau_s^h,y-z)-\nabla_y  p_\alpha (t-s,y-z)\right.\right.\\
&\left.\left. \qquad\qquad - \big(\nabla_y p_\alpha(t-\tau_s^h,y'-z)-\nabla_y  p_\alpha (t-s,y'-z)\big)\right) \d z \right| \d s\\
\lesssim& h \int_h^{\tau_t^h-h}\left| \int_0^1 \int_{\R^d} \Gamma^h(0,x,\tau_s^h,z)\mathfrak{b}_h(s,z)\cdot \left( \partial_t\nabla_y p_\alpha(t-(\tau_s^h+\lambda(s-\tau_s^h)),y-z)\right.\right.\\
&\qquad\phantom{\int_0^1d\lambda}\left.\left.-\partial_t\nabla_y p_\alpha(t-(\tau_s^h+\lambda(s-\tau_s^h)),y'-z)\right)\d z   \d \lambda \right|\d s \\
\lesssim& h\int_h^{\tau_t^h-h} \d s\|\mathfrak b_h(s,\cdot)\|_{\B_{p,q}^\beta} \\ & \times  \int_0^1\d \lambda\left\| \Gamma^h (0,x,\tau_s^h,\cdot)\left( \partial_t\nabla_y p_\alpha(t-(\tau_s^h+\lambda(s-\tau_s^h)),y-\cdot)-\partial_t\nabla_y  p_\alpha (t-(\tau_s^h+\lambda(s-\tau_s^h)),y'-\cdot)\right)\right\|_{\B_{p',q'}^{-\beta}},
\end{align*}
\sm{using \eqref{dual-ineq} for the last inequality}.

\ei{Using   \eqref{CTR_BESOV_BH} and then} \eqref{besov-estimate-gammah-stable-derivees_temps_esp_sensi_holder_esp} \ei{with $\theta=1, \zeta \in (-\beta, -\beta+\gamma) $ and $\sm{r=\tau_s^h+\lambda (s-\tau_s^h)}$ therein,  together with the fact that $h\leq h^{\frac{\gamma-\eps}\alpha} (t-s)^{\frac{\alpha-\gamma+\eps}\alpha}$} we get 
\begin{align}
		&|\Delta_{51}(0,x,t,y,y')|\nonumber\\		
		&\quad \lesssim|y-y'|^\rho\int_{h}^{\tau_t^h-h}\|\mathfrak b_h(s,\cdot)\|_{\B_{p,q}^\beta} h^{\frac{\ei{\gamma-\eps}}{\alpha}}\frac{\bar p_\alpha(t,y-x) }{(t-s)^{\frac{\ei{1+\gamma+\rho-\eps}}{\alpha}}} t^{\frac{\beta}{\alpha}}\left[\frac{1}{s^{\frac{d }{\alpha  p}}}+ \frac{1}{(t-s)^{\frac{d }{\alpha  p}}} \right] \left[\frac{t^{\frac{\zeta}{\alpha}}}{s^{\frac{\zeta }{\alpha}}}+\frac{t^{\frac{\zeta}{\alpha}}}{(t-s)^{\frac{\zeta }{\alpha}}}  \right]  \d s\nonumber\\
		&\quad \lesssim |y-y'|^\rho\bar p_\alpha(t,y-x)h^{\frac{\ei{\gamma-\eps}}{\alpha}}t^{\frac \beta\alpha}\left(\int_{h}^{\tau_t^h-h} \frac{1}{(t-s)^{\frac{\ei{1+\gamma+\rho-\eps}}{\alpha} r'}}\left[\frac{1}{s^{\frac{d }{\alpha  p}}}+ \frac{1}{(t-s)^{\frac{d }{\alpha  p}}} \right]^{r'} \left[ \frac{t^{\frac{\zeta}{\alpha}}}{s^{\frac{\zeta }{\alpha}}}+\frac{t^{\frac{\zeta}{\alpha}}}{(t-s)^{\frac{\zeta }{\alpha}}}  \right]^{r'}\d s\right)^{\frac 1{r'}}.\nonumber
\end{align}
\ei{The latter singular integral is treated by   Lemma \ref{lm:sing_int} with $\aa=0,\bb=\frac{1+\gamma+\rho-\eps}{\alpha},\cc=\frac{d}{\alpha p},\dd=\frac{\zeta}{\alpha} $ thanks to the fact that the integrability condition writes 
\[
r'( \frac{1+\gamma+\rho-\eps}{\alpha}+\frac{d}{\alpha p}+\frac{\zeta}{\alpha}) <1 \iff1 - \frac1r - \frac{1+\gamma+\rho-\eps}{\alpha}-\frac{d}{\alpha p}-\frac{\zeta}{\alpha}\sm{=\frac{-2\beta+\eps-\rho-\zeta}\alpha}>0 \iff \frac\eps4>0,
\]
having chosen $\zeta= -\beta+\frac\eps4$ and recalling the choice \eqref{CHOICE_FOR_RHO} for $\rho$,
 \begin{align}\label{maj-D51_H}
	\Delta_{51} 	&   \lesssim |y-y'|^\rho \bar p_\alpha(t,y-x) h^{\frac{\gamma-\eps}{\alpha} } t^{ \frac{\beta+\zeta}{\alpha}} t^{1-\frac1r -\frac{1+\gamma+\rho-\eps}{\alpha} -\frac{ d}{\alpha p} - \frac \zeta \alpha}
\lesssim |y-y'|^\rho\bar p_\alpha(t,y-x)h^{\frac{\gamma-\eps}{\alpha}} t^{\frac{\eps}{2\alpha}}.
\end{align}
}
On the other hand, using \ei{\eqref{dual-ineq}, then} \eqref{CTR_BESOV_BH} and \eqref{besov-estimate-gammah-stable-PERTURB_DRIFT_SENSI_HOLDER} \ei{with $\rho=-\beta +\frac\eps2$ as in \eqref{CHOICE_FOR_RHO} and $\zeta = -\beta+\frac\eps2 \in (-\beta, -\beta+\gamma)$}, we have
\begin{align}
&|\Delta_{52}(0,x,t,y,y')|\nonumber\\
&\quad \lesssim \int_{h}^{\tau_t^h}  \|\mathfrak b_h(s,\cdot)\|_{\B_{p,q}^\beta} \int_0^1 \left\| \Gamma^h(0,x,\tau_s^h,\cdot)\Big(\nabla_y^2 p_\alpha (t-\tau_s^h,y-(\cdot+\lambda\int_{\tau_s^h}^s\mathfrak{b}_h(u,\cdot)\d u))\right.\nonumber\\
&\left.\qquad\qquad\qquad\qquad\qquad\qquad - \nabla_y^2  p_\alpha (t-\tau_s^h,y'-(\cdot+\lambda\int_{\tau_s^h}^s\mathfrak{b}_h(u,\cdot)\d u))\Big)\int_{\tau_s^h}^s\mathfrak{b}_h(u,\cdot)\d u\right \|_{\B_{p',q'}^{-\beta}} \d \lambda \d s\nonumber\\
&\quad \lesssim |y-y'|^\rho\bar p_\alpha(t,y-x) t^{\frac{\beta}{\alpha}}\int_{h}^{\tau_t^h-h}  \frac{\|b(s,\cdot)\|_{\B_{p,q}^\beta}h^{\frac{\gamma-\beta}{\alpha}}}{(t-\tau_s^h)^{\frac{1+\rho}{\alpha}}} \left[\frac{1}{(\tau_s^h)^{\frac{d}{\alpha p}}} + \frac{1}{(t-\tau_s^h)^{\frac{d}{\alpha p}}}\right]\left[\frac{t^{\frac \zeta\alpha}}{(\tau_s^h)^{\frac{\zeta}{\alpha }}} + \frac{t^{\frac\zeta\alpha}}{(t-\tau_s^h)^{\frac{\zeta}{\alpha }}}\right]\d s\notag\\
&\quad \lesssim  |y-y'|^\rho h^{\frac{\gamma-\beta}{\alpha}} \bar p_\alpha(t,y-x) \ei{ t^{\frac{\beta+\zeta}{\alpha}} t^{1-\frac1r-\frac{1+\rho}{\alpha} -\frac{d}{\alpha p} - \frac\zeta\alpha } } \notag\\
& \quad \lesssim  |y-y'|^\rho \bar p_\alpha(t,y-x) \ei{h^{\frac{\gamma-\beta}{\alpha}} t^{\frac{\gamma- \frac{\eps}{2}}{\alpha}} } 
\lesssim  |y-y'|^\rho \bar p_\alpha(t,y-x) \ei{h^{\frac{\gamma-\eps}{\alpha}} t^{\frac{\eps}{2\alpha}} }, \label{maj-D52_H}
\end{align}
having used \ei{H\"older inequality and then} Lemma \ref{lm:sing_int} with $\aa=0,\bb=\frac{1+\rho}{\alpha},\cc=\frac{d}{\alpha p},\dd=\frac{\zeta}{\alpha} $, \ei{which is allowed since the integrability condition holds (it is less resctrictive than the one for the term $\Delta_{51}$), and recalling that we choose $\gamma >\eps$ for the last inequality.
}
From \eqref{maj-D51_H} and \eqref{maj-D52_H} we thus derive:
\begin{align}\label{maj-D5-mainthm_H}
|\Delta_5(0,x,t,y,y')|\lesssim |y-y'|^\rho \bar p_\alpha(t,y-x) h^{\frac{\gamma-\varepsilon}{\alpha}} t^{\frac{\eps}{2\alpha}}.
\end{align}

\paragraph{Term $\Delta_6 $: last time steps}
This term is once again handled very much like $\Delta_1$, in the sense that the smallness will come from each contribution and not from sensitivities. Namely,
\begin{align*}
&|\Delta_6(0,x,t,y,y')|\\
=&\left|\int_{\tau_t^h-h}^t \E_{0,x} \left[ b(s,X_s)\cdot \Big(\nabla_y p_\alpha (t-s,y-X_s)-\nabla_y p_\alpha (t-s,y'-X_s)\Big)\right]\right.\\
&\left.-\E_{0,x} \left[\mathfrak{b}_h(s,X_{\tau_s^h}^h)\cdot\Big(\nabla_y p_\alpha (t-s,y-X_s^h)-\nabla_y p_\alpha (t-s,y'-X_s^h)\Big)  \right] \d s\right|\\
\le&\left|\int_{\tau_t^h-h}^t \E_{0,x} \left[ b(s,X_s)\cdot \Big(\nabla_y p_\alpha (t-s,y-X_s)-\nabla_y p_\alpha (t-s,y'-X_s)\Big)\right]\d s \right|\\
&+\left|\int_{\tau_t^h-h}^t\E_{0,x}\left[\mathfrak{b}_h(s,X_{\tau_s^h}^h)\cdot\Big(\nabla_y p_\alpha (t-s,y-X_s^h)-\nabla_y p_\alpha (t-s,y'-X_s^h)\Big)  \right] \d s\right|\\
=:&\Big(|\Delta_{6,1}|+|\Delta_{6,2}|\Big)(0,x,t,y,y').
\end{align*}
\ei{As done for the bounds of term $\Delta_6$ in the sup-norm, we assume w.l.o.g\ that $t>3h$, so for $\tau_t^h -h< s<t$ so that $t-(\tau_t^h-h) \asymp h$ (since $t-(\tau_t^h-h)  \in (h,2h)$). We rely also on the duality inequality \eqref{dual-ineq}.}
We use \eqref{besov-estimate-gamma-and-stable_NO_NORM_Holder} (taking therein $\zeta\in (-\beta,\ei{-\beta+\gamma), j=0, k=1}$ \ei{and $\rho = -\beta+\frac\eps2$ as defined in \eqref{CHOICE_FOR_RHO}), and proceed similarly as for the control of the sup norm of $\Delta_{6,1}$} to get
\begin{align}
	&|\Delta_{6,1}(0,x,t,y,y')|\notag\\
	&\quad \lesssim\int_{\tau_t^h-h}^t \Vert b (s,\cdot)\Vert_{\B_{p,q}^\beta} \Vert \Gamma (0,x,s,\cdot)\Big(\nabla_y p_{\alpha} (t-s,y-\cdot)-\nabla_y p_{\alpha} (t-s,y'-\cdot)\Big)\Vert_{\B_{p',q'}^{-\beta}}\d s\nonumber\\
	&\quad \lesssim|y-y'|^\rho\bar{p}_{\alpha}(t,y-x)\int_{\tau_t^h-h}^t \Vert b (s,\cdot)\Vert_{\B_{p,q}^\beta} \frac{t^{\frac{\beta}{\alpha}}}{(t-s)^{\frac{1+\rho}{\alpha}}}\left[\frac{1}{s^{\frac{d}{\alpha p}}} + \frac{1}{(t-s)^{\frac{d}{\alpha p}}}\right]\left[%1+
	\frac{t^{\frac{\zeta}{\alpha}}}{s^{\frac{\zeta}{\alpha}}}+\frac{t^{\frac{\zeta}{\alpha}}}{(t-s)^{\frac{\zeta}{\alpha}}}\right]\d s\nonumber\\
	&\quad \lesssim|y-y'|^\rho \bar{p}_{\alpha}(t,y-x) \Vert b \Vert_{L^r-\B_{p,q}^\beta} t^{\frac{\beta+\zeta}\alpha}\left(\int_{\tau_t^h-h}^t \ei{\frac{1}{(t-s)^{\frac{1+\rho+\zeta}{\alpha} r'} }}\left[\frac{1}{t^{\frac{d}{\alpha p}}} + \frac{1}{(t-s)^{\frac{d}{\alpha p}}}\right]^{\textcolor{black}{r'}}
	\d s\right)^{\frac{1}{r'}}\nonumber\\
	&\quad \lesssim|y-y'|^\rho\bar{p}_{\alpha}(t,y-x) \ei{ t^{\frac{\beta+\zeta}\alpha}} h^{1-\frac{1}{r}} \ei{\left( h^{-\frac{1+\rho+\zeta}{\alpha} - \frac{ d} {\alpha p}}+  t^{\frac{d}{\alpha p}} h^{-\frac{1+\rho+\zeta}{\alpha}} \right)}\nonumber\\
		&\quad \lesssim \textcolor{black}{|y-y'|^\rho\bar{p}_{\alpha}(t,y-x)  t^{\frac{\beta+\zeta}{\alpha}} h^{\frac{\gamma-2\beta-(\rho+\zeta)}{\alpha}} }\nonumber\\
	&\quad \lesssim |y-y'|^\rho\bar{p}_{\alpha}(t,y-x) h^{\frac{\gamma-\varepsilon}{\alpha}}t^{\frac{\eps}{2\alpha}},
	\label{maj-D61-mainthm_H}
\end{align}
having used the second bullet point of Lemma \ref{lm:sing_int} (equation \eqref{eq:Beta6}) with $\aa=\frac{1+\textcolor{black}{\rho}+\zeta}\alpha, \bb= \frac d {\alpha p}$ and $v= \tau_t^h-h$. Notice that the assumption $r'(a+b)<1$ \ei{(using the definition  \eqref{CHOICE_FOR_RHO} of $\rho$)}  is equivalent to:
\begin{align*}
\frac{1+\rho+\zeta + \frac dp}\alpha <1-\frac1r &\iff 1+\textcolor{black}{\rho}+\zeta+\frac dp < \alpha - \frac \alpha r \iff \zeta+\rho< \alpha-\frac\alpha r  -\frac dp - 1\\
& \iff \zeta<  \alpha-\frac\alpha r  -\frac dp - 1 +\beta -\frac\eps2
\end{align*} 
which is satisfied  e.g.\ for the choice $\zeta=\rho=-\beta+\frac \eps2 $ as in  since the above condition then rewrites $\gamma>\eps $ (which as been assumed for the whole analysis).

\ei{For $\Delta_{6,2}$ we proceed  similarly as for $\Delta_{6,1}$ and write using the $L^\infty$ bound of $\mathfrak{b}_h$, \eqref{CTR_PONCTUEL_BH}, the fact that $\Gamma^h\lesssim p_\alpha$ (see \eqref{aronson-gammah}) and  \eqref{holder-space-palpha} (with $\delta=0,\zeta=1, \theta=\rho $):
}
\begin{align*}
|\Delta_{6,2}(0,x,t,y,y')|=&\left|\int_{\tau_t^h-h}^t\E_{0,x}\left[\mathfrak{b}_h(s,X_{\tau_s^h}^h)\cdot\Big(\nabla_y p_\alpha (t-s,y'-X_s^h)-\nabla_y p_\alpha (t-s,y-X_s^h)\Big)  \right] \d s\right|\\
=&\left|\int_{\tau_t^h-h}^t\E_{0,x}\left[\mathfrak{b}_h(s,X_{\tau_s^h}^h)\cdot\Big(\nabla_{y'} p_\alpha (t-\tau_s^h,y'-(X_{\tau_s^h}^h+\int_{\tau_s^h}^s\mathfrak{b}_h(r,X_{\tau_s^h}^h)\d r)\right.\right.\\
&-\left.\left.\nabla_y p_\alpha (t-\tau_s^h,y-(X_{\tau_s^h}^h+\int_{\tau_s^h}^s\mathfrak{b}_h(r,X_{\tau_s^h}^h)\d r)\Big)  \right] \d s\right|\\
\le& \int_{\tau_t^h-h}^t\int_{\R^d} \Gamma^h(0,x,\tau_s^h,z)|\mathfrak{b}_h(s,z)|\Big|\nabla_{y'} p_\alpha (t-\tau_s^h,y'-(z+\int_{\tau_s^h}^s\mathfrak{b}_h(r,z)\d r))\\
&-\nabla_y p_\alpha (t-\tau_s^h,y-(z+\int_{\tau_s^h}^s\mathfrak{b}_h(r,z)\d r))\Big|  \d z \d s\\
\le &\int_{\tau_t^h-h}^t\int_{\R^d} \bar p_\alpha(\tau_s^h,z-x)\|b(s,\cdot)\|_{\B_{p,q}^\beta}(s-\tau_s^h)^{-\frac{d}{\alpha p} + \frac{\beta}{\alpha}}\frac{|y-y'|^\rho}{(t-s)^{\frac{1+\rho}{\alpha}}}\\
&\times \Big(\bar p_\alpha(t-\tau_s^h,y'-(z+\int_{\tau_s^h}^s\mathfrak{b}_h(r,z)\d r)) +\bar p_\alpha(t-\tau_s^h,y-(z+\int_{\tau_s^h}^s\mathfrak{b}_h(r,z)\d r))\Big).
\end{align*}
From Lemma \ref{Lemma_TRANS_SCHEME} we eventually derive:

{\color{black}
\begin{align}&|\Delta_{6,2}(0,x,t,y,y')|\notag\\
	\lesssim&  |y-y'|^\rho\int_{\tau_t^h-h}^t \int \frac{1}{(t-s)^{\frac{1}{\alpha}+\frac \rho\alpha}}\|b(s,\cdot)\|_{\B_{p,q}^\beta}(s-\tau_s^h)^{-\frac{d}{\alpha p} + \frac{\beta}{\alpha}}\bar p_\alpha (s,z-x) (\bar p_\alpha (t-s,y-z)+\bar p_\alpha (t-s,y'-z))  \d z\d s\nonumber\\
	\lesssim& |y-y'|^\rho\bar p_{\alpha} (t,y-x)\left(\int_{\tau_t^h-h}^t \frac{1}{(s-\tau_s^h)^{r'(\frac{d}{\alpha p} - \frac{\beta}{\alpha})}(t-s)^{r'\big(\frac{1}{\alpha}+\frac \rho\alpha\big)}} 
	\d s\right)^{\frac 1{r'}}\notag\\
	\lesssim& |y-y'|^\rho\bar p_{\alpha} (t,y-x) h^{\frac{\gamma-(\rho+\beta)}{\alpha}}\lesssim |y-y'|^\rho\bar p_{\alpha} (t,y-x) h^{\frac{\gamma-\varepsilon}{\alpha}}t^{\frac{\eps}{2\alpha}}=|y-y'|^\rho\bar p_{\alpha} (t,y-x) h^{\frac{\gamma-\varepsilon}{\alpha}}t^{ \frac{\beta+\rho}{\alpha}},
	\label{maj-D62-mainthm_H}
\end{align}
where we used the approximate convolution property \eqref{APPR_CONV_PROP} as well as the fact that the diagonal regime holds, i.e. $|y'-y|\le t^{1/\alpha} $, for the second inequality (see Remark \ref{REM_DIAG_REGIME_DENS}). The last inequality follows by direct integration of the rescaled $\beta$-function following the time splitting already detailed in \eqref{SPLIT_BETA_LAST_TIME_STEP} as well as the specific choice of $\rho=-\beta+\frac \eps2 $ in \eqref{CHOICE_FOR_RHO}.
}

From \eqref{maj-D61-mainthm_H}, \eqref{maj-D62-mainthm_H}, we eventually get:
\begin{align}\label{maj-D6-mainthm_H}
	|\Delta_{6}(0,x,t,y,y')|&\lesssim |y-y'|^\rho\bar p_\alpha (t,y-x)h^{\frac {\gamma-\varepsilon}\alpha }t^{\ei{\frac{\eps}{2\alpha} } }.
\end{align}

\subsection{Closing the circular argument}\label{sec:circular}

{\color{black}
Putting together the $L^\infty$-controls \eqref{maj-D1-mainthm}, \eqref{maj-D2-mainthm}, \eqref{maj-D3-mainthm}, \eqref{maj-D4-mainthm}, \eqref{maj-D5-mainthm}, \eqref{maj-D6-mainthm} we derive:

\begin{equation}\label{PREAL_BD_THM}
\frac{|(\Gamma-\Gamma^h)(0,t,x,y)|}{\bar p_\alpha(t,y-x)}\lesssim h^{\frac{\gamma-\eps}{\alpha}}t^{\frac{\eps}{2\alpha}} +\sup_{s\in (h,T]}g_{h,\rho}(s)t^{\frac{\gamma-\frac{\eps}{2}}{\alpha}}.
\end{equation}
On the other hand, putting together the H\"older controls \eqref{maj-D1-mainthm_H}, \eqref{maj-D2-mainthm_H}, \eqref{maj-D3-mainthm_H}, \eqref{maj-D4-mainthm_H}, \eqref{maj-D5-mainthm_H}, \eqref{maj-D6-mainthm_H} we derive
\begin{align*}
&\frac{\left|(\Gamma-\Gamma^h)(0,x,t,y)-(\Gamma-\Gamma^h)(0,x,t,y')\right|}{\bar p_\alpha(t,y-x)|y-y'|^\rho} \lesssim h^{\frac{\gamma-\varepsilon}\alpha}  t^{\frac{\eps}{4 \alpha}}+  h^{\frac{\gamma-\varepsilon}\alpha}  t^{\frac{\beta}{\alpha}}+\sup_{s\in (h,T]} g_{h,\rho}(s) t^{\frac{\gamma+\beta-\eps}{\alpha}},
\end{align*}
}
which in turn yields from the choice of $\rho=-\beta+\frac \eps2 $ in \eqref{CHOICE_FOR_RHO},
\begin{align}
&t^{\frac{\rho}{\alpha}}\frac{\left|(\Gamma-\Gamma^h)(0,x,t,y)-(\Gamma-\Gamma^h)(0,x,t,y')\right|}{\bar p_\alpha(t,y-x)|y-y'|^\rho} \lesssim h^{\frac{\gamma-\varepsilon}\alpha}\textcolor{black}{ t^{\frac{\eps}{\textcolor{black}{4}\alpha}}}+\sup_{s\in (h,T]} g_{h,\rho}(s) t^{\frac{\gamma-\frac{\eps}{2}}{\alpha}}.\label{PREAL_CIRCU}
\end{align}
We can now use  \eqref{PREAL_BD_THM} and \eqref{PREAL_CIRCU} to derive, 
\begin{align*}
&\sup_{y\in \R^d}\frac{|\Gamma(0,x,t,y)-\Gamma^h(0,x,t,y)|}{\bar p_\alpha(t,y-x)}+t^{\frac \rho\alpha}\sup_{(y,y')\in (\R^d)^2}\frac{\left|(\Gamma-\Gamma^h)(0,x,t,y)-(\Gamma-\Gamma^h)(0,x,t,y')\right|}{\bar p_\alpha(t,y-x)|y-y'|^\rho}\notag\\
&\quad \lesssim h^{\frac{\gamma-\varepsilon}\alpha} +\sup_{s\in (h,T]} g_{h,\rho}(s)t^{\frac{\gamma-\frac{\eps}{2}}{\alpha}},
\end{align*}
which rewrites from \eqref{DEF_GH} and \eqref{THE_DECOUP_FOR_HOLDER_MOD_E}:
\begin{align*}
g_{h,\rho}(t)
\lesssim h^{\frac{\gamma-\varepsilon}\alpha}+\sup_{s\in (h,T]} g_{h,\rho}(s)t^{\frac{\gamma-\frac{\eps}{2}}{\alpha}}.
\end{align*}
Since the exponent of $t$ in the above was chosen to be non-negative, we can take the supremum in $t\in (h,T] $ in the above equation to derive, for a constant $C\geq 1$,
\begin{align*}
\sup_{t\in (h,T]}g_{h,\rho}(t)
\leq C\left( h^{\frac{\gamma-\varepsilon}\alpha} +\sup_{s\in (h,T]} g_{h,\rho}(s) T^{\frac{\gamma-\frac{\eps}{2}}{\alpha}}\right),
\end{align*}
which for $T$ small enough (i.e. s.t. $CT^{\frac{\gamma-\frac{\eps}{2}}{\alpha}}\leq 1/2$) eventually yields:
\begin{align*}
\sup_{t\in (h,T]}g_{h,\rho}(t)
\leq 2C h^{\frac{\gamma-\varepsilon}\alpha} \lesssim h^{\frac{\gamma-\eps}{\alpha}}.
\end{align*}
The theorem is proved.

	\section{Proof of technical Lemmas}\label{sec-proofs-lemmas}
%	\subsection{Proof of Proposition \ref{THE_PROP}}
	\subsection{Proof of Lemma \ref{lemma-besov-estimates-palpha}}\label{subsec-lemma-besov}
	 \subsubsection{Proof of \eqref{big-lemma-2}}\label{PROOF_BIG_LEMME_2}
% \textcolor{black}{Usare $\mathfrak q _{\cdots}$ come prima per coerenza. Però indicare in ogni caso considerato tutte le variabili in gioco. Uniformare le notazioni per le componenti termiche $(\mathcal T^{\cdots, (0,t)}...$ più esplicito)}.
% 
 We provide here a full proof of the key relation \eqref{big-lemma-2} that we recall for the sake of completeness. Setting for this paragraph,
 \begin{align*}
\mathfrak{q}_{x,w,\textcolor{black}{y}}^{s,t}(\cdot):=\bar{p}_{\alpha} (s,x-\cdot)\left[\frac{\nabla p_\alpha (t-s,w-\cdot)}{\bar{p}_{\alpha} (t,w-x)} -\frac{\nabla p_\alpha (t-s,y-\cdot)}{\bar{p}_{\alpha} (t,y-x)}\right],
 \end{align*}
 it rewrites:
\begin{align*}%\label{big-lemma-2}
		&\left\Vert \bar{p}_{\alpha} (s,x-\cdot) \mathfrak{q}_{x,w,\textcolor{black}{y}}^{s,t}\right\Vert_{ \B_{p',q'}^{-\beta}} %\nonumber \\& 
		%\qquad
		\lesssim \frac{|w-y|^\zeta }{(t-s)^{\frac{\zeta+1}{\alpha}}} t^{\frac{\beta}{\alpha}} \left[\frac{1}{s^{\frac{d }{\alpha  p}}}+ \frac{1}{(t-s)^{\frac{d }{\alpha  p}}} \right] \left[1+  \frac{t^{\frac{\zeta}{\alpha}}}{s^{\frac{\zeta }{\alpha}}}+\frac{t^{\frac{\zeta}{\alpha}}}{(t-s)^{\frac{\zeta }{\alpha}}}  \right] .
\end{align*} 
It is plain from the $L^1-L^{p'}$ convolution inequality and the definition in Section \ref{subsec-besov} that
\begin{align}
%&
\left\|\hat \phi * \mathfrak{q}_{x,w,\textcolor{black}{y}}^{s,t} \right\|_{L^{p'}}%\notag\\
\le 
%&
\left\|\bar{p}_{\alpha} (s,x-\cdot)\mathfrak{q}_{x,w,\textcolor{black}{y}}^{s,t} \right\|_{L^{p'}} (\I_{|w-y|> (t-s)^{\frac 1\alpha}}+\I_{|w-y|\le (t-s)^{\frac 1\alpha}})
=: T_1+T_2.\label{NORM_NON_THERM}
\end{align}
For the \textit{off-diagonal} contribution we readily derive from \eqref{p-q-convo}:
\begin{align*}
T_1 \le & \Big(\frac{1}{(t-s)^{\frac 1\alpha}}\left[\frac{\|p_\alpha (t-s,w-\cdot)\bar{p}_{\alpha} (s,x-\cdot)\|_{L^{p'}}}{\bar p(t,w-x)}+ \frac{\|p_\alpha (t-s,y-\cdot)\bar{p}_{\alpha} (s,x-\cdot)\|_{L^{p'}}}{\bar p(t,y-x)}\right]\Big)\I_{|w-y|> (t-s)^{\frac 1\alpha}}\\
\lesssim &\frac{|w-y|^\zeta}{(t-s)^{\frac{1+\zeta}\alpha}} \Big(\frac{1}{s^{\frac{d}{\alpha p}}}+\frac{1}{(t-s)^{\frac d{\alpha p}}}\Big).
\end{align*}
In the \textit{diagonal regime} we write from \eqref{p-q-convo} and \eqref{holder-space-palpha}:
\begin{align*}
T_2\le &\Bigg(\left\| \bar{p}_{\alpha} (s,x-\cdot)\frac{\int_0^1 \d \lambda D^2 p_\alpha (t-s,y+\lambda(w-y)-\cdot)(w-y)}{\bar{p}_{\alpha} (t,w-x)}\right\|_{L^{p'}}\\
&+\left\|\bar{p}_{\alpha} (s,x-\cdot)\nabla p_\alpha (t-s,y-\cdot) \right\|_{L^{p'}}\left |\frac{1}{\bar p_\alpha(t,w-x)}-\frac{1}{\bar p_\alpha(t,y-x)}\right|\Bigg)\I_{|w-y|\le (t-s)^{\frac1\alpha}}\\
\lesssim & \Bigg(\frac{|w-y|}{(t-s)^{\frac 2\alpha} \bar p_\alpha(t,w-x)} \bar p_\alpha(t,y-x)\left(\frac{1}{s^{\frac{d}{\alpha p}}}+\frac{1}{(t-s)^{\frac d{\alpha p}}} \right)\\
&+\frac{1}{(t-s)^\frac1\alpha}\left(\frac{1}{s^{\frac{d}{\alpha p}}}+\frac{1}{(t-s)^{\frac d{\alpha p}}} \right)\bar p_\alpha(t-s,y-x)\frac{\int_0^1 \d\lambda \nabla \bar p_\alpha(t,y+\lambda (w-y)-x)\cdot(w-y) }{\bar p_\alpha(t,w-x)\bar p_\alpha(t,y-x)}\Bigg)\I_{|w-y|\le (t-s)^{\frac1\alpha}}\\
\lesssim &\frac{|w-y|^\zeta}{(t-s)^{\frac {1+\zeta}\alpha}}
\left(\frac{1}{s^{\frac{d}{\alpha p}}}+\frac{1}{(t-s)^{\frac d{\alpha p}}}\right),
\end{align*}
using that \eqref{holder-space-palpha} still holds for $\bar p_\alpha $ and recalling that $\bar p_\alpha(t-s,y-x)\asymp \bar p_\alpha(t-s,y-x) $ and $\bar p_\alpha(t,y-x)\asymp \bar p_\alpha(t,y-x) $ on the considered regime $|w-y|\le (t-s)^{\frac 1\alpha} $. Plugging the above bounds for $T_1$ and $T_2$ into \eqref{NORM_NON_THERM} gives that for the non-thermic part of the norm:
\begin{align}
\label{CTR_NT_HOLD}
\left\|\hat \phi *\mathfrak{q}_{x,w,\textcolor{black}{y}}^{s,t} \right\|_{L^{p'}}\lesssim\frac{|w-y|^\zeta}{(t-s)^{\frac {1+\zeta}\alpha}}
\left(\frac{1}{s^{\frac{d}{\alpha p}}}+\frac{1}{(t-s)^{\frac d{\alpha p}}}\right).
\end{align}
That is, the contribution is indeed compatible with the bound stated in \eqref{big-lemma-2}.

Let us now turn to the thermic part. %We set for notational convenience:
%$$\Psi_{s,t,x,w,y}(\cdot):=\bar{p}_{\alpha} (s,x-\cdot)\left[\frac{\nabla p_\alpha (t-s,w-\cdot)}{\bar{p}_{\alpha} (t,w-x)} -\frac{\nabla p_\alpha (t-s,y-\cdot)}{\bar{p}_{\alpha} (t,y-x)}\right] .$$
Assuming w.l.o.g. that $q'<+\infty$,:
\begin{align}
\mathcal T_{p',q'}^{-\beta}\Bigg(\mathfrak{q}_{x,w,\textcolor{black}{y}}^{s,t}\Bigg)%\notag\\
\lesssim&\Big(\int_0^t \frac{dv}{v}v^{(1+\frac{\beta}{\alpha})q'}\|\partial_v p_\alpha(v,\cdot)*\mathfrak{q}_{x,w,\textcolor{black}{y}}^{s,t}\|_{L^{p'}}^{q'}\Big)^{\frac {1}{q'}}+\Big(\int_t^1 \frac{dv}{v}v^{(1+\frac{\beta}{\alpha})q'}\|\partial_v p_\alpha(v,\cdot)*\mathfrak{q}_{x,w,\textcolor{black}{y}}^{s,t}\|_{L^{p'}}^{q'}\Big)^{\frac 1{q'}}\notag\\
&=:\mathcal T_{p',q'}^{-\beta,(0,t)}(\mathfrak{q}_{x,w,\textcolor{black}{y}}^{s,t})+\mathcal T_{p',q'}^{-\beta,(1,t)}(\mathfrak{q}_{x,w,\textcolor{black}{y}}^{s,t}),\label{NORM_THERM_SPLIT}
\end{align}
where we have split according to the cutting level $t\le 1$. From the previous bounds on $T_1,T_2$ and \eqref{NORM_NON_THERM}, we readily get:
\begin{align}
\mathcal T_{p',q'}^{-\beta,(1,t)}(\mathfrak{q}_{x,w,\textcolor{black}{y}}^{s,t})\lesssim& \Big(\int_t^1 \frac{dv}{v}v^{(1+\frac{\beta}{\alpha})q'}\|\partial_v p_\alpha(v,\cdot)*\mathfrak{q}_{x,w,\textcolor{black}{y}}^{s,t}\|_{L^{p'}}^{q'}\Big)^{\frac 1{q'}}\notag\\
\lesssim&\Big(\int_t^1 \frac{dv}{v}v^{(1+\frac{\beta}{\alpha})q'}\|\partial_v p_\alpha(v,\cdot)\|_{L^1}^{q'}\|\mathfrak{q}_{x,w,\textcolor{black}{y}}^{s,t}\|_{L^{p'}}^{q'}\Big)^{\frac 1{q'}}\notag\\
\lesssim& \frac{|w-y|^\zeta}{(t-s)^{\frac {1+\zeta}\alpha}}
\left(\frac{1}{s^{\frac{d}{\alpha p}}}+\frac{1}{(t-s)^{\frac d{\alpha p}}}\right)\Big(\int_t^1 \frac{dv}{v}v^{\frac{\beta}{\alpha}q'}\Big)^{\frac 1{q'}}\lesssim \frac{|w-y|^\zeta}{(t-s)^{\frac {1+\zeta}\alpha}}
\left(\frac{1}{s^{\frac{d}{\alpha p}}}+\frac{1}{(t-s)^{\frac d{\alpha p}}}\right)t^{\frac \beta\alpha}.\label{CTR_UPPER_CUT_HOLD}
\end{align}
It remains to handle $ \mathcal T_{p',q'}^{-\beta,(0,t)}(\mathfrak{q}_{x,w,\textcolor{black}{y}}^{s,t})$ for which a cancellation argument is needed to absorb the time singularity in the thermic variable $v$. Write first:
\begin{align*}
\|\partial_v p_\alpha(v,\cdot)*\mathfrak{q}_{x,w,\textcolor{black}{y}}^{s,t}\|_{L^{p'}}^{p'}=\int_{\R^d} \left|\int_{\R^d}\partial_v p_\alpha(v,z-\bar z)\left[\mathfrak{q}_{x,w,\textcolor{black}{y}}^{s,t}(\bar z)-\mathfrak{q}_{x,w,\textcolor{black}{y}}^{s,t}(z)\right] \d \bar z\right|^{p'} \d z
\end{align*}
It is here needed ton control the $\zeta>-\beta $ modulus of continuity of $z\mapsto \mathfrak{q}_{x,w,\textcolor{black}{y}}^{s,t}(z) $ which must also make appear the a prefactor in $|y-w|^\zeta $ as far as the spatial variables $y,w$ are concerned. As above we discuss according to the regime of $|y-w| $ w.r.t. $(t-s)^{\frac 1\alpha} $. In the off-diagonal regime $|y-w|>(t-s)^{\frac 1\alpha} $, we write:
 \begin{align}
&| \mathfrak{q}_{x,w,\textcolor{black}{y}}^{s,t}(z)-\mathfrak{q}_{x,w,\textcolor{black}{y}}^{s,t}(\bar z)|\notag\\
\le& \Bigg| \bar{p}_{\alpha} (s,x-z)\left[\frac{\nabla p_\alpha (t-s,w-z)}{\bar{p}_{\alpha} (t,w-x)} -\frac{\nabla p_\alpha (t-s,y-z)}{\bar{p}_{\alpha} (t,y-x)}\right]\notag\\
&-\bar{p}_{\alpha} (s,x-\bar z)\left[\frac{\nabla p_\alpha (t-s,w-\bar z)}{\bar{p}_{\alpha} (t,w-x)} -\frac{\nabla p_\alpha (t-s,y-\bar z)}{\bar{p}_{\alpha} (t,y-x)}\right]\Bigg|\I_{|w-y|>(t-s)^{\frac 1\alpha}}\notag\\
\le &\Bigg(\frac{1}{\bar{p}_{\alpha} (t,w-x)}\Bigg| \bar{p}_{\alpha} (s,x-z)\nabla p_\alpha (t-s,w-z)-\bar{p}_{\alpha} (s,x-\bar z)\nabla p_\alpha (t-s,w-\bar z)\Bigg|\notag\\
&+\frac{1}{\bar{p}_{\alpha} (t,y-x)}\Bigg| \bar{p}_{\alpha} (s,x-z)\nabla p_\alpha (t-s,y-z)-\bar{p}_{\alpha} (s,x-\bar z)\nabla p_\alpha (t-s,y-\bar z)\Bigg|\Bigg)\I_{|w-y|>(t-s)^{\frac 1\alpha}}\notag\\
\le & \Bigg(\sum_{\fy\in\{w,y\}}
\frac{1}{\bar{p}_{\alpha} (t,\fy-x)}\Bigg| \bar{p}_{\alpha} (s,x-z)\nabla p_\alpha (t-s,\fy-z)-\bar{p}_{\alpha} (s,x-\bar z)\nabla p_\alpha (t-s,\fy-\bar z)\Bigg|\Bigg)\I_{|w-y|>(t-s)^{\frac 1\alpha}}\notag\\
=:&\Big(\sum_{\fy\in\{w,y\}} D_{s,t,x,\fy}(z,\bar z)\Big)\I_{|w-y|>(t-s)^{\frac 1\alpha}}.
\label{PREAL_TAGLIO_DOPPIA_DISC_FD}
\end{align}
At this point an additional discussion must be performed concerning the diagonal or off-diagonal regime of the thermic variables. When $|z-\bar z|> s^{\frac 1\alpha} $, we write:
\begin{align*}
|D_{s,t,x,\fy}(z,\bar z)|\lesssim \frac{|z-\bar z|^\zeta}{s^{\frac \zeta\alpha}(t-s)^{\frac1\alpha}\bar{p}_{\alpha} (t,\fy-x)}\Big(\bar{p}_{\alpha} (s,x-z)\bar p_\alpha (t-s,\fy-z)+\bar{p}_{\alpha} (s,x-\bar z)\bar p_\alpha (t-s,\fy-\bar z)\Big).
\end{align*}
If now $|z-\bar z|\le s^{\frac 1\alpha} $, we get from \eqref{holder-space-palpha},
\begin{align*}
|D_{s,t,x,\fy}(z,\bar z)|\lesssim& \frac{|z-\bar z|^{\zeta}}{(t-s)^{\frac1\alpha}}\frac{1}{\bar p_\alpha(t,\fy-x)}\Bigg[\frac{1}{s^{\frac \zeta\alpha}}(\bar p_\alpha(s,x-z)+\bar p_\alpha(s,x-\bar z)) \bar p_\alpha(t-s,\fy-z))\notag\\
&+\frac{1}{ (t-s)^{\frac {\zeta}\alpha}}\bar p_\alpha(s,x-\bar z)(\bar p_\alpha(t-s,\fy-z)+\bar p_\alpha(t-s,\fy-\bar z))\Bigg]\\
\lesssim&\frac{|z-\bar z|^{\zeta}}{(t-s)^{\frac1\alpha}}\Big(\frac{1}{s^{\frac \zeta\alpha}}+\frac{1}{(t-s)^{\frac \zeta\alpha}}\Big)\frac{1}{\bar p_\alpha(t,\fy-x)}\Bigg[\bar p_\alpha(s,x-z)\bar p_\alpha(t-s,\fy-z)\notag\\
&+\bar p_\alpha(s,x-\bar z)\bar p_\alpha(t-s,\fy-\bar z)\Bigg],
\end{align*}
where we precisely exploited that $\bar p_\alpha(s,x-\bar z)\lesssim \bar p_\alpha(s,x-z) $ in the considered regime.

This then gives in \eqref{PREAL_TAGLIO_DOPPIA_DISC_FD}
\begin{align}
&| \mathfrak{q}_{x,w,\textcolor{black}{y}}^{s,t}(z)-\mathfrak{q}_{x,w,\textcolor{black}{y}}^{s,t}(\bar z)|\notag\\
%\le & |z-\bar z|^{\zeta}\Bigg(\frac{1}{\bar p_\alpha(t,w-x)}\Bigg[\frac{1}{s^{\frac \zeta\alpha}}(\bar p_\alpha(s,x-z)+\bar p_\alpha(s,x-\bar z)) \bar p_\alpha(t-s,w-z))\notag\\
%&+\frac{1}{ (t-s)^{\frac {\zeta}\alpha}}\bar p_\alpha(s,x-\bar z)(\bar p_\alpha(t-s,w-z)+\bar p_\alpha(t-s,w-\bar z))\Bigg]\notag\\
%&+\frac{1}{\bar p_\alpha(t,y-x)}\Bigg[\frac{1}{s^{\frac \zeta\alpha} }(\bar p_\alpha(s,x-z)+\bar p_\alpha(s,x-\bar z)) \bar p_\alpha(t-s,y-z))\notag\\
%&+\frac{1}{ (t-s)^{\frac {\zeta}\alpha}}\bar p_\alpha(s,x-\bar z)(\bar p_\alpha(t-s,y-z)+\bar p_\alpha(t-s,y-\bar z))\Bigg]\Bigg)\frac{|w-y|^\zeta}{(t-s)^{\frac{1+\zeta}\alpha}}\notag\\
\le & |z-\bar z|^{\zeta}\Bigg(\frac{1}{ s^{\frac {\zeta}\alpha}}+\frac{1}{ (t-s)^{\frac {\zeta}\alpha}}\Bigg)\notag\\
&\Bigg[\sum_{\fy \in \{ w,y\}}\frac{ (\bar p_\alpha(s,z-x)\bar p_\alpha(t-s,\fy-z)+\bar p_\alpha(s,\bar z-x)\bar p_\alpha(t-s,\fy-\bar z))}{\bar p_\alpha(t,\fy-x)}\Bigg]\frac{|w-y|^\zeta}{(t-s)^{\frac{1+\zeta}\alpha}}.\label{DIFF_IN_PSI}
 \end{align}
Hence, 
\begin{align}
&\int_{\R^d} \left|\int_{\R^d}\partial_v p_\alpha(v,z-\bar z)\left[\mathfrak{q}_{x,w,\textcolor{black}{y}}^{s,t}(\bar z)-\mathfrak{q}_{x,w,\textcolor{black}{y}}^{s,t}(z)\right| \d \bar z\right|^{p'} \d z\I_{|w-y|>(t-s)^{\frac 1\alpha}}\notag\\
\lesssim&  \Bigg(v^{-1}\Bigg(\frac{1}{ s^{\frac {\zeta}\alpha}}+\frac{1}{ (t-s)^{\frac {\zeta}\alpha}}\Bigg)\frac{|w-y|^\zeta}{(t-s)^{\frac{1+\zeta}\alpha}}\Bigg)^{p'}\notag\\
&\times \Bigg(\sum_{\fy\in \{w,y\}}\frac{1}{\bar p_\alpha(t,\fy-x)^{p'}}\int_{\R^d}\Big(\int_{\R^d}\bar p_\alpha(v,z-\bar z)|z-\bar z|^\zeta(\bar p_\alpha(s,z-x)\bar p_\alpha(t-s,\fy-z)\notag\\
&+\bar p_\alpha(s,\bar z-x)\bar p_\alpha(t-s,\fy-\bar z)) \d \bar z\Big)^{p'}  \d z\Bigg)\notag\\
\lesssim&  \Bigg(v^{-1}\Bigg(\frac{1}{ s^{\frac {\zeta}\alpha}}+\frac{1}{ (t-s)^{\frac {\zeta}\alpha}}\Bigg)\frac{|w-y|^\zeta}{(t-s)^{\frac{1+\zeta}\alpha}}\Bigg)^{p'}\notag\\
&\times \Bigg(\sum_{\fy\in \{w,y\}}\frac{1}{\bar p_\alpha(t,\fy-x)^{p'}}\Big(\int_{\R^d} (\bar p_\alpha(s,z-x)\bar p_\alpha(t-s,\fy-z))^{p'}\d z v^{\frac{\zeta}{\alpha}p'} \notag\\
&+ \int_{\R^d}\int_{\R^d}\bar p(v,z-\bar z)|z-\bar z|^\zeta(\bar p_\alpha(s,\bar z-x)\bar p_\alpha(t-s,\fy-\bar z))^{p'} \d \bar z  \d z\Big)\Bigg)\notag\\
\lesssim&\Bigg(v^{-1+\frac \zeta\alpha}\Bigg(\frac{1}{ s^{\frac {\zeta}\alpha}}+\frac{1}{ (t-s)^{\frac {\zeta}\alpha}}\Bigg)\frac{|w-y|^\zeta}{(t-s)^{\frac{1+\zeta}\alpha}}\Bigg[\frac{1}{s^{\frac d{p\alpha}}}+\frac{1}{(t-s)^{\frac d{p\alpha}}} \Bigg]\Bigg)^{p'},\label{THE_CTR_LPP_DUALITY_DEL}
\end{align}
 using the $L^1-L^{p'} $ Young inequality, the Fubini theorem and \eqref{p-q-convo} for the last inequality.
 It remains to establish the same control remains valid when $|w-y|\le (t-s)^{\frac 1\alpha} $ (diagonal regime). Write:
  \begin{align}
&|\mathfrak{q}_{x,w,\textcolor{black}{y}}^{s,t}(z)-\mathfrak{q}_{x,w,\textcolor{black}{y}}^{s,t}(\bar z)|\notag\\
\le& \Bigg| \bar{p}_{\alpha} (s,x-z)\left[\frac{\nabla p_\alpha (t-s,w-z)}{\bar{p}_{\alpha} (t,w-x)} -\frac{\nabla p_\alpha (t-s,y-z)}{\bar{p}_{\alpha} (t,y-x)}\right]\notag\\
&-\bar{p}_{\alpha} (s,x-\bar z)\left[\frac{\nabla p_\alpha (t-s,w-\bar z)}{\bar{p}_{\alpha} (t,w-x)} -\frac{\nabla p_\alpha (t-s,y-\bar z)}{\bar{p}_{\alpha} (t,y-x)}\right]\Bigg|\I_{|w-y|\le (t-s)^{\frac 1\alpha}}\notag\\
\le& \Bigg| \bar{p}_{\alpha} (s,x-z)\left[\frac{\nabla p_\alpha (t-s,w-z)-\nabla p_\alpha (t-s,y-z)}{\bar{p}_{\alpha} (t,w-x)} +\nabla p_\alpha (t-s,y-z)\left[\frac{1}{\bar{p}_{\alpha} (t,w-x)}-\frac{1}{\bar{p}_{\alpha} (t,y-x)}\right]\right]\notag\\
-&\left(\bar{p}_{\alpha} (s,x-\bar z)\left[\frac{\nabla p_\alpha (t-s,w-\bar z)-\nabla p_\alpha (t-s,y-\bar z)}{\bar{p}_{\alpha} (t,w-x)} +\nabla p_\alpha (t-s,y-\bar z)\left[\frac{1}{\bar{p}_{\alpha} (t,w-x)}-\frac{1}{\bar{p}_{\alpha} (t,y-x)}\right]\right]\right)\Bigg|\notag\\
&\times \I_{|w-y|\le (t-s)^{\frac 1\alpha}}\notag\\
\lesssim &\frac{1}{\bar p_\alpha(t,w-x)} \Bigg[\Bigg|\bar{p}_{\alpha} (s,x-z) \int_{0}^1 \d \lambda D^2p_\alpha(t-s,y+\lambda(w-y)-z)(w-y)\notag\\
&-  \bar{p}_{\alpha} (s,x-\bar z) \int_{0}^1 \d \lambda D^2p_\alpha(t-s,y+\lambda(w-y)-\bar z)(w-y)\Bigg|\notag\\
&+  |\bar{p}_{\alpha} (s,x-z)\nabla p_\alpha (t-s,y-z)-\bar{p}_{\alpha} (s,x-\bar z)\nabla p_\alpha (t-s,y-\bar z)|\left[\frac{1}{\bar{p}_{\alpha} (t,w-x)}-\frac{1}{\bar{p}_{\alpha} (t,y-x)}\right]\Bigg]\notag\\
&\times\I_{|w-y|\le (t-s)^{\frac 1\alpha}}.\label{LOW_CUT_OFF_DIAG_REGIME_PROV}
\end{align}
It now remains to discuss as above and use a splitting according to the position of $|z-\bar z| $ with respect to $s^{\frac 1\alpha} $. Starting with the associated off-diagonal regime $|z-\bar z|>s^{\frac 1\alpha} $, we derive from \eqref{LOW_CUT_OFF_DIAG_REGIME_PROV} that in that case:
\begin{align}
&| \mathfrak{q}_{x,w,\textcolor{black}{y}}^{s,t}(z)-\mathfrak{q}_{x,w,\textcolor{black}{y}}^{s,t}(\bar z)|\notag\\
\le& \frac{|z-\bar z|^\zeta}{s^{\frac \zeta \alpha}}\Bigg[ \frac{1}{\bar p_\alpha(t,y-x)}\Big(\sum_{\fz\in \{z,\bar z\}}\bar p_\alpha(s,x-\fz)\bar p_\alpha(t-s,y-\fz)\Big)\frac{|w-y|^\zeta}{(t-s)^{\frac{1+\zeta}\alpha}}\Bigg]\I_{|w-y|\le (t-s)^{\frac 1\alpha}},\notag
\end{align}
using again thoroughly the controls of Lemma \ref{lemma-stable-sensitivities} (which also extend to $\bar p_\alpha $). A very similar control is then obtained  in the diagonal regime $|z-\bar z|\le s^{\frac 1\alpha} $, up to additional inequalities making the $\zeta $-H\"older modulus appear in the $z,\bar z $ variables and exploiting that then $\bar p_\alpha(s,x-z)\asymp \bar p_\alpha(s,x-\bar z) $. Precisely,  one derives:
\begin{align}
&| \mathfrak{q}_{x,w,\textcolor{black}{y}}^{s,t}(z)-\mathfrak{q}_{x,w,\textcolor{black}{y}}^{s,t}(\bar z)|\notag\\
\le& |z-\bar z|^\zeta \Big( \frac{1}{s^{\frac \zeta \alpha}}+\frac{1}{(t-s)^\frac{\alpha}\alpha}\Big)\Bigg[ \frac{1}{\bar p_\alpha(t,y-x)}\Big(\sum_{\fz\in \{z,\bar z\}}\bar p_\alpha(s,x-\fz)\bar p_\alpha(t-s,y-\fz)\Big)\frac{|w-y|^\zeta}{(t-s)^{\frac{1+\zeta}\alpha}}\Bigg]\I_{|w-y|\le (t-s)^{\frac 1\alpha}},\notag
\end{align}
whose r.h.s. is eventually bounded by the one of \eqref{DIFF_IN_PSI}. Thus, we get that in any case:
\begin{align*}
\|\partial_v p_\alpha(v,\cdot)*\mathfrak{q}_{x,w,\textcolor{black}{y}}^{s,t}\|_{L^{p'}}\lesssim&v^{-1+\frac \zeta\alpha}\Bigg(\frac{1}{ s^{\frac {\zeta}\alpha}}+\frac{1}{ (t-s)^{\frac {\zeta}\alpha}}\Bigg)\frac{|w-y|^\zeta}{(t-s)^{\frac{1+\zeta}\alpha}}\Bigg[\frac{1}{s^{\frac d{p\alpha}}}+\frac{1}{(t-s)^{\frac d{p\alpha}}} \Bigg],%\label{THE_CTR_LPP_DUALITY_DEL}
\end{align*}
which from the definition of $\mathcal T_{p',q'}^{-\beta,(0,t)}(\mathfrak{q}_{x,w,\textcolor{black}{y}}^{s,t}) $ in \eqref{NORM_THERM_SPLIT} eventually gives for $\zeta\in (-\beta,1] $:
\begin{align*}
\mathcal T_{p',q'}^{-\beta,(0,t)}(\mathfrak{q}_{x,w,\textcolor{black}{y}}^{s,t})&\lesssim \Big(\int_0^t v^{-1}v^{(1+\frac \beta\alpha)q'}v^{(-1+\frac \zeta \alpha)q'} \d v\Big)^{\frac 1{q'}}\Bigg(\frac{1}{ s^{\frac {\zeta}\alpha}}+\frac{1}{ (t-s)^{\frac {\zeta}\alpha}}\Bigg)\frac{|w-y|^\zeta}{(t-s)^{\frac{1+\zeta}\alpha}}\Bigg[\frac{1}{s^{\frac d{p\alpha}}}+\frac{1}{(t-s)^{\frac d{p\alpha}}} \Bigg]\notag\\
&\lesssim t^{\frac \beta\alpha+\frac \zeta\alpha}\Bigg(\frac{1}{ s^{\frac {\zeta}\alpha}}+\frac{1}{ (t-s)^{\frac {\zeta}\alpha}}\Bigg)\frac{|w-y|^\zeta}{(t-s)^{\frac{1+\zeta}\alpha}}\Bigg[\frac{1}{s^{\frac d{p\alpha}}}+\frac{1}{(t-s)^{\frac d{p\alpha}}} \Bigg],
\end{align*}
which together with \eqref{CTR_UPPER_CUT_HOLD}, \eqref{NORM_THERM_SPLIT} and \eqref{CTR_NT_HOLD} gives the statement.
%\textcolor{black}{Da mettere in Sezione 5.}
	\subsubsection{Proofs of \eqref{besov-estimate-gamma-and-stable_NO_NORM_Holder}, \eqref{besov-estimate-gammah-stable-sensi-holder-time}, \eqref{besov-estimate-stable-derivees_temps_esp_sensi_holder_esp} and \eqref{besov-estimate-gammah-stable-derivees_temps_esp_sensi_holder_esp}}\label{subsubsec-lemma-besov-1}
		Let us first prove \eqref{besov-estimate-gamma-and-stable_NO_NORM_Holder}. Denote for this section $\mathfrak{q}_{x,y,\textcolor{black}{y'}}^{s,t}(\cdot):=\Gamma(0,x,s,\cdot) \left(\nabla_y p_\alpha (t-s,y-\cdot)-\nabla_{y'} p_\alpha (t-s,y'-\cdot)\right)$, of which we will control the $\B_{p',q'}^{-\beta}$ norm using the \textcolor{black}{thermic characterization introduced in \eqref{HEAT_CAR}}:
		
		$$\Vert \mathfrak{q}_{x,y,\textcolor{black}{y'}}^{s,t}\Vert_{\B_{p',q'}^{-\beta}} = \Vert \textcolor{black}{\mathcal F^{-1}(\phi  \mathcal F \mathfrak{q}_{x,y,\textcolor{black}{y'}}^{s,t})}\Vert_{L^{p'}} + \mathcal{T}_{p',q'}^{-\beta} [\mathfrak{q}_{x,y,\textcolor{black}{y'}}^{s,t}] .$$
		
		\begin{paragraph}{Thermic part \\[0.5cm]}
			
			\textcolor{black}{We assume w.l.o.g. that $t<1$ and $q<+\infty$.} Let us recall the definition of the thermic part and split it in two parts:
			\begin{align}
				\mathcal{T}_{p',q'}^{-\beta} [\mathfrak{q}_{x,y,\textcolor{black}{y'}}^{s,t}] ^{q'} &= \int_0^t \frac{\d v}{v} v^{\left(1+\frac{\beta}{\alpha}\right)q'} \Vert \partial_v p_\alpha (v,\cdot) \star \mathfrak{q}_{x,y,\textcolor{black}{y'}}^{s,t} (\cdot)\Vert_{L^{p'}}^{q'} +\int_t^1 \frac{\d v}{v} v^{\left(1+\frac{\beta}{\alpha}\right)q'} \Vert \partial_v p_\alpha (v,\cdot) \star \mathfrak{q}_{x,y,\textcolor{black}{y'}}^{s,t} (\cdot)\Vert_{L^{p'}}^{q'}\notag\\
				&=:\mathcal{T}_{p',q'}^{-\beta,(0,t)} [\mathfrak{q}_{x,y,\textcolor{black}{y'}}^{s,t}] ^{q'} + \mathcal{T}_{p',q'}^{-\beta,(t,1)} [\mathfrak{q}_{x,y,\textcolor{black}{y'}}^{s,t}] ^{q'}.\label{DEF_CUT_1}
			\end{align}
			For the upper part on $(t,1)$, using a $L^1-L^{p'}$ convolution inequality, we get
			\begin{align*}
				\mathcal{T}_{p',q'}^{-\beta,(t,1)} [\mathfrak{q}_{x,y,\textcolor{black}{y'}}^{s,t}] ^{q'} & \lesssim \int_t^1 \frac{\d v}{v} v^{\left(1+\frac{\beta}{\alpha}\right)q'} \Vert \partial_v p_\alpha (v,\cdot)\Vert_{L^{1}}^{q'} \Vert \Gamma(0,x,s,\cdot) \left(\nabla_y p_\alpha (t-s,y-\cdot)-\nabla_{y'} p_\alpha (t-s,y'-\cdot)\right)\Vert_{L^{p'}}^{q'}
			\end{align*}
			Using the pointwise estimate \eqref{holder-space-palpha} and \eqref{aronson-gamma}, then \eqref{p-q-convo}, we have, for any $\rho \in (-\beta,1]$,
			\begin{align*}
				\Vert \Gamma(0,x,s,\cdot) \left(\nabla_y p_\alpha (t-s,y-\cdot)-\nabla_{y'} p_\alpha (t-s,y'-\cdot)\right)\Vert_{L^{p'}} \lesssim \bar{p}_{\alpha}(t,y-x) \frac{|y-y'|^\rho}{(t-s)^{\frac{\rho+1}{\alpha}}} \left[\frac{1}{s^{\frac{d}{\alpha p}}}+\frac{1}{(t-s)^{\frac{d}{\alpha p}}}\right],
			\end{align*}
			\textcolor{black}{recalling as well for the last inequality that we assumed $|y-y'|\leq t^{\frac{1}{\alpha}}$}.
			
			This yields
			\begin{align}
				\mathcal{T}_{p',q'}^{-\beta(t,1)} [\mathfrak{q}_{x,y,\textcolor{black}{y'}}^{s,t}]  \nonumber& \lesssim \bar{p}_{\alpha}(t,y-x) \frac{|y-y'|^{\rho }}{(t-s)^{\frac{\rho+1}{\alpha}}} \left[\frac{1}{s^{\frac{d }{\alpha p}}}+\frac{1}{(t-s)^{\frac{d}{\alpha p}}}\right]\left(\int_t^1  v^{\frac{\beta q'}{\alpha}-1}  \d v\right)^{\frac{1}{q'}}\\
				&\lesssim \bar{p}_{\alpha}(t,y-x)\frac{|y-y'|^{\rho }}{(t-s)^{\frac{\rho+1}{\alpha}}} \left[\frac{1}{s^{\frac{d }{\alpha p}}}+\frac{1}{(t-s)^{\frac{d}{\alpha p}}}\right]\textcolor{black}{t^{\frac{\beta}{\alpha}}}.\label{UPPER_CUT_FIRST}
			\end{align}
			For the lower part, let us write
			\begin{align}
				\Vert \partial_v p_\alpha (v,\cdot)\star \mathfrak{q}_{x,y,\textcolor{black}{y'}}^{s,t}\Vert_{L^{p'}}^{p'}&\nonumber =\int \left|\int \partial_v p_\alpha (v,z-w)\mathfrak{q}_{x,y,\textcolor{black}{y'}}^{s,t}(w)\d w\right|^{p'} \d z\\
				&  =\int \bigg|\int \partial_v p_\alpha (v,z-w)\big[\mathfrak{q}_{x,y,\textcolor{black}{y'}}^{s,t}(w)-\mathfrak{q}_{x,y,\textcolor{black}{y'}}^{s,t}(z)\big]\d w\bigg|^{p'} \d z,\label{lower-part-cancel}
			\end{align}
			using a cancellation argument for the last equality. Next, let us distinguish whether this difference is in diagonal or off-diagonal regime \textcolor{black}{w.r.t. the intermediate time $s$}.
			\begin{trivlist}
				\item[$\bullet $] \textbf{Diagonal case: $|z-w|\leq s^\frac{1}{\alpha}$}. Let us write
				\begin{align*}
					&\mathfrak{q}_{x,y,\textcolor{black}{y'}}^{s,t}(w)-\mathfrak{q}_{x,y,\textcolor{black}{y'}}^{s,t}(z)\\
					&= \Gamma(0,x,s,w) \big[\left(\nabla_y p_\alpha (t-s,y-w)-\nabla_{y'} p_\alpha (t-s,y'-w)\right)-\left(\nabla_y p_\alpha (t-s,y-z)-\nabla_{y'} p_\alpha (t-s,y'-z)\right)\\
					&\quad - \big[\Gamma(0,x,s,z)-\Gamma(0,x,s,w)\big] \left(\nabla_y p_\alpha (t-s,y-z)-\nabla_{y'} p_\alpha (t-s,y'-z)\right)\\
					&=\Gamma(0,x,s,w) \Big(\int_0^1 \left[\nabla_y^2 p_\alpha (t-s,y-w-\lambda (z-w))-\nabla_{y'}^2 p_\alpha (t-s,y'-w-\lambda(z-w))\right](w-z)\d \lambda \big]\textcolor{black}{{\mathbb I}_{|z-w|\le (t-s)^{\frac 1\alpha}}}\\
					&\textcolor{black}{+\big[\left(\nabla_y p_\alpha (t-s,y-w)-\nabla_{y'} p_\alpha (t-s,y'-w)\right)-\left(\nabla_y p_\alpha (t-s,y-z)-\nabla_{y'} p_\alpha (t-s,y'-z)\right)\big]{\mathbb I}_{|z-w|>(t-s)^{\frac 1\alpha}}\Big)}\\
					&\quad - \big[\Gamma(0,x,s,z)-\Gamma(0,x,s,w)\big] \left(\nabla_y p_\alpha (t-s,y-z)-\nabla_{y'} p_\alpha (t-s,y'-z)\right).
				\end{align*}
				Using the regularity of $p_\alpha$, \eqref{holder-space-palpha} and the forward regularity of $\Gamma$, \eqref{holder-forward-gamma}, we get, for any $\zeta\in (0,\textcolor{black}{-\beta+\gamma})$\footnote{\textcolor{black}{Pay attention that here again the choice for the exponent $\zeta $ is limited by the forward regularity of the density.}},
				\begin{align}
					\nonumber&|\mathfrak{q}_{x,y,\textcolor{black}{y'}}^{s,t}(w)-\mathfrak{q}_{x,y,\textcolor{black}{y'}}^{s,t}(z)|\\
					\nonumber&\lesssim\bar{p}_{\alpha} (s,w-x) \textcolor{black}{\frac{|y-y'|^\rho }{(t-s)^{\frac{\rho+1}{\alpha}}}\frac{|z-w|^\zeta}{(t-s)^{\frac{\zeta}{\alpha}}}\Big(\int_0^1\left[\bar{p}_{\alpha} (t-s,y-w-\lambda (z-w))+ \bar{p}_{\alpha} (t-s,y'-w-\lambda(z-w))\right]\d \lambda {\mathbb I}_{|z-w|\le (t-s)^{\frac 1\alpha}}}\\
					\nonumber & \textcolor{black}{+(\bar p_\alpha(t-s,y-w)+\bar p_\alpha(t-s,y'-w)+\bar p_\alpha(t-s,y-z)+\bar p_\alpha(t-s,y'-z)){\mathbb I}_{|z-w|> (t-s)^{\frac 1\alpha}}\Big)}\\
					\nonumber&\quad +\frac{|y-y'|^\rho }{(t-s)^{\frac{\rho+1}{\alpha}}}\frac{|z-w|^\zeta}{s^{\frac{\zeta}{\alpha}}}\left(\bar{p}_{\alpha} (s,z-x)+\bar{p}_{\alpha} (s,w-x)\right) \left( \bar{p}_{\alpha} (t-s,y-z)+ \bar{p}_{\alpha} (t-s,y'-z)\right)\\
					\nonumber&\lesssim\bar{p}_{\alpha} (s,w-x)\left(\bar{p}_{\alpha} (t-s,y-w)+\bar{p}_{\alpha} (t-s,y'-w)\right) \frac{|y-y'|^\rho }{(t-s)^{\frac{\rho+1}{\alpha}}}\frac{|z-w|^\zeta}{(t-s)^{\frac{\zeta}{\alpha}}}\\
					&\quad +\bar{p}_{\alpha} (s,z-x) \left( p_\alpha (t-s,y-z)+ p_\alpha (t-s,y'-z)\right)\frac{|y-y'|^\rho }{(t-s)^{\frac{\rho+1}{\alpha}}}\frac{|z-w|^\zeta}{s^{\frac{\zeta}{\alpha}}},\label{on-diag-holder-q}
				\end{align}
				using the current diagonal regime to write that $\bar{p}_{\alpha} (t-s,y-w-\lambda (z-w))\lesssim \bar{p}_{\alpha} (t-s,y-w)$ (and the same estimate for $y'$) and $\bar{p}_{\alpha} (s,w-x)\lesssim \bar{p}_{\alpha} (s,z-x)$.
				\item[$\bullet $] \textbf{Off-diagonal case: $|z-w|\geq s^\frac{1}{\alpha}$}. Using a triangular inequality, \eqref{holder-space-palpha}, \eqref{aronson-gamma} and the fact that $\frac{|z-w|^\zeta}{s^{\frac{\zeta}{\alpha}}}\geq 1$, we trivially have the following:
				\begin{align}
					|\mathfrak{q}_{x,y,\textcolor{black}{y'}}^{s,t}(w)-\mathfrak{q}_{x,y,\textcolor{black}{y'}}^{s,t}(z)|&\nonumber\lesssim\bar{p}_{\alpha} (s,w-x)\left(\bar{p}_{\alpha} (t-s,y-w)+\bar{p}_{\alpha} (t-s,y'-w)\right) \frac{|y-y'|^\rho }{(t-s)^{\frac{\rho+1}{\alpha}}}\frac{|z-w|^\zeta}{s^{\frac{\zeta}{\alpha}}}\\
					&\qquad +\bar{p}_{\alpha} (s,z-x) \left( \bar{p}_{\alpha} (t-s,y-z)+ \bar{p}_{\alpha} (t-s,y'-z)\right)\frac{|y-y'|^\rho }{(t-s)^{\frac{\rho+1}{\alpha}}}\frac{|z-w|^\zeta}{s^{\frac{\zeta}{\alpha}}}.\label{off-diag-holder-q}
				\end{align}
			\end{trivlist}
			Gathering \eqref{on-diag-holder-q} and \eqref{off-diag-holder-q}, we have
			\begin{align}
				|\mathfrak{q}_{x,y,\textcolor{black}{y'}}^{s,t}(w)-\mathfrak{q}_{x,y,\textcolor{black}{y'}}^{s,t}(z)|&\nonumber\lesssim\bar{p}_{\alpha} (s,w-x)\left(\bar{p}_{\alpha} (t-s,y-w)+\bar{p}_{\alpha} (t-s,y'-w)\right) \frac{|y-y'|^\rho }{(t-s)^{\frac{\rho+1}{\alpha}}}\left[\frac{|z-w|^\zeta}{s^{\frac{\zeta}{\alpha}}}+\frac{|z-w|^\zeta}{(t-s)^{\frac{\zeta}{\alpha}}}\right]\\
				&\qquad +\bar{p}_{\alpha} (s,z-x) \left( \bar{p}_{\alpha} (t-s,y-z)+ \bar{p}_{\alpha} (t-s,y'-z)\right)\frac{|y-y'|^\rho }{(t-s)^{\frac{\rho+1}{\alpha}}}\frac{|z-w|^\zeta}{s^{\frac{\zeta}{\alpha}}}.
			\end{align}
			Plugging this in \eqref{lower-part-cancel}, we get
			\begin{align}
				&\Vert \partial_v p_\alpha (v,\cdot)\star \mathfrak{q}_{x,y,\textcolor{black}{y'}}^{s,t}\Vert_{L^{p'}}^{p'}\nonumber =\int \left|\int \partial_v p_\alpha (v,z-w)\mathfrak{q}_{x,y}^{s,t}(w)\d w\right|^{p'} \d z\\
				&  \lesssim\int \bigg(\int v^{-1}\bar p_\alpha (v,z-w)\bar{p}_{\alpha} (s,w-x)\left(\bar{p}_{\alpha} (t-s,y-w)+\bar{p}_{\alpha} (t-s,y'-w)\right)\nonumber\\ & \qquad\qquad\qquad\qquad\qquad \qquad\qquad\qquad\qquad\qquad \qquad\times \frac{|y-y'|^\rho }{(t-s)^{\frac{\rho+1}{\alpha}}}\left[\frac{|z-w|^\zeta}{s^{\frac{\zeta}{\alpha}}}+\frac{|z-w|^\zeta}{(t-s)^{\frac{\zeta}{\alpha}}}\right]\d w\bigg)^{p'} \d z\nonumber\\
				&\qquad +\int \bigg(\int v^{-1}\bar p_\alpha (v,z-w)\bar{p}_{\alpha} (s,z-x) \left( \bar{p}_{\alpha} (t-s,y-z)+ \bar{p}_{\alpha} (t-s,y'-z)\right)\nonumber\\ & \qquad\qquad\qquad\qquad\qquad \qquad\qquad\qquad\qquad\qquad \qquad\times\frac{|y-y'|^\rho }{(t-s)^{\frac{\rho+1}{\alpha}}}\frac{|z-w|^\zeta}{s^{\frac{\zeta}{\alpha}}}\d w\bigg)^{p'} \d z.
			\end{align}
			From this point, we derive a smoothing effect in $v$ by using the moments estimate \eqref{spatial-moments}. It is immediate for the second term, whereas for the first one, due to the order of integration, we need to use an $L^1-L^{p'}$ convolution inequality. This yields 
			\begin{align}
				&\Vert \partial_v p_\alpha (v,\cdot)\star \mathfrak{q}_{x,y,\textcolor{black}{y'}}^{s,t}\Vert_{L^{p'}}^{p'}\nonumber \\
				&  \lesssim\left(\frac{v^{-1+\frac{\zeta}{\alpha}}|y-y'|^\rho }{(t-s)^{\frac{\rho+1}{\alpha}}}\left[s^{-\frac{\zeta}{\alpha}}+(t-s)^{-\frac{\zeta}{\alpha}}\right]\right)^{p'}\int \left( \bar{p}_{\alpha} (s,w-x) \left( \bar{p}_{\alpha} (t-s,y-w)+ \bar{p}_{\alpha} (t-s,y'-w)\right)\right)^{p'} \d w\nonumber\\
				&\qquad +\left(\frac{v^{-1+\frac{\zeta}{\alpha}}|y-y'|^\rho }{(t-s)^{\frac{\rho+1}{\alpha}}}s^{-\frac{\zeta}{\alpha}}\right)^{p'}\int \left( \bar{p}_{\alpha} (s,z-x) \left( \bar{p}_{\alpha} (t-s,y-z)+ \bar{p}_{\alpha} (t-s,y'-z)\right)\right)^{p'} \d z\nonumber\\
				&  \lesssim\left(\frac{v^{-1+\frac{\zeta}{\alpha}}|y-y'|^\rho }{(t-s)^{\frac{\rho+1}{\alpha}}}\left[s^{-\frac{\zeta}{\alpha}}+(t-s)^{-\frac{\zeta}{\alpha}}\right]\right)^{p'}\left[\frac{1}{s^{\frac{d}{\alpha p}}} + \frac{1}{(t-s)^{\frac{d}{\alpha p}}}\right]^{p'}\bar{p}_{\alpha}(t,y-x)^{p'},
			\end{align}
			\textcolor{black}{using \eqref{p-q-convo} for the last inequality as well as $|y-y'|\le t^{\frac 1\alpha} $}.
			Going back to the definition of $\mathcal{T}_{p',q'}^{-\beta,(0,t)} $ \textcolor{black}{in \eqref{DEF_CUT_1}}, we thus obtain, taking $\zeta>-\beta$, %and recalling that we are in the regime $|y-y'|\leq t^{\frac{1}{\alpha}}$
			\begin{align*}
				\mathcal{T}_{p',q'}^{-\beta,(0,t)}[\mathfrak{q}_{x,y,\textcolor{black}{y'}}^{s,t}] &\lesssim  \frac{|y-y'|^{\rho } }{(t-s)^{\frac{\rho+1}{\alpha}}}\left[\frac{1}{s^{\frac{\zeta }{\alpha}}} + \frac{1}{(t-s)^{\frac{\zeta }{\alpha}}}\right]\left[\frac{1}{s^{\frac{d}{\alpha p}}} + \frac{1}{(t-s)^{\frac{d}{\alpha p}}}\right]%\left(
				\bar{p}_{\alpha}(t,y-x)%+\bar{p}_{\alpha}(t,y'-x)\right) 
				\left( \int_0^t v^{q'\frac{\beta+\zeta}{\alpha}-1} \d v\right)^{\frac{1}{q'}}\\
				&\lesssim  \frac{|y-y'|^{\rho } }{(t-s)^{\frac{\rho+1}{\alpha}}}\left[\frac{1}{s^{\frac{\zeta }{\alpha}}} + \frac{1}{(t-s)^{\frac{\zeta }{\alpha}}}\right]\left[\frac{1}{s^{\frac{d}{\alpha p}}} + \frac{1}{(t-s)^{\frac{d}{\alpha p}}}\right]\bar{p}_{\alpha}(t,y-x)t^{\frac{\beta+\zeta}{\alpha}} .
			\end{align*}
			This finally yields together with \eqref{UPPER_CUT_FIRST} and \eqref{DEF_CUT_1},
			\begin{align}
				\mathcal{T}_{p',q'}^{-\beta} [\mathfrak{q}_{x,y,\textcolor{black}{y'}}^{s,t}] \lesssim\frac{|y-y'|^{\rho } \textcolor{black}{t^{\frac \beta\alpha}}}{(t-s)^{\frac{\rho+1}{\alpha}}}\left[%1+
				\frac{t^{\frac{\zeta}{\alpha}}}{s^{\frac{\zeta }{\alpha}}} + \frac{t^{\frac{\zeta}{\alpha}}}{(t-s)^{\frac{\zeta }{\alpha}}}\right]\left[\frac{1}{s^{\frac{d}{\alpha p}}} + \frac{1}{(t-s)^{\frac{d}{\alpha p}}}\right]\bar{p}_{\alpha}(t,y-x).\label{thermic-bound-1}
			\end{align}
		\end{paragraph}
		\begin{paragraph}{Non-thermic part \\[0.5cm]}
			Noticing that 
			\begin{align*}
				\Vert \mathcal{F}(\phi)\star \mathfrak{q}_{x,y,\textcolor{black}{y'}}^{s,t}\Vert_{L^{p'}} & \lesssim 	\Vert \mathcal{F}(\phi)\Vert_{L^{1}} \Vert \mathfrak{q}_{x,y,\textcolor{black}{y'}}^{s,t}\Vert_{L^{p'}},
			\end{align*}
		we see that \eqref{thermic-bound-1} is also a valid bound for the non-thermic part of $\Vert \mathfrak{q}_{x,y,\textcolor{black}{y'}}^{s,t} \Vert_{ \B_{p',q'}^{-\beta}}$. \\
		\end{paragraph}

		This concludes the proof of \eqref{besov-estimate-gamma-and-stable_NO_NORM_Holder} for $j=0$. \textcolor{black}{The case $j=1-\textcolor{black}{\varepsilon/\gamma} $ is similar up to choosing $\zeta>-\beta+\gamma-\textcolor{black}{\varepsilon} $}.\\
		
		Equation \eqref{besov-estimate-gammah-stable-sensi-holder-time} follows from the same proof, using the H\"older regularity in time of the stable kernel instead of its regularity in space (i.e. \eqref{holder-time-palpha} instead of \eqref{holder-space-palpha}), as well as the forward spatial regularity of $\Gamma^h$ instead of that of $\Gamma$ (i.e. \eqref{holder-forward-gammah} instead of \eqref{holder-forward-gamma}).\\
		
		Equations \eqref{besov-estimate-stable-derivees_temps_esp_sensi_holder_esp} and \eqref{besov-estimate-gammah-stable-derivees_temps_esp_sensi_holder_esp} also follow from the same proof, using the standard pointwise estimate $\forall (y,y',z)\in (\R^d)^3,r\in [0,t)$,
		\begin{equation}
		 |\partial_t \nabla_y p_\alpha (t-r,y-z)-\partial_t \nabla_y p_\alpha (t-r,y'-z)| \lesssim \frac{|y-y'|^\rho}{(t-r)^{\frac{\rho}{\alpha}+1}}\left( \bar p_\alpha (t-r,y-z)+ \bar p_\alpha (t-r,y'-z)\right)
		\end{equation}
		and the fact that for the considered time variables, $s\asymp r \asymp \tau_s^h$ to deal with the stable kernel. For the estimate \eqref{besov-estimate-gammah-stable-derivees_temps_esp_sensi_holder_esp}, we also rely on the heat kernel estimate \eqref{holder-forward-gammah} for the forward spatial regularity of $\Gamma^h$\\

		\subsubsection{Proof of \eqref{besov-estimate-gammah-stable-PERTURB_DRIFT} and \eqref{besov-estimate-gammah-stable-PERTURB_DRIFT_SENSI_HOLDER}: \textcolor{black}{controls involving the approximate drift}}\label{subsubsec-lemma-besov-3}
		\paragraph{\textcolor{black}{Proof of \eqref{besov-estimate-gammah-stable-PERTURB_DRIFT}: Hölder regularity involving the one step transition.}}\phantom{Booh}\\
		\textcolor{black}{The proof of \eqref{besov-estimate-gammah-stable-PERTURB_DRIFT} is somehow close to the  one in the previous subsection}. Denote this time
		$$\mathfrak{q}_{x,y}^{s,t,h}\textcolor{black}{(\cdot)}:= \Gamma^h(0,x,\tau_s^h,\cdot)\left[\nabla_y p_\alpha(t-\tau_s^h,y-\cdot) -\nabla_y  p_\alpha \left(t-\tau_s^h,y-\cdot-\int_{\tau_s^h}^s \mathfrak b_h(u,\cdot)\d u\right) \right].$$
		 For $s\in \textcolor{black}{[h,\tau_t^h-h]}$, we want to estimate $\left\|\mathfrak{q}_{x,y}^{s,t,h}\right \|_{\B_{p',q'}^{-\beta}} = \Vert \textcolor{black}{\mathcal F(\phi) \star}\mathfrak{q}_{x,y}^{s,t,h}\Vert_{L^{p'}}+  \mathcal{T}_{p',q'}^{-\beta}\left[\mathfrak{q}_{x,y}^{s,t,h}\right]$.
		As above let us start with the thermic part of the norm.
		\begin{paragraph}{Thermic part \\[0.5cm]}
		
	Let us split the thermic part into two parts:%\int_{\tau_s^h}^s\mathfrak{b}_h(r,X_{\tau_s^h}^h)\d r
	\begin{align*}
		\mathcal{T}_{p',q'}^{-\beta} [\mathfrak{q}_{x,y}^{s,t,h}] ^{q'} &= \int_0^t \frac{\d v}{v} v^{\left(1+\frac{\beta}{\alpha}\right)q'} \Vert \partial_v p_\alpha (v,\cdot) \star \mathfrak{q}_{x,y}^{s,t,h} \Vert_{L^{p'}}^{q'} +\int_t^1 \frac{\d v}{v} v^{\left(1+\frac{\beta}{\alpha}\right)q'} \Vert \partial_v p_\alpha (v,\cdot) \star \mathfrak{q}_{x,y}^{s,t,h} \Vert_{L^{p'}}^{q'}\\
		&=:\mathcal{T}_{p',q'}^{-\beta,(0,t)} [\mathfrak{q}_{x,y}^{s,t,h}] ^{q'} + \mathcal{T}_{p',q'}^{-\beta,(t,1)} [\mathfrak{q}_{x,y}^{s,t,h}] ^{q'}.
	\end{align*}
	For the upper part on $(t,1)$, we use the $L^1-L^{p'} $ convolution inequality  to write
	\begin{align*}
%		&\mathcal{T}_{p',q'}^{-\beta,(t,1)}[\bar {p}_{\alpha} (\tau_s^h,x-\cdot) \Big(\nabla_y p_{\alpha} (t-\tau_s^h,y-\cdot)-\nabla_y  p_\alpha (t-\tau_s^h,y-(\cdot+\int_{\tau_s^h}^s \mathfrak b_h(u,\cdot)du)) \Big)]^{q'} \\
		\mathcal{T}_{p',q'}^{-\beta,(t,1)}[\mathfrak{q}_{x,y}^{s,t,h}]^{q'} & \lesssim \int_t^1 v^{q'\left(1+\frac{\beta}{\alpha}\right)-1} \Vert \partial_v p_{\alpha}(v,\cdot)\Vert_{L^1}^{q'}  \Vert \mathfrak{q}_{x,y}^{s,t,h}\Vert_{L^{p'}}^{q'}\d v.
%		&\qquad \times \Vert  \bar {p}_{\alpha} (\tau_s^h,x-\cdot) \Big(\nabla_y p_{\alpha} (t-\tau_s^h,y-\cdot)-\nabla_y  p_\alpha (t-\tau_s^h,y-(\cdot+\int_{\tau_s^h}^s \mathfrak b_h(u,\cdot)du)) \Big)\Vert_{L^{p'}}^{q'}\d v
	\end{align*}		
		Observe now carefully from the \textcolor{black}{pointwise control \eqref{CTR_PONCTUEL_BH_INT}} on $\mathfrak{b}_h$, the spatial regularity \eqref{holder-space-palpha} of $p_\alpha$, the heat kernel bound \eqref{aronson-gammah} and the Lebesgue estimate \eqref{p-q-convo} that 
		\begin{align*}
		\Vert \mathfrak{q}_{x,y}^{s,t,h}\Vert_{L^{p'}}&=\left\Vert\Gamma^h(0,x,\tau_s^h,\cdot) \left[\nabla_y p_{\alpha} (t-\tau_s^h,y-\cdot)-\nabla_y  p_\alpha \left(t-\tau_s^h,y-(\cdot+\int_{\tau_s^h}^s \mathfrak b_h(u,\cdot)\d u)\right) \right]\right\Vert_{L^{p'}}\\
		&\lesssim \bar p_\alpha(t,y-x) \frac{\big((s-\tau_s^h)^{1-\frac 1r-\frac d{\alpha p}+\frac\beta\alpha}\|b\|_{L^r-\B_{p,q}^\beta}\big)^\delta}{(t-\tau_s^h)^{\frac 1 \alpha+\frac{\delta}{\alpha}}} \Big(\frac{1}{\textcolor{black}{(\tau_s^h)}^{\frac {d}{\alpha p}}}+\frac{1}{(t-\tau_s^h)^{\frac d{\alpha p}}}\Big),
		\end{align*}
using as well Lemma \ref{Lemma_TRANS_SCHEME} for the last inequality, i.e. the drift component is negligible w.r.t. the increment of the noise on the corresponding considered time intervals. Hence,
		\begin{align}		
		\mathcal{T}_{p',q'}^{-\beta,(t,1)}[\mathfrak{q}_{x,y}^{s,t,h}] & \lesssim \bar p_\alpha(t,y-x)  \frac{\big(h^{\frac{\gamma+1-\beta}{\alpha}}\|b\|_{L^r-\B_{p,q}^\beta}\big)^\delta}{(t-\tau_s^h)^{\frac 1 \alpha+\frac{\delta}{\alpha}}}\left[\frac{1}{(\tau_s^h)^{\frac{d}{\alpha p}}} + \frac{1}{(t-\tau_s^h)^{\frac{d}{\alpha p}}}\right]\left( \int_t^1 v^{\frac{\beta q'}{\alpha}-1}\d v\right)^{\frac 1q'}\notag\\
		& \lesssim \bar p_\alpha(t,y-x)  \frac{h^{\frac{\gamma-\beta}{\alpha}}\|b\|_{L^r-\B_{p,q}^\beta}}{(t-\tau_s^h)^{\frac 1 \alpha}}\left[\frac{1}{(\tau_s^h)^{\frac{d}{\alpha p}}} + \frac{1}{(t-\tau_s^h)^{\frac{d}{\alpha p}}}\right]t^{\frac{\beta}{\alpha}},\label{CUT_HAUT}
	\end{align}
	taking $\delta=1 $ for the last inequality.\\

	Let us now turn to the lower part, for which we again \textcolor{black}{use the same previous cancellation argument}:
	\begin{align}
		\Vert \partial_v  p_\alpha (v,\cdot)\star \mathfrak{q}_{x,y}^{s,t,h}\Vert_{L^{p'}}^{p'}&\nonumber =\int \left|\int \partial_v p_\alpha (v,z-w)\mathfrak{q}_{x,y}^{s,t,h}(w)\d w\right|^{p'} \d z\\
		&  =\int \bigg|\int \partial_v p_\alpha (v,z-w)\big[\mathfrak{q}_{x,y}^{s,t,h}(w)-\mathfrak{q}_{x,y}^{s,t,h}(z)\big]\d w\bigg|^{p'} \d z. \label{cancel_BH}
	\end{align}
%	\begin{align}
%%		 &\Vert \partial_v p_{\alpha}(v,\cdot) \star \bar {p}_{\alpha} (\tau_s^h,x-\cdot) \Big(\nabla_y p_{\alpha} (t-\tau_s^h,y-\cdot)-\nabla_y  p_\alpha (t-\tau_s^h,y-(\cdot+\int_{\tau_s^h}^s \mathfrak b_h(u,\cdot)\d u))\Vert_{L^{p'}}^{p'}\nonumber\\
%		 &\Vert \partial_v p_{\alpha}(v,\cdot) \star \mathfrak{q}\Vert_{L^{p'}}^{p'}\nonumber\\
%		 &=\int \left| \int \partial_v p_{\alpha}(v,z-w) \bar p_\alpha (\tau_s^h,w-x)\Big(\nabla_y p_{\alpha}(t-\tau_s^h,y-w)- \nabla_y p_{\alpha}(t-\tau_s^h,y-(w+\int_{\tau_s^h}^s\mathfrak b_h(u,w)\d u))\Big)\d w\right|^{p'} \d z\nonumber\\
%		 &=\int \left| \int \partial_v p_{\alpha}(v,z-w) \left[\bar p_\alpha (\tau_s^h,w-x)\Big(\nabla_y p_{\alpha}(t-\tau_s^h,y-w)- \nabla_y p_{\alpha}(t-\tau_s^h,y-(w+\int_{\tau_s^h}^s\mathfrak b_h(u,w)\d u))\Big)\right.\right.\notag\\
%		 & \qquad -\left.\left. \bar p_\alpha (\tau_s^h,z-x)\Big(\nabla_y p_{\alpha}(t-\tau_s^h,y-z)- \nabla_y p_{\alpha}(t-\tau_s^h,y-(z+\int_{\tau_s^h}^s\mathfrak b_h(u,z)\d u))\Big) \right]\d w\right|^{p'} \d z. \label{cancel_BH}
%	\end{align}
We now introduce a diagonal/off-diagonal splitting based on the position of $|w-z| $ w.r.t. $(\tau_s^h)^{\frac 1\alpha}$.
\begin{itemize}
	\item \textbf{Diagonal case: $|w-z|\le (\tau_s^h)^{\frac 1\alpha}$.}
In this case, the contribution into brackets in \eqref{cancel_BH} can be bounded as follows: for all $\delta\in [0,1] $, using \eqref{holder-forward-gammah} and \eqref{aronson-gammah}, we have \textcolor{black}{for $\zeta \in (-\beta,-2\beta\textcolor{black}{\wedge(-\beta+\gamma)}) $,}
\begin{align*}
	|\mathfrak{q}_{x,y}^{s,t,h}(w)-\mathfrak{q}_{x,y}^{s,t,h}(z)|&=\left|\left[\Gamma^h(0,x,\tau_s^h,w)\Big(\nabla_y p_{\alpha}(t-\tau_s^h,y-w)- \nabla_y p_{\alpha}(t-\tau_s^h,y-(w+\int_{\tau_s^h}^s\mathfrak b_h(u,w)\d u))\Big)\right.\right.\notag\\
	& \qquad -\left. \left. \Gamma^h(0,x,\tau_s^h,z)\Big(\nabla_y p_{\alpha}(t-\tau_s^h,y-z)- \nabla_y p_{\alpha}(t-\tau_s^h,y-(z+\int_{\tau_s^h}^s\mathfrak b_h(u,z)\d u))\Big) \right]\right|\\
	&\lesssim |\Gamma^h(0,x,\tau_s^h,w)-\Gamma^h(0,x,\tau_s^h,z)|\frac{\Big(\int_{\tau_s^h}^s\mathfrak b_h(u,w)\d u\Big)^\delta}{(t-\tau_s^h)^{\frac{1+\delta}\alpha}} \bar p_{\alpha}(t-\tau_s^h,y-w)\\
	&\quad+\Gamma^h(0,x,\tau_s^h,z) \Big(\int_0^1 \d \lambda \Big|\nabla_y^2 p_{\alpha}(t-\tau_s^h,y-(z+\lambda\int_{\tau_s^h}^s\mathfrak b_h(u,z)\d u))\int_{\tau_s^h}^s\mathfrak b_h(u,z)\d u	\\
	&\qquad -\nabla_y^2 p_{\alpha}(t-\tau_s^h,y-(w+\lambda \int_{\tau_s^h}^s\mathfrak b_h(u,w)\d u))\int_{\tau_s^h}^s \mathfrak b_h(u,w)\d u) \Big|\Big)\\
	&\lesssim  \frac{|w-z|^\zeta(h^{\frac{\gamma+1-\beta}{\alpha}}\|b\|_{L^r-\B_{p,q}^\beta})^\delta}{(t-\tau_s^h)^{\frac 1\alpha+\frac \delta\alpha}}\Big[\Big( \bar p_\alpha (\tau_s^h,w-x)+\bar p_\alpha (\tau_s^h,z-x)\Big) \frac{1}{(\tau_s^h)^{\frac\zeta \alpha}}\bar p_\alpha(t-\tau_s^h,y-w)\Big]\\
	&\quad+\bar p_\alpha(\tau_s^h,z-x)\frac{|w-z|^\zeta h^{\frac{\gamma+1-\beta}{\alpha}}\|b\|_{L^r-\B_{p,q}^\beta} }{(t-\tau_s^h)^{\frac 2\alpha+\frac  \zeta \alpha}}\Big(\bar p_\alpha(t-\tau_s^h,y-z)+\bar p_\alpha(t-\tau_s^h,y-w )\Big) \\
	&\quad+\bar p_\alpha(\tau_s^h,z-x)p_\alpha(t-\tau_s^h,y-w)\frac{|\int_{\tau_s^h}^s\Big(\mathfrak b_h(u,z)-\mathfrak b_h(u,w)\Big)\d u|^\zeta}{(t-\tau_s^h)^{\frac 2\alpha+\frac \zeta\alpha}}h^{\frac{\gamma+1-\beta}{\alpha}}\|b\|_{L^r-\B_{p,q}^\beta},
\end{align*}		 
using thoroughly \eqref{DRIFT_TO_NEGLECT} from Lemma \ref{Lemma_TRANS_SCHEME}. From \eqref{CTR_PONCTUEL_BH_INT_HOLDER}, we eventually derive (recalling that $t-\tau_s^h\ge h $ and \textcolor{black}{$\gamma+1-\beta-\zeta>0 $}):		 
\begin{align*}		 
		|\mathfrak{q}_{x,y}^{s,t,h}(w)-\mathfrak{q}_{x,y}^{s,t,h}(z)|		 &\lesssim  \frac{|w-z|^\zeta}{(t-\tau_s^h)^{\frac 1\alpha}}\Big[\Big( \bar p_\alpha (\tau_s^h,w-x)+\bar p_\alpha (\tau_s^h,z-x)\Big) \frac{h^{\frac{(\gamma-\beta)\delta} \alpha}}{(\tau_s^h)^{\frac\zeta \alpha}}\bar p_\alpha(t-\tau_s^h,y-w)\\
		 &\quad+\bar p_\alpha(\tau_s^h,z-x)\frac{h^{\frac{\gamma-\beta}\alpha}}{(t-\tau_s^h)^{\frac  \zeta \alpha}}\Big(\bar p_\alpha(t-\tau_s^h,y-z)+\bar p_\alpha(t-\tau_s^h,y-w )\Big) \Big]\\
		& \lesssim \frac{|w-z|^\zeta h^{\frac{(\gamma-\beta)} \alpha}}{(t-\tau_s^h)^{\frac 1\alpha}}\Big[\Big( \bar p_\alpha (\tau_s^h,w-x)+\bar p_\alpha (\tau_s^h,z-x)\Big) \frac{1}{(\tau_s^h)^{\frac\zeta \alpha}}\bar p_\alpha(t-\tau_s^h,y-w)\\
		 &\quad+\bar p_\alpha(\tau_s^h,z-x)\frac{1}{(t-\tau_s^h)^{\frac  \zeta \alpha}}\Big(\bar p_\alpha(t-\tau_s^h,y-z)+\bar p_\alpha(t-\tau_s^h,y-w )\Big) \Big],
\end{align*}
taking $\delta =1$ for the last inequality.
%using thoroughly \eqref{DRIFT_TO_NEGLECT} from Lemma \eqref{Lemma_TRANS_SCHEME} as well as  for the last inequality. 
The terms that are \textit{a priori} delicate to integrate in \eqref{cancel_BH} are those emphasizing a \textit{cross dependence on the integration variables}, namely $p_\alpha (\tau_s^h,z-x)\bar p_\alpha(t-\tau_s^h,y-w) $. Anyhow, in the current diagonal regime $|w-z|\le (\tau_s^h)^{\frac 1\alpha} $, it holds that 
$$p_\alpha (\tau_s^h,z-x)\bar p_\alpha(t-\tau_s^h,y-w)
\lesssim p_\alpha (\tau_s^h,w-x)\bar p_\alpha(t-\tau_s^h,y-w),$$
which eventually gives, in the considered diagonal regime:
\begin{align}		 
		|\mathfrak{q}_{x,y}^{s,t,h}(w)-\mathfrak{q}_{x,y}^{s,t,h}(z)|&				 \label{BD_DIAG_DH} \lesssim  \frac{|w-z|^\zeta h^{\frac{(\gamma-\beta)}\alpha} }{(t-\tau_s^h)^{\frac 1\alpha}}\left[\frac{1}{(\tau_s^h)^{\frac\zeta \alpha}}+\frac{1}{(t-\tau_s^h)^{\frac  \zeta \alpha}}\right]\\ & \qquad +\left[ \bar p_\alpha (\tau_s^h,w-x) \bar p_\alpha(t-\tau_s^h,y-w)+\bar p_\alpha (\tau_s^h,z-x) \bar p_\alpha(t-\tau_s^h,y-z)\right]\notag.
\end{align}

\item \textbf{Off-diagonal case: $|w-z|> (\tau_s^h)^{\frac 1\alpha}$.} 
%We here consider, for fixed $z\in \R^d$ the set $\mathcal D_{\tau_s^h,z}^C:=\{w\in \R^d:  |w-z|> (\tau_s^h)^{\frac 1\alpha} \}$. 
In that case, the contribution into brackets in \eqref{cancel_BH} can be bounded as follows: for all $\zeta\in (-\beta,\textcolor{black}{-\beta+\gamma)} $,
\begin{align*}
	&|\mathfrak{q}_{x,y}^{s,t,h}(w)-\mathfrak{q}_{x,y}^{s,t,h}(z)|\\
	=&\left|\left[\Gamma^h(0,x,\tau_s^h,w)\Big(\nabla_y p_{\alpha}(t-\tau_s^h,y-w)- \nabla_y p_{\alpha}(t-\tau_s^h,y-(w+\int_{\tau_s^h}^s\mathfrak b_h(u,w)\d u))\Big)\right.\right.\notag\\
		 & \qquad -\left. \left. \Gamma^h(0,x,\tau_s^h,z)\Big(\nabla_y p_{\alpha}(t-\tau_s^h,y-z)- \nabla_y p_{\alpha}(t-\tau_s^h,y-(z+\int_{\tau_s^h}^s\mathfrak b_h(u,z)\d u))\Big) \right]\right|\\
		 &\lesssim \frac{|w-z|^\zeta}{(\tau_s^h)^\frac\zeta\alpha}\frac{1}{(t-\tau_s^h)^{\frac 2\alpha}}\Big[  \bar p_\alpha (\tau_s^h,w-x)\textcolor{black}{\int_0^1 \d \lambda} \bar  p_{\alpha}(t-\tau_s^h,y-(w+\lambda \int_{\tau_s^h}^s\mathfrak b_h(u,w)\d u))\\
		 &\qquad \times\Big|\int_{\tau_s^h}^s\mathfrak b_h(u,w)\d u\Big|\\
		 &\qquad+\  \bar p_\alpha (\tau_s^h,z-x)\textcolor{black}{\int_0^1 \d \lambda}\bar  p_{\alpha}(t-\tau_s^h,y-(z+\lambda \int_{\tau_s^h}^s\mathfrak b_h(u,z)\d u))\Big|\int_{\tau_s^h}^s\mathfrak b_h(u,z)\d u\Big|\Big]\\
		 &\le \frac{|w-z|^\zeta}{(\tau_s^h)^\frac\zeta\alpha}\frac{h^{\frac{\gamma-\beta}\alpha}}{(t-\tau_s^h)^\frac 1\alpha}\Big( \bar p_\alpha (\tau_s^h,w-x)\bar  p_{\alpha}(t-\tau_s^h,y-w)+\bar p_\alpha (\tau_s^h,z-x)\bar  p_{\alpha}(t-\tau_s^h,y-z)\Big)
\end{align*}
%%%%%% From S. to E. and M.: I don't succeed in avoiding the space here
using \eqref{CTR_PONCTUEL_BH_INT} and \eqref{DRIFT_TO_NEGLECT} for the last inequality, recalling as well that $t-\tau_s^h\ge h $.
\end{itemize} Plugging this control and \eqref{BD_DIAG_DH} into \eqref{cancel_BH}  yields, using the $L^1-L^{p'}$ convolution inequality:
\begin{align*}
%		 &\Vert \partial_v p_{\alpha}(v,\cdot) \star \bar {p}_{\alpha} (\tau_s^h,x-\cdot) \Big(\nabla_y p_{\alpha} (t-\tau_s^h,y-\cdot)-\nabla_y  p_\alpha (t-\tau_s^h,y-(\cdot+\int_{\tau_s^h}^s \mathfrak b_h(u,\cdot)\d u))\Vert_{L^{p'}}^{p'}\nonumber\\
		 &\Vert \partial_v p_{\alpha}(v,\cdot) \star \mathfrak{q}_{x,y}^{s,t,h}\Vert_{L^{p'}}\lesssim \frac{v^{-1+\frac\zeta\alpha}\textcolor{black}{h^{\frac{\gamma-\beta}\alpha}}}{(t-\tau_s^h)^\frac1\alpha}\Big[\frac{1}{(\tau_s^h)^\frac\zeta\alpha}+\frac{1}{(t-\tau_s^h)^\frac\zeta\alpha} \Big] \left[\frac{1}{(\tau_s^h)^{\frac{d}{\alpha p}}} + \frac{1}{(t-\tau_s^h)^{\frac{d}{\alpha p}}}\right]\bar p_\alpha(t,y-x).
\end{align*}
This eventually gives:
\begin{align}
\mathcal{T}_{p',q'}^{-\beta,(0,t)}[\mathfrak{q}_{x,y}^{s,t,h}]
%		&\qquad \lesssim p_\alpha(t,y-x)  \frac{\big(h^{\frac{\gamma+1-\beta}{\alpha}}\|b\|_{L^r-\B_{p,q}^\beta}\big)^\delta}{(t-\tau_s^h)^{\frac 1 \alpha+\frac{\delta}{\alpha}}}\left[\frac{1}{(\tau_s^h)^{\frac{d}{\alpha p}}} + \frac{1}{(t-\tau_s^h)^{\frac{d}{\alpha p}}}\right]\left( \int_t^1 v^{\frac{\beta q'}{\alpha}-1}\d v\right)^{\frac 1q'}\\
		& \lesssim \bar p_\alpha(t,y-x)  \frac{h^{\frac{\gamma-\beta}{\alpha}}\|b\|_{L^r-\B_{p,q}^\beta}}{(t-\tau_s^h)^{\frac 1 \alpha}}\left[\frac{1}{(\tau_s^h)^{\frac{d}{\alpha p}}} + \frac{1}{(t-\tau_s^h)^{\frac{d}{\alpha p}}}\right] \left[\frac{1}{(\tau_s^h)^\frac\zeta\alpha}+\frac{1}{(t-\tau_s^h)^\frac\zeta\alpha} \right]t^{\frac{\beta+\zeta}{\alpha}},\label{CUT_BAS}
\end{align}
which together with \eqref{CUT_HAUT} gives that the bound for the thermic part indeed corresponds to the one of the statement.
\end{paragraph}

\paragraph{Non Thermic part.} Write:
\begin{align*}
&\Vert \mathcal F(\phi)\star \mathfrak{q}_{x,y}^{s,t,h}\Vert_{L^{p'}}
\le \|\mathcal F(\phi)\|_{L^1}\|\mathfrak{q}_{x,y}^{s,t,h}\Vert_{L^{p'}},
\end{align*}
so that from the previous computations (using again \eqref{CTR_PONCTUEL_BH_INT} and \eqref{DRIFT_TO_NEGLECT})
\begin{align*}
&\Vert \mathcal F(\phi)\star \mathfrak{q}_{x,y}^{s,t,h}\Vert_{L^{p'}}\lesssim\bar p_\alpha(t,y-x)  \frac{h^{\frac{\gamma-\beta}{\alpha}}\|b\|_{L^r-\B_{p,q}^\beta}}{(t-\tau_s^h)^{\frac 1 \alpha}}\left[\frac{1}{(\tau_s^h)^{\frac{d}{\alpha p}}} + \frac{1}{(t-\tau_s^h)^{\frac{d}{\alpha p}}}\right].
\end{align*}
From this last inequality, \eqref{CUT_HAUT} and \eqref{CUT_BAS}, the statement \eqref{besov-estimate-gammah-stable-PERTURB_DRIFT} is proved.\\

\paragraph{\textcolor{black}{Proof of \eqref{besov-estimate-gammah-stable-PERTURB_DRIFT_SENSI_HOLDER}: yet another control involving the one step transition}}\phantom{Booh}\\
Let us now turn to the proof of \eqref{besov-estimate-gammah-stable-PERTURB_DRIFT_SENSI_HOLDER}. Using the product rule for Besov spaces \eqref{PR}, we can write
\begin{align}
	&\left\| \bar p_\alpha(\tau_s^h,x,\cdot)\Big(\nabla_y^2  p_\alpha (t-\tau_s^h,y-(\cdot+ \lambda \int_{\tau_s^h}^{s}\mathfrak b_h(u,\cdot)\d u))- \nabla_y^2  p_\alpha (t-\tau_s^h,y'-(\cdot+ \lambda \int_{\tau_s^h}^{s}\mathfrak b_h(u,\cdot)\d u))\Big)\right.\notag\\
	&\qquad\left.\times\int_{\tau_s^h}^{s}\mathfrak b_h(u,\cdot)\d u)\right \|_{\B_{p',q'}^{-\beta}}\notag\\
	&\qquad \lesssim \left\| \bar p_\alpha(\tau_s^h,x,\cdot)\Big(\nabla_y^2  p_\alpha (t-\tau_s^h,y-(\cdot+ \lambda \int_{\tau_s^h}^{s}\mathfrak b_h(u,\cdot)\d u))- \nabla_y^2  p_\alpha (t-\tau_s^h,y'-(\cdot+ \lambda \int_{\tau_s^h}^{s}\mathfrak b_h(u,\cdot)\d u))\Big)\right \|_{\B_{p',q'}^{-\beta}}\nonumber\\ & \qquad \qquad \times \left\Vert \int_{\tau_s^h}^{s}\mathfrak b_h(u,\cdot)\d u\right \|_{\B_{\infty,\infty}^{-\beta+\eps}}
\end{align}
for any $\eps>0$. Then, write, using Lemma \ref{lemma-regularity-mollified-b} twice,
\begin{align*}
	\left\Vert \int_{\tau_s^h}^{s}\mathfrak b_h(u,\cdot)\d u\right \|_{\B_{\infty,\infty}^{-\beta+\eps}} &\asymp \left\Vert \int_{\tau_s^h}^{s}\mathfrak b_h(u,\cdot)\d u\right \|_{L^\infty} + \sup_{z\neq z'\in (\R^d)^2} \frac{|\int_{\tau_s^h}^{s}\mathfrak b_h(u,z)\d u-\int_{\tau_s^h}^{s}\mathfrak b_h(u,z')\d u|}{|z-z'|^{-\beta+\eps}}\\
	&\lesssim h^{\frac{\gamma+1-\beta}{\alpha}}+h^{\frac{\gamma+1-\eps}{\alpha}}\lesssim h^{\frac{\gamma+1-\eps}{\alpha}}.
\end{align*}
Equation \eqref{besov-estimate-gammah-stable-PERTURB_DRIFT_SENSI_HOLDER} then follows from controlling the previous $\B_{p',q'}^{-\beta}$ norm in the same way as for \eqref{besov-estimate-gammah-stable-PERTURB_DRIFT}. Namely, denoting this time 
\begin{align*}
	&\mathfrak{q}_{x,y}^{s,t,h}\textcolor{black}{(\cdot)}\\
	:=&\Gamma^h(0,x,\tau_s^h,\cdot)\left[\nabla_y^2  p_\alpha \left(t-\tau_s^h,y-(\cdot+ \lambda \int_{\tau_s^h}^{s}\mathfrak b_h(u,\cdot)\d u)\right)- \nabla_y^2  p_\alpha \left(t-\tau_s^h,y'-(\cdot+ \lambda \int_{\tau_s^h}^{s}\mathfrak b_h(u,\cdot)\d u)\right)\right]
\end{align*}
we have, in the diagonal regime $|z-w|\leq (\tau_s^h)^{\frac{1}{\alpha}}$, using \eqref{holder-space-palpha} and Lemma \ref{lemma-regularity-mollified-b} profusely,
\begin{align*}
	&|\mathfrak{q}_{x,y}^{s,t,h}(w)-\mathfrak{q}_{x,y}^{s,t,h}(z)|\\
	&=\Gamma^h(0,x,\tau_s^h,w)\left[\nabla_y^2  p_\alpha \left(t-\tau_s^h,y-(w+ \lambda \int_{\tau_s^h}^{s}\mathfrak b_h(u,w)\d u)\right)- \nabla_y^2  p_\alpha \left(t-\tau_s^h,y'-(w+ \lambda \int_{\tau_s^h}^{s}\mathfrak b_h(u,w)\d u)\right)\right]\\
	&\qquad - \Gamma^h(0,x,\tau_s^h,\textcolor{black}{z})\left[\nabla_y^2  p_\alpha \left(t-\tau_s^h,y-(z+ \lambda \int_{\tau_s^h}^{s}\mathfrak b_h(u,z)\d u)\right)- \nabla_y^2  p_\alpha \left(t-\tau_s^h,y'-(z+ \lambda \int_{\tau_s^h}^{s}\mathfrak b_h(u,z)\d u)\right)\right]\\
	&\lesssim \left[\Gamma^h(0,x,\tau_s^h,w)-\Gamma^h(0,x,\tau_s^h,z)\right]\frac{|y-y'|^\delta}{(t-\tau_s^h)^{\textcolor{black}{\frac 2\alpha}+\frac{\delta}{\alpha}}}\left[p_\alpha(t-\tau_s^h,y-w)+ p_\alpha(t-\tau_s^h,y'-w)\right]\\
	& \quad +\bar p_\alpha(\tau_s^h,z-x)\textcolor{black}{\Bigg(|y-y'|\int_0^1\bigg| \nabla_y^3  p_\alpha \left(t-\tau_s^h,y'+\mu(y-y')-(z+ \lambda \int_{\tau_s^h}^{s}\mathfrak b_h(u,z)\d u)\right)}\\
	&\qquad\qquad\qquad\qquad\qquad\qquad \quad \textcolor{black}{- \nabla_y^3  p_\alpha \left(t-\tau_s^h,y'+\mu(y-y')-(w+ \lambda \int_{\tau_s^h}^{s}\mathfrak b_h(u,w)\d u)\right)\bigg|\d \mu\mathbb{I}_{|y-y'|\le (t-\tau_s^h)^{\frac 1\alpha}}}\\
        &\qquad\textcolor{black}{+\frac{|y-y'|^\delta}{(t-\tau_s^h)^{\frac 2\alpha+\frac \delta\alpha+\frac \zeta\alpha}}(\bar p_\alpha(t-\tau_s^h,y-z)+\bar p_\alpha(t-\tau_s^h,y-w)+\bar p_\alpha(t-\tau_s^h,y'-z)+\bar p_\alpha(t-\tau_s^h,y'-w))}\\
        &\qquad \textcolor{black}{\times \left|z-w+ \lambda \int_{\tau_s^h}^{s}(\mathfrak b_h(u,z)-\mathfrak b_h(u,w))\d u)\right|^\zeta \mathbb{I}_{|y-y'|> (t-\tau_s^h)^{\frac 1\alpha}}\Bigg)}\\
	&\lesssim \bar p_\alpha(\tau_s^h,w-x)\frac{|w-z|^\zeta}{(\tau_s^h)^{\frac{\zeta}{\alpha}}}\frac{|y-y'|^\delta}{(t-\tau_s^h)^{\textcolor{black}{\frac 2\alpha}+\frac{\delta}{\alpha}}}\left[p_\alpha(t-\tau_s^h,y-w)+ p_\alpha(t-\tau_s^h,y'-w)\right]\\
	&\quad +\bar p_\alpha(\tau_s^h,z-x) \frac{|y-y'|^\delta}{(t-\tau_s^h)^{\textcolor{black}{\frac 2\alpha}+\frac{\delta+\zeta}{\alpha}}}\left|z-w+ \lambda \int_{\tau_s^h}^{s}(\mathfrak b_h(u,z)-\mathfrak b_h(u,w))\d u)\right|^\zeta \\
	&\qquad\quad \textcolor{black}{\times[\bar p_\alpha(t-\tau_s^h,y-z)+\bar p_\alpha(t-\tau_s^h,y-w)+\bar p_\alpha(t-\tau_s^h,y'-z)+\bar p_\alpha(t-\tau_s^h,y'-w)]}\\
	%\times [\bar p_\alpha (t-\tau_s^h,y-z)+\bar p_\alpha (t-\tau_s^h,y'-z)]\\
	&\lesssim\bar p_\alpha(\tau_s^h,w-x)\frac{|w-z|^\zeta}{(\tau_s^h)^{\frac{\zeta}{\alpha}}}\frac{|y-y'|^\delta}{(t-\tau_s^h)^{\textcolor{black}{\frac 2\alpha}+\frac{\delta}{\alpha}}}\left[p_\alpha(t-\tau_s^h,y-w)+ p_\alpha(t-\tau_s^h,y'-w)\right]\\
	&\quad +\bar p_\alpha(\tau_s^h,z-x) \frac{|y-y'|^\delta}{(t-\tau_s^h)^{\textcolor{black}{\frac 2\alpha}+\frac{\delta+\zeta}{\alpha}}}\left|z-w
	\right|^\zeta [\bar p_\alpha (t-\tau_s^h,y-z)+\bar p_\alpha (t-\tau_s^h,y'-z)+\bar p_\alpha (t-\tau_s^h,y-w)\\
	&\qquad\qquad+\textcolor{black}{\bar p_\alpha (t-\tau_s^h,y'-w)}].
\end{align*}
\textcolor{black}{We can again get rid of the cross terms in the integration variable in the above inequality recalling that in the considered diagonal regime $|w-z|\le (\tau_s^h)^{\frac 1\alpha} $ it holds that $\bar p_\alpha(\tau_s^h,z-x)\lesssim \bar p_\alpha(\tau_s^h,w-x)$.
The rest of the proof is similar to the one of \eqref{besov-estimate-gammah-stable-PERTURB_DRIFT}}.

 \begin{paragraph}{\textbf{Declaration of competing interests}\\[0.3cm]}
 	
 	The authors declare that they have no known competing financial interests or personal relationships that could have appeared to influence the work reported in this paper.
 \end{paragraph}

\appendix
\section{Proof of the heat-kernel estimates of Proposition \ref{THE_PROP}}
\label{APP_HK}

\subsection{Heat-kernel bounds for the Euler scheme: proof of \eqref{ineq-density-scheme}.}
\label{PROOF_FOR_DENS_SCHEME}
	
%\begin{proof}%[Proof of \eqref{ineq-density-scheme}]
%	First, let us state that the
%%	\begin{equation}\label{ctr-ponctuel-gammah}
%%		\exists c_0\geq 1 : \forall (x,y)\in (\R^d)^2, t\in (0,T],\Gamma^h(0,x,t,y)\lesssim \bar{p}_{\alpha}(t,y-x),
%%	\end{equation}
%	Duhamel representation \eqref{duhamel-scheme}
%	\begin{equation}
%		\Gamma^h (0,x,t,\cdot) =p_\alpha(t,y-x)-\int_{0}^{t}\E_{0,x}\left[\mathfrak b_h(s,X^h_{\tau_s^h})\cdot\nabla_y  p_\alpha(t-s,y-X^h_s)\right]\d s
%	\end{equation}
%	\textcolor{black}{could be derived following the approach of  \cite{FJM24} because $\mathfrak{b}_h$ can be seen as a Lebesgue drift and the current scheme as a semi-discretization Euler scheme in that setting (only the spatial component is discretized). TO PROVE HERE: representation of the density}.\\
%	
%{\phantom{BOUHH}}\\
	\textbf{Duhamel representation for the density of the scheme.}
	 Let us first prove the Duhamel representation \eqref{duhamel-scheme} for the density of the scheme. Let $t\in (t_k,T]$, $\phi$ be a ${\cal C}^2$ function with compact support and $v(s,y)={\mathbb 1}_{s<t}p_\alpha (t-s,\cdot)\star \phi(y)+{\mathbb 1}_{s=t}\phi(y)$. It is well known that $v$ is ${\cal C}^{1,2}$ on $[0,t]\times \R^d$ and solves the Feynman-Kac partial differential equation
         $$\forall (s,y)\in[0,t)\times\R^d,\;\partial_s v(s,y)+\mathcal{L}^\alpha v(s,y)=0 .$$ From Itô's formula between $t_k$ and $t$ applied along the Euler scheme  $(X_s^h)_{s\in[t_k,T]}$  started from $X^h_{t_k}=x$ and  with integral dynamics \eqref{euler-scheme-besov}, we obtain :
		\begin{equation*}
			\phi(X_t^h)=v(t_k,x)+M_{t_k,t}^h+\int_{t_k}^t \nabla v(s,X_s^h)\cdot \mathfrak b_h \left(s,X_{\tau_s^h}^h\right) \d s, % + \int_{t_k}^t (\partial_s+\mathcal{L}^\alpha)v(s,X_s^h) \d s
		\end{equation*}
		where $M_{t_k,t}^h=\int_{t_k}^t \int_{\R^d\backslash\{0\}}\Big(v({s,} X_{s^-}^h+x)-v({s,} X_{s^-}^h)\Big) \tilde N({\rm d}s,{\rm d} x)$, in which $\tilde N $ is the compensated Poisson measure associated with $Z$ is a martingale.
		Take now the expectation  and use the Fubini theorem to derive
		\begin{equation*}
			\int \phi(y)\Gamma^h (t_k,x,t,y)\d y=v(t_k,x)+\int_{t_k}^t \E_{t_k,x} \left[ \nabla v(s,X_s^h)\cdot \mathfrak b_h \left(s,X_{\tau_s^h}^h\right)\right]\d s.
		\end{equation*}
		From the definition of $v$, it follows
		\begin{align*}
			&\int \phi(y)\Gamma^h (t_k,x,t,y)\d y\\ & \qquad=\int p_\alpha (t-t_k,x-y)\phi(y) \d y+\int \phi(y)\int_{t_k}^t   \E_{t_k,x} \left[ \nabla_y p_\alpha (t-s,X_s^h-y) \cdot \mathfrak b_h \left(s,X_{\tau_s^h}^h\right)\right] \d s\d y.
		\end{align*}
		The function $\phi$ being arbitrary and $p_\alpha(t-s,\cdot)$ even, we obtain that for almost all $ y\in \R^d$, 
		\begin{equation}\label{duhamel-inproof}
			\Gamma^h (t_k,x,t,y)= p_\alpha (t-t_k,x-y)-\int_{t_k}^t   \E_{t_k,x} \left[ \nabla_y p_\alpha (t-s,y-X_s^h) \cdot \mathfrak b_h \left(s,X_{\tau_s^h}^h\right)\right] \d s.
		\end{equation}
It is also plain from Lemmas \ref{lemma-stable-sensitivities} and \ref{lemma-regularity-mollified-b} to prove that $\Gamma^h (t_k,x,t,\cdot) $ is continuous and therefore
 \eqref{duhamel-inproof} actually holds for all $y\in \R^d$. This concludes the proof of \eqref{duhamel-scheme}.

	 We will now actually prove the H\"older regularity of $\Gamma^h$ in the forward space variable.
	Similarly to the proof of the main theorem, we will control $\left\Vert \frac{\Gamma^h(0,x,t,\cdot)}{\bar p_\alpha(t,\cdot-x)} \right\Vert_{\B_{\infty,\infty}^\rho}$ for $\rho \in (-\beta,\gamma-\beta)$ using a circular argument.\\
	
	\textbf{Control of the supremum norm}\\

	We write the Duhamel formula as follows:
	\begin{align*}
		\Gamma^h (0,x,t,y) &=p_\alpha(t,y-x) -\int_0^h \E_{0,x}\left[\mathfrak b_h(s,X^h_{\tau_s^h})\cdot\nabla_y  p_\alpha(t-s,y-X^h_s)\right]\d s\\
		&\qquad - \int_{h}^{t}\E_{0,x}\left[\mathfrak b_h(s,X^h_{\tau_s^h})\cdot\nabla_y  p_\alpha(t-s,y-X^h_{s})\right]\d s \\
		&=: \Delta_{1}(y)+\Delta_2 (y)+\Delta_3 (y).
	\end{align*}
	For $\Delta_2$, using \eqref{CTR_PONCTUEL_BH},  \eqref{DRIFT_TO_NEGLECT} and \eqref{derivatives-palpha},
	\begin{align}
		|\Delta_2| &=\left| \int_0^h \int \mathfrak b_h(s,x)\cdot p_\alpha \left(s,z-x-\int_0^s \mathfrak{b}_h(r,x)\d r \right)\nabla_y  p_\alpha(t-s,y-z)\d z\d s\right|\nonumber\\
		& \lesssim \int_0^h \int s^{-\frac{d}{\alpha p}+\frac{\beta}{\alpha}}\Vert b(s,\cdot)\Vert_{\B_{p,q}^\beta} \bar{p}_{\alpha}\left(s,z-x\right) (t-s)^{-\frac{1}{\alpha}}  \bar{p}_{\alpha}(t-s,y-z)\d z\d s\nonumber\\
		& \lesssim \bar{p}_{\alpha} (t,y-x)\Vert b\Vert_{L^r-\B_{p,q}^\beta}\left(\int_0^h s^{-\frac{dr'}{\alpha p}+\frac{\beta r'}{\alpha}}  (t-s)^{-\frac{r'}{\alpha}}  \d s\right)^{\frac{1}{r'}}\nonumber\\
		& \lesssim \bar{p}_{\alpha} (t,y-x) h^{1-\frac{1}{r}-\frac{d}{\alpha p}+\frac{\beta}{\alpha}}t^{-\frac{1}{\alpha}} \lesssim\bar{p}_{\alpha} (t,y-x)h^{\frac{\gamma}{\alpha}}h^{\frac{1}{\alpha}-\frac{\beta}{\alpha}}t^{-\frac{1}{\alpha}}\lesssim \bar{p}_{\alpha} (t,y-x)h^{\frac{\gamma}{\alpha}}t^{-\frac{\beta}{\alpha}}.\label{delta2-supnorm}
	\end{align}
	For $\Delta_3$, using the harmonicity of the stable kernel, for $\rho>-\beta$,
	\begin{align*}
		|\Delta_3| & =\left|\int_{h}^{t}\int \Gamma^h (0,x,\tau_s^h,z)\mathfrak b_h(s,z)\cdot\nabla_y  p_\alpha \left(t-\tau_s^h,y-z-\int_{\tau_s^h}^{s}\mathfrak{b}_h(r,z)\d r\right)\d z\d s\right|\\
		&\lesssim \int_{h}^{t} \left\Vert \frac{\Gamma^h (0,x,\tau_s^h,\cdot)}{\bar{p}_{\alpha}(\tau_s^h,\cdot-x)}\right\Vert_{\B_{\infty,\infty}^\rho} \Vert \mathfrak b_h(s,\cdot)\Vert_{\B_{p,q}^\beta} \\ & \qquad \qquad \qquad \times \left\Vert \bar{p}_{\alpha}(\tau_s^h,\cdot-x) \nabla_y  {p}_{\alpha} \left(t-\tau_s^h,y-\cdot-\int_{\tau_s^h}^{s}\mathfrak{b}_h(r,\cdot)\d r\right)\right\Vert_{\B_{p',q'}^{-\beta}}\d s.
	\end{align*}
	Using a triangular inequality, \eqref{besov-estimate-gammah-stable-PERTURB_DRIFT} (in which we can trivially replace $\Gamma^h$ with $p_\alpha$)  and \eqref{besov-estimate-stable}, we have, for any $\zeta \in (\textcolor{black}{-\beta},1]$,
	\begin{align}
		\nonumber&\left\Vert \bar{p}_{\alpha}(\tau_s^h,\cdot-x) \nabla_y  {p}_{\alpha} \left(t-\tau_s^h,y-\cdot-\int_{\tau_s^h}^{s}\mathfrak{b}_h(r,\cdot)\d r\right)\right\Vert_{\B_{p',q'}^{-\beta}}\\
		\nonumber& \lesssim \left\Vert \bar{p}_{\alpha}(\tau_s^h,\cdot-x) \left[\nabla_y  p_\alpha \left(t-\tau_s^h,y-\cdot-\int_{\tau_s^h}^{s}\mathfrak{b}_h(r,\cdot)\d r\right)-\nabla_y  {p}_{\alpha} \left(t-\tau_s^h,y-\cdot\right)\right]\right\Vert_{\B_{p',q'}^{-\beta}}\\ \nonumber& \qquad +\left\Vert \bar{p}_{\alpha}(\tau_s^h,\cdot-x) \nabla_y  {p}_{\alpha} \left(t-\tau_s^h,y-\cdot\right)\right\Vert_{\B_{p',q'}^{-\beta}}\\
		&\lesssim \bar{p}_{\alpha}(t,y-x)\left(1+h^{\frac{\gamma-\beta}{\alpha}}\right)\frac{t^{\frac{\beta}{\alpha}}}{ %s^{\frac{\rho}{\alpha}}
		(t-\tau_s^h)^{\frac{1}{\alpha}}} \left[\frac{1}{(\tau_s^h)^{\frac{d}{\alpha p}}} + \frac{1}{(t-\tau_s^h)^{\frac{d}{\alpha p}}}\right]\left[1+\frac{t^{\frac{\zeta}{\alpha}}}{(\tau_s^h)^{\frac{\zeta}{\alpha}}}+\frac{t^{\frac{\zeta}{\alpha}}}{(t-\tau_s^h)^{\frac{\zeta}{\alpha}}}\right].\label{triangular-stable-estimates}
	\end{align}
	Set now, for $s\in (0,T]$,
	\begin{equation}
		\tilde{g}_{h,\rho}(s):=\left\Vert \frac{\Gamma^h(0,x,s,\cdot)}{\bar{p}_{\alpha}(s,\cdot-x)}\right\Vert_{L^\infty} + s^{\frac{\rho}{\alpha}} \sup_{z\neq z'\in (\R^d)^2} \left| \frac{\frac{\Gamma^h(0,x,s,z)}{\bar{p}_{\alpha}(s,z-x)}-\frac{\Gamma^h(0,x,s,z')}{\bar{p}_{\alpha}(s,z'-x)}}{|z-z'|^\rho}\right| \gtrsim s^{\frac{\rho}{\alpha}} \left\Vert \frac{\Gamma^h (0,x,s,\cdot)}{\bar{p}_{\alpha}(s,\cdot-x)}\right\Vert_{\B_{\infty,\infty}^\rho}.\label{DEF_TILDE_G_H_RHO}
	\end{equation}
	Plugging this and \eqref{triangular-stable-estimates} into $\Delta_3$ along with \eqref{CTR_BESOV_BH} yields
	\begin{align*}
		|\Delta_3| &\lesssim \bar{p}_{\alpha}\left(t,y-x\right) \int_{h}^{t} \tilde{g}_{h,\rho}(\tau_s^h)\Vert b(s,\cdot)\Vert_{\B_{p,q}^\beta} \\ & \qquad \qquad \qquad \times \frac{t^{\frac{\beta}{\alpha}}}{s^{\frac{\rho}{\alpha}}(t-\tau_s^h)^{\frac{1}{\alpha}}} \left[\frac{1}{(\tau_s^h)^{\frac{d}{\alpha p}}} + \frac{1}{(t-\tau_s^h)^{\frac{d}{\alpha p}}}\right]\left[1+\frac{t^{\frac{\zeta}{\alpha}}}{(\tau_s^h)^{\frac{\zeta}{\alpha}}}+\frac{t^{\frac{\zeta}{\alpha}}}{(t-\tau_s^h)^{\frac{\zeta}{\alpha}}}\right]\d s.
	\end{align*} 
	\textcolor{black}{Note} that, on the considered time interval, $s\asymp \tau_s^h$, yielding 
	\begin{align}
		|\Delta_3| \lesssim &\bar{p}_{\alpha}\left(t,y-x\right) \textcolor{black}{\|b\|_{L^r-\B_{p,q}^\beta}}\sup_{s\in (h,T]} \tilde{g}_{h,\rho}(s) \notag\\
		&\times \textcolor{black}{t^{\frac{\beta+\zeta}{\alpha}}}\left(\int_0^t \frac{1}{s^{\frac{\rho \textcolor{black}{r'}}{\alpha}}(t-s)^{\frac{\textcolor{black}{r'}}{\alpha}}}  \left[\frac{1}{s^{\frac{d}{\alpha p}}} + \frac{1}{(t-s)^{\frac{d}{\alpha p}}}\right]^{\textcolor{black}{r'}}\left[\frac{1}{s^{\frac{\zeta}{\alpha}}}+\frac{1}{(t-s)^{\frac{\zeta}{\alpha}}}\right]^{\textcolor{black}{r'}}\d s\right)^{\frac {1}{r'}}\nonumber\\
		\lesssim&\bar{p}_{\alpha} (t,y-x)t^{\frac{\gamma-\beta-\rho}{\alpha}}  \sup_{s\in (h,T]} \tilde{g}_{h,\rho}(s),\label{delta3-supnorm}
	\end{align} 
	\textcolor{black}{where we used Lemma \ref{lm:sing_int} with $\aa=\rho/\alpha,\bb=1/\alpha,\cc=d/(\alpha p),\dd=\zeta/\alpha $ for the last inequality}.
	Gathering \eqref{delta2-supnorm} and \eqref{delta3-supnorm}, \textcolor{black}{using as well} \eqref{derivatives-palpha} , we get recalling that $\rho$ can be chosen so that $\gamma-\beta-\rho>0$,
	\begin{equation}
		\label{maj-supnorm} \left\Vert \frac{\Gamma^h(0,x,t,\cdot)}{\bar{p}_{\alpha}(t,\cdot-x)}\right\Vert_{L^\infty} \lesssim 1+ T^{\frac{\gamma-\beta-\rho}{\alpha}}\sup_{s\in (h,T]} \tilde{g}_{h,\rho}(s).
	\end{equation}
	\textbf{Control of the H\"older modulus\\}
	
	We will now control the $\rho$-H\"older modulus of $\Gamma^h (0,x,t,\cdot)/\bar{p}_{\alpha}(t,\cdot-x)$ in the diagonal regime, i.e. for $(y,y)\in (\R^d)^2$ such that $|y-y'|\leq t^{\frac{1}{\alpha}}$ (since otherwise the required control is trivial). Similarly to the proof of the main theorem, let us write, using \eqref{holder-space-palpha} \textcolor{black}{(which readily extends to $\bar p_\alpha $ instead of $p_\alpha $)},
	\begin{align}
		\left|\frac{\Gamma^h(0,x,t,y)}{\bar p_\alpha(t,y-x)}-\frac{\Gamma^h(0,x,t,y')}{\bar p_\alpha(t,y'-x)}\right| &\leq \left|\frac{\Gamma^h(0,x,t,y)-\Gamma^h(0,x,t,y')}{\bar p_\alpha(t,y-x)}\right| + \Gamma^h(0,x,t,y')\left|\textcolor{black}{\frac{1}{\bar p_\alpha(t,y-x)}-\frac 1{\bar p_\alpha(t,y'-x)}}\right|\nonumber\\
		&\lesssim\left|\frac{\Gamma^h(0,x,t,y)-\Gamma^h(0,x,t,y')}{\bar p_\alpha(t,y-x)}\right| + \frac{|y-y'|^\rho}{t^{\frac{\rho}{\alpha}}} \left\Vert\frac{\Gamma^h(0,x,t,\cdot)}{\bar p_\alpha(t,\cdot-x)}\right\Vert_{L^\infty}.\label{triangu-holdermod}
	\end{align}
	The error expansion for $|\Gamma^h(0,x,t,y)-\Gamma^h(0,x,t,y')|$ writes
	\begin{align*}
		\Gamma^h (0,x,t,y) -\Gamma^h(0,x,t,y')&=p_\alpha(t,y-x)-p_\alpha(t,y'-x)\\
		&\qquad -\int_0^h \E_{0,x}\left[\mathfrak b_h(s,X^h_{\tau_s^h})\cdot(\nabla_y  p_\alpha(t-s,y-X^h_s)-\nabla_{y'}  p_\alpha(t-s,y'-X^h_s))\right]\d s\\
		&\qquad - \int_{h}^{t}\E_{0,x}\left[\mathfrak b_h(s,X^h_{\tau_s^h})\cdot(\nabla_y  p_\alpha(t-s,y-X^h_s)-\nabla_{y'}  p_\alpha(t-s,y'-X^h_s))\right]\d s \\
		&=: \Delta_{1}(y,y')+\Delta_2 (y,y')+\Delta_3 (y,y'),
	\end{align*}
\textcolor{black}{where with a slight abuse of notation we do not emphasize the dependence of these quantities on $x,t$ and \textit{a priori} on $h$ (we will actually prove that the estimates are uniform w.r.t. this last parameter)}.
	For $\Delta_1$, using \eqref{holder-space-palpha}, we have
	\begin{align}
		|\Delta_{1}\textcolor{black}{(y,y')}| &= |p_\alpha(t,y-x)-p_\alpha(t,y'-x)|\lesssim \frac{|y-y'|^\rho}{t^{\frac{\rho}{\alpha}}} \bar p_\alpha(t,y-x).\label{delta1-holdermod}
	\end{align}
	For $\Delta_2$, using \eqref{CTR_PONCTUEL_BH},  \eqref{DRIFT_TO_NEGLECT} and \eqref{holder-space-palpha},
	\begin{align}
		|\Delta_2\textcolor{black}{(y,y')}| &=\left| \int_0^h \int \mathfrak b_h(s,x)\cdot p_\alpha \left(s,z-x-\int_0^s \mathfrak{b}_h(r,x)\d r \right)[\nabla_y  p_\alpha(t-s,y-z)-\nabla_{y'} p_\alpha(t-s,y'-z)]\d z\d s\right|\nonumber\\
		& \lesssim \int_0^h \int s^{-\frac{d}{\alpha p}+\frac{\beta}{\alpha}}\Vert b(s,\cdot)\Vert_{\B_{p,q}^\beta} \bar{p}_{\alpha}\left(s,z-x\right) \frac{|y-y'|^\rho}{(t-s)^{\frac{\rho+1}{\alpha}}} [\bar{p}_{\alpha}(t-s,y-z)+\bar{p}_{\alpha}(t-s,y'-z)]\d z\d s \nonumber \\
		& \lesssim \bar{p}_{\alpha}(t,y-x)|y-y|^\rho  \Vert b\Vert_{L^r-\B_{p,q}^\beta}\left(\int_0^h s^{-\frac{dr'}{\alpha p}+\frac{\beta r'}{\alpha}}  (t-s)^{-\frac{r'(\rho+1)}{\alpha}}  \d s\right)^{\frac{1}{r'}}\nonumber\\
		& \lesssim\bar{p}_{\alpha} (t,y-x) |y-y|^\rho h^{1-\frac{1}{r}-\frac{d}{\alpha p}+\frac{\beta}{\alpha}}t^{-\frac{1+\rho}{\alpha}}\nonumber\\
		&\lesssim \bar{p}_{\alpha} (t,y-x)|y-y|^\rho h^{\frac{\gamma}{\alpha}}h^{\frac{1}{\alpha}-\frac{\beta}{\alpha}}t^{-\frac{1+\rho}{\alpha}}\lesssim \bar{p}_{\alpha} (t,y-x)\frac{|y-y|^\rho }{t^{\frac{\rho}{\alpha}}}h^{\frac{\gamma}{\alpha}}t^{-\frac{\beta}{\alpha}}.\label{delta2-holdermod}
	\end{align}
	For $\Delta_3$, using the harmonicity of the stable kernel, for $\rho>-\beta$,
	\begin{align*}
		&|\Delta_3\textcolor{black}{(y,y')}| \\
		 =&\bigg|\int_{h}^{t}\int \Gamma^h (0,x,\tau_s^h,z)\mathfrak b_h(s,z)\\ &  \quad \cdot \left[\nabla_y  p_\alpha \left(t-\tau_s^h,y-z-\int_{\tau_s^h}^{s}\mathfrak{b}_h(r,z)\d r\right)-\nabla_{y'}  p_\alpha \left(t-\tau_s^h,y'-z-\int_{\tau_s^h}^{s}\mathfrak{b}_h(r,z)\d r\right)\right]\d z\d s\bigg|\\
		\lesssim& \int_{h}^{t} \left\Vert \frac{\Gamma^h (0,x,\tau_s^h,\cdot)}{\bar{p}_{\alpha}(\tau_s^h,\cdot-x)}\right\Vert_{\B_{\infty,\infty}^\rho} \Vert \mathfrak b_h(s,\cdot)\Vert_{\B_{p,q}^\beta} \\ 
		&   \times \left\Vert \textcolor{black}{\bar p_\alpha}(\tau_s^h,\cdot-x) \left[\nabla_y  p_\alpha \left(t-\tau_s^h,y-\cdot-\int_{\tau_s^h}^{s}\mathfrak{b}_h(r,\cdot)\d r\right)-\nabla_{y'}  p_\alpha \left(t-\tau_s^h,y'-\cdot-\int_{\tau_s^h}^{s}\mathfrak{b}_h(r,\cdot)\d r\right)\right]\right\Vert_{\B_{p',q'}^{-\beta}}\!\!\!\!\d s.
	\end{align*}
	\textcolor{black}{Similarly to the computations performed to prove \eqref{besov-estimate-gammah-stable-PERTURB_DRIFT_SENSI_HOLDER}}  (in which we again take $p_\alpha$ instead of $\Gamma^h$) and the definition of $\tilde g_{h,\rho}(s)$ in \eqref{DEF_TILDE_G_H_RHO} along with \eqref{CTR_BESOV_BH} yields, for $\zeta\in (-\beta,1],$
	\begin{align}
		|\Delta_3\textcolor{black}{(y,y')}| &\lesssim \bar{p}_{\alpha}\left(t,y-x\right) \int_{h}^{t} \tilde{g}_{h,\rho}(\tau_s^h)\Vert b(s,\cdot)\Vert_{\B_{p,q}^\beta} \nonumber\\ & \qquad \qquad \qquad \times \frac{t^{\frac{\beta}{\alpha}}|y-y'|^\rho \textcolor{black}{(1+h^{\frac{\gamma-\beta}{\alpha}})}}{s^{\frac{\rho}{\alpha}}(t-\tau_s^h)^{\frac{1+\rho}{\alpha}}} \left[\frac{1}{(\tau_s^h)^{\frac{d}{\alpha p}}} + \frac{1}{(t-\tau_s^h)^{\frac{d}{\alpha p}}}\right]\left[\frac{t^{\frac{\zeta}{\alpha}}}{(\tau_s^h)^{\frac{\zeta}{\alpha}}}+\frac{t^{\frac{\zeta}{\alpha}}}{(t-\tau_s^h)^{\frac{\zeta}{\alpha}}}\right]\d s\nonumber\\
		&\lesssim\bar{p}_{\alpha} (t,y-x)|y-y'|^\rho t^{\frac{\gamma-\beta-2\rho}{\alpha}}  \sup_{s\in (h,T]} \tilde{g}_{h,\rho}(s),\label{delta3-holdermod}
	\end{align} 
	using the fact that $s\asymp \tau_s^h$ on the considered time interval and \textcolor{black}{Lemma \ref{lm:sing_int} with $\aa=\rho/\alpha,\bb=(1+\rho)/\alpha,\cc=d/(\alpha p),\dd=\zeta/\alpha $ for the last inequality}. We then have, plugging \eqref{delta1-holdermod},  \eqref{delta2-holdermod} and \eqref{delta3-holdermod} into \eqref{triangu-holdermod},
	\begin{equation}
		\left|\frac{\Gamma^h(0,x,t,y)}{\bar p_\alpha(t,y-x)}-\frac{\Gamma^h(0,x,t,y')}{\bar p_\alpha(t,y'-x)}\right| \lesssim \frac{|y-y'|^\rho}{t^{\frac{\rho}{\alpha}}} T^{\frac{\textcolor{black}{\gamma-\beta-\rho}}{\alpha}} \sup_{s\in (h,T]} \tilde{g}_{h,\rho}(s),
	\end{equation}
	which, together with \eqref{maj-supnorm} and \textcolor{black}{recalling that $\gamma-\beta-\rho $ can be chosen to be positive},  concludes, \textcolor{black}{through a circular argument}, the proof of \eqref{ineq-density-scheme}, \textcolor{black}{since it gives that $\tilde{g}_{h,\rho}(s)$ is bounded for $s\in (h,T] $. This in turn gives \eqref{aronson-gammah} and \eqref{holder-forward-gammah} on the corresponding time set. The extension to $s\in (0,h)$ is direct since in that case the statement follows from a direct computation on the densities.}\hfill $\square $
%\end{proof}

\subsection{Heat-kernel bounds for the diffusion: proof of \eqref{ineq-density-diff}.}
\label{PROOF_FOR_DENS_DIFF} We actually extend here the result established in \cite{Fit23} which stated Hölder continuity in the forward spatial variable for $\rho\in (-\beta,-\beta+\frac \gamma2) $. We actually extend it here to $\rho\in (-\beta,-\beta+ \gamma) $. This is crucial for our convergence analysis since this regularity partly rules the convergence rate, see in particular the former term $\Delta_3 $ in the error analysis. It seems as well coherent that we obtain for the diffusion the same forward spatial regularity that we proved in the previous section for the Euler scheme. The control for the supremum norm of the quotient follows from \cite{Fit23}. It thus remains to refine the Holder norm control. Namely, it holds that 
\begin{align}\label{BD_SUP_DIFF}
\left\|\frac{\Gamma(0,x,t,\cdot)}{\bar p_\alpha(t,\cdot-x)}\right\|_{L^\infty}\lesssim C. 
\end{align}
Let us now write inthe diagonal regime, i.e. for $(y,y)\in (\R^d)^2$ such that $|y-y'|\leq t^{\frac{1}{\alpha}}$ (since otherwise the required control is trivial), similarly to \eqref{triangu-holdermod},
	\begin{align}
		\left|\frac{\Gamma(0,x,t,y)}{\bar p_\alpha(t,y-x)}-\frac{\Gamma(0,x,t,y')}{\bar p_\alpha(t,y'-x)}\right| &\leq \left|\frac{\Gamma(0,x,t,y)-\Gamma(0,x,t,y')}{\bar p_\alpha(t,y-x)}\right| + \Gamma(0,x,t,y')\left|\textcolor{black}{\frac{1}{\bar p_\alpha(t,y-x)}-\frac 1{\bar p_\alpha(t,y'-x)}}\right|\nonumber\\
		&\lesssim\left|\frac{\Gamma(0,x,t,y)-\Gamma(0,x,t,y')}{\bar p_\alpha(t,y-x)}\right| + \frac{|y-y'|^\rho}{t^{\frac{\rho}{\alpha}}} \left\Vert\frac{\Gamma(0,x,t,\cdot)}{\bar p_\alpha(t,\cdot-x)}\right\Vert_{L^\infty}.\label{triangu-holdermod_DIFF}
	\end{align}
	From the Duhamel expansion, the expansion for $\Gamma(0,x,t,y)-\Gamma(0,x,t,y')$ writes
	\begin{align*}
		\Gamma (0,x,t,y) -\Gamma(0,x,t,y')&=p_\alpha(t,y-x)-p_\alpha(t,y'-x)\\
		%&\qquad -\int_0^h \E_{0,x}\left[\mathfrak b_h(s,X^h_{\tau_s^h})\cdot(\nabla_y  p_\alpha(t-s,y-X^h_s)-\nabla_{y'}  p_\alpha(t-s,y'-X^h_s))\right]\d s\\
		&\qquad - \int_{0}^{t}\E_{0,x}\left[b(s,X_{s})\cdot(\nabla_y  p_\alpha(t-s,y-X_s)-\nabla_{y'}  p_\alpha(t-s,y'-X_s))\right]\d s \\
		&=: \Delta_{1}(y,y')+\Delta_2 (y,y'),
	\end{align*}
\textcolor{black}{where with a slight abuse of notation we do not emphasize the dependence of these quantities on $x,t$.}.
	The term $\Delta_1$ was already controlled for the scheme and leads to the bound \eqref{delta1-holdermod}.
%	\begin{align}
%		|\Delta_{1}\textcolor{black}{(y,y')}| &= |p_\alpha(t,y-x)-p_\alpha(t,y'-x)|\lesssim \frac{|y-y'|^\rho}{t^{\frac{\rho}{\alpha}}} \bar p_\alpha(t,y-x).\label{delta1-holdermod_DIFF}
%	\end{align}
%	For $\Delta_2$, using \eqref{CTR_PONCTUEL_BH},  \eqref{DRIFT_TO_NEGLECT} and \eqref{holder-space-palpha},
%	\begin{align}
%		|\Delta_2\textcolor{black}{(y,y')}| &=\left| \int_0^h \int \mathfrak b_h(s,x)\cdot p_\alpha \left(s,z-x-\int_0^s \mathfrak{b}_h(r,x)\d r \right)[\nabla_y  p_\alpha(t-s,y-z)-\nabla_{y'} p_\alpha(t-s,y'-z)]\d z\d s\right|\nonumber\\
%		& \lesssim \int_0^h \int s^{-\frac{d}{\alpha p}+\frac{\beta}{\alpha}}\Vert b(s,\cdot)\Vert_{\B_{p,q}^\beta} \bar{p}_{\alpha}\left(s,z-x\right) \frac{|y-y'|^\rho}{(t-s)^{\frac{\rho+1}{\alpha}}} [\bar{p}_{\alpha}(t-s,y-z)+\bar{p}_{\alpha}(t-s,y'-z)]\d z\d s \nonumber \\
%		& \lesssim \bar{p}_{\alpha}(t,y-x)|y-y|^\rho  \Vert b\Vert_{L^r-\B_{p,q}^\beta}\left(\int_0^h s^{-\frac{dr'}{\alpha p}+\frac{\beta r'}{\alpha}}  (t-s)^{-\frac{r'(\rho+1)}{\alpha}}  \d s\right)^{\frac{1}{r'}}\nonumber\\
%		& \lesssim\bar{p}_{\alpha} (t,y-x) |y-y|^\rho h^{1-\frac{1}{r}-\frac{d}{\alpha p}+\frac{\beta}{\alpha}}t^{-\frac{1+\rho}{\alpha}}\nonumber\\
%		&\lesssim \bar{p}_{\alpha} (t,y-x)|y-y|^\rho h^{\frac{\gamma}{\alpha}}h^{\frac{1}{\alpha}-\frac{\beta}{\alpha}}t^{-\frac{1+\rho}{\alpha}}\lesssim \bar{p}_{\alpha} (t,y-x)\frac{|y-y|^\rho }{t^{\frac{\rho}{\alpha}}}h^{\frac{\gamma}{\alpha}}t^{-\frac{\beta}{\alpha}}.\label{delta2-holdermod}
%	\end{align}
	For $\Delta_2$, write for $\rho>-\beta$,
	\begin{align*}
		&|\Delta_2\textcolor{black}{(y,y')}| \\
		 =&\bigg|\int_{0}^{t}\int \Gamma (0,x,s,z) b(s,z) \cdot \left[\nabla_y  p_\alpha \left(t-s,y-z\right)-\nabla_{y'}  p_\alpha \left(t-s,y'-z\right)\right]\d z\d s\bigg|\\
		\lesssim& \int_{0}^{t} \left\Vert \frac{\Gamma(0,x,s,\cdot)}{\bar{p}_{\alpha}(s,\cdot-x)}\right\Vert_{\B_{\infty,\infty}^\rho} \Vert b(s,\cdot)\Vert_{\B_{p,q}^\beta} %\\ 
		%&   
		%\times 
		\left\Vert \textcolor{black}{\bar p_\alpha}(s,\cdot-x) \left[\nabla_y  p_\alpha \left(t-s,y-\cdot\right)-\nabla_{y'}  p_\alpha \left(t-s,y'-\cdot\right)\right]\right\Vert_{\B_{p',q'}^{-\beta}}\!\!\!\!\d s.
	\end{align*}

We would get  similarly to \eqref{big-lemma-2} %\footnote{\textcolor{black}{From S., 9/12/25: this blunt estimate is not present here. I think we should not give another statement and or proof. It is very local.}} 
that for $\zeta\in (-\beta,1]$,
\begin{align*}
	&	\left\Vert \textcolor{black}{\bar p_\alpha}(s,\cdot-x) \left[\nabla_y  p_\alpha \left(t-s,y-\cdot\right)-\nabla_{y'}  p_\alpha \left(t-s,y'-\cdot\right)\right]\right\Vert_{\B_{p',q'}^{-\beta}}\\
		\lesssim &t^{\frac\beta\alpha}\frac{|y-y'|^\rho}{(t-s)^{\frac{1+\rho}{\alpha}}}\left[\frac{1}{s^{\frac{d}{\alpha p}}} + \frac{1}{(t-s)^{\frac{d}{\alpha p}}}\right]\left[\frac{t^{\frac{\zeta}{\alpha}}}{s^{\frac{\zeta}{\alpha}}}+\frac{t^{\frac{\zeta}{\alpha}}}{(t-s)^{\frac{\zeta}{\alpha}}}\right].
\end{align*}
	Setting now for $s\in (0,T]$,
	\begin{equation}
		\tilde{g}_{\rho}(s):=\left\Vert \frac{\Gamma(0,x,s,\cdot)}{\bar{p}_{\alpha}(s,\cdot-x)}\right\Vert_{L^\infty} + s^{\frac{\rho}{\alpha}} \sup_{z\neq z'\in (\R^d)^2} \left| \frac{\frac{\Gamma(0,x,s,z)}{\bar{p}_{\alpha}(s,z-x)}-\frac{\Gamma(0,x,s,z')}{\bar{p}_{\alpha}(s,z'-x)}}{|z-z'|^\rho}\right| \gtrsim s^{\frac{\rho}{\alpha}} \left\Vert \frac{\Gamma^h (0,x,s,\cdot)}{\bar{p}_{\alpha}(s,\cdot-x)}\right\Vert_{\B_{\infty,\infty}^\rho},\label{DEF_TILDE_G_RHO}
	\end{equation}
we therefore get:
	\begin{align}
		|\Delta_2\textcolor{black}{(y,y')}| &\lesssim \bar{p}_{\alpha}\left(t,y-x\right) \int_{0}^{t} \tilde{g}_{\rho}(s)\Vert b(s,\cdot)\Vert_{\B_{p,q}^\beta} \nonumber\\ & \qquad \qquad \qquad \times \frac{t^{\frac{\beta}{\alpha}}|y-y'|^\rho }{s^{\frac{\rho}{\alpha}}(t-s)^{\frac{1+\rho}{\alpha}}} \left[\frac{1}{s^{\frac{d}{\alpha p}}} + \frac{1}{(t-s)^{\frac{d}{\alpha p}}}\right]\left[\frac{t^{\frac{\zeta}{\alpha}}}{s^{\frac{\zeta}{\alpha}}}+\frac{t^{\frac{\zeta}{\alpha}}}{(t-s)^{\frac{\zeta}{\alpha}}}\right]\d s\nonumber\\
		&\lesssim\bar{p}_{\alpha} (t,y-x)|y-y'|^\rho t^{\frac{\gamma-\beta-2\rho}{\alpha}}  \sup_{s\in (h,T]} \tilde{g}_{h,\rho}(s),\label{delta3-holdermod_APP}
	\end{align} 
	using the fact that $s\asymp \tau_s^h$ on the considered time interval and \textcolor{black}{Lemma \ref{lm:sing_int} with $\aa=\rho/\alpha,\bb=(1+\rho)/\alpha,\cc=d/(\alpha p),\dd=\zeta/\alpha $ with 
	$$r'(\bb+\cc+\dd)<1 \iff 1+\rho+\frac dp+\zeta<\alpha-\frac \alpha r \iff \rho<\Big(\alpha-1+2\beta-\frac dp-\frac \alpha r\Big)-2\beta-\zeta=\gamma-2\beta-\zeta.$$ 
	for the last inequality. Setting $\zeta=-\beta+\eta $ for an arbitrary $\eta>0 $, we see that any $\rho<\gamma-\beta $ can fulfill the condition}. We then have, plugging \eqref{delta1-holdermod} and \eqref{delta3-holdermod_APP} into \eqref{triangu-holdermod_DIFF},
	\begin{equation}
		\left|\frac{\Gamma(0,x,t,y)}{\bar p_\alpha(t,y-x)}-\frac{\Gamma(0,x,t,y')}{\bar p_\alpha(t,y'-x)}\right| \lesssim \frac{|y-y'|^\rho}{t^{\frac{\rho}{\alpha}}} T^{\frac{\textcolor{black}{\gamma-\beta-\rho}}{\alpha}} \sup_{s\in (h,T]} \tilde{g}_{\rho}(s),
	\end{equation}
	which, together with \eqref{BD_SUP_DIFF} and \textcolor{black}{recalling that $\gamma-\beta-\rho $ can be chosen to be positive},  concludes the proof of \eqref{ineq-density-diff}.\hfill $\square $

\subsection{Time sensitivity of the heat-kernel: proof of \eqref{holder-time-gamma}.}
\label{HK_TIME_APP}

Let us assume $t'\ge t $ and $0\le t'-t \le \frac{t}2$ and write from the Duhamel representation \eqref{duhamel-Diff} of the density that:
\begin{align}
\Gamma(0,x,t,y)-\Gamma(0,x,t',y)
			=& p_\alpha(t,y-x)-p_\alpha(t',y-x)\notag\\
			&-\int_{0}^{ t}\E_{t_k,x}\left[b(s,X_s)\cdot\Big(\nabla_y  p_\alpha(t-s,y-X_s)-\nabla_y  p_\alpha(t'-s,y-X_s)\Big)\right]\d s\notag\\
			&+\int_{t}^{ t'}\E_{t_k,x}\left[b(s,X_s)\cdot\Big(\nabla_y  p_\alpha(t'-s,y-X_s)\Big)\right]\d s.\label{THE_DIFF-BIS}
\end{align}
Thus, from \eqref{holder-time-palpha} we get:
\begin{align*}
|\Gamma(0,x,t,y)-\Gamma(0,x,t',y)|
			\lesssim& \Big(\frac{t'-t}{t}\Big)^{\frac{\gamma-\varepsilon}{\alpha}}\bar p_\alpha(t,y-x)\\
			&+\int_{0}^{ t}\Big|\int \Gamma(0,x,s,z) b(s,z)\Big(\nabla_y  p_\alpha(t'-s,y-z)-\nabla_y  p_\alpha(t-s,y-z)\Big)\d z \Big| \d s\\
			&+\int_{t}^{ t'}\Big|\int \Gamma(0,x,s,z) b(s,z)\nabla_y  p_\alpha(t'-s,y-z)\d z \Big| \d s.
\end{align*}
Using now the product rule \eqref{PR} and \eqref{ineq-density-diff}, \eqref{besov-estimate-stable} (taking therein $\zeta\in (-\beta,1) $)%\footnote{From S. recall as well carefully the product law.}
, we get for $\rho>-\beta$,
\begin{align*}
&|\Gamma(0,x,t,y)-\Gamma(0,x,t',y)|\lesssim   \Big(\frac{t'-t}{t}\Big)^{\frac{\gamma-\varepsilon}{\alpha}}\bar p_\alpha(t,y-x)\\
	&+\int_0^t \Vert b (s,\cdot)\Vert_{\B_{p,q}^\beta} \left\Vert \frac{\Gamma (0,x,s,\cdot)}{\bar{p}_{\alpha}(s,\cdot-x)}\right\Vert_{\B_{\infty,\infty}^\rho} \Vert \bar{p}_{\alpha}(s,\cdot-x)\Big(\nabla_y p_{\alpha} (t-s,y-\cdot)-\nabla_y p_{\alpha} (t'-s,y-\cdot)\Big)\Vert_{\B_{p',q'}^{-\beta}}\d s\nonumber\\
	&+\int_t^{t'} \Vert b (s,\cdot)\Vert_{\B_{p,q}^\beta} \left\Vert \frac{\Gamma (0,x,s,\cdot)}{\bar{p}_{\alpha}(s,\cdot-x)}\right\Vert_{\B_{\infty,\infty}^\rho} \Vert \bar{p}_{\alpha}(s,\cdot-x)\nabla_y p_{\alpha} (t'-s,y-\cdot)\Vert_{\B_{p',q'}^{-\beta}}\d s\\
	&\lesssim 	\bar{p}_{\alpha} (t,x-y)\Bigg(\Big(\frac{t'-t}{t}\Big)^{\frac{\gamma-\varepsilon}{\alpha}} +\int_{0}^{t}	\Vert b (s,\cdot)\Vert_{\B_{p,q}^\beta}s^{-\frac \rho\alpha}	\frac{(t'-t)^{\frac{\gamma-\varepsilon}\alpha}}{(t-s)^{\frac{\gamma-\varepsilon+1}{\alpha}}} (t')^{\frac{\beta}{\alpha}}\left[ \frac{1}{s^{\frac{d }{\alpha  p}}}+\frac{1}{(t-s)^{\frac{d }{\alpha  p}}} \right] \left[\frac{(t')^{\frac{\zeta}{\alpha}}}{s^{\frac{\zeta }{\alpha}}}+\frac{(t')^{\frac{\zeta}{\alpha}}}{(t-s)^{\frac{\zeta }{\alpha}}}  \right]\d s	\\
	&+\int_{t}^{t'}	\Vert b (s,\cdot)\Vert_{\B_{p,q}^\beta}	s^{-\frac \rho\alpha}\frac{1}{(t'-s)^{\frac{1}{\alpha}}} (t')^{\frac{\beta}{\alpha}}\left[ \frac{1}{s^{\frac{d }{\alpha  p}}}+\frac{1}{(t'-s)^{\frac{d }{\alpha  p}}} \right] \left[\frac{(t')^{\frac{\zeta}{\alpha}}}{s^{\frac{\zeta }{\alpha}}}+\frac{(t')^{\frac{\zeta}{\alpha}}}{(t'-s)^{\frac{\zeta }{\alpha}}}  \right]\d s\Bigg)\\
&\lesssim 	\bar{p}_{\alpha} (t,x-y)\Bigg(\Big(\frac{t'-t}{t}\Big)^{\frac{\gamma-\varepsilon}{\alpha}} \\
&+\Vert b \Vert_{L^r-\B_{p,q}^\beta}(t')^{\frac{\beta+\textcolor{black}{\zeta}}{\alpha}}(t'-t)^{\frac{\gamma-\varepsilon}\alpha}\Big( \int_{0}^{t}		\frac{s^{-\frac \rho\alpha r'}}{(t-s)^{\frac{\gamma-\varepsilon+1}{\alpha} r'}} \left[ \frac{1}{s^{\frac{d}{\alpha  p}}}+\frac{1}{(t-s)^{\frac{d}{\alpha  p}}} \right]^{\textcolor{black}{r'}} \left[  \frac{1}{s^{\frac{\zeta}{\alpha}}}+\frac{1}{(t-s)^{\frac{\zeta }{\alpha}}}  \right]^{\textcolor{black}{r'}}\d s\Big)^{\frac 1{r'}}	\\
	&+\Vert b \Vert_{L^r-\B_{p,q}^\beta} (t')^{\frac{\beta-\rho+\textcolor{black}{\zeta}}{\alpha}} \Big(\int_{t}^{t'}		\frac{1}{(t'-s)^{\frac{r'}{\alpha}}} \left[ \frac{1}{s^{\frac{d }{\alpha  p}}}+\frac{1}{(t'-s)^{\frac{d}{\alpha  p}}} \right]^{\textcolor{black}{r'}} %\left[1%+  \frac{(t')^{\frac{\zeta r'}{\alpha}}}{s^{\frac{\zeta r'}{\alpha}}}
	%+
	\frac{1%(t')^{\frac{r'\zeta}{\alpha}}
	}{(t'-s)^{\frac{\textcolor{black}{r'}\zeta }{\alpha}}}  %\right]
	\d s\Big)^{\frac 1{r'}}\Bigg)	
\end{align*}
recalling that, since $t'-t\le t/2 $, $t'/s\lesssim 1, \ s\in [t,t'] $%\footnote{\textcolor{black}{From S. to E.: ho cambiato un po' la presentazione per tornare all'impostazione del Lemma sulle singolarità.}}
. \textcolor{black}{Hence, applying Lemma \ref{lm:sing_int} with $\aa=\rho/\alpha,\bb=(\gamma+1-\varepsilon)/\alpha,\cc=d/(\alpha p),\dd= \zeta/\alpha$ %\footnote{\textcolor{black}{Da S. a E.: c'è una confusione notazionale tra la $d$ del Lemma e la $d$ della dimensione... Dici che ognuno riesce a capire? O meglio cambiarte le notazioni del Lemma (o tipografia)?}} 
 for the integral on $[0,t] $ and %$a=0,b=(1+\zeta)/\alpha,c=,d=  $ 
direct integration for the integral on $[t,t'] $, we obtain}:
\begin{align}
&|\Gamma(0,x,t,y)-\Gamma(0,x,t',y)|\notag\\
&\lesssim \bar{p}_{\alpha} (t,x-y)\Bigg(\Big(\frac{t'-t}{t}\Big)^{\frac{\gamma-\varepsilon}{\alpha}} +\Vert b \Vert_{L^r-\B_{p,q}^\beta}t^{\frac{\beta}{\alpha}}\Big[(t'-t)^{\frac{\gamma-\varepsilon}\alpha}t^{1-\frac 1 r-\frac \rho\alpha-\frac{\gamma-\varepsilon+1}{\alpha}-\frac{d}{p\alpha}}+t^{\frac \zeta\alpha\textcolor{black}{-\frac \rho\alpha}}(t'-t)^{1-\frac 1r-\frac 1\alpha-\frac d{p\alpha}-\frac \zeta\alpha}\Big]\Bigg)\notag\\
&\lesssim \bar{p}_{\alpha} (t,x-y)\Bigg(\Big(\frac{t'-t}{t}\Big)^{\frac{\gamma-\varepsilon}{\alpha}} +\Vert b \Vert_{L^r-\B_{p,q}^\beta} t^{-\frac{\beta+\rho}\alpha+\frac \varepsilon \alpha}(t'-t)^{\frac{\gamma-\varepsilon}\alpha}\Bigg).\notag
\end{align}
We thus get:
\begin{align}
&\left\|\frac{\Gamma(0,x,t,\cdot)-\Gamma(0,x,t',\cdot)}{\bar{p}_{\alpha} (t',x-\cdot)}\right\|_{L^\infty}\lesssim \Big(\frac{t'-t}{t}\Big)^{\frac{\gamma-\varepsilon}{\alpha}},
\label{CTR_DIFF_TEMPS}
\end{align}
provided $\beta+\rho<\gamma $, which can always be achieved taking $\rho=-\beta+\eta $ for $\eta<\gamma $.
%%%% Il faut repasser par la caractérisation thermique.
It now remains to control the  thermic part  of the Besov norm, i.e. with the notation of \eqref{HEAT_CAR}  the quantity
\begin{align*}
\mathcal T_{\infty,\infty}^\rho\Big(\frac{{\Gamma(0,x,t,\cdot)-\Gamma(0,x,t',\cdot)}}{\bar p_\alpha(t',\cdot-x)}\Big):=\sup_{v\in [0,1]}v^{\textcolor{black}{1-\frac{\rho}{\alpha}}}\left\|\partial_v p_\alpha \star\frac{(\Gamma(0,x,t,\cdot)-\Gamma(0,x,t',\cdot))}{\bar p_\alpha(t',\cdot-x)}\right\|_{L^\infty}.
\end{align*}
Write now from the expansion \eqref{THE_DIFF-BIS}:
\begin{align}
&\mathcal T_{\infty,\infty}^\rho\Big(\frac{
\Gamma(0,x,t,\cdot)-\Gamma(0,x,t',\cdot)}{\bar p_\alpha(t',\cdot-x)}\Big)\notag\\
			\le & \mathcal T_{\infty,\infty}^\rho\Big(
\frac{p_\alpha(t,\cdot-x)-p_\alpha(t',\cdot-x)}{\bar p_\alpha(t',\cdot-x)}\Big)\notag\\
			&+\mathcal T_{\infty,\infty}^\rho\Big(\frac{1}{\bar p_\alpha(t',\cdot-x)}\int_{0}^{ t}\int \Gamma(0,x,s,z) b(s,z)\Big(\nabla  p_\alpha(t'-s,\cdot-z)-\nabla  p_\alpha(t-s,\cdot-z)\Big)\d z  \d s\Big)\notag\\
			&+\mathcal T_{\infty,\infty}^\rho\Big(\frac{1}{\bar p_\alpha(t',\cdot-x)}\int_{t}^{ t'}\int \Gamma(0,x,s,z) b(s,z)\nabla  p_\alpha(t'-s,\cdot-z)\d z  \d s\Big)=:\sum_{i=1}^3 \mathcal T_{\infty,\infty,i}^\rho(x,t,t').
			\label{DECOUP_HOLD_T}
\end{align}
Write:
\begin{align*}
T_{\infty,\infty,1}^\rho(x,t,t')=\sup_{v\in [0,1]}v^{1-\frac{\rho}\alpha}\left\|\partial_v p_\alpha(v,\cdot)\star \frac{(p_\alpha(t,\cdot-x)-p_\alpha(t',\cdot-x))}{\bar p_\alpha(t',\cdot-x)}\right\|_{L^\infty}.
\end{align*}
Recall that $t,t'$ are assumed to be small and that, in that setting, $t$ appears as a natural cutting level in the study of the thermic part of the norm. Namely, for $v\ge t$ we readily get from \textcolor{black}{\eqref{holder-time-palpha}}%\footnote{Da S.: In termini notazionali, potremmo evitare la $p $ perché ci siamo già ristretti al caso di un rumore isotropo} %and \eqref{holder-time-palpha}
\begin{align*}
\left\|\partial_v p_\alpha(v,\cdot)\star \frac{(p_\alpha(t,\cdot-x)-p_\alpha(t',\cdot-x))}{\bar p_\alpha(t',\cdot-x)} \right\|_{L^\infty}\lesssim \frac{|t-t'|^{\frac{\gamma-\varepsilon}\alpha}}{t^{\frac{\gamma-\varepsilon}\alpha}}\frac{1}{v}.
\end{align*} 
In particular:
\begin{align}
\sup_{v\in [t,1]}v^{1-\frac{\rho}\alpha}\left\|\partial_v p_\alpha(v,\cdot)\star \frac{(p_\alpha(t,\cdot-x)-p_\alpha(t',\cdot-x))}{\bar p_\alpha(t',\cdot-x)} \right\|_{L^\infty}\lesssim \frac{|t-t'|^{\frac{\gamma-\varepsilon}\alpha}}{t^{\frac{\gamma-\varepsilon}\alpha}}\frac{1}{t^{\frac \rho\alpha}}.\label{COUPE_1_MAIN_HOLDER}
\end{align} 
Write now for $v\in [0,t] $, for all $y\in \R^d$:
\begin{align*}
&\left|\partial_v p_\alpha(v,\cdot)\star \frac{(p_\alpha(t,\cdot-x)-p_\alpha(t',\cdot-x))}{\bar p_\alpha(t',\cdot-x)}(y)\right|\\
=&\left|\int \partial_v p_\alpha(v,y-z)\Big(\frac{(p_\alpha(t,z-x)-p_\alpha(t',z-x))}{\bar p_\alpha(t',z-x)}-\frac{(p_\alpha(t,y-x)-p_\alpha(t',y-x))}{\bar p_\alpha(t',y-x)} \Big)\d z\right|\\
\lesssim &\frac 1 v\Big(\int\bar p_\alpha(v,y-z){\mathbb 1}_{|y-z|>t^{\frac 1\alpha}}2\left\|\frac{(p_\alpha(t,\cdot-x)-p_\alpha(t',\cdot-x))}{\bar p_\alpha(t',\cdot-x)}\right\|_{L^\infty}\d z\\
&+\int\bar p_\alpha(v,y-z){\mathbb 1}_{|y-z|\le t^{\frac 1\alpha}}\frac{|t-t'|\frac{\gamma-\varepsilon}{\alpha}}{t^{\frac{\gamma-\varepsilon}{\alpha}}}\frac{|y-z|^{\rho}}{t^{\frac{\rho}{\alpha}}}\d z\Big)
\end{align*} 
using a spatial Taylor expansion and \eqref{holder-time-palpha} (recalling as well that the diagonal regime holds for the \textit{non thermic} densities) for the last inequality. Hence, from \eqref{CTR_DIFF_TEMPS},
\begin{align*}
&\left|\partial_v p_\alpha(v,\cdot)\star \frac{(p_\alpha(t,\cdot-x)-p_\alpha(t',\cdot-x))}{\bar p_\alpha(t',\cdot-x)}(y)\right|\\
\lesssim & \frac{1}{v}\frac{1}{t^{\frac \rho\alpha}}\frac{|t-t'|\frac{\gamma-\varepsilon}{\alpha}}{t^{\frac{\gamma-\varepsilon}{\alpha}}}\int\bar p_\alpha(v,y-z) |y-z|^\rho dz\lesssim \frac{1}{v^{1-\frac{\rho}{\alpha}}}\frac{1}{t^{\frac \rho\alpha}}\frac{|t-t'|\frac{\gamma-\varepsilon}{\alpha}}{t^{\frac{\gamma-\varepsilon}{\alpha}}}.
\end{align*}
Thus,
$$\sup_{v\in [0,t]}v^{1-\frac{\rho}\alpha}\|\partial_v p_\alpha(v,\cdot)\star \frac{(p_\alpha(t,\cdot-x)-p_\alpha(t',\cdot-x))}{\bar p_\alpha(t',\cdot-x)} \|_{L^\infty}\lesssim \frac{|t-t'|^{\frac{\gamma-\varepsilon}\alpha}}{t^{\frac{\gamma-\varepsilon}\alpha}}\frac{1}{t^{\frac \rho\alpha}}, $$
which together with \eqref{COUPE_1_MAIN_HOLDER} yields 
\begin{align}
T_{\infty,\infty,1}^\rho(x,t,t')\lesssim \frac{|t-t'|^{\frac{\gamma-\varepsilon}\alpha}}{t^{\frac{\gamma-\varepsilon}\alpha}}\frac{1}{t^{\frac \rho\alpha}}.\label{CT_HOLD_T1}
\end{align}
which is precisely the expected expected bound. \\

Let us now turn to $\mathcal T_{\infty,\infty,3}^\rho(x,t,t')$:
\begin{align*}
\mathcal T_{\infty,\infty,3}^\rho(x,t,t')=\mathcal T_{\infty,\infty}^\rho\Big(\frac{1}{\bar p_\alpha(t',\cdot-x)}\int_{t}^{ t'}\int \Gamma(0,x,s,z) b(s,z)\nabla  p_\alpha(t'-s,\cdot-z)\d z  \d s\Big). 
\end{align*}
We proceed with the same previous dichotomy for the time variable:
\begin{trivlist}
\item[-]For $v\in [t,1] $ write:
\begin{align*}
&\left\|\partial_v p_\alpha(v,\cdot)\star\Bigg(\frac{1}{\bar p_\alpha(t',\cdot-x)}   \int_{t}^{ t'}\int \Gamma(0,x,s,z) b(s,z)\nabla  p_\alpha(t'-s,\cdot-z)\d z  \d s\Bigg)\right\|_{L^\infty}\\
\lesssim &\frac{C}{v}\int_{t}^{t'} \left\|\frac{1}{\bar p_\alpha(t',\cdot-x)}\int \Gamma(0,x,s,z) b(s,z)\nabla  p_\alpha(t'-s,\cdot-z)\d z \right\|_{L^\infty} \d s\\
\lesssim &\frac{1}{v}\int_{t}^{t'} \|b(s,\cdot)\|_{\B_{p,q}^\beta} \sup_{y\in \R^d}\frac{1}{\bar p_\alpha(t',\cdot-y)}\left\|\frac{\Gamma(0,x,s,\textcolor{black}{\cdot})}{\bar p_\alpha(s,x,\cdot)} \right\|_{\B_{\infty,\infty}^\rho}\left\|\bar p_\alpha(s,x,\cdot)\nabla  p_\alpha(t'-s,y-\cdot)\right\|_{\B_{p',q'}^{-\beta}}\d s\\
\lesssim &\frac{1}{v}\int_{t}^{t'} \|b(s,\cdot)\|_{\B_{p,q}^\beta} s^{-\frac \rho\alpha} 				\frac{1}{(t'-s)^{\frac{1}{\alpha}}} (t')^{\frac{\beta}{\alpha}}\left[ \frac{1}{s^{\frac{d }{\alpha  p}}}+\frac{1}{(t'-s)^{\frac{d }{\alpha  p}}} \right] \left[\frac{(t')^{\frac{\zeta}{\alpha}}}{s^{\frac{\zeta }{\alpha}}}+\frac{{t'}^{\frac{\zeta}{\alpha}}}{(t'-s)^{\frac{\zeta }{\alpha}}}  \right]\d s,
\end{align*}
where we used \eqref{ineq-density-diff} and \eqref{besov-estimate-stable} for the last inequality, where $\rho,\zeta>-\beta $. We get:%\footnote{\textcolor{black}{Da S. a E.: non penso occorra invocare il Lemma per questi conti}}:
\begin{align*}
&\left\|\partial_v p_\alpha(v,\cdot)\star\Bigg(\frac{1}{\bar p_\alpha(t',\cdot-x)}   \int_{t}^{ t'}\int \Gamma(0,x,s,z) b(s,z)\nabla  p_\alpha(t'-s,\cdot-z)\d z  \d s\Bigg)\right\|_{L^\infty}\\
\lesssim& \frac{1}v\|b\|_{L^r-\B_{p,q}^\beta}t^{-\frac{\rho}{\alpha}+\frac \beta\alpha+\frac \zeta\alpha}\left(\int_t^{t'} \frac{1}{(t'-s)^{\frac{r'}{\alpha}}} \left[ \frac{1}{(t'-s)^{\frac{d r'}{\alpha  p}}} \right] \left[  \frac{1}{(t'-s)^{\frac{\zeta r'}{\alpha}}}\right]\d s\right)^{\frac 1{r'}}\\
\lesssim& \frac{1}v\|b\|_{L^r-\B_{p,q}^\beta}t^{-\frac{\rho}{\alpha}} (t'-t)^{1-\frac 1{r}-(\frac 1\alpha+\frac{d}{p\alpha}+\frac{\zeta}{\alpha}) }=\frac{1}v\|b\|_{L^r-\B_{p,q}^\beta}t^{-\frac{\rho}{\alpha}} (t'-t)^{\frac{\gamma-\varepsilon}{\alpha} +\frac{-2\beta-\zeta}{\alpha}+\frac{\varepsilon}\alpha }.
\end{align*}
Eventually, choosing ${-2\beta-\zeta}+\varepsilon=\rho $.
\begin{align}
&\sup_{v\in [t,1]}v^{1-\frac \rho\alpha}\left\|\partial_v p_\alpha(v,\cdot)\star\Bigg(\frac{1}{\bar p_\alpha(t',\cdot-x)}   \int_{t}^{ t'}\int \Gamma(0,x,s,z) b(s,z)\nabla  p_\alpha(t'-s,\cdot-z)\d z  \d s\Bigg)\right\|_{L^\infty}\notag\\
\lesssim& (t'-t)^{\frac{\gamma-\varepsilon}\alpha}t^{-\frac \rho\alpha}.\label{T3_RHO_CUT_OFF_ALTO}
\end{align}
\item[-] For $v\in [0,t] $, write for all $y\in \R^d $:
\begin{align*}
&|\partial_v p_\alpha(v,\cdot)\star\Bigg(\frac{1}{\bar p_\alpha(t',\cdot-x)}   \int_{t}^{ t'}\int \Gamma(0,x,s,z) b(s,z)\nabla  p_\alpha(t'-s,\cdot-z)\d z  \d s\Bigg)(y)|\\
=&\left|\int_t^{t'}\int\int\partial_v p_\alpha(v,y-w)\Big(\frac{\Gamma(0,x,s,z) b(s,z)\nabla  p_\alpha(t'-s,w-z)}{\bar p_\alpha(t',w-x)}-\frac{\Gamma(0,x,s,z) b(s,z)\nabla  p_\alpha(t'-s,y-z)}{\bar p_\alpha(t',y-x)} \Big) \d z\d w\d s\right|\\
\lesssim& \int_t^{t'}\int |\partial_v p_\alpha(v,y-w)| \left| \int\frac{\Gamma(0,x,s,z) b(s,z)\nabla  p_\alpha(t'-s,w-z)}{\bar p_\alpha(t',w-x)}\right.\\
&\left.-\frac{\Gamma(0,x,s,z) b(s,z)\nabla  p_\alpha(t'-s,y-z)}{\bar p_\alpha(t',y-x)}  \d z\right| \d w \d s\\
\lesssim& \int_t^{t'}\|b(s,\cdot)\|_{B_{p,q}^\beta}\int |\partial_v p_\alpha(v,y-w)|\left \|\frac{\Gamma(0,x,s,\cdot)}{\bar p_\alpha(s,\cdot-x)}\right\|_{B_{\infty,\infty}^\rho}\\
&\times \left\|\frac{\bar p_\alpha(s,\cdot-x) \nabla  p_\alpha(t'-s,w-\cdot)}{\bar p_\alpha(t',w-x)}-\frac{\bar p_\alpha(s,\cdot-x)\nabla  p_\alpha(t'-s,y-\cdot)}{\bar p_\alpha(t',y-x)}  \right\|_{B_{p',q'}^{-\beta}} \d w \d s\\
\underset{\eqref{big-lemma-2}}{\lesssim}& \int_t^{t'}\|b(s,\cdot)\|_{B_{p,q}^\beta}\Bigg(\int\frac{\bar p_\alpha(v,y-w)}{v}|w-y|^\zeta \d w\Bigg)  \frac{s^{-\frac \rho\alpha} (t')^{\frac{\beta}{\alpha}}}{(t'-s)^{\frac{\zeta+1}{\alpha}}}  \left[ \frac{1}{(t'-s)^{\frac{d }{\alpha  p}}} +\frac{1}{s^{\frac{d }{\alpha  p}}}\right] \left[%\left( 
\frac{(t')^{\frac{\zeta}{\alpha}}}{(t'-s)^{\frac{\zeta }{\alpha}}} + \frac{(t')^{\frac{\zeta}{\alpha}}}{s^{\frac{\zeta }{\alpha}}}%\right) %+1
 \right] \\
\lesssim & v^{-1+\frac{\zeta}{\alpha}}\|b\|_{L^r-\B_{p,q}^\beta}(t')^{-\frac \rho\alpha+\frac\beta \alpha+\frac \zeta\alpha}\Bigg( \int_t^{t'}\frac{\d s}{(t'-s)^{r'(\frac{1+\zeta}{\alpha}+\frac{d}{\alpha p}+\frac{\zeta}{\alpha})}}\Bigg)^{\frac 1{r'}}\\
\lesssim & v^{-1+\frac{\zeta}{\alpha}}\|b\|_{L^r-\B_{p,q}^\beta}(t')^{-\frac \rho\alpha+\frac\beta \alpha+\frac \zeta\alpha}(t'-t)^{1-\frac 1r-(\frac 1\alpha+\frac d{p\alpha}+2\frac{\zeta}{\alpha})}\\
\lesssim & v^{-1+\frac{\zeta}{\alpha}}\|b\|_{L^r-\B_{p,q}^\beta}(t')^{-\frac \rho\alpha}(t'-t)^{\frac{\gamma-\varepsilon}{\alpha}+\frac{-2\zeta-2\beta+\varepsilon}{\alpha}}
\lesssim %& 
v^{-1+\frac{\zeta}{\alpha}}\|b\|_{L^r-\B_{p,q}^\beta}(t')^{-\frac \rho\alpha}(t'-t)^{\frac{\gamma-\varepsilon}{\alpha}}
\end{align*}
provided $-2\zeta-2\beta+\varepsilon\textcolor{black}{\ge} 0\iff \zeta	\textcolor{black}{\le} -\beta+\frac \varepsilon 2$. Together with \eqref{T3_RHO_CUT_OFF_ALTO} we eventually get:
\begin{align}
\mathcal T_{\infty,\infty,3}^\rho(x,t,t')\lesssim (t')^{-\frac \rho\alpha}(t'-t)^{\frac{\gamma-\varepsilon}{\alpha}}.\label{CT_HOLD_T3}
\end{align}
%using \eqref{big-lemma-2} for the third inequality.
\end{trivlist}

Let us now turn to $\mathcal T_{\infty,\infty,2}^\rho(x,t,t')$
\begin{align*}
\mathcal T_{\infty,\infty,2}^\rho(x,t,t')=\mathcal T_{\infty,\infty}^\rho\Big(\frac{1}{\bar p_\alpha(t',\cdot-x)}\int_{0}^{ t}\int \Gamma(0,x,s,z) b(s,z)\Big(\nabla  p_\alpha(t'-s,\cdot-z)-\nabla  p_\alpha(t-s,\cdot-z)\Big)\d z  \d s\Big). 
\end{align*}
We proceed with the same previous dichotomy for the time variable:
\begin{trivlist}	
\item[-] For $v\in [t,1] $ write:
\begin{align*}
&\left\|\partial_v p_\alpha(v,\cdot)\star\Bigg(\frac{1}{\bar p_\alpha(t',\cdot-x)}   \int_{0}^{ t}\int \Gamma(0,x,s,z) b(s,z)\Big(\nabla  p_\alpha(t'-s,\cdot-z)-\nabla  p_\alpha(t-s,\cdot-z)\Big)\d z  \d s\Bigg)\right\|_{L^\infty}\\
\lesssim &\frac{C}{v}\int_{0}^{t} \left\|\frac{1}{\bar p_\alpha(t',\cdot-x)}\int \Gamma(0,x,s,z) b(s,z)\Big(\nabla  p_\alpha(t'-s,\cdot-z)-\nabla  p_\alpha(t-s,\cdot-z)\Big)\d z \right\|_{L^\infty} \d s\\
\lesssim &\frac{1}{v}\int_{0}^{t} \|b(s,\cdot)\|_{\B_{p,q}^\beta} \sup_{y\in \R^d}\frac{1}{\bar p_\alpha(t',y-x)}\left\|\frac{\Gamma(0,x,s,\textcolor{black}{\cdot})}{\bar p_\alpha(s,\cdot-x)} \right\|_{\B_{\infty,\infty}^\rho}\left\|\bar p_\alpha(s,\cdot-x)\Big(\nabla  p_\alpha(t'-s,y-\cdot)-\nabla  p_\alpha(t-s,y-\cdot)\Big)\right\|_{\B_{p',q'}^{-\beta}}\d s\\
\lesssim &\frac{1}{v}\int_{0}^{t} \|b(s,\cdot)\|_{\B_{p,q}^\beta} s^{-\frac \rho\alpha} 	
(t-t')^{\frac{\gamma-\varepsilon}\alpha}\frac{t^{\frac{\beta}{\alpha}} }{(t-s)^{\frac{1}{\alpha}+\frac{\gamma-\varepsilon}\alpha}} \left[\frac{1}{s^{\frac{d }{\alpha  p}}}+ \frac{1}{(t-s)^{\frac{d }{\alpha  p}}} \right] \left[ \frac{t^{\frac{\zeta}{\alpha}}}{s^{\frac{\zeta }{\alpha}}}+\frac{t^{\frac{\zeta}{\alpha}}}{(t-s)^{\frac{\zeta }{\alpha}}}  \right]
\d s,
\end{align*}
using \eqref{besov-estimate-gammah-stable-sensi-holder-time} (with $p_\alpha$ in place of $\Gamma^h$) for the last inequality. Thus,
\begin{align*}
&\left\|\partial_v p_\alpha(v,\cdot)\star\Bigg(\frac{1}{\bar p_\alpha(t',\cdot-x)}   \int_{0}^{ t}\int \Gamma(0,x,s,z) b(s,z)\Big(\nabla  p_\alpha(t'-s,\cdot-z)-\nabla  p_\alpha(t-s,\cdot-z)\Big)\d z  \d s\Bigg)\right\|_{L^\infty}\\
\lesssim&\frac{1}{v}(t-t')^{\frac{\gamma-\varepsilon}\alpha}\|b\|_{L^r-\B_{p,q}^\beta}t^{\frac{\beta+\zeta}\alpha}
 \Bigg(\int_0^t s^{-r'\frac \rho\alpha} 	
\frac{1}{(t-s)^{r'(\frac{1}{\alpha}+\frac{\gamma-\varepsilon}\alpha)}} \left[\frac{1}{s^{\frac{d }{\alpha  p}}}+ \frac{1}{(t-s)^{\frac{d }{\alpha  p}}} \right]^{r'} \left[\frac{1}{s^{\frac{\zeta }{\alpha}}}+\frac{1}{(t-s)^{\frac{\zeta }{\alpha}}}  \right]^{r'}\Bigg)^{\frac 1{r'}}\\
\lesssim&\frac{1}{v}(t-t')^{\frac{\gamma-\varepsilon}\alpha}\|b\|_{L^r-\B_{p,q}^\beta}t^{\frac{\beta+\zeta}\alpha}t^{1-\frac 1{r}-\big(\textcolor{black}{\frac \rho\alpha}+\frac 1\alpha+\frac{\gamma-\varepsilon}\alpha+\frac{d}{\alpha p}+\frac \zeta\alpha\big)}\le \frac{1}{v}(t-t')^{\frac{\gamma-\varepsilon}\alpha}\|b\|_{L^r-\B_{p,q}^\beta}t^{-\frac{\textcolor{black}{\rho+\beta-\varepsilon}}{\alpha}},
\end{align*}
which yields, \textcolor{black}{taking $\rho+\beta-\varepsilon\le 0 $},
\begin{align}
&\sup_{v\in [t,1]}v^{1-\frac \rho\alpha}\left\|\partial_v p_\alpha(v,\cdot)\star\Bigg(\frac{1}{\bar p_\alpha(t',\cdot-x)}   \int_{0}^{ t}\int \Gamma(0,x,s,z) b(s,z)\Big(\nabla  p_\alpha(t'-s,\cdot-z)-\nabla  p_\alpha(t-s,\cdot-z)\Big)\d z  \d s\Bigg)\right\|_{L^\infty}\notag\\
\le& (t-t')^{\frac{\gamma-\varepsilon}\alpha}\|b\|_{L^r-\B_{p,q}^\beta}t^{-\frac{\rho}{\alpha}}.\label{T2_RHO_CUT_OFF_ALTO}
\end{align}
\item[-] For $v\in [0,t] $:
write for all $y\in \R^d $:
\begin{align*}
&|\partial_v p_\alpha(v,\cdot)\star\Bigg(\frac{1}{\bar p_\alpha(t',\cdot-x)}   \int_{0}^{ t}\int \Gamma(0,x,s,z) b(s,z)\Big(\nabla  p_\alpha(t'-s,\cdot-z)-\nabla  p_\alpha(t-s,\cdot-z)\Big)\d z  \d s\Bigg)(y)|\\
=&\left|\int_0^{t-2|t'-t|}\int\int\partial_v p_\alpha(v,y-w)\Big(\frac{\Gamma(0,x,s,z) b(s,z)\Big(\nabla  p_\alpha(t'-s,w-z)-\nabla  p_\alpha(t-s,w-z)\Big)}{\bar p_\alpha(t',w-x)}\right.\\
&-\left.\frac{\Gamma(0,x,s,z) b(s,z)\Big(\nabla  p_\alpha(t'-s,y-z)-\nabla  p_\alpha(t-s,y-z)\Big)}{\bar p_\alpha(t',y-x)} \Big) \d z\d w\d s\right|\\
&+\left|\int_{t-2|t'-t|}^{\textcolor{black}{t}}\int\int\partial_v p_\alpha(v,y-w)\Big(\frac{\Gamma(0,x,s,z) b(s,z)\nabla  p_\alpha(t'-s,w-z)}{\bar p_\alpha(t',w-x)}-\frac{\Gamma(0,x,s,z) b(s,z)\nabla  p_\alpha(t'-s,y-z)}{\bar p_\alpha(t',y-x)} \Big) \d z\d w\d s\right|\\
&+\left|\int_{t-2|t'-t|}^{\textcolor{black}{t}}\int\int\partial_v p_\alpha(v,y-w)\Big(\frac{\Gamma(0,x,s,z) b(s,z)\nabla  p_\alpha(t-s,w-z)}{\bar p_\alpha(t',w-x)}-\frac{\Gamma(0,x,s,z) b(s,z)\nabla  p_\alpha(t-s,y-z)}{\bar p_\alpha(t',y-x)} \Big) \d z\d w\d s\right|\\
%\lesssim& \int_{0}^{t}\int |\partial_v p_\alpha(v,y-w)| \left| \int\frac{\Gamma(0,x,s,z) b(s,z)\nabla  p_\alpha(t'-s,w-z)}{\bar p_\alpha(t',w-x)}\right.\\
%&\left.-\frac{\Gamma(0,x,s,z) b(s,z)\nabla  p_\alpha(t'-s,y-z)}{\bar p_\alpha(t',y-x)}  \d z\right| \d w \d s
=:&(T_1+T_2+T_3)(v,t,t',x,y).
\end{align*}
Note that the terms $(T_2+T_3)(v,t,t',x,y) $ can be handled just as we did before for the lower cut in the thermic variable for $\mathcal T_{\infty,\infty,\textcolor{black}{3}}^\rho(x,t,t')$. On the other hand, \textcolor{black}{extending \eqref{big-lemma-2} to take as well into account the time derivative we get}%\footnote{\textcolor{black}{Da S. a E. non basta l'equazione citata che non contiene la derivata in tempo. Non cambia niente però. Va bene così o modifichiamo nel Lemmone?}}
:
\begin{align*}
&T_1(v,t,t',x,y)\\
\lesssim&\left|\int_0^1 \d \lambda\int_0^{t-2|t'-t|}\int\int\partial_v p_\alpha(v,y-w)\Big(\frac{\Gamma(0,x,s,z) b(s,z)\partial_u \nabla  p_\alpha(u,w-z)|_{u=t+\lambda (t'-t)-s}}{\bar p_\alpha(t',w-x)}\right.\\
&-\left.\frac{\Gamma(0,x,s,z) b(s,z)\partial_u\nabla  p_\alpha(u,y-z)|_{u=t+\lambda (t'-t)-s}}{\bar p_\alpha(t',y-x)} \Big) \d z\d w\d s\right| (t'-t)\\
&\lesssim \int_0^1 \d \lambda \int_0^{t-2|t'-t|}\|b(s,\cdot)\|_{\B_{p,q}^\beta}\int |\partial_v p_\alpha(v,y-w)|\left \|\frac{\Gamma(0,x,s,\cdot)}{\bar p_\alpha(s,\cdot-x)}\right\|_{\B_{\infty,\infty}^\rho}\\
&\times \left\|\frac{\bar p_\alpha(s,\cdot-x) \partial_u\nabla  p_\alpha(u,w-\cdot)|_{u=t+\lambda(t'-t)-s}}{\bar p_\alpha(t',w-x)}-\frac{\bar p_\alpha(s,\cdot-x)\partial_u\nabla  p_\alpha(u,y-\cdot)|_{u=t+\lambda(t'-t)-s}}{\bar p_\alpha(t',y-x)}  \right\|_{\B_{p',q'}^{-\beta}}\!\!\! \d w \d s (t'-t)\\
\underset{\eqref{big-lemma-2}}{\lesssim}
& \int_0^{t-2|t'-t|}\|b(s,\cdot)\|_{\B_{p,q}^\beta}s^{-\frac \rho\alpha}\Bigg(\int\frac{\bar p_\alpha(v,y-w)}{v}|w-y|^\zeta \d w\Bigg)  \frac{ (t')^{\frac{\beta}{\alpha}}}{(t'-s)^{\frac{\textcolor{black}{1+\zeta}}{\alpha}+1}}  \left[ \frac{1}{(t'-s)^{\frac{d }{\alpha  p}}} +\frac{1}{s^{\frac{d }{\alpha  p}}}\right]\\
&\times  \left[(t')^{\frac{\zeta}{\alpha}}\left( \frac{1}{(t'-s)^{\frac{\zeta }{\alpha}}} + \frac{1}{s^{\frac{\zeta }{\alpha}}}\right) \right] (t'-t)\d s\\
\lesssim & v^{-1+\frac{\zeta}{\alpha}}\|b\|_{L^r-\B_{p,q}^\beta}(t')^{\frac\beta \alpha+\frac \zeta\alpha}(t'-t)^{\frac{\gamma-\varepsilon}\alpha}\Bigg( \int_0^{t-2|t'-t|}\frac{\d s}{s^{r'\frac\rho \alpha}(t'-s)^{r'(\frac{1+\zeta}{\alpha}+\frac{d}{\alpha p}+\frac{\zeta}{\alpha}+\frac{\gamma-\varepsilon}\alpha)}}\Bigg)^{\frac 1{r'}}\\
\lesssim &v^{-1+\frac{\zeta}{\alpha}}\|b\|_{L^r-\B_{p,q}^\beta}(t')^{\frac\beta \alpha+\frac \zeta\alpha}(t'-t)^{\frac{\gamma-\varepsilon}\alpha}t^{1-\frac 1r-(\frac 1\alpha+\frac d{p\alpha}+2\frac{\zeta}{\alpha}+\frac{\gamma-\varepsilon}{\alpha})-\frac \rho\alpha}\\
\lesssim & v^{-1+\frac{\zeta}{\alpha}}\|b\|_{L^r-\B_{p,q}^\beta}(t')^{\frac\beta \alpha+\frac \zeta\alpha}(t'-t)^{\frac{\gamma-\varepsilon}{\alpha}} t^{\frac{-2\zeta-2\beta+\varepsilon}{\alpha}-\frac \rho\alpha}
\lesssim %& 
v^{-1+\frac{\zeta}{\alpha}}\|b\|_{L^r-\B_{p,q}^\beta}(t')^{-\frac \rho\alpha}(t'-t)^{\frac{\gamma-\varepsilon}{\alpha}},
\end{align*}
provided $-\frac\beta \alpha-\frac \zeta\alpha+\frac{\varepsilon}{2\alpha}>0$ for the above integral to converge%\footnote{\textcolor{black}{Da S. a E.: invocare a modo Lemma di nuovo}}
. These computations, together with \eqref{T2_RHO_CUT_OFF_ALTO} eventually yields:
$$\mathcal T_{\infty,\infty,\textcolor{black}{2}}^\rho(x,t,t')\lesssim (t')^{-\frac \rho\alpha}(t'-t)^{\frac{\gamma-\varepsilon}{\alpha}},
 $$
 which together with \eqref{CT_HOLD_T3}, \eqref{CT_HOLD_T1} and \eqref{DECOUP_HOLD_T} gives the claim.
\end{trivlist}

\bibliographystyle{alpha}
\bibliography{ar}

\begin{thebibliography}{CdRJM25}

\bibitem[ABM20]{ABM20}
Siva Athreya, Oleg Butkovsky, and Leonid Mytnik.
\newblock Strong existence and uniqueness for stable stochastic differential
  equations with distributional drift.
\newblock {\em Ann. Probab.}, 48(1):178--210, 2020.

\bibitem[BJ22]{BJ20}
Oumaima Bencheikh and Benjamin Jourdain.
\newblock Convergence in total variation of the {E}uler--{M}aruyama scheme
  applied to diffusion processes with measurable drift coefficient and additive
  noise.
\newblock {\em SIAM Journal on Numerical Analysis}, 60(4):1701--1740, 2022.

\bibitem[BT96]{BT96}
Vlad Bally and Denis Talay.
\newblock The law of the euler scheme for stochastic differential equations.
\newblock {\em Probability Theory and Related Fields}, 104(1):43--60, 1996.

\bibitem[CdRJM25]{CdRJM22}
Paul-\'{E}ric Chaudru~de Raynal, Jean-Francois Jabir, and St\'{e}phane Menozzi.
\newblock Multidimensional stable driven {McK}ean-{V}lasov {SDE}s with
  distributional interaction kernel -- a regularization by noise perspective.
\newblock {\em {S}tochastics and Partial {D}ifferential {E}quations},
  13--1:367--420, 2025.

\bibitem[CdRM22]{CdRM22}
Paul-\'{E}ric Chaudru~de Raynal and St{\'e}phane Menozzi.
\newblock {On multidimensional stable-driven stochastic differential equations
  with {B}esov drift}.
\newblock {\em Electronic Journal of Probability}, 27(none):1 -- 52, 2022.

\bibitem[CIP23]{CIP23}
{Luis Mario} {Chaparro Jáquez}, Elena Issoglio, and Jan Palczewski.
\newblock Convergence rate of numerical scheme for {SDE}s with a distributional
  drift in {B}esov space.
\newblock {\em ArXiv 2309.11396}, 2023.

\bibitem[CZZ21]{CZZ21}
Zhen-Qing Chen, Xicheng Zhang, and Guohuan Zhao.
\newblock Supercritical {SDE}s driven by multiplicative stable-like {L}\'{e}vy
  processes.
\newblock {\em Trans. Amer. Math. Soc.}, 374(11):7621--7655, 2021.

\bibitem[DD16]{DD16}
Fran{\c{c}}ois Delarue and Roland Diel.
\newblock Rough paths and 1d {SDE} with a time dependent distributional drift:
  application to polymers.
\newblock {\em Probability Theory and Related Fields}, 165(1):1--63, June 2016.

\bibitem[DGI22]{dAGI}
Tiziano {De Angelis}, Maximilien Germain, and Elena Issoglio.
\newblock A numerical scheme for stochastic differential equations with
  distributional drift.
\newblock {\em Stochastic Processes and their Applications}, 154:55--90, 2022.

\bibitem[Fit24]{Fit23}
Mathis Fitoussi.
\newblock Heat kernel estimates for stable-driven {SDE}s with distributional
  drift.
\newblock {\em Potential Analysis}, 61--3:431--461, 2024.

\bibitem[FJM25]{FJM24}
Mathis Fitoussi, Benjamin Jourdain, and St{\'e}phane Menozzi.
\newblock {Weak well-posedness and weak discretization error for stable-driven
  SDEs with Lebesgue drift}.
\newblock {\em IMA Journal of Numerical Analysis}, page draf079, 2025.
\newblock https://doi.org/10.1093/imanum/draf079.

\bibitem[FM25]{FM24}
Mathis Fitoussi and Stephane Menozzi.
\newblock Weak error on the densities for the {E}uler scheme of stable additive
  {SDE}s with {H}{\"o}lder drift.
\newblock {\em Stochastic Processes and Applications}, Vol 190:\ paper
  $\#104736 $, 21pp, 2025.

\bibitem[GHR25]{GHR25}
Ludovic Goudenège, El~Mehdi Haress, and Alexandre Richard.
\newblock Numerical approximation of {SDE}s with fractional noise and
  distributional drift.
\newblock {\em Stochastic Processes and their Applications}, 181:104533, 2025.

\bibitem[HLL24]{hao:le:ling:24}
Zimo Hao, Khoa L{\^{e}}, and Chengcheng Ling.
\newblock Quantitative approximation of stochastic kinetic equations: from
  discrete to continuum.
\newblock {\em arXiv:2409.05706}, 2024.

\bibitem[HLL26]{hao:le:ling:26}
Zimo Hao, Khoa Lê, and Chengcheng Ling.
\newblock Weak approximation of kinetic {SDE}s: closing the criticality gap.
\newblock {\em arXiv 2602.18411}, 2026.

\bibitem[Hol24]{Hol22}
Teodor Holland.
\newblock A note on the weak rate of convergence for the {E}uler-{M}aruyama
  scheme with {H}\"older drift.
\newblock {\em Stoch. Proc and Appl.}, 174, paper \#104379, 2024.

\bibitem[HW26]{hao:wu:26}
Zimo Hao and Mingyan Wu.
\newblock Euler–maruyama scheme for $\alpha $-stable {SDE} with
  distributional drift.
\newblock {\em arXiv:2604.07757}, 2026.

\bibitem[JM24]{JM211}
Benjamin Jourdain and St{\'e}phane Menozzi.
\newblock {Convergence Rate of the Euler-Maruyama Scheme Applied to Diffusion
  Processes with $L^q$ -- $L^\rho$ Drift Coefficient and Additive Noise}.
\newblock {\em Annals of Applied Probability}, 34--1b:1663--1697, January 2024.

\bibitem[JM26]{jour:meno:26}
Benjamin Jourdain and Stéphane Menozzi.
\newblock Weak error for {SDE}s with additive stable noise and singular drift:
  choose the test function in the same space as the drift!
\newblock {\em arXiv 2604.20323}, 2026.

\bibitem[KM02]{KM02}
Valentin Konakov and Enno Mammen.
\newblock {Edgeworth type expansions for Euler schemes for stochastic
  differential equations}.
\newblock {\em Monte Carlo Methods Appl.}, 8--3:271--285, 2002.

\bibitem[KM10]{KM10}
Valentin Konakov and Stéphane Menozzi.
\newblock Weak error for stable driven stochastic differential equations:
  Expansion of the densities.
\newblock {\em Journal of Theoretical Probability}, 24-2:554--578, 2010.

\bibitem[KM17]{KM17}
Valentin Konakov. and Stéphane Menozzi.
\newblock Weak error for the {E}uler scheme approximation of diffusions with
  non-smooth coefficients.
\newblock {\em Electr. Journal of Proba.}, 22:paper \# 46, 47 p., 2017.

\bibitem[Kol00]{Kol00}
Vassili Kolokoltsov.
\newblock Symmetric stable laws and stable-like jump-diffusions.
\newblock {\em Proc. London Math. Soc. (3)}, 80(3):725--768, 2000.

\bibitem[KS22]{KS21}
Franziska Kühn and René~L. Schilling.
\newblock Convolution inequalities for {B}esov and {T}riebel--{L}izorkin
  spaces, and applications to convolution semigroups.
\newblock {\em Studia Mathematica}, 262:93--119, 2022.

\bibitem[L{\^e}20]{Le20}
Khoa L{\^e}.
\newblock {A stochastic sewing lemma and applications}.
\newblock {\em Electronic Journal of Probability}, 25(none):1 -- 55, 2020.

\bibitem[LL25]{LL22}
Khoa L\^e and Chengcheng Ling.
\newblock Taming singular stochastic differential equations: A numerical
  method.
\newblock {\em Annals of Probability}, 53--5:1764--1824, 2025.

\bibitem[LR02]{lema:02}
Pierre-Gilles Lemari\'{e}-Rieusset.
\newblock {\em Recent developments in the Navier-Stokes problem}.
\newblock CRC Press, 2002.

\bibitem[MP91]{MP91}
Remigijus Mikulevicius and Eckhard Platen.
\newblock Rate of convergence of the {E}uler approximation for diffusion
  processes.
\newblock {\em Mathematische Nachrichten}, 151:233--239, 1991.

\bibitem[MP26]{meno:pagl:26}
Stéphane Menozzi and Stefano Pagliarani.
\newblock On heat kernel estimtes for brownian sdes with distributional drift.
\newblock {\em arXiv, 2605.18505}, 2026.

\bibitem[PvZ23]{PvZ22}
Nicolas Perkowski and Willem van Zuijlen.
\newblock Quantitative heat-kernel estimates for diffusions with distributional
  drift.
\newblock {\em Potential Analysis}, 59--2:731--752, 2023.
\newblock https://doi.org/10.1007/s11118-021-09984-3.

\bibitem[Sat99]{Sat99}
Ken-{i}ti Sato.
\newblock {\em L\'{e}vy Processes and Infinitely divisible Distributions}.
\newblock Cambridge University Press, 1999.

\bibitem[Saw18]{Sawano18}
Yoshihiro Sawano.
\newblock {\em Theory of Besov spaces}.
\newblock Springer, 2018.

\bibitem[Tri06]{Tri06}
Hans Triebel.
\newblock {\em Theory of Function Spaces III}.
\newblock Monographs in Mathematics. Birkhäuser Basel, 2006.

\bibitem[TT90]{TT90}
Denis Talay and Luciano Tubaro.
\newblock Expansion of the global error for numerical schemes solving
  sto\-chastic differential equations.
\newblock {\em Stoch. Anal. and App.}, 8-4:94--120, 1990.

\bibitem[Wat07]{Wa07}
Toshiro Watanabe.
\newblock Asymptotic estimates of multi-dimensional stable densities and their
  applications.
\newblock {\em Transactions of the American Mathematical Society},
  359(6):2851--2879, 2007.

\bibitem[XZ20]{XZ20}
Longjie Xie and Xicheng Zhang.
\newblock Ergodicity of stochastic differential equations with jumps and
  singular coefficients.
\newblock {\em Ann. Inst. Henri Poincar\'{e} Probab. Stat.}, 56(1):175--229,
  2020.

\end{thebibliography}
\end{document}